 \theoremstyle{plain}
 \newtheorem{thm1}{Theorem}
\newtheorem{thm}{Theorem}[section]
\newtheorem{lemma}[thm]{Lemma}
\newtheorem{prop}[thm]{Proposition}
\newtheorem{cor}[thm]{Corollary}
\theoremstyle{definition}
\newtheorem{defn}[thm]{Definition}
 \newtheorem*{defn1}{Definition}
\newtheorem{remark}[thm]{Remark}
\newtheorem{example}[thm]{Example}
\numberwithin{equation}{section}
\def\sA{\mathsf{A}}
\def\sB{\mathsf{B}}
\def\sC{\mathsf{C}}
\def\sD{\mathsf{D}}
\def\sE{\mathsf{E}}
\def\sF{\mathsf{F}}
\def\sG{\mathsf{G}}
\def\sX{\mathsf{X}}
\def\PGL{\mathsf{PGL}}
\def\cC{\mathcal{C}}
\def\cL{\mathcal{L}}
\def \cP{\mathcal{P}}
\def\cQ{\mathcal{Q}}
\def\cS{\mathcal{S}}
\newcommand{\PG}{\mathsf{PG}}
\def\K{\mathbb{K}}
\def\CC{\mathbb{C}}
\def\FF{\mathbb{F}}
\def\HH{\mathbb{H}}
\def\KK{\mathbb{K}}
\def\LL{\mathbb{L}}
\def\QQ{\mathbb{Q}}
\def\RR{\mathbb{R}}
\def\K{\mathbb{K}}
\def\PG{\mathsf{PG}}
\def\kar{\mathsf{char}}
\def\codim{\mathsf{codim}}
\def\Ga{\Gamma}
\DeclareMathOperator\Type{\mathsf{Typ}}
\DeclareMathOperator\Res{\mathsf{Res}}
\DeclareMathOperator\proj{\mathsf{proj}}
\DeclareMathOperator\Opp{\mathsf{Opp}}
\DeclareMathOperator\Diag{\mathsf{Diag}}
\DeclareMathOperator\tr{\mathsf{tr}}
\def\<{\langle}
\def\>{\rangle}
\renewcommand{\@makefnmark}{\mbox{\textsuperscript{}}}
\title{Automorphisms and opposition in spherical buildings of classical type}
\author{James Parkinson 
\and
Hendrik Van Maldeghem}
\date{\today}
\begin{document}

\maketitle

\begin{abstract}
An automorphism of a spherical building is called domestic if it maps no chamber to an opposite chamber. In this paper we classify domestic automorphisms of spherical buildings of classical type.
\end{abstract}


\section*{Introduction}

The study of the geometry of fixed elements of automorphisms of spherical buildings is a well-established and beautiful topic (see~\cite{PMW:15}). Recently a complementary theory concerning the ``opposite geometry'' $\Opp(\theta)$, consisting of those elements mapped to opposite elements by an automorphism, has been developed (see  \cite{PVM:19a,PVM:19b,PVM:21,PVM:23}). The generic situation is that this opposite geometry is rather large, and typically contains many chambers of the building. The more delicate and interesting situation is when the opposite geometry $\Opp(\theta)$ contains no chamber, in which case the automorphism $\theta$ is called \textit{domestic}. This paper is part of series of papers directed towards classifying, as precisely as possible, the class of domestic automorphisms of spherical buildings, with the focus of this paper being on the buildings of classical types.

Central to the study of the domesticity and the opposite geometry is the notion of the \textit{opposition diagram} $\Diag(\theta)$ of an automorphism~$\theta$, introduced in~\cite{PVM:19a,PVM:19b}. This diagram encodes the types of the simplices of the building that are mapped onto opposite simplices by~$\theta$, and one surprising aspect of the theory is that there are relatively few permitted opposition diagrams for a given spherical building. This opens up the possibility of classifying the class of domestic automorphisms with each diagram. In~\cite{PVM:21} and \cite{PVM:23} we carried out this program for split spherical buildings of exceptional type (with the exception of the three opposition diagrams $\sE_{7;3}$, $\sE_{7;4}$, and $\sE_{8;4}$ that will be dealt with in future work), and for all Moufang hexagons. In the present paper we carry out the program for spherical buildings of classical type (both split and non-split) of rank at least~$3$.

Before stating our main results, we briefly expand on some of the above concepts (see Section~\ref{sec:background} for remaining concepts). Let $\Delta$ be a thick spherical building of irreducible type $(W,S)$ with Coxeter diagram $\Gamma$, and let $\theta$ be an automorphism of $\Delta$. Let $\pi\in\mathrm{Aut}(\Gamma)$ denote the automorphism of $\Gamma$ induced by $\theta$ (thus $\pi=\mathrm{id}$ if and only if $\theta$ is type preserving). Let $\pi_0\in\mathrm{Aut}(\Gamma)$ be the automorphism of $\Gamma$ induced by the longest element~$w_0$. The \textit{distinguished orbits} are the nontrivial subsets of $S$ that are minimal subject to being preserved by $\pi_0\circ\pi$. In particular, the distinguished orbits are all singletons if and only if $\pi_0\circ\pi=\mathrm{id}$. The \textit{opposition diagram} $\Diag(\theta)$ of $\theta$ is drawn by encircling those distinguished orbits $J\subseteq S$ of $\Gamma$ such that there exists a type $J$ simplex mapped onto an opposite simplex. The opposition diagram $\Diag(\theta)$ is drawn ``straight'' if $\pi_0\circ\pi=\mathrm{id}$, and ``bent'' in the usual way otherwise. 

Thus, for example, the opposition diagrams
\begin{center}
(a)\quad \begin{tikzpicture}[scale=0.5,baseline=-0.5ex]
\node at (1,0.8) {};
\node [inner sep=0.8pt,outer sep=0.8pt] at (0,0) (0) {$\bullet$};
\node [inner sep=0.8pt,outer sep=0.8pt] at (1,0) (1) {$\bullet$};
\node [inner sep=0.8pt,outer sep=0.8pt] at (2,0) (2) {$\bullet$};
\node [inner sep=0.8pt,outer sep=0.8pt] at (3,0) (3) {$\bullet$};
\node [inner sep=0.8pt,outer sep=0.8pt] at (4,0) (4) {$\bullet$};
\node [inner sep=0.8pt,outer sep=0.8pt] at (5,0.5) (5a) {$\bullet$};
\node [inner sep=0.8pt,outer sep=0.8pt] at (5,-0.5) (5b) {$\bullet$};
\draw (0,0)--(4,0);
\draw (4,0) to   (5,0.5);
\draw (4,0) to   (5,-0.5);
\draw [line width=0.5pt,line cap=round,rounded corners] (1.north west)  rectangle (1.south east);
\draw [line width=0.5pt,line cap=round,rounded corners] (3.north west)  rectangle (3.south east);
\end{tikzpicture}\qquad\qquad\qquad (b)\quad \begin{tikzpicture}[scale=0.5,baseline=-0.5ex]
\node at (0,0.8) {};
\node at (0,-0.8) {};
\node [inner sep=0.8pt,outer sep=0.8pt] at (0,0) (0) {$\bullet$};
\node [inner sep=0.8pt,outer sep=0.8pt] at (1,0) (1) {$\bullet$};
\node [inner sep=0.8pt,outer sep=0.8pt] at (2,0) (2) {$\bullet$};
\node [inner sep=0.8pt,outer sep=0.8pt] at (3,0) (3) {$\bullet$};
\node [inner sep=0.8pt,outer sep=0.8pt] at (4,0) (4) {$\bullet$};
\node [inner sep=0.8pt,outer sep=0.8pt] at (5,0.5) (5a) {$\bullet$};
\node [inner sep=0.8pt,outer sep=0.8pt] at (5,-0.5) (5b) {$\bullet$};
\draw (0,0)--(4,0);
\draw (4,0) to [bend left] (5,0.5);
\draw (4,0) to [bend right=45] (5,-0.5);
\draw [line width=0.5pt,line cap=round,rounded corners] (1.north west)  rectangle (1.south east);
\draw [line width=0.5pt,line cap=round,rounded corners] (3.north west)  rectangle (3.south east);
\draw [line width=0.5pt,line cap=round,rounded corners] (5a.north west)  rectangle (5b.south east);
\end{tikzpicture}
\end{center}
represent (a) a non-type preserving automorphism of a $\sD_7$ building mapping type $2$ and type $4$ vertices to opposite vertices, and (b) a type preserving automorphism of a $\sD_7$ building mapping type $2$ and $4$ vertices onto opposite, and type $\{6,7\}$ simplices onto opposite simplices (recall that the opposition relation $\pi_0$ on $\sD_n$ is type preserving if $n$ is even, and interchanges types $n-1$ and $n$ if $n$ is odd).

In \cite{PVM:19a,PVM:19b} we showed that if $\theta$ is an automorphism of a thick irreducible spherical building then $\Diag(\theta)$ belongs to a very restricted list of possible diagrams. These diagrams are called the \textit{admissible diagrams}, and are defined by simple combinatorial rules. We will not require the formal definition of admissible diagrams, instead the explicit list of admissible diagrams of classical type given in Tables~\ref{table:1} and~\ref{table:2} is sufficient for the purposes of this paper (see \cite{PVM:19a} for the formal details).

\begin{center}
\noindent\begin{tabular}{|l|l|l|}
\hline
Symbol&Diagram&Conditions \\
\hline\hline
\begin{tabular}{l}
$^{2}\sA_{n;i}^1$
\end{tabular}
&
\begin{tabular}{l}
\begin{tikzpicture}[scale=0.5,baseline=-0.5ex]
\node [inner sep=0.8pt,outer sep=0.8pt] at (1,0.5) (1a) {$\bullet$};
\node [inner sep=0.8pt,outer sep=0.8pt] at (1,-0.5) (1b) {$\bullet$};
\node [inner sep=0.8pt,outer sep=0.8pt] at (2,0.5) (2a) {$\bullet$};
\node [inner sep=0.8pt,outer sep=0.8pt] at (2,-0.5) (2b) {$\bullet$};
\node [inner sep=0.8pt,outer sep=0.8pt]  at (3,0.5) (3a) {$\bullet$};
\node [inner sep=0.8pt,outer sep=0.8pt] at (3,-0.5) (3b) {$\bullet$};
\node [inner sep=0.8pt,outer sep=0.8pt] at (5,0.5) (4a) {$\bullet$};
\node [inner sep=0.8pt,outer sep=0.8pt] at (5,-0.5) (4b) {$\bullet$};
\node [inner sep=0.8pt,outer sep=0.8pt] at (6,0.5) (5a) {$\bullet$};
\node [inner sep=0.8pt,outer sep=0.8pt] at (6,-0.5) (5b) {$\bullet$};
\node [inner sep=0.8pt,outer sep=0.8pt] at (7,0.5) (6a) {$\bullet$};
\node [inner sep=0.8pt,outer sep=0.8pt] at (7,-0.5) (6b) {$\bullet$};
\draw (1,0.5)--(3,0.5);
\draw (1,-0.5)--(3,-0.5);
\draw [dashed] (3,0.5)--(5,0.5);
\draw [dashed] (3,-0.5)--(5,-0.5);
\draw (5,0.5)--(7,0.5);
\draw (5,-0.5)--(7,-0.5);
\draw [line width=0.5pt,line cap=round,rounded corners] (1a.north west)  rectangle (1b.south east);
\draw [line width=0.5pt,line cap=round,rounded corners] (2a.north west)  rectangle (2b.south east);
\draw [line width=0.5pt,line cap=round,rounded corners] (3a.north west)  rectangle (3b.south east);
\draw [line width=0.5pt,line cap=round,rounded corners] (4a.north west)  rectangle (4b.south east);
\node at (5,0) {$i$};
\draw [domain=-90:90, dashed] plot ({7+(0.7)*cos(\x)}, {(0.5)*sin(\x)});
\end{tikzpicture}
\end{tabular}&$0\leq i\leq n/2$\\
\hline
\begin{tabular}{l}
$^1\mathsf{A}_{n;(n-1)/2}^2$
\end{tabular}&
\begin{tabular}{l}
\begin{tikzpicture}[scale=0.5,baseline=-0.5ex]
\node at (0,0.3) {};
\node at (0,-0.3) {};
\node [inner sep=0.8pt,outer sep=0.8pt] at (-5,0) (-5) {$\bullet$};
\node [inner sep=0.8pt,outer sep=0.8pt] at (-4,0) (-4) {$\bullet$};
\node [inner sep=0.8pt,outer sep=0.8pt] at (-3,0) (-3) {$\bullet$};
\node [inner sep=0.8pt,outer sep=0.8pt] at (-2,0) (-2) {$\bullet$};
\node [inner sep=0.8pt,outer sep=0.8pt] at (-1,0) (-1) {$\bullet$};
\node [inner sep=0.8pt,outer sep=0.8pt] at (1,0) (1) {$\bullet$};
\node [inner sep=0.8pt,outer sep=0.8pt] at (2,0) (2) {$\bullet$};
\node [inner sep=0.8pt,outer sep=0.8pt] at (3,0) (3) {$\bullet$};
\node [inner sep=0.8pt,outer sep=0.8pt] at (4,0) (4) {$\bullet$};
\node [inner sep=0.8pt,outer sep=0.8pt] at (5,0) (5) {$\bullet$};
\draw (-5,0)--(-0.5,0);
\draw (0.5,0)--(5,0);
\draw [style=dashed] (-1,0)--(1,0);
\draw [line width=0.5pt,line cap=round,rounded corners] (-4.north west)  rectangle (-4.south east);
\draw [line width=0.5pt,line cap=round,rounded corners] (-2.north west)  rectangle (-2.south east);
\draw [line width=0.5pt,line cap=round,rounded corners] (2.north west)  rectangle (2.south east);
\draw [line width=0.5pt,line cap=round,rounded corners] (4.north west)  rectangle (4.south east);
\phantom{\draw [line width=0.5pt,line cap=round,rounded corners] (-5.north west)  rectangle (-5.south east);}
\end{tikzpicture}
\end{tabular}&$n$ odd\\
\hline
\begin{tabular}{l}
$^1\mathsf{A}_{n;n}^1$
\end{tabular}&
\begin{tabular}{l}
\begin{tikzpicture}[scale=0.5,baseline=-0.5ex]
\node at (0,0.3) {};
\node at (0,-0.3) {};
\node [inner sep=0.8pt,outer sep=0.8pt] at (-5,0) (-5) {$\bullet$};
\node [inner sep=0.8pt,outer sep=0.8pt] at (-4,0) (-4) {$\bullet$};
\node [inner sep=0.8pt,outer sep=0.8pt] at (-3,0) (-3) {$\bullet$};
\node [inner sep=0.8pt,outer sep=0.8pt] at (-2,0) (-2) {$\bullet$};
\node [inner sep=0.8pt,outer sep=0.8pt] at (-1,0) (-1) {$\bullet$};
\node [inner sep=0.8pt,outer sep=0.8pt] at (1,0) (1) {$\bullet$};
\node [inner sep=0.8pt,outer sep=0.8pt] at (2,0) (2) {$\bullet$};
\node [inner sep=0.8pt,outer sep=0.8pt] at (3,0) (3) {$\bullet$};
\node [inner sep=0.8pt,outer sep=0.8pt] at (4,0) (4) {$\bullet$};
\node [inner sep=0.8pt,outer sep=0.8pt] at (5,0) (5) {$\bullet$};
\draw (-5,0)--(-0.5,0);
\draw (0.5,0)--(5,0);
\draw [style=dashed] (-1,0)--(1,0);
\draw [line width=0.5pt,line cap=round,rounded corners] (-4.north west)  rectangle (-4.south east);
\draw [line width=0.5pt,line cap=round,rounded corners] (-2.north west)  rectangle (-2.south east);
\draw [line width=0.5pt,line cap=round,rounded corners] (2.north west)  rectangle (2.south east);
\draw [line width=0.5pt,line cap=round,rounded corners] (4.north west)  rectangle (4.south east);
\draw [line width=0.5pt,line cap=round,rounded corners] (-3.north west)  rectangle (-3.south east);
\draw [line width=0.5pt,line cap=round,rounded corners] (-1.north west)  rectangle (-1.south east);
\draw [line width=0.5pt,line cap=round,rounded corners] (1.north west)  rectangle (1.south east);
\draw [line width=0.5pt,line cap=round,rounded corners] (3.north west)  rectangle (3.south east);
\draw [line width=0.5pt,line cap=round,rounded corners] (-5.north west)  rectangle (-5.south east);
\draw [line width=0.5pt,line cap=round,rounded corners] (5.north west)  rectangle (5.south east);
\end{tikzpicture}
\end{tabular}& \\
\hline
\begin{tabular}{l}
${^1}\sB_{n;i}^1$ or ${^1}\sC_{n;i}^1$
\end{tabular}&
\begin{tabular}{l}
\begin{tikzpicture}[scale=0.5,baseline=-0.5ex]
\node at (0,0.5) {};
\node [inner sep=0.8pt,outer sep=0.8pt] at (-5,0) (-5) {$\bullet$};
\node [inner sep=0.8pt,outer sep=0.8pt] at (-4,0) (-4) {$\bullet$};
\node [inner sep=0.8pt,outer sep=0.8pt] at (-3,0) (-3) {$\bullet$};
\node [inner sep=0.8pt,outer sep=0.8pt] at (-2,0) (-2) {$\bullet$};
\node [inner sep=0.8pt,outer sep=0.8pt] at (-1,0) (-1) {$\bullet$};
\node [inner sep=0.8pt,outer sep=0.8pt] at (1,0) (1) {$\bullet$};
\node [inner sep=0.8pt,outer sep=0.8pt] at (2,0) (2) {$\bullet$};
\node [inner sep=0.8pt,outer sep=0.8pt] at (3,0) (3) {$\bullet$};
\node [inner sep=0.8pt,outer sep=0.8pt] at (4,0) (4) {$\bullet$};
\node [inner sep=0.8pt,outer sep=0.8pt] at (5,0) (5) {$\bullet$};
\node at (1,-0.7) {$i$};
\draw (-5,0)--(-0.5,0);
\draw (0.5,0)--(4,0);
\draw (4,0.07)--(5,0.07);
\draw (4,-0.07)--(5,-0.07);
\draw [style=dashed] (-1,0)--(1,0);
\draw [line width=0.5pt,line cap=round,rounded corners] (-5.north west)  rectangle (-5.south east);
\draw [line width=0.5pt,line cap=round,rounded corners] (-4.north west)  rectangle (-4.south east);
\draw [line width=0.5pt,line cap=round,rounded corners] (-3.north west)  rectangle (-3.south east);
\draw [line width=0.5pt,line cap=round,rounded corners] (-2.north west)  rectangle (-2.south east);
\draw [line width=0.5pt,line cap=round,rounded corners] (-1.north west)  rectangle (-1.south east);
\draw [line width=0.5pt,line cap=round,rounded corners] (1.north west)  rectangle (1.south east);
\end{tikzpicture}
\end{tabular}
&$0\leq i\leq n$\\
\hline
\begin{tabular}{l}
${^1}\sB_{n;i}^2$ or ${^1}\sC_{n;i}^2$
\end{tabular}&
\begin{tabular}{l}
\begin{tikzpicture}[scale=0.5,baseline=-0.5ex]
\node at (0,0.3) {};
\node [inner sep=0.8pt,outer sep=0.8pt] at (-5,0) (-5) {$\bullet$};
\node [inner sep=0.8pt,outer sep=0.8pt] at (-4,0) (-4) {$\bullet$};
\node [inner sep=0.8pt,outer sep=0.8pt] at (-3,0) (-3) {$\bullet$};
\node [inner sep=0.8pt,outer sep=0.8pt] at (-2,0) (-2) {$\bullet$};
\node [inner sep=0.8pt,outer sep=0.8pt] at (-1,0) (-1) {$\bullet$};
\node [inner sep=0.8pt,outer sep=0.8pt] at (1,0) (1) {$\bullet$};
\node [inner sep=0.8pt,outer sep=0.8pt] at (2,0) (2) {$\bullet$};
\node [inner sep=0.8pt,outer sep=0.8pt] at (3,0) (3) {$\bullet$};
\node [inner sep=0.8pt,outer sep=0.8pt] at (4,0) (4) {$\bullet$};
\node [inner sep=0.8pt,outer sep=0.8pt] at (5,0) (5) {$\bullet$};
\draw (-5,0)--(-0.5,0);
\draw (0.5,0)--(4,0);
\draw (4,0.07)--(5,0.07);
\draw (4,-0.07)--(5,-0.07);
\node at (2,-0.7) {$2i$};
\draw [style=dashed] (-1,0)--(1,0);
\draw [line width=0.5pt,line cap=round,rounded corners] (-4.north west)  rectangle (-4.south east);
\draw [line width=0.5pt,line cap=round,rounded corners] (-2.north west)  rectangle (-2.south east);
\draw [line width=0.5pt,line cap=round,rounded corners] (2.north west)  rectangle (2.south east);
\phantom{\draw [line width=0.5pt,line cap=round,rounded corners] (-5.north west)  rectangle (-5.south east);}
\end{tikzpicture}
\end{tabular}&$0\leq i\leq n/2$\\
\hline
\begin{tabular}{l}
${^1}\sD_{n;i}^1$
\end{tabular}&
\begin{tabular}{l}
\begin{tikzpicture}[scale=0.5,baseline=-0.5ex]
\node at (0,0.8) {};
\node [inner sep=0.8pt,outer sep=0.8pt] at (-5,0) (-5) {$\bullet$};
\node [inner sep=0.8pt,outer sep=0.8pt] at (-4,0) (-4) {$\bullet$};
\node [inner sep=0.8pt,outer sep=0.8pt] at (-3,0) (-3) {$\bullet$};
\node [inner sep=0.8pt,outer sep=0.8pt] at (-2,0) (-2) {$\bullet$};
\node [inner sep=0.8pt,outer sep=0.8pt] at (-1,0) (-1) {$\bullet$};
\node [inner sep=0.8pt,outer sep=0.8pt] at (1,0) (1) {$\bullet$};
\node [inner sep=0.8pt,outer sep=0.8pt] at (2,0) (2) {$\bullet$};
\node [inner sep=0.8pt,outer sep=0.8pt] at (3,0) (3) {$\bullet$};
\node [inner sep=0.8pt,outer sep=0.8pt] at (4,0) (4) {$\bullet$};
\node [inner sep=0.8pt,outer sep=0.8pt] at (5,0.5) (5a) {$\bullet$};
\node [inner sep=0.8pt,outer sep=0.8pt] at (5,-0.5) (5b) {$\bullet$};
\draw (-5,0)--(-0.5,0);
\draw (0.5,0)--(4,0);
\draw (4,0) to (5,0.5);
\draw (4,0) to (5,-0.5);
\draw [style=dashed] (-1,0)--(1,0);
\draw [line width=0.5pt,line cap=round,rounded corners] (-5.north west)  rectangle (-5.south east);
\draw [line width=0.5pt,line cap=round,rounded corners] (-4.north west)  rectangle (-4.south east);
\draw [line width=0.5pt,line cap=round,rounded corners] (-3.north west)  rectangle (-3.south east);
\draw [line width=0.5pt,line cap=round,rounded corners] (-2.north west)  rectangle (-2.south east);
\draw [line width=0.5pt,line cap=round,rounded corners] (-1.north west)  rectangle (-1.south east);
\draw [line width=0.5pt,line cap=round,rounded corners] (1.north west)  rectangle (1.south east);
\draw [line width=0.5pt,line cap=round,rounded corners] (2.north west)  rectangle (2.south east);
\node [below] at (2,-0.25) {$i$};
\end{tikzpicture}
\end{tabular} &
$0\leq i<n$ and $i= n\mod 2$\\
\begin{tabular}{l}
${^1}\sD_{n;n}^1$
\end{tabular}&
\begin{tabular}{l}
\begin{tikzpicture}[scale=0.5,baseline=-0.5ex]
\node at (0,0.8) {};
\node at (0,-0.8) {};
\node [inner sep=0.8pt,outer sep=0.8pt] at (-5,0) (-5) {$\bullet$};
\node [inner sep=0.8pt,outer sep=0.8pt] at (-4,0) (-4) {$\bullet$};
\node [inner sep=0.8pt,outer sep=0.8pt] at (-3,0) (-3) {$\bullet$};
\node [inner sep=0.8pt,outer sep=0.8pt] at (-2,0) (-2) {$\bullet$};
\node [inner sep=0.8pt,outer sep=0.8pt] at (-1,0) (-1) {$\bullet$};
\node [inner sep=0.8pt,outer sep=0.8pt] at (1,0) (1) {$\bullet$};
\node [inner sep=0.8pt,outer sep=0.8pt] at (2,0) (2) {$\bullet$};
\node [inner sep=0.8pt,outer sep=0.8pt] at (3,0) (3) {$\bullet$};
\node [inner sep=0.8pt,outer sep=0.8pt] at (4,0) (4) {$\bullet$};
\node [inner sep=0.8pt,outer sep=0.8pt] at (5,0.5) (5a) {$\bullet$};
\node [inner sep=0.8pt,outer sep=0.8pt] at (5,-0.5) (5b) {$\bullet$};
\draw (-5,0)--(-0.5,0);
\draw (0.5,0)--(4,0);
\draw (4,0) to  (5,0.5);
\draw (4,0) to  (5,-0.5);
\draw [style=dashed] (-1,0)--(1,0);
\draw [line width=0.5pt,line cap=round,rounded corners] (-4.north west)  rectangle (-4.south east);
\draw [line width=0.5pt,line cap=round,rounded corners] (-2.north west)  rectangle (-2.south east);
\draw [line width=0.5pt,line cap=round,rounded corners] (2.north west)  rectangle (2.south east);
\draw [line width=0.5pt,line cap=round,rounded corners] (4.north west)  rectangle (4.south east);
\draw [line width=0.5pt,line cap=round,rounded corners] (5a.north west)  rectangle (5a.south east);
\draw [line width=0.5pt,line cap=round,rounded corners] (-5.north west)  rectangle (-5.south east);
\draw [line width=0.5pt,line cap=round,rounded corners] (-3.north west)  rectangle (-3.south east);
\draw [line width=0.5pt,line cap=round,rounded corners] (-1.north west)  rectangle (-1.south east);
\draw [line width=0.5pt,line cap=round,rounded corners] (1.north west)  rectangle (1.south east);
\draw [line width=0.5pt,line cap=round,rounded corners] (3.north west)  rectangle (3.south east);
\draw [line width=0.5pt,line cap=round,rounded corners] (5b.north west)  rectangle (5b.south east);
\end{tikzpicture}
\end{tabular} &
\\
\hline
\begin{tabular}{l}
${^2}\sD_{n;i}^1$
\end{tabular}&
\begin{tabular}{l}
\begin{tikzpicture}[scale=0.5,baseline=-0.5ex]
\node at (0,0.8) {};
\node [inner sep=0.8pt,outer sep=0.8pt] at (-5,0) (-5) {$\bullet$};
\node [inner sep=0.8pt,outer sep=0.8pt] at (-4,0) (-4) {$\bullet$};
\node [inner sep=0.8pt,outer sep=0.8pt] at (-3,0) (-3) {$\bullet$};
\node [inner sep=0.8pt,outer sep=0.8pt] at (-2,0) (-2) {$\bullet$};
\node [inner sep=0.8pt,outer sep=0.8pt] at (-1,0) (-1) {$\bullet$};
\node [inner sep=0.8pt,outer sep=0.8pt] at (1,0) (1) {$\bullet$};
\node [inner sep=0.8pt,outer sep=0.8pt] at (2,0) (2) {$\bullet$};
\node [inner sep=0.8pt,outer sep=0.8pt] at (3,0) (3) {$\bullet$};
\node [inner sep=0.8pt,outer sep=0.8pt] at (4,0) (4) {$\bullet$};
\node [inner sep=0.8pt,outer sep=0.8pt] at (5,0.5) (5a) {$\bullet$};
\node [inner sep=0.8pt,outer sep=0.8pt] at (5,-0.5) (5b) {$\bullet$};
\draw (-5,0)--(-0.5,0);
\draw (0.5,0)--(4,0);
\draw (4,0) to [bend left] (5,0.5);
\draw (4,0) to [bend right=45] (5,-0.5);
\draw [style=dashed] (-1,0)--(1,0);
\draw [line width=0.5pt,line cap=round,rounded corners] (-5.north west)  rectangle (-5.south east);
\draw [line width=0.5pt,line cap=round,rounded corners] (-4.north west)  rectangle (-4.south east);
\draw [line width=0.5pt,line cap=round,rounded corners] (-3.north west)  rectangle (-3.south east);
\draw [line width=0.5pt,line cap=round,rounded corners] (-2.north west)  rectangle (-2.south east);
\draw [line width=0.5pt,line cap=round,rounded corners] (-1.north west)  rectangle (-1.south east);
\draw [line width=0.5pt,line cap=round,rounded corners] (1.north west)  rectangle (1.south east);
\draw [line width=0.5pt,line cap=round,rounded corners] (2.north west)  rectangle (2.south east);
\node [below] at (2,-0.25) {$i$};
\end{tikzpicture}
\end{tabular} &
$0\leq i< n-1$ and $i=n+1\mod 2$\\
\begin{tabular}{l}
${^2}\sD_{n;n-1}^1$
\end{tabular}&
\begin{tabular}{l}
\begin{tikzpicture}[scale=0.5,baseline=-0.5ex]
\node at (0,0.8) {};
\node at (0,-0.8) {};
\node [inner sep=0.8pt,outer sep=0.8pt] at (-5,0) (-5) {$\bullet$};
\node [inner sep=0.8pt,outer sep=0.8pt] at (-4,0) (-4) {$\bullet$};
\node [inner sep=0.8pt,outer sep=0.8pt] at (-3,0) (-3) {$\bullet$};
\node [inner sep=0.8pt,outer sep=0.8pt] at (-2,0) (-2) {$\bullet$};
\node [inner sep=0.8pt,outer sep=0.8pt] at (-1,0) (-1) {$\bullet$};
\node [inner sep=0.8pt,outer sep=0.8pt] at (1,0) (1) {$\bullet$};
\node [inner sep=0.8pt,outer sep=0.8pt] at (2,0) (2) {$\bullet$};
\node [inner sep=0.8pt,outer sep=0.8pt] at (3,0) (3) {$\bullet$};
\node [inner sep=0.8pt,outer sep=0.8pt] at (4,0) (4) {$\bullet$};
\node [inner sep=0.8pt,outer sep=0.8pt] at (5,0.5) (5a) {$\bullet$};
\node [inner sep=0.8pt,outer sep=0.8pt] at (5,-0.5) (5b) {$\bullet$};
\draw (-5,0)--(-0.5,0);
\draw (0.5,0)--(4,0);
\draw (4,0) to [bend left] (5,0.5);
\draw (4,0) to [bend right=45] (5,-0.5);
\draw [style=dashed] (-1,0)--(1,0);
\draw [line width=0.5pt,line cap=round,rounded corners] (-4.north west)  rectangle (-4.south east);
\draw [line width=0.5pt,line cap=round,rounded corners] (-2.north west)  rectangle (-2.south east);
\draw [line width=0.5pt,line cap=round,rounded corners] (2.north west)  rectangle (2.south east);
\draw [line width=0.5pt,line cap=round,rounded corners] (4.north west)  rectangle (4.south east);
\draw [line width=0.5pt,line cap=round,rounded corners] (-5.north west)  rectangle (-5.south east);
\draw [line width=0.5pt,line cap=round,rounded corners] (-3.north west)  rectangle (-3.south east);
\draw [line width=0.5pt,line cap=round,rounded corners] (-1.north west)  rectangle (-1.south east);
\draw [line width=0.5pt,line cap=round,rounded corners] (1.north west)  rectangle (1.south east);
\draw [line width=0.5pt,line cap=round,rounded corners] (3.north west)  rectangle (3.south east);
\draw [line width=0.5pt,line cap=round,rounded corners] (5a.north west)  rectangle (5b.south east);
\end{tikzpicture}
\end{tabular} &
\\
\hline
\begin{tabular}{l}
${^1}\sD_{n;i}^2$
\end{tabular}&
\begin{tabular}{l}
\begin{tikzpicture}[scale=0.5,baseline=-0.5ex]
\node at (0,0.8) {};
\node [inner sep=0.8pt,outer sep=0.8pt] at (-5,0) (-5) {$\bullet$};
\node [inner sep=0.8pt,outer sep=0.8pt] at (-4,0) (-4) {$\bullet$};
\node [inner sep=0.8pt,outer sep=0.8pt] at (-3,0) (-3) {$\bullet$};
\node [inner sep=0.8pt,outer sep=0.8pt] at (-2,0) (-2) {$\bullet$};
\node [inner sep=0.8pt,outer sep=0.8pt] at (-1,0) (-1) {$\bullet$};
\node [inner sep=0.8pt,outer sep=0.8pt] at (1,0) (1) {$\bullet$};
\node [inner sep=0.8pt,outer sep=0.8pt] at (2,0) (2) {$\bullet$};
\node [inner sep=0.8pt,outer sep=0.8pt] at (3,0) (3) {$\bullet$};
\node [inner sep=0.8pt,outer sep=0.8pt] at (4,0) (4) {$\bullet$};
\node [inner sep=0.8pt,outer sep=0.8pt] at (5,0.5) (5a) {$\bullet$};
\node [inner sep=0.8pt,outer sep=0.8pt] at (5,-0.5) (5b) {$\bullet$};
\draw (-5,0)--(-0.5,0);
\draw (0.5,0)--(4,0);
\draw (4,0) to   (5,0.5);
\draw (4,0) to   (5,-0.5);
\draw [style=dashed] (-1,0)--(1,0);
\draw [line width=0.5pt,line cap=round,rounded corners] (-4.north west)  rectangle (-4.south east);
\draw [line width=0.5pt,line cap=round,rounded corners] (-2.north west)  rectangle (-2.south east);
\draw [line width=0.5pt,line cap=round,rounded corners] (2.north west)  rectangle (2.south east);
\phantom{\draw [line width=0.5pt,line cap=round,rounded corners] (-5.north west)  rectangle (-5.south east);}
\node [below] at (2,-0.25) {$2i$};
\end{tikzpicture}
\end{tabular} &
$n$ even and $0\leq i< n/2$\\
\begin{tabular}{l}
${^1}\sD_{n;n/2}^2$
\end{tabular}&
\begin{tabular}{l}
\begin{tikzpicture}[scale=0.5,baseline=-0.5ex]
\node at (0,0.8) {};
\node at (0,-0.8) {};
\node [inner sep=0.8pt,outer sep=0.8pt] at (-5,0) (-5) {$\bullet$};
\node [inner sep=0.8pt,outer sep=0.8pt] at (-4,0) (-4) {$\bullet$};
\node [inner sep=0.8pt,outer sep=0.8pt] at (-3,0) (-3) {$\bullet$};
\node [inner sep=0.8pt,outer sep=0.8pt] at (-2,0) (-2) {$\bullet$};
\node [inner sep=0.8pt,outer sep=0.8pt] at (-1,0) (-1) {$\bullet$};
\node [inner sep=0.8pt,outer sep=0.8pt] at (1,0) (1) {$\bullet$};
\node [inner sep=0.8pt,outer sep=0.8pt] at (2,0) (2) {$\bullet$};
\node [inner sep=0.8pt,outer sep=0.8pt] at (3,0) (3) {$\bullet$};
\node [inner sep=0.8pt,outer sep=0.8pt] at (4,0) (4) {$\bullet$};
\node [inner sep=0.8pt,outer sep=0.8pt] at (5,0.5) (5a) {$\bullet$};
\node [inner sep=0.8pt,outer sep=0.8pt] at (5,-0.5) (5b) {$\bullet$};
\draw (-5,0)--(-0.5,0);
\draw (0.5,0)--(4,0);
\draw (4,0) to   (5,0.5);
\draw (4,0) to   (5,-0.5);
\draw [style=dashed] (-1,0)--(1,0);
\draw [line width=0.5pt,line cap=round,rounded corners] (-4.north west)  rectangle (-4.south east);
\draw [line width=0.5pt,line cap=round,rounded corners] (-2.north west)  rectangle (-2.south east);
\draw [line width=0.5pt,line cap=round,rounded corners] (2.north west)  rectangle (2.south east);
\draw [line width=0.5pt,line cap=round,rounded corners] (4.north west)  rectangle (4.south east);
\draw [line width=0.5pt,line cap=round,rounded corners] (5b.north west)  rectangle (5b.south east);
\phantom{\draw [line width=0.5pt,line cap=round,rounded corners] (-5.north west)  rectangle (-5.south east);}
\end{tikzpicture}
\end{tabular} &
$n$ even\\
\hline
\begin{tabular}{l}
${^2}\sD_{n;i}^2$
\end{tabular}&
\begin{tabular}{l}
\begin{tikzpicture}[scale=0.5,baseline=-0.5ex]
\node at (0,0.8) {};
\node at (0,-0.8) {};
\node [inner sep=0.8pt,outer sep=0.8pt] at (-5,0) (-5) {$\bullet$};
\node [inner sep=0.8pt,outer sep=0.8pt] at (-4,0) (-4) {$\bullet$};
\node [inner sep=0.8pt,outer sep=0.8pt] at (-3,0) (-3) {$\bullet$};
\node [inner sep=0.8pt,outer sep=0.8pt] at (-2,0) (-2) {$\bullet$};
\node [inner sep=0.8pt,outer sep=0.8pt] at (-1,0) (-1) {$\bullet$};
\node [inner sep=0.8pt,outer sep=0.8pt] at (1,0) (1) {$\bullet$};
\node [inner sep=0.8pt,outer sep=0.8pt] at (2,0) (2) {$\bullet$};
\node [inner sep=0.8pt,outer sep=0.8pt] at (3,0) (3) {$\bullet$};
\node [inner sep=0.8pt,outer sep=0.8pt] at (4,0) (4) {$\bullet$};
\node [inner sep=0.8pt,outer sep=0.8pt] at (5,0.5) (5a) {$\bullet$};
\node [inner sep=0.8pt,outer sep=0.8pt] at (5,-0.5) (5b) {$\bullet$};
\draw (-5,0)--(-0.5,0);
\draw (0.5,0)--(4,0);
\draw (4,0) to [bend left] (5,0.5);
\draw (4,0) to [bend right=45] (5,-0.5);
\draw [style=dashed] (-1,0)--(1,0);
\draw [line width=0.5pt,line cap=round,rounded corners] (-4.north west)  rectangle (-4.south east);
\draw [line width=0.5pt,line cap=round,rounded corners] (-2.north west)  rectangle (-2.south east);
\draw [line width=0.5pt,line cap=round,rounded corners] (2.north west)  rectangle (2.south east);
\node [below] at (2,-0.25) {$2i$};
\end{tikzpicture}
\end{tabular} &
$n$ odd and $0\leq i<(n-1)/2$\\
\begin{tabular}{l}
${^2}\sD_{n;(n-1)/2}^2$
\end{tabular}&
\begin{tabular}{l}
\begin{tikzpicture}[scale=0.5,baseline=-0.5ex]
\node at (0,0.8) {};
\node at (0,-0.8) {};
\node [inner sep=0.8pt,outer sep=0.8pt] at (-5,0) (-5) {$\bullet$};
\node [inner sep=0.8pt,outer sep=0.8pt] at (-4,0) (-4) {$\bullet$};
\node [inner sep=0.8pt,outer sep=0.8pt] at (-3,0) (-3) {$\bullet$};
\node [inner sep=0.8pt,outer sep=0.8pt] at (-2,0) (-2) {$\bullet$};
\node [inner sep=0.8pt,outer sep=0.8pt] at (-1,0) (-1) {$\bullet$};
\node [inner sep=0.8pt,outer sep=0.8pt] at (1,0) (1) {$\bullet$};
\node [inner sep=0.8pt,outer sep=0.8pt] at (2,0) (2) {$\bullet$};
\node [inner sep=0.8pt,outer sep=0.8pt] at (3,0) (3) {$\bullet$};
\node [inner sep=0.8pt,outer sep=0.8pt] at (4,0) (4) {$\bullet$};
\node [inner sep=0.8pt,outer sep=0.8pt] at (5,0.5) (5a) {$\bullet$};
\node [inner sep=0.8pt,outer sep=0.8pt] at (5,-0.5) (5b) {$\bullet$};
\draw (-5,0)--(-0.5,0);
\draw (0.5,0)--(4,0);
\draw (4,0) to [bend left] (5,0.5);
\draw (4,0) to [bend right=45] (5,-0.5);
\draw [style=dashed] (-1,0)--(1,0);
\draw [line width=0.5pt,line cap=round,rounded corners] (-4.north west)  rectangle (-4.south east);
\draw [line width=0.5pt,line cap=round,rounded corners] (-2.north west)  rectangle (-2.south east);
\draw [line width=0.5pt,line cap=round,rounded corners] (1.north west)  rectangle (1.south east);
\draw [line width=0.5pt,line cap=round,rounded corners] (3.north west)  rectangle (3.south east);
\draw [line width=0.5pt,line cap=round,rounded corners] (5a.north west)  rectangle (5b.south east);
\end{tikzpicture}
\end{tabular} &
$n$ odd\\
\hline
\end{tabular}
\captionof{table}{Admissible diagrams of classical type, general rank}\label{table:1}
\end{center}

\begin{center}
\noindent\begin{tabular}{|l|l||l|l||l|l|l|l}
\hline
Symbol&Diagram&Symbol&Diagram&Symbol&Diagram\\
\hline\hline
\begin{tabular}{l}
${^2}\sB_{2;1}^1$ or ${^2}\sC_{2;1}^1$
\end{tabular}&
\hspace{0.25cm}\begin{tabular}{l}
\begin{tikzpicture}[scale=0.5,baseline=-0.5ex]
\node at (7,0.8) {};
\node at (7,-0.8) {};
\node at (7,0) (0) {};
\node [inner sep=0.8pt,outer sep=0.8pt] at (7,0.5) (6a) {$\bullet$};
\node [inner sep=0.8pt,outer sep=0.8pt] at (7,-0.5) (6b) {$\bullet$};
\draw [line width=0.5pt,line cap=round,rounded corners] (6a.north west)  rectangle (6b.south east);
\draw [domain=-90:90] plot ({7+(0.6)*cos(\x)}, {(0.45)*sin(\x)});
\draw [domain=-90:90] plot ({7+(0.7)*cos(\x)}, {(0.55)*sin(\x)});
\end{tikzpicture} 
\end{tabular}&
${^3}\sD_{4;1}^2$&\hspace{0.25cm}\begin{tikzpicture}[scale=0.5,baseline=-0.5ex]
\node at (0,0.8) {};
\node at (0,-0.8) {};
\node [inner sep=0.8pt,outer sep=0.8pt] at (1,0) (-1) {$\bullet$};
\node [inner sep=0.8pt,outer sep=0.8pt] at (0,0) (0) {$\bullet$};
\node [inner sep=0.8pt,outer sep=0.8pt] at (1,0.5) (1a) {$\bullet$};
\node [inner sep=0.8pt,outer sep=0.8pt] at (1,-0.5) (1b) {$\bullet$};
\draw (0,0)--(1,0);
\draw (0,0) to [bend left=45] (1,0.5);
\draw (0,0) to [bend right=45] (1,-0.5);
\draw [line width=0.5pt,line cap=round,rounded corners] (1a.north west)  rectangle (1b.south east);
\end{tikzpicture} & ${^3}\sD_{4;2}^1$ &\hspace{0.25cm} \begin{tikzpicture}[scale=0.5,baseline=-0.5ex]
\node at (0,0.8) {};
\node at (0,-0.8) {};
\node [inner sep=0.8pt,outer sep=0.8pt] at (1,0) (-1) {$\bullet$};
\node [inner sep=0.8pt,outer sep=0.8pt] at (0,0) (0) {$\bullet$};
\node [inner sep=0.8pt,outer sep=0.8pt] at (1,0.5) (1a) {$\bullet$};
\node [inner sep=0.8pt,outer sep=0.8pt] at (1,-0.5) (1b) {$\bullet$};
\draw (0,0)--(1,0);
\draw (0,0) to [bend left=45] (1,0.5);
\draw (0,0) to [bend right=45] (1,-0.5);
\draw [line width=0.5pt,line cap=round,rounded corners] (1a.north west)  rectangle (1b.south east);
\draw [line width=0.5pt,line cap=round,rounded corners] (0.north west)  rectangle (0.south east);
\end{tikzpicture} \\
\hline
\end{tabular}
\captionof{table}{Admissible diagrams of classical type, special rank}\label{table:2}
\end{center}

As in the tables, each admissible diagram of classical type is denoted by a symbol
$
{^t}\sX_{n;i}^j
$
where 
\begin{compactenum}[$(1)$]
\item $\sX\in\{\sA,\sB,\sC,\sD\}$ is the Coxeter type of $\Gamma$, and $n$ is the rank;
\item $t\in\{1,2,3\}$ is the order of the automorphism $\pi_0\circ\pi$ of $\Gamma$ (the ``twisting'');
\item $i$ is the number of distinguished orbits encircled in the diagram; and
\item $j\in\{1,2\}$ is the (graph) distance between successive encircled distinguished orbits in $J$. 
\end{compactenum}

Domestic automorphisms of generalised polygons (rank~$2$ buildings) are studied in~\cite{TTM:12b,PTM:15}, and domestic trialities of building of type $\sD_4$ are studied in~\cite{HVM:13}, and the diagrams in Table~\ref{table:2} are not discussed further in this paper. For the diagrams ${^t}\sX_{n;i}^j$ in Table~\ref{table:1} it turns out that the twisting index $t\in\{1,2\}$ is determined by the indices $n,i,j$ (and~$\sX$). This is clear if $\sX\in\{\sA,\sB,\sC\}$, and in type $\sD$ note that $t\in\{1,2\}$ is determined by the condition $t=n+ij+1\mod 2$. With this in mind, it is usually convenient to omit the twisting index $t$ from the notation ${^t}\sX_{n;i}^j$ without risk of confusion. 

An automorphism $\theta$ is called \textit{capped} if whenever there exist both type $J_1$ and $J_2$ simplices in $\Opp(\theta)$, then there exists a type $J_1\cup J_2$ simplex in $\Opp(\theta)$. In \cite[Theorem~1]{PVM:19a} we proved that every automorphism of a large spherical building is capped (a thick spherical building of rank at least $3$ is called \textit{large} if it contains no Fano plane residues, and \textit{small} otherwise). Since automorphisms of small buildings are studied in~\cite{PVM:19b} we shall, when required, restrict attention to large spherical buildings in this paper, and by doing so we may thus assume that all automorphisms are capped. In this case an automorphism is domestic if and only if its opposition diagram has at least one distinguished orbit not encircled. 

Domestic automorphisms of large buildings of type $\sA_n$ have been completely classified in previous work (see Theorem~\ref{thm1:An} of this paper for a summary). Thus the main focus of this paper is on buildings of types $\sB_n,\sC_n$ and $\sD_n$, where the situation is considerably more complicated. Such buildings may naturally be regarded in a standard and uniform way as \textit{polar spaces} $\Pi=(\cP,\Omega)$ of rank~$n$, where $\cP$ is the set of points of~$\Pi$ (the type~$1$ vertices of the building in standard Bourbaki labelling), and $\Omega$ is the set of singular subspaces of~$\Pi$ (see Section~\ref{sec:polarspace} for more details). The singular subspaces of (projective) dimension~$n-1$ are called the \textit{maximal singular subspaces}.  

Since we consider thick buildings, the associated polar spaces have thick lines (meaning that each line contains at least $3$ points). We adopt the convention that buildings of type $\sC_n$ are those corresponding to symplectic polar spaces (that is, split buildings of type $\sC_n$), buildings of type $\sD_n$ are those corresponding to the non-thick polar spaces (meaning that each singular $(n-2)$-space is contained in precisely $2$ maximal singular subspaces), and buildings associated to all other polar spaces are considered to be of type~$\sB_n$ (however note that this convention is somewhat non-standard as buildings of absolute type ${^2}\sA_n$ are often considered as ``type $\sC_n$'' elsewhere). Thus for buildings of type $\sB_n$ and $\sC_n$ the type~$i$ vertices ($1\leq i\leq n$) correspond to the singular subspaces of projective dimension $i-1$. This is also true for buildings of type $\sD_n$ for $1\leq i\leq n-2$, while the type $n-1$ and $n$ vertices of the building correspond to the two types of maximal singular subspaces. Points $x,y$ of a polar space are opposite one another if and only if they are distinct and not collinear. Singular subspaces $X,Y\in\Omega$ of a polar space are opposite one another if and only if they have the same dimension and for each point $x\in X$ there is a point $y\in Y$ with $x$ and $y$ opposite.

The automorphisms of spherical buildings with opposition diagrams in Table~\ref{table:1} are all collineations of the associated polar space $\Pi$ (however, note that in the $\sD_n$ case, the automorphism $\theta$ may interchange the type $n-1$ and $n$ vertices of the building). A collineation $\theta$ of a polar space is called \textit{point-domestic} if it maps no point (equivalently, no type $1$ vertex of the building) to an opposite. More generally, $\theta$ is called $i$-domestic ($0\leq i\leq n-1$) if it maps no singular subspace of projective dimension~$i$ to an opposite (hence point-domestic is equivalent to $0$-domestic). Domestic automorphisms of rank~$3$ polar spaces are classified in~\cite[Theorem~7.2]{TTM:12}, and so, when required, we may restrict our attention to rank at least~$4$, and moreover, from the comments above, we may restrict to large polar spaces. 

It is convenient to divide the class of domestic collineations of a polar space into three mutually disjoint classes, as follows.

\begin{defn1}
A domestic collineation $\theta$ of a polar space is said to be:
\begin{compactenum}[$(1)$]
\item of \textit{class} I if it is not point-domestic. 
\item of \textit{class} II if it is point-domestic, and has a fixed point.
\item of \textit{class} III if it is point-domestic, and has no fixed points. 
\end{compactenum}
\end{defn1}

Note that if $\theta$ is of class I, then by the classification of admissible diagrams, we have $\Diag(\theta)=\sX_{n;i}^1$ for some $1\leq i<n$ (with $i<n-1$ in the $\sX=\sD$ case). Similarly, if $\theta$ is of class II or class III then $\Diag(\theta)=\sX_{n;i}^2$ for some $0\leq i\leq n/2$ (where $\sX\in\{\sB,\sC,\sD\}$).

The main results of this paper are summarised as follows (for undefined notions, see Section~\ref{sec:background}). To begin with, consider class I collineations of a polar space $\Pi=(\cP,\Omega)$. We first show that such automorphisms are characterised by big fixed point sets (see Definition~\ref{defn:polarspacethings}).

\begin{thm1}\label{thm1:nonpointdom_1}
If $\theta$ is a class \emph{I} automorphism of a large polar space $\Pi$ with diagram $\Diag(\theta)=\sX_{n;i}^1$ (with $\sX\in\{\sB,\sC,\sD\}$) then $\theta$ fixes pointwise a subspace of corank~$i$. In particular, the fixed point set of $\theta$ is a possibly degenerate and possibly non-thick polar space (possibly with lines of size $2$). Conversely, if $\theta$ is a collineation of $\Pi$ mapping at least one point to an opposite and fixing pointwise a subspace of corank~$i$ and fixing no subspace of corank $j<i$, then $\Diag(\theta)=\sX_{n;i}^1$ (with $\sX\in\{\sB,\sC,\sD\}$).
\end{thm1}

On an intuitive level, Theorem~\ref{thm1:nonpointdom_1} says that domestic automorphisms that are not point-domestic are domestic for the following reason: by fixing a subspace of suitably high dimension it is forced that each singular subspace of suitably high dimension contains a fixed point, and thus is not mapped onto an opposite singular subspace.  

In certain cases we are able to be more precise. For example, we prove the following (see Definition~\ref{defn:polarspacethings} for the definition of elations).

\begin{thm1}\label{thm1:symplectic1}
Let $\Pi=\sC_{n,1}(\KK)$ be the (large) symplectic polar space over a field $\KK$ with $n\geq 3$ and let $\theta$ be a collineation of $\Pi$. Then:
\begin{compactenum}[$(1)$]
\item $\Diag(\theta)=\sC_{n;1}^1$ if and only if $\theta$ is a central elation (that is, a long root elation).
\item $\Diag(\theta)=\sC_{n;2}^1$ if and only if $\theta$ is either
\begin{compactenum}[$(a)$]
\item a product of two nontrivial perpendicular long root elations, or
\item a nontrivial member of the group generated by two opposite long root groups and not conjugate to a long root elation. 
\end{compactenum}
\end{compactenum}
\end{thm1}

In a similar way we also classify those collineations of thick non-embeddable polar spaces with diagram $\mathsf{C_{3;1}^1}$ (see Theorem~\ref{polarnonemb}) and those of split polar spaces with diagram $\sB_{n;1}^1$ and $\sD_{n;1}^1$ (see Proposition~\ref{polarquadric}, Theorem~\ref{polarDn} and Remark~\ref{remnonperfect} for the statements).

The situation for class II and class III automorphisms of polar spaces (that is, point-domestic collineations) is more complicated, but richer. Recall that the diagrams of such (nontrivial) automorphisms are of the form $\sX_{n;i}^2$ with $\sX\in\{\sB,\sC,\sD\}$ and $1\leq i\leq \lfloor n/2\rfloor$. The automorphisms whose diagrams have the fewest encircled nodes (that is $i=1$) admit a complete classification.

\begin{thm1}\label{thm1:base}
Let $\Pi=(\cP,\Omega)$ be a polar space of rank~$n\geq 3$ and let $\theta$ be a collineation of~$\Pi$.
\begin{compactenum}[$(1)$]
\item If $\Pi$ is of symplectic type then $\theta$ has opposition diagram $\sC_{n;1}^2$ if and only if $\theta$ is an axial collineation (in the case $\kar(\KK)= 2$) or a specific involutive homology (if $\kar(\KK)\neq 2$).
\item If $\Pi$ is not of symplectic type then $\theta$ has diagram $\sX_{n;1}^2$, $\sX\in\{\sB,\sD\}$, if and only if $\theta$ is an axial elation (and so $\Pi$ is an orthogonal polar space), or the rank is $3$ and $\theta$ is an ideal Baer collineation.
\end{compactenum}
\end{thm1}

In the split case (that is, those arising from Chevalley groups over commutative fields), it is possible to completely classify the point-domestic automorphisms with diagram not equal to one of the ``extreme diagrams'' $\sB_{n;n/2}^2$, $\sC_{n;n/2}^2$, $\sD_{n;n/2}^2$ (with $n$ even) or $\sD_{n;(n-1)/2}^2$ (with $n$ odd).

\begin{thm1}~\label{thm1:middle}
Let $\theta$ be a point-domestic collineation of a parabolic, symplectic (with characteristic not~$2$), or hyperbolic polar space $\Delta$ with opposition diagram $\mathsf{B}_{n;i}^2$, $1\leq i\leq (n-1)/2$, $\mathsf{C}_{n,i}^2$, $1\leq i\leq (n-1)/2$, or $\mathsf{D}_{n,i}^2$, $1\leq i\leq (n-2)/2$, respectively. Then $\theta$ is the product of $i$ pairwise orthogonal long root elations, an $(I_{2n-2i},-I_{2i})$-homology, or the product of $i$ pairwise orthogonal long root elations, respectively. The conclusion also holds for $\sC_{n,n/2}^2$ if it is assumed that $\theta$ fixes at least one point.
\end{thm1}

Remarkably, when the polar space of rank $n\geq 3$ is Hermitian, there is only one type of automorphisms of class II, and they have opposition diagram $\mathsf{B}_{n;m}^2$, with $2m\in\{n,n+1\}$ (for the precise statement, we refer to Proposition~\ref{hermpointfix}).

The most beautiful situation is a domestic automorphism that fixes no points (class III). These are very restricted, as illustrated by the following theorem. In particular, note that every class III automorphism must have an ``extreme'' opposition diagram $\sX_{n;n/2}^2$ with $n$ even and $\sX\in\{\sB,\sC,\sD\}$.

\begin{thm1}\label{thm1:BCD1}
Let $\Delta$ be a spherical building of type $\sB_n$, $\sC_n$, or $\sD_n$ with $n\geq 4$ and let $\Pi=(\cP,\Omega)$ be the associated polar space defined over the field $\KK$. Suppose there exists a point domestic collineation $\theta$ of $\Pi$ with no fixed points. Let $\cQ$ denote the set of fixed lines of $\theta$, and let $\Omega'$ denote the set of fixed singular subspaces. Then 
\begin{compactenum}[$(1)$]
\item $n$ is even,
\item $\Diag(\theta)=\sX_{n;n/2}^2$ with $\sX\in\{\sB,\sC,\sD\}$, and
\item the pair $\Pi'=(\cQ,\Omega')$ is a polar space defined over either a quadratic extension of $\KK$, or over a quaternion division algebra $\HH$ which is $2$-dimensional over $\KK$ (and $\KK$ is $2$-dimensional over the centre of $\HH$).
\end{compactenum}
\end{thm1}

In the split case we can be even more explicit.

\begin{thm1}\label{thm1:pointdomnofixedpoint}
Let $\Delta$ be a split polar space of rank $n\geq 4$ over a field $\KK$. Let $\theta$ be a point-domestic collineation of $\Delta$ with no fixed points. Then $n$ is even, and $\Delta$ is either a symplectic polar space (type $\sC_n$), or a hyperbolic polar space (type $\sD_n$), and $\theta$ has opposition diagram $\sC_{n,n/2}^2$ or $\sD_{n,n/2}^2$, respectively. Moreover, the fixed element structure of $\theta$ is either
\begin{compactenum}[$(1)$]
\item a symplectic polar space over a quadratic extension of $\KK$, or
\item a minimal Hermitian polar space, or a mixed polar space,
\end{compactenum}
respectively. In each case, the points of the fixed element structure are the lines of $\Delta$ that are fixed by~$\theta$.
\end{thm1}

We can be more precise in each case, as follows.

\begin{thm1}\label{thm1:moreprecise1}
Let $\Delta$ be a symplectic polar space of rank $n\geq 4$ defined over a field $\KK$. Let $\theta$ be a point-domestic collineation of $\Delta$ with no fixed points. Let $\LL$ be the quadratic extension of $\KK$ over which the fixed element structure of $\theta$ is defined (according to Theorem~\ref{thm1:pointdomnofixedpoint}). Then $\theta$ is an involution, and if $\kar(\KK)=2$ then $\LL$ is an inseparable extension of $\KK$. Conversely, if $\KK$ admits a quadratic extension $\LL/\KK$, inseparable in the case $\kar(\KK)=2$, then $\Delta$ admits a point-domestic collineation $\theta$ without fixed points as above. 
\end{thm1}

\begin{thm1}\label{thm1:moreprecise2}
Let $\Delta$ be a hyperbolic polar space of rank $n\geq 4$ defined over a field $\KK$. Let $\theta$ be a point-domestic collineation of $\Delta$ with no fixed points. Let $\PG(2n-1,\KK)$ be the ambient projective space. Then $\theta$ naturally extends to $\PG(2n-1,\KK)$ pointwise fixing a spread $\mathcal{S}$ which defines a projective space $\PG(n-1,\LL)$ over a quadratic extension $\LL$ of $\KK$, and every collineation of $\PG(2n-1,\KK)$ which pointwise fixes $\mathcal{S}$ is a point-domestic collineation of $\Delta$ without fixed points. Moreover:
\begin{compactenum}[$(1)$]
\item If $\LL/\KK$ is separable then the fixed element polar space (c.f. Theorem~\emph{\ref{thm1:pointdomnofixedpoint}}) is minimal Hermitian.
\item If $\LL/\KK$ is inseparable then the fixed element polar space is the subspace of the symplectic polar space over $\LL$ obtained by restricting the long root elations to~$\KK$. 
\end{compactenum}
Conversely, if $\KK$ admits a quadratic extension $\LL/\KK$ then there exists a point-domestic collineation of $\Delta$ without fixed points as above.
\end{thm1}

For non-split polar spaces the situation depends highly on the underlying field and the form defining the polar space. The neatest case is as follows.

\begin{thm1}\label{thm1:nonsplit}
Let $\Delta$ be a minimal Hermitian polar space of rank $n\geq 4$ defined over a field $\KK$, with corresponding field involution $\sigma$ and field extension $\KK/\FF$. Let $\theta$ be a point-domestic collineation of $\Delta$ with no fixed points. Let $\PG(2n-1,\KK)$ be the ambient projective space. Then $\theta$ is an involution and naturally extends to $\PG(2n-1,\KK)$ pointwise fixing a spread $\mathcal{S}$ which defines a projective space $\PG(n-1,\HH)$ over a quaternion division algebra $\HH$ over its centre $\FF$, with $\KK$ a $2$-dimensional subalgebra of $\HH$. The only nontrivial collineation of $\Delta$ fixing every member of $\mathcal{S}$ in $\Delta$ is $\theta$. The fixed element polar space is a minimal quaternion polar space related to a non-standard involution of $\HH$ pointwise fixing a $3$-dimensional subalgebra over $\FF$. 

Conversely, if $\KK$ admits a quaternion extension $\HH$ such that the centre $\FF$ of $\HH$ is a subfield of $\KK$ and $\KK/\FF$ is a separable quadratic field extension, then there exists a point-domestic collineation of $\Delta$ without fixed points as above. 
\end{thm1}

The remaining cases are orthogonal and Hermitian polar spaces whose standard form has nontrivial anisotropic forms. Since these contain the hyperbolic and minimal Hermitian polar spaces fixed under the given point-domestic collineation, the previous two theorems give necessary conditions for the existence of collineations of class III. We provide examples, and some instances where such collineations do not exist (see Section~\ref{sec:specialfields}).

%
%
%
%
%
%

In this paper we shall also classify those admissible diagrams that can be obtained as the opposition diagram of a \textit{unipotent element} of a split spherical building (that is, an element conjugate to an element of $U^+$), extending the corresponding result \cite[Theorem~5]{PVM:21} for buildings of exceptional type. In \cite{PVM:21} we introduced a combinatorial notion of an admissible diagram being \textit{polar closed}. See Section~\ref{sec:polarclosed} for the definition, however for now we note that in the classical case all admissible diagrams other than $\sA_{n;(n-1)/2}^2$ ($n$ odd), $\sA_{n;n}^1$, $\sB_{n;i}^1$ or $\sD_{n;i}^1$ (with $i$ odd and $1\leq i<n$), $\sC_{n;i}^2$ (with $1\leq i\leq n/2$), and those in Table~\ref{table:2} turn out to be polar closed. The classification of admissible diagrams arising as the opposition diagram of a unipotent element is then as follows.

\begin{thm1}\label{thm1:polarclosed}
Let $\Delta$ be a split irreducible spherical building with Dynkin diagram $\Ga$. The admissible Dynkin diagrams of type $\Ga$ that can be obtained as opposition diagrams of a unipotent element are precisely the polar closed diagrams. 
\end{thm1}

In particular, Theorem~\ref{thm1:polarclosed} shows that all polar closed admissible diagrams (of classical type) occur as the opposition diagram of some automorphism of the respective split building (over any field). We will prove the following theorem, completing the proof of \cite[Corollary~10]{PVM:21}, showing that the list of admissible diagrams contains no redundancies. 

\begin{thm1}\label{thm1:attained}
Let $\Delta$ be a split spherical building of irreducible type $\sX_n$ (with the underlying field assumed to be perfect in the case of $\sB_n$, $\sC_n$ or $\sF_4$ in characteristic $2$ and $\sG_2$ in characteristic~$3$). Every admissible diagram with underlying Dynkin diagram~$\sX_n$ arises as the opposition diagram of some automorphism of $\Delta$. 
\end{thm1}

We conclude this introduction with a summary of the structure of this paper. Section~\ref{sec:background} provides background on spherical buildings, domestic automorphisms, and polar spaces. The main theorems are proved in Sections~\ref{sec2}--\ref{sec:polarclosed1}. More precisely, Theorem~\ref{thm1:nonpointdom_1} is proved at the beginning of Section~\ref{sec2}, Theorem~\ref{thm1:symplectic1} follows from Theorems~\ref{polarsymplectic} and~\ref{pocopo}, and Theorem~\ref{thm1:base} follows from Theorem~\ref{copolar} and Proposition~\ref{axialbase}. Theorem~\ref{thm1:middle} follows from Theorems~\ref{BCD} and~\ref{symplectichomology}, Theorem~\ref{thm1:BCD1} follows from Corollaries~\ref{oddrankchamber}, \ref{oppdiafull}, and~\ref{corclassIII}. Theorems~\ref{thm1:pointdomnofixedpoint}, \ref{thm1:moreprecise1}, \ref{thm1:moreprecise2} and~\ref{thm1:nonsplit} are proved in Theorem~\ref{splitcasewithoutfixedpoints}, Proposition~\ref{morepprecise1a}, Proposition~\ref{hyperbolicpointdomestic}, and Proposition~\ref{hermitianpointdomestic}, respectively. Theorems~\ref{thm1:polarclosed} and~\ref{thm1:attained} are proved in Section~\ref{sec:polarclosed1}.

\section{Background and definitions}\label{sec:background}

This section contains background on Coxeter groups, spherical buildings, domestic automorphisms, opposition diagrams, Chevalley groups, projective spaces, and polar spaces. 

\subsection{Coxeter groups and spherical buildings}

Let $(W,S)$ be a spherical Coxeter system with Coxeter diagram $\Ga=\Ga(W,S)$. For $J\subseteq S$ let $W_J$ be the parabolic subgroup generated by~$J$, and let $\Ga_J=\Ga(W_J,J)$ be the associated subgraph of~$\Ga$. For each $J\subseteq S$ let $w_J$ be the longest element of $W_J$, and write $w_0=w_S$. For each $J\subseteq S$ the element $w_J$ induces an automorphism $\pi_J$ of $\Ga_J$ by $\pi_J(s)=w_Jsw_J^{-1}$ for $s\in J$. Write $\pi_0=\pi_S$.

Let $\Delta$ be a thick spherical building of type $(W,S)$, regarded as a simplicial complex, with chamber set $\cC=\cC(\Delta)$ and $W$-distance function~$\delta:\cC\times\cC\to W$. Let $\tau:\Delta\to 2^S$ be a fixed type map on the simplicial complex~$\Delta$ (we adopt Bourbaki~\cite{Bou:02} conventions for the indexing of the generators of spherical Coxeter systems). Our main references for the theory of buildings are \cite{AB:08,Tit:74}, and we assume that the reader is already acquainted with the theory. 

Chambers $A,B\in\cC$ are \textit{opposite} if they are at maximum distance in the chamber graph, or equivalently if $\delta(A,B)=w_0$. Simplices $\alpha,\beta$ of $\Delta$ are \textit{opposite} if $\tau(\beta)=\pi_0(\tau(\alpha))$ and there exists a chamber $A$ containing $\alpha$ and a chamber $B$ containing $\beta$ such that $A$ and $B$ are opposite.

The \textit{residue} of a simplex $\alpha$ of $\Delta$ is the set of all simplices of $\Delta$ which contain $\alpha$, together with the order relation induced by that on~$\Delta$. Then $\Res(\alpha)$ is a building whose Coxeter diagram is obtained from the Coxeter diagram of $\Delta$ by removing all nodes which belong to $\tau(\alpha)$. 

Let $\alpha$ be a simplex of $\Delta$. The \textit{projection onto $\alpha$} is the map $\proj_{\alpha}:\Delta\to\Res(\alpha)$ defined as follows (see~\cite[Section~3]{Tit:74}). For $\beta$ a simplex of $\Delta$, we set $\proj_{\alpha}(\beta)$ to be the unique simplex $\gamma$ of $\Res(\alpha)$ which is maximal subject to the property that every minimal length gallery from a chamber of $\Res(\beta)$ to $\Res(\alpha)$ ends in a chamber containing~$\gamma$.

We call the thick irreducible spherical buildings of rank at least $3$ with no Fano plane residues \textit{large buildings}, and those containing at least one Fano plane residue are called \textit{small buildings}.

\subsection{Automorphisms and opposition diagrams}\label{sec:automorphisms}

An \textit{automorphism} of $\Delta$ is a simplicial complex automorphism $\theta:\Delta\to\Delta$. Note that $\theta$ does not necessarily preserve types. Indeed, each automorphism $\theta:\Delta\to\Delta$ induces an automorphism $\pi_{\theta}$ of $\Ga$ by $\delta(A,B)=s$ if and only if $\delta(A^{\theta},B^{\theta})=\pi_{\theta}(s)$.

Let $\theta$ be an automorphism of $\Delta$. The \textit{opposite geometry} of $\theta$ is 
$$
\Opp(\theta)=\{\alpha\in\Delta\mid \alpha\text{ is opposite }\alpha^{\theta}\},
$$
and the \textit{type} $\Type(\theta)$ of $\theta$ is the union of all subsets $J\subseteq S$ such that there is a type $J$ simplex in $\Opp(\theta)$. 

The following fundamental theorem, due to Leeb~\cite{Lee:00} and Abramenko and Brown~\cite{AB:09}, shows that $\Opp(\theta)$ is empty if and only if $\theta$ is the identity. 

\begin{thm}[{\cite{Lee:00,AB:09}}]\label{thm:nonempty} Every nontrivial automorphism of a thick spherical buildings maps some simplex onto an opposite simplex. 
\end{thm}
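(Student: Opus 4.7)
The plan is to establish the stronger assertion that some \emph{chamber} is mapped onto an opposite chamber, which plainly implies the theorem. I would argue by contradiction: assume $\theta$ is nontrivial but that $\delta(C, C^\theta) \neq w_0$ for every chamber $C$. The function $C \mapsto \ell(\delta(C, C^\theta))$ on the chamber set $\cC$ takes values in the finite set $\{0, 1, \ldots, \ell(w_0)\}$, so it attains a maximum at some $C_0 \in \cC$. Writing $w = \delta(C_0, C_0^\theta)$, our assumption gives $w \neq w_0$, hence there exists $s \in S$ with $\ell(sw) > \ell(w)$.

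The core of the argument is local analysis within the $s$-panel $P$ through $C_0$. Its image $P^\theta$ is a $\pi_\theta(s)$-panel through $C_0^\theta$, and $\theta$ induces a bijection $P \to P^\theta$. Using standard gallery-distance identities together with $\ell(sw) > \ell(w)$, one shows that for every $D \in P \setminus \{C_0\}$ we have $\delta(D, C_0^\theta) = sw$, of length $\ell(w) + 1$. Since $D^\theta$ lies in $P^\theta$ at $\pi_\theta(s)$-distance from $C_0^\theta$, it follows that $\delta(D, D^\theta) \in \{sw,\, sw\pi_\theta(s)\}$; the first option contradicts maximality of $\ell(w)$, so $\delta(D, D^\theta) = sw\pi_\theta(s)$, and this word must have length exactly $\ell(w)$.

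Now I would invoke thickness: the panel $P$ contains at least three chambers, so we may pick distinct $D_1, D_2 \in P \setminus \{C_0\}$. The constraints above, combined with $\theta$ acting as a bijection $P \to P^\theta$ and the structural dichotomy for projection between rank-$1$ residues (either bijective, or collapsing all but one chamber to a single image), yield an over-determined configuration on $P^\theta$. Careful bookkeeping then produces a chamber $D$ whose gallery structure forces $\ell(\delta(D, D^\theta)) > \ell(w)$, contradicting the maximality of $C_0$. I expect the main obstacle to be precisely this last step: one must ensure that the chambers furnished by thickness genuinely escape all the degenerate configurations in which the relevant panel-projections collapse in a compatible way — this is the delicate ``ping-pong'' inherent to the Abramenko--Brown combinatorial argument. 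As an alternative route, one could follow Leeb and realise $\Delta$ as a $\mathrm{CAT}(1)$ space, then argue that the displacement function of $\theta$ must attain the diameter $\pi$; this avoids the combinatorial bookkeeping but requires a substantial metric-geometric detour.
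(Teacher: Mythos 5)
There is a fatal gap, and it is located exactly where you anticipate difficulty: the ``careful bookkeeping'' at the end cannot succeed, because the strengthened statement you set out to prove --- that some \emph{chamber} is mapped onto an opposite chamber --- is false. A nontrivial automorphism mapping no chamber to an opposite is precisely a domestic automorphism, and these exist in abundance; indeed the present paper is devoted to classifying them. For a concrete counterexample, a nontrivial central (long root) elation of a symplectic polar space has opposition diagram $\sC_{n;1}^1$ (Theorem~\ref{polarsymplectic}): it is nontrivial, yet only points are mapped to opposites, so no chamber is. Consequently your proof by contradiction starts from a hypothesis ($\theta$ nontrivial and $\delta(C,C^{\theta})\neq w_0$ for all $C$) that is consistent, and no configuration on the panel $P^{\theta}$ can be ``over-determined'' enough to contradict the maximality of $C_0$. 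Your intermediate computations are all correct --- the maximum $w$ exists, $\delta(D,C_0^{\theta})=sw$ for $D\in P\setminus\{C_0\}$ when $\ell(sw)>\ell(w)$, and maximality forces $\delta(D,D^{\theta})=sw\pi_{\theta}(s)$ of length exactly $\ell(w)$ --- but these facts are consistent with $w\neq w_0$ and lead to no contradiction.

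Note also that the paper does not prove this theorem; it is quoted from Leeb and Abramenko--Brown. The correct combinatorial argument begins exactly as yours does, but its conclusion is extracted \emph{positively} from the maximal-displacement chamber rather than by contradiction: the relations $\ell(sw\pi_{\theta}(s))=\ell(sw)-1$, valid for every $s$ with $\ell(sw)>\ell(w)$, are used to show that the face of $C_0$ of cotype $J=\{s\in S:\ell(sw)>\ell(w)\}$ (a nonempty set since $w\neq w_0$) is mapped by $\theta$ onto an opposite simplex; thickness enters in verifying that the relevant residue is genuinely opposite its image rather than merely ``close to'' it. The same correction applies to your sketch of Leeb's approach: the $\mathrm{CAT}(1)$ argument produces a point of the spherical realisation moved to an antipode, but that point lies in the open cell of some lower-dimensional simplex mapped to an opposite, not in the interior of a chamber. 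In short, the machinery you assembled is the right machinery, but it must be aimed at producing an element of $\Opp(\theta)$ of possibly small type, not at forcing a chamber into $\Opp(\theta)$.
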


The \textit{opposition diagram} $\Diag(\theta)$ of $\theta$ is the triple $(\Ga,\Type(\theta),\pi_{\theta})$, where $\Ga$ is the Coxeter diagram of $\Delta$. Less formally, the opposition diagram of $\theta$ is depicted by drawing $\Ga$ and encircling the nodes of $\Type(\theta)$, where we encircle nodes in minimal subsets invariant under $\pi_0\circ \pi_{\theta}$. We draw the diagram ``bent'' (in the standard way) if $\pi_0\circ\pi_{\theta}\neq1$ (see the examples in the introduction). An opposition diagram is \textit{empty} if no nodes are encircled (that is, $\Type(\theta)=\emptyset$), and \textit{full} if all nodes are encircled (that is, $\Type(\theta)=S$). Note that $\Diag(\theta)$ is empty if and only if $\theta$ is the identity (by Theorem~\ref{thm:nonempty}).

\begin{defn}
Let $\Delta$ be a spherical building of type $(W,S)$. Let $\theta$ be a nontrivial automorphism of $\Delta$, and let $J\subseteq S$ be stable under the diagram automorphism $\pi_0\circ\pi_{\theta}$. Then $\theta$ is called:
\begin{compactenum}[$(1)$]
\item \textit{capped} if there exists a type $\Type(\theta)$ simplex in $\Opp(\theta)$, and \textit{uncapped} otherwise. 
\item \textit{domestic} if $\Opp(\theta)$ contains no chamber.
\item \text{$J$-domestic} if $\Opp(\theta)$ contains no type $J$-simplex. 
\end{compactenum}
A simplex $\alpha\in\Delta$ is said to be \textit{domestic for $\theta$} if it is not mapped onto an opposite simplex by $\theta$, and is \textit{non-domestic for $\theta$} otherwise. If the automorphism $\theta$ is clear from context, we will often simply say $\alpha$ is domestic (or non-domestic). 
\end{defn}

The requirement that $J$ be stable under $\pi_0\circ \pi_{\theta}$ in the above definition is to avoid trivialities. For if $J$ is not stable under $\pi_0\circ\pi_{\theta}$ then necessarily there is no type $J$-simplex mapped to an opposite by $\theta$ (see \cite[Lemma~1.3]{PVM:19a}).

The main result of \cite{PVM:19a} is the following useful fact.

\begin{thm}[{\cite[Theorem~1]{PVM:19a}}]\label{thm:capped} Every automorphism of a large spherical building is capped. 
\end{thm}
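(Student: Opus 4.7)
The plan is to prove the capping property by induction on the rank of $\Delta$, using residues and induced automorphisms to reduce to a rank-$2$ base case. Recall that $\theta$ is capped if there is a simplex of type $\Type(\theta)$ in $\Opp(\theta)$; equivalently, whenever $\alpha_1,\alpha_2\in\Opp(\theta)$ have types $J_1,J_2$, one wants a simplex of type $J_1\cup J_2$ in $\Opp(\theta)$. By an induction on $|J_1\cup J_2|$, it is enough to enlarge the type of a given $\alpha_1\in\Opp(\theta)$ by one additional letter witnessed by some other $\alpha_2\in\Opp(\theta)$.

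The main tool I would set up is the \emph{induced automorphism} on residues. For $\alpha\in\Opp(\theta)$ the simplices $\alpha$ and $\alpha^\theta$ are opposite, so the projection $\proj_\alpha\colon \Res(\alpha^\theta)\to\Res(\alpha)$ is an isomorphism (compatible with types up to the twisting $\pi_0\circ\pi_\theta$). Composing with $\theta$ yields an automorphism $\theta_\alpha$ of $\Res(\alpha)$, and the critical compatibility is that a simplex $\beta\in\Res(\alpha)$ lies in $\Opp(\theta_\alpha)$ if and only if the join $\alpha\cup\beta$ lies in $\Opp(\theta)$ in $\Delta$. Thus capping for $\theta$ on $\Delta$ follows, via induction on rank, from capping for $\theta_\alpha$ on each residue together with a rank-$2$ base case. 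The inductive step applies the hypothesis to $\theta_{\alpha_1}$: projecting the witness $\alpha_2$ into $\Res(\alpha_1)$ produces (after checking it lies in $\Opp(\theta_{\alpha_1})$, which is forced by the choice of chambers realising the opposition of $\alpha_1$ and $\alpha_1^\theta$) a simplex to which the induction applies, yielding the desired enlargement of the type of $\alpha_1$.

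The hard part, and the place where the largeness hypothesis is essential, is the base case in rank~$2$. One must show that in a thick generalised $m$-gon with no Fano plane parameter, the existence of a non-domestic point together with a non-domestic line forces the existence of a non-domestic chamber. This is where a direct counting or path-length argument in the incidence graph is required, exploiting the thickness condition (at least three points per line and three lines per point, with the Fano plane case $m=3$ and both parameters equal to $2$ excluded) to prevent the cancellation that otherwise obstructs the conclusion. It is precisely the Fano plane residues that allow uncapped automorphisms to exist, motivating the separate treatment of small buildings in~\cite{PVM:19b}.

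The principal obstacle is therefore not the inductive machinery, which is essentially formal once the induced automorphism $\theta_\alpha$ is shown to behave correctly under projections, but rather the rank-$2$ analysis: one must verify by a geometric argument in each thick non-Fano generalised polygon that the capping property holds. Modulo this base case and careful bookkeeping with the twisting $\pi_0\circ\pi_\theta$ (to ensure that the types one enlarges remain $\pi_0\circ\pi_\theta$-stable throughout), the residue-based induction then delivers the theorem for all large spherical buildings.
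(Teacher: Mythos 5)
First, a point of order: this paper does not prove Theorem~\ref{thm:capped} at all --- it is quoted from \cite[Theorem~1]{PVM:19a} and used as a black box --- so there is no internal proof to compare your argument against. Judged on its own terms, your proposal contains a genuine gap at its central step, and the gap is precisely where the whole difficulty of the theorem lives.

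You reduce everything to the claim that, given $\alpha_1,\alpha_2\in\Opp(\theta)$ of types $J_1,J_2$, the projection $\proj_{\alpha_1}(\alpha_2)$ lies in $\Opp(\theta_{\alpha_1})$, and you assert this is ``forced by the choice of chambers realising the opposition of $\alpha_1$ and $\alpha_1^\theta$.'' It is not. The compatibility result you invoke (\cite[Proposition~1.13]{PVM:19a}, quoted in Section~\ref{sec:automorphisms} of this paper) only translates opposition \emph{inside} $\Res(\alpha_1)$: for $\beta\in\Res(\alpha_1)$, $\beta$ is opposite $\beta^{\theta}$ in $\Delta$ if and only if $\beta$ is opposite $\beta^{\theta_{\alpha_1}}$ in the residue. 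It gives no mechanism for manufacturing such a $\beta$ of the desired type from the mere existence of a non-domestic simplex $\alpha_2$ of type $J_2$ located elsewhere in the building. A priori, the (nonempty) set of non-domestic type-$J_1$ simplices and the (nonempty) set of non-domestic type-$J_2$ simplices may contain no incident pair; ruling this out is the entire content of cappedness. If your projection step were automatic, the theorem would hold for every thick spherical building, yet uncapped automorphisms exist (this is why \cite{PVM:19b} treats small buildings separately). Since in your scheme largeness enters only through a rank-$2$ base case, the inductive step itself must be doing no work that distinguishes large from small --- which is exactly backwards.

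Your surrounding intuitions are sound: the induced automorphisms $\theta_\alpha$ on residues, the relevance of the classification of (exceptional) domestic automorphisms of generalised polygons \cite{TTM:12b,PTM:15}, and the role of Fano-plane-type parameters are all genuine ingredients of the published proof. But that proof obtains the step from ``types $J_1$ and $J_2$ are each attained in $\Opp(\theta)$'' to ``type $J_1\cup J_2$ is attained'' by case-by-case geometric arguments in the relevant projective and polar space geometries (connectedness and transitivity arguments, explicit constructions of incident non-domestic pairs), with the largeness hypothesis invoked at several points of that analysis, not by a formal projection argument layered over a rank-$2$ base case. To repair your proposal you would need to supply, for each type, a proof that some non-domestic type-$J_2$ simplex can be found incident with some non-domestic type-$J_1$ simplex --- and that is the theorem.
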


Let $\theta$ be an automorphism of $\Delta$, and suppose that $\alpha\in\Opp(\theta)$. It follows from \cite[Theorem~3.28]{Tit:74} that the map $\proj_{\alpha}:\Res(\alpha^{\theta})\to\Res(\alpha)$ is an isomorphism, and hence we have an isomorphism
$$
\theta_{\alpha}:\Res(\alpha)\xrightarrow{\sim} \Res(\alpha)\quad \text{given by}\quad \theta_{\alpha}=\proj_{\alpha}\circ\,\theta.
$$
If $\pi_{\theta}$ is the automorphism of $\Ga$ induced by $\theta$, and if $J=\tau(\alpha)$, then the automorphism of $\Ga_{S\backslash J}$ induced by $\theta_{\alpha}$ is 
$
\pi_{\theta_{\alpha}}=\pi_{S\backslash J}\circ \pi_0\circ \pi_{\theta}
$
(see \cite[Proposition~1.11]{PVM:19a}). There is a very useful relationship between domesticity of $\theta$ and domesticity of $\theta_{\alpha}$, as follows (in particular, this relationship facilitates inductive arguments). 

\begin{lemma}[{\cite[Proposition~1.13]{PVM:19a}}]
Let $\theta$ be an automorphism of a spherical building~$\Delta$ and let $\alpha\in\Opp(\theta)$. If $\beta\in\Res(\alpha)$ then $\beta$ is opposite $\beta^{\theta}$ in the building $\Delta$ if and only if $\beta$ is opposite $\beta^{\theta_{\alpha}}$ in the building $\Res(\alpha)$. 
\end{lemma}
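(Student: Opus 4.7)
The plan is to reduce the lemma to a chamber-level opposition equivalence arising from the projection isomorphism $\proj_\alpha\colon\Res(\alpha^\theta)\to\Res(\alpha)$ supplied by \cite[Theorem~3.28]{Tit:74}.

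First I would verify that $\theta_\alpha(\beta)=\proj_\alpha(\beta^\theta)$ is a well-defined simplex of $\Res(\alpha)$: since $\beta\supseteq\alpha$ we have $\beta^\theta\supseteq\alpha^\theta$, so $\beta^\theta\in\Res(\alpha^\theta)$, and because $\proj_\alpha$ restricts to a type-preserving isomorphism of buildings of type $(W_{S\setminus\tau(\alpha)},S\setminus\tau(\alpha))$, the image $\theta_\alpha(\beta)$ lies in $\Res(\alpha)$ with matching type.

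The heart of the argument is the following chamber claim: for chambers $C\in\Res(\alpha)$ and $D\in\Res(\alpha^\theta)$, $C$ is opposite $D$ in $\Delta$ if and only if $C$ is opposite $\proj_\alpha(D)$ in $\Res(\alpha)$. To prove this, invoke the gate property
\[
\delta(C,D)=\delta(C,\proj_\alpha(D))\cdot u_D,\qquad u_D:=\delta(\proj_\alpha(D),D),
\]
with the lengths adding. Because $\alpha$ and $\alpha^\theta$ are opposite, the unique chamber of $\Delta$ opposite $D$ lies in $\Res(\alpha)$, so $w_0\in W_{S\setminus\tau(\alpha)}\cdot u_D$; consequently $u_D$ is the unique minimal-length element of the right coset $W_{S\setminus\tau(\alpha)}\cdot w_0$, and is in particular the same element $u$ for every $D\in\Res(\alpha^\theta)$. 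The factorization $w_0=w_{S\setminus\tau(\alpha)}\cdot u$ with additive lengths then immediately yields $\delta(C,D)=w_0$ iff $\delta(C,\proj_\alpha(D))=w_{S\setminus\tau(\alpha)}$, which is the asserted chamber equivalence.

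To conclude, I would promote this equivalence to simplices. By definition $\beta$ is opposite $\beta^\theta$ in $\Delta$ iff their types are $\pi_0$-paired and there exist chambers $C\supseteq\beta$ and $D\supseteq\beta^\theta$ with $C$ opposite $D$ in $\Delta$; since $\proj_\alpha$ is a type-preserving bijection $\Res(\alpha^\theta)\to\Res(\alpha)$ carrying chambers containing $\beta^\theta$ onto chambers containing $\theta_\alpha(\beta)$, the chamber claim transforms this into the assertion that there exist chambers $C\supseteq\beta$ and $E\supseteq\theta_\alpha(\beta)$ of $\Res(\alpha)$ opposite in $\Res(\alpha)$, and the type-pairing conditions on the two sides correspond under $\proj_\alpha$. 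This is exactly the condition that $\beta$ be opposite $\theta_\alpha(\beta)$ in $\Res(\alpha)$. The main obstacle, in my view, is justifying that the codistance $u_D$ is a fixed element of $W$ as $D$ ranges over $\Res(\alpha^\theta)$; this Coxeter-theoretic fact is precisely the content of why opposition of $\alpha$ and $\alpha^\theta$ makes $\proj_\alpha$ into an isomorphism of residues, and once granted the rest of the proof is formal bookkeeping.
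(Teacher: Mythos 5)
Your argument is correct and is essentially the standard proof of this fact; note that the paper itself gives no proof, importing the statement from \cite[Proposition~1.13]{PVM:19a}, where the same reduction to the chamber level via the gate property and the length-additive factorisation $w_0=w_{S\setminus\tau(\alpha)}\,u$ is carried out. The only blemish is the phrase ``the unique chamber of $\Delta$ opposite $D$'' (opposite chambers are far from unique in a building); what you actually need, and what does hold, is that $W_{S\setminus\tau(\alpha)}\,\delta(C_0,D)=W_{S\setminus\tau(\alpha)}\,w_0$ for \emph{every} chamber $D\in\Res(\alpha^{\theta})$, which follows from the identity $w_0W_{S\setminus\tau(\alpha^{\theta})}=W_{S\setminus\tau(\alpha)}w_0$ together with the existence of a single opposite pair of chambers in $\Res(\alpha)\times\Res(\alpha^{\theta})$, and this yields the constancy of $u_D$ that you correctly identify as the crux.
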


An admissible diagram $\sX=(\Ga,J,\pi)$ is called \textit{type preserving} if $\pi=\mathrm{id}$ (these are the opposition diagrams of type preserving automorphisms). For example, the type preserving diagrams of type~$\sD$ are precisely the diagrams  $\sD_{n;i}^1$ with $n$ and $i$ both even, $\sD_{n;i}^1$ with $n$ odd and $i$ even, and all diagrams $\sD_{n,i}^2$. 

\subsection{Root systems and Chevalley groups}

The \textit{split} irreducible spherical buildings are those arising from Chevalley groups over a commutative field~$\mathbb{K}$. In this case there is an associated irreducible crystallographic root system~$\Phi$. Let $\alpha_1,\ldots,\alpha_n$ be a fixed system of simple roots, and let $\Phi^+$ denote the associated positive roots. Adopting standard notation (from \cite{Car:89,St:16}) we write $x_{\alpha}(a)$ ($\alpha\in\Phi$ and $a\in \mathbb{K}$) for the unipotent generators of the Chevalley group~$G=G_{\Phi}(\mathbb{K})$. Write $U_{\alpha}=\langle x_{\alpha}(a)\mid a\in\mathbb{K}\rangle$ for the root group of~$\alpha\in\Phi$, and 
$$U=\langle U_{\alpha}\mid \alpha\in\Phi^+\rangle.$$
By a \textit{unipotent element} we shall mean an element of $G$ conjugate to a member of~$U$.

Let $\Gamma=\Gamma(\Phi)$ be the Dynkin diagram (with the convention that the arrow points towards the short root in the case of double or triple bonds). The \textit{height} of a root $\alpha=k_1\alpha_1+\cdots+k_n\alpha_n$ is $\mathrm{ht}(\alpha)=k_1+\cdots+k_n$, and there is a unique root $\varphi$ of maximal height (the \textit{highest root of $\Phi$}). The \textit{polar type} of $\Phi$ is the subset $\wp\subseteq\{1,2,\ldots,n\}$ given by 
$$
\wp=\{1\leq i\leq n\mid \langle\alpha_i,\varphi\rangle\neq 0\}.
$$
Thus, in standard Bourbaki labelling, the polar types of $\sA_n,\sB_n,\sC_n,\sD_n$ are $\{1,n\},\{2\},\{1\},\{2\}$, respectively (a simple way to remember these is to note that the polar type is the set of nodes of $\Gamma$ to which the additional generator is joined when constructing the associated affine diagram). In the case that $\wp=\{p\}$ is a singleton set, we refer to $p$ as the \textit{polar node}.

The \textit{dual polar type} of a Dynkin diagram is the subset $\wp'$ corresponding to the polar node of the dual diagram. Thus $\wp'=\wp$ in the $\sA_n$ and $\sD_n$ cases, while $\wp'=\{1\}$ for $\sB_n$, and $\wp'=\{2\}$ for $\sC_n$. If $\wp'=\{p'\}$ we refer to $p'$ as the dual polar node.

\subsection{Polar closed admissible diagrams}\label{sec:polarclosed}

The definition of admissible diagrams naturally extends to admissible Dynkin diagrams (with the additional condition that $\pi$ preserves arrows on the Dynkin diagram). Let $\sX$ be an admissible Dynkin diagram, and let $\sX=\sX_0,\sX_1,\ldots$ be subdiagrams such that, for $j\geq 1$, the diagram $\sX_j$ is obtained from $\sX_{j-1}$ by removing an encircled polar type from one of the connected components of $\sX_{j-1}$. This process terminates at some step~$j=k$ (that is, $\sX_k$ has no polar types encircled). We say that $\sX$ is \textit{polar closed} if $\sX_k$ is an empty diagram (that is, has no nodes encircled). 

For example, the following sequence $\sX_0,\sX_1,\sX_2,\sX_3,\sX_4$ shows that $\sB_{5;4}^1$ is polar closed
$$
\begin{tikzpicture}[scale=0.5,baseline=-0.5ex]
\node [inner sep=0.8pt,outer sep=0.8pt] at (-2.5,0) (0) {$\bullet$};
\node [inner sep=0.8pt,outer sep=0.8pt] at (-1.5,0) (1) {$\bullet$};
\node [inner sep=0.8pt,outer sep=0.8pt] at (-0.5,0) (2) {$\bullet$};
\node [inner sep=0.8pt,outer sep=0.8pt] at (0.5,0) (3) {$\bullet$};
\node [inner sep=0.8pt,outer sep=0.8pt] at (1.5,0) (4) {$\bullet$};
\phantom{\draw [line width=0.5pt,line cap=round,rounded corners] (1.north west)  rectangle (1.south east);}
\phantom{\draw [line width=0.5pt,line cap=round,rounded corners] (4.north west)  rectangle (4.south east);}
\draw (-2.5,0)--(-0.5,0);
\draw (0.5,0.1)--(1.5,0.1);
\draw (0.5,-0.1)--(1.5,-0.1);
\draw (-0.5,0)--(0.5,0);
\draw (1-0.15,0.3) -- (1+0.08,0) -- (1-0.15,-0.3);
\draw [line width=0.5pt,line cap=round,rounded corners] (1.north west)  rectangle (1.south east);
\draw [line width=0.5pt,line cap=round,rounded corners] (2.north west)  rectangle (2.south east);
\draw [line width=0.5pt,line cap=round,rounded corners] (0.north west)  rectangle (0.south east);
\draw [line width=0.5pt,line cap=round,rounded corners] (3.north west)  rectangle (3.south east);
\end{tikzpicture}
\quad\mapsto\quad
\begin{tikzpicture}[scale=0.5,baseline=-0.5ex]
\node [inner sep=0.8pt,outer sep=0.8pt] at (-2.5,0) (0) {$\bullet$};
\node [inner sep=0.8pt,outer sep=0.8pt] at (-1.5,0) (1) {$\times$};
\node [inner sep=0.8pt,outer sep=0.8pt] at (-0.5,0) (2) {$\bullet$};
\node [inner sep=0.8pt,outer sep=0.8pt] at (0.5,0) (3) {$\bullet$};
\node [inner sep=0.8pt,outer sep=0.8pt] at (1.5,0) (4) {$\bullet$};
\draw (-0.5,0)--(0.5,0);
\draw (0.5,0.1)--(1.5,0.1);
\draw (0.5,-0.1)--(1.5,-0.1);
\draw (1-0.15,0.3) -- (1+0.08,0) -- (1-0.15,-0.3);
\draw [line width=0.5pt,line cap=round,rounded corners] (0.north west)  rectangle (0.south east);
\draw [line width=0.5pt,line cap=round,rounded corners] (2.north west)  rectangle (2.south east);
\draw [line width=0.5pt,line cap=round,rounded corners] (3.north west)  rectangle (3.south east);
\end{tikzpicture}
\quad\mapsto\quad
\begin{tikzpicture}[scale=0.5,baseline=-0.5ex]
\node [inner sep=0.8pt,outer sep=0.8pt] at (-0.5,0) (2) {$\bullet$};
\node [inner sep=0.8pt,outer sep=0.8pt] at (0.5,0) (3) {$\bullet$};
\node [inner sep=0.8pt,outer sep=0.8pt] at (1.5,0) (4) {$\bullet$};
\draw (-0.5,0)--(0.5,0);
\draw (0.5,0.1)--(1.5,0.1);
\draw (0.5,-0.1)--(1.5,-0.1);
\draw (1-0.15,0.3) -- (1+0.08,0) -- (1-0.15,-0.3);
\draw [line width=0.5pt,line cap=round,rounded corners] (2.north west)  rectangle (2.south east);
\draw [line width=0.5pt,line cap=round,rounded corners] (3.north west)  rectangle (3.south east);
\end{tikzpicture}
\quad\mapsto\quad
\begin{tikzpicture}[scale=0.5,baseline=-0.5ex]
\node [inner sep=0.8pt,outer sep=0.8pt] at (-0.5,0) (2) {$\bullet$};
\node [inner sep=0.8pt,outer sep=0.8pt] at (0.5,0) (3) {$\times$};
\node [inner sep=0.8pt,outer sep=0.8pt] at (1.5,0) (4) {$\bullet$};
\draw [line width=0.5pt,line cap=round,rounded corners] (2.north west)  rectangle (2.south east);
\end{tikzpicture}
\quad\mapsto\quad
\begin{tikzpicture}[scale=0.5,baseline=-0.5ex]
\node [inner sep=0.8pt,outer sep=0.8pt] at (1.5,0) (4) {$\bullet$};
\end{tikzpicture}
$$
while $\sB_{5;3}^1$ is not polar closed because
\begin{align*}
\sX_0&=\begin{tikzpicture}[scale=0.5,baseline=-0.5ex]
\node [inner sep=0.8pt,outer sep=0.8pt] at (-2.5,0) (0) {$\bullet$};
\node [inner sep=0.8pt,outer sep=0.8pt] at (-1.5,0) (1) {$\bullet$};
\node [inner sep=0.8pt,outer sep=0.8pt] at (-0.5,0) (2) {$\bullet$};
\node [inner sep=0.8pt,outer sep=0.8pt] at (0.5,0) (3) {$\bullet$};
\node [inner sep=0.8pt,outer sep=0.8pt] at (1.5,0) (4) {$\bullet$};
\phantom{\draw [line width=0.5pt,line cap=round,rounded corners] (1.north west)  rectangle (1.south east);}
\phantom{\draw [line width=0.5pt,line cap=round,rounded corners] (4.north west)  rectangle (4.south east);}
\draw (-2.5,0)--(-0.5,0);
\draw (0.5,0.1)--(1.5,0.1);
\draw (0.5,-0.1)--(1.5,-0.1);
\draw (-0.5,0)--(0.5,0);
\draw (1-0.15,0.3) -- (1+0.08,0) -- (1-0.15,-0.3);
\draw [line width=0.5pt,line cap=round,rounded corners] (1.north west)  rectangle (1.south east);
\draw [line width=0.5pt,line cap=round,rounded corners] (2.north west)  rectangle (2.south east);
\draw [line width=0.5pt,line cap=round,rounded corners] (0.north west)  rectangle (0.south east);
\end{tikzpicture}
\quad\mapsto\quad
\sX_1=\begin{tikzpicture}[scale=0.5,baseline=-0.5ex]
\node [inner sep=0.8pt,outer sep=0.8pt] at (-2.5,0) (0) {$\bullet$};
\node [inner sep=0.8pt,outer sep=0.8pt] at (-1.5,0) (1) {$\times$};
\node [inner sep=0.8pt,outer sep=0.8pt] at (-0.5,0) (2) {$\bullet$};
\node [inner sep=0.8pt,outer sep=0.8pt] at (0.5,0) (3) {$\bullet$};
\node [inner sep=0.8pt,outer sep=0.8pt] at (1.5,0) (4) {$\bullet$};
\draw (-0.5,0)--(0.5,0);
\draw (0.5,0.1)--(1.5,0.1);
\draw (0.5,-0.1)--(1.5,-0.1);
\draw (1-0.15,0.3) -- (1+0.08,0) -- (1-0.15,-0.3);
\draw [line width=0.5pt,line cap=round,rounded corners] (0.north west)  rectangle (0.south east);
\draw [line width=0.5pt,line cap=round,rounded corners] (2.north west)  rectangle (2.south east);
\end{tikzpicture}
\quad\mapsto\quad
\sX_2=\begin{tikzpicture}[scale=0.5,baseline=-0.5ex]
\node [inner sep=0.8pt,outer sep=0.8pt] at (-0.5,0) (2) {$\bullet$};
\node [inner sep=0.8pt,outer sep=0.8pt] at (0.5,0) (3) {$\bullet$};
\node [inner sep=0.8pt,outer sep=0.8pt] at (1.5,0) (4) {$\bullet$};
\draw (-0.5,0)--(0.5,0);
\draw (0.5,0.1)--(1.5,0.1);
\draw (0.5,-0.1)--(1.5,-0.1);
\draw (1-0.15,0.3) -- (1+0.08,0) -- (1-0.15,-0.3);
\draw [line width=0.5pt,line cap=round,rounded corners] (2.north west)  rectangle (2.south east);
\end{tikzpicture}
\end{align*}
where $\sX_2$ is a non-empty diagram with no polar node encircled. In fact, by inspection of the list of diagrams, the admissible Dynkin diagrams of classical type that are not polar closed are precisely the diagrams listed in the introduction.

Suppose that $\sX$ is polar closed, and let $\sX=\sX_0,\sX_1,\ldots,\sX_k$ be a sequence of subdiagrams obtained by successively removing encircled polar nodes with $\sX_k$ empty (as in the definition of polar closed). Let $\varphi_1,\ldots,\varphi_k\in\Phi^+$ be the highest roots associated to the polar nodes that are removed at each step. For example, the sequence of highest roots corresponding to the above sequence for $\sB_{5;4}^1$ is (in the basis of simple roots)
\begin{align*}
\varphi_{\sB_5}=(12222)\quad\mapsto\quad\varphi_{\sA_1}=(10000)\quad\mapsto\quad\varphi_{\sB_3}=(00122)\quad\mapsto\quad\varphi_{\sA_1}=(00100).
\end{align*}
Note that the sequence $\sX_0,\ldots,\sX_k$ is not unique (for example, in the $\sB_{5;4}^1$ sequence given above we may remove the polar node of the $\sB_{3}$ diagram at the second step instead of the $\sA_1$ polar node), however it is clear that the set $\{\varphi_1,\ldots,\varphi_k\}$ of highest roots is independent of the choices made. Moreover, note that the roots $\varphi_1,\ldots,\varphi_k$ are mutually perpendicular (by the definition of the polar type), and hence the group 
$$
U(\sX)=\langle U_{\varphi_1},\ldots,U_{\varphi_k}\rangle
$$ 
generated by the associated positive root groups is abelian. We call an element $g\in U(\sX)$ \textit{generic} if 
$g=x_{\varphi_1}(a_1)\cdots x_{\varphi_k}(a_k)$ with $a_1,\ldots,a_k\neq 0$.

%
%
%
%

\subsection{Projective spaces}

Buildings of type $\sA_n$ are equivalent to projective spaces in a well known way, with type $1$ vertices of the building corresponding to points of the projective space, type $2$ vertices corresponding to lines, and so on, with type $n$ vertices corresponding to hyperplanes. An automorphism of a projective space is a \textit{collineation} (respectively, \textit{duality}) if the corresponding automorphism of the building of type $\sA_n$ is type preserving (respectively, interchanges types $i$ and $n-i+1$ for $i=1,\ldots,n$). 

We denote by $\PG(n,\KK)$ the projective space over $\K$ of dimension~$n$ over a (possibly noncommutative) field~$\KK$. The points, lines, etc, are the $1$-spaces, $2$-spaces, etc of the vector space~$\KK^{n+1}$.

If $\FF$ is a subfield of $\KK$, with $\KK$ 2-dimensional over $\FF$, then the natural inclusion of $\PG(n,\FF)$ in $\PG(n,\KK)$ is called a \textit{Baer subspace} (however note that Baer subspaces are not ``subspaces'' in the technical sense of point-line geometry). A \textit{Baer collineation} of $\PG(n,\KK)$ is a collineation that pointwise fixes a Baer subspace. 

A \textit{symplectic polarity} of $\PG(n,\KK)$ is a duality $\theta$ of the form $U^{\theta}=\{v\in\KK^{n+1}\mid (u,v)=0\text{ for all $u\in U$}\}$, where $(\cdot,\cdot)$ is a nondegenerate symplectic form on $\KK^{n+1}$ (necessarily $n$ is odd and $\KK$ is commutative; and here $U$ is a subspace). 

An $(I_{n-i},-I_{i})$-homology of $\PG(n-1,\K)$, $\K$ a commutative field, is a collineation arising from a linear map in the underlying vector space with diagonal matrix containing $n-i$ times 1 and $i$ times $-1$.

Domesticity in buildings of type $\sA_n$ is well understood (see Theorem~\ref{thm1:An} below).

\begin{thm}[{\cite[Theorem~3.1, 4.3]{TTM:11} and \cite[Theorem~3.5]{PVM:19a}}]\label{thm1:An}
Let $\Delta$ be the building $\sA_n(\KK)$, regarded as the projective space $\PG(n,\KK)$, with $\KK$ a possibly noncommutative field and $n\geq 2$. 
\begin{compactenum}[$(1)$]
\item A collineation $\theta$ of $\Delta$ has opposition diagram $\Diag(\theta)=\mathsf{A}_{n;i}^1$ (with $0\leq i\leq n/2$) if and only if $\theta$ pointwise fixes a unique subspace of projective dimension~$n-i$ but does not fix any subspace of larger projective dimension pointwise.
\item If $|\KK|>2$ then $\Delta$ admits domestic dualities if and only if $n$ is odd and $\KK$ is commutative. In this case, every domestic duality is a symplectic polarity (that is, $\theta$ is induced from a nondegenerate symplectic form on $\KK^{n+1}$), with opposition diagram $\Diag(\theta)=\mathsf{A}_{n;(n-1)/2}^2$. 
\end{compactenum}
\end{thm}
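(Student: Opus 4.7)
For Part~(1), the forward direction combines capping with the Leeb/Abramenko--Brown triviality criterion applied to a residue. Since $\Delta$ is large, Theorem~\ref{thm:capped} yields a simplex $\alpha\in\Opp(\theta)$ of type $J=\{1,\ldots,i\}\cup\{n+1-i,\ldots,n\}$. In projective-space language $\alpha$ is a flag $X_1\subset X_2\subset\cdots\subset X_i\subset X_{n+1-i}\subset\cdots\subset X_n$ with $X_j$ of vector-space dimension~$j$, and opposition with $\theta(\alpha)$ translates into the direct-sum decompositions
\[
X_j\oplus\theta(X_{n+1-j})=\KK^{n+1}\qquad\text{for each }j\in\{1,\ldots,i\}.
\]
The residue $\Res(\alpha)$ is a thick building of type $\sA_{n-2i}$, naturally identified with the full-flag building of the quotient $X_{n+1-i}/X_i$, and the induced automorphism $\theta_\alpha=\proj_\alpha\circ\theta$ inherits the empty opposition diagram from $\Diag(\theta)$; hence Theorem~\ref{thm:nonempty} forces $\theta_\alpha=\id$. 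Unpacking $\proj_\alpha$ through a fixed apartment containing both $\alpha$ and $\theta(\alpha)$ then shows that $\theta$ fixes every frame vector indexed by the ``middle part'' of the diagram, and from the resulting fixed lines one reconstructs a pointwise-fixed projective subspace of dimension $n-i$, canonically determined by the complements $\theta(X_{n+1-j})$. Uniqueness of the maximal fixed subspace is a consequence of the linearity of the fixed locus of a collineation. The reverse direction follows by applying the forward direction to $\theta$ itself: since $\theta$ is nontrivial it has some diagram $\sA^1_{n;i'}$, and the corresponding maximal fixed subspace must coincide with the given~$F$, forcing $i'=i$.

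For Part~(2), a duality satisfies $\pi_\theta=\pi_0$, so $\pi_0\circ\pi_\theta=\id$ and the opposition diagram is drawn straight. Consulting Table~\ref{table:1}, the only admissible straight $\sA_n$-diagram other than the full one is ${}^1\sA^2_{n;(n-1)/2}$, which already forces $n$ odd. Every duality $\theta$ of $\PG(n,\KK)$ is determined by a $\sigma$-sesquilinear form $f$ on $\KK^{n+1}$ via the perp map $U\mapsto U^{\perp_f}$, and chamber-domesticity translates to the non-existence of a maximal flag $X_0\subset X_1\subset\cdots\subset X_{n-1}$ in which every $X_k$ is $f$-nondegenerate. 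For a nondegenerate symplectic form over a commutative field every $1$-dimensional subspace is totally isotropic, so no such flag exists and chamber-domesticity is automatic; the existence of a nondegenerate symplectic form forces $n$ odd and $\KK$ commutative.

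The crux is to rule out the remaining possibilities. For non-polar dualities (those with $\theta^2\neq\id$), I would apply Part~(1) to the collineation $\theta^2$ to extract a large $\theta^2$-fixed subspace, and then use this structure together with the hypothesis $|\KK|>2$ to exhibit an $f$-nondegenerate maximal flag, contradicting chamber-domesticity of $\theta$. For non-symplectic reflexive forms (orthogonal and Hermitian polarities), the existence of anisotropic vectors, combined with a genericity argument valid when $|\KK|>2$, allows one to extend iteratively any anisotropic point to a full $f$-nondegenerate flag. The expected main obstacle is the Hermitian case over a noncommutative field, where bookkeeping of the companion anti-automorphism is delicate; this is precisely where the hypothesis $|\KK|>2$ is essential, as it guarantees enough room to avoid all isotropic subspaces of each successive codimension.
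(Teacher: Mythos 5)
This theorem is not proved in the paper at all: it is quoted as a summary of \cite[Theorems~3.1 and~4.3]{TTM:11} and \cite[Theorem~3.5]{PVM:19a}, so there is no internal argument to compare yours against. Judged on its own merits, your proof of Part~(1) has a genuine gap at its central step. From one maximal non-domestic simplex $\alpha$ of type $J$ you correctly deduce $\theta_\alpha=\id$ on $\Res(\alpha)$, but the inference from there to ``$\theta$ pointwise fixes a subspace of projective dimension $n-i$'' is invalid: $\theta_\alpha=\id$ only constrains $\theta$ on flags refining $\alpha$, and everything after that point in your argument uses nothing else. Concretely, take $n=3$, $\KK=\QQ$, and the linear collineation $\theta$ with $e_1\mapsto e_4$, $e_2\mapsto e_2$, $e_3\mapsto e_3$, $e_4\mapsto 2e_1$. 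For $\alpha=(\langle e_1\rangle\subset\langle e_1,e_2,e_3\rangle)$ one checks $\alpha\in\Opp(\theta)$ and $\theta_\alpha=\id$ (each line $\langle e_1,ae_2+be_3\rangle$ goes to $\langle e_4,ae_2+be_3\rangle$, which meets $\langle e_1,e_2,e_3\rangle$ back inside the original line), yet the largest pointwise fixed subspace of $\theta$ is the line $\langle e_2,e_3\rangle$, of projective dimension $1$, not $n-i=2$. This $\theta$ does not contradict the theorem --- it maps $\langle e_1+e_2,e_3+e_4\rangle$ to a disjoint line, so its diagram is not $\sA_{3;1}^1$ --- but it satisfies every hypothesis your argument actually invokes. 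The missing ingredient is the \emph{global} hypothesis that no type-$\{i+1,n-i\}$ flag anywhere is mapped to an opposite; this is strictly stronger than its restriction to $\Res(\alpha)$, and exploiting it requires the genuinely geometric induction of \cite{TTM:11}, not a single residue computation. Two further inaccuracies: $\theta_\alpha=\id$ does not force $\theta$ to fix the ``middle frame vectors'' (modify the example by $e_2\mapsto e_2+e_1$), and the fixed-point locus of a collineation is a union of eigenspaces rather than a subspace, so uniqueness of the maximal fixed subspace rests on the dimension count $2(n-i+1)>n+1$, not on ``linearity of the fixed locus''.

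For Part~(2), your reduction is sound and is essentially the route of \cite{TTM:11}: the diagram is straight, the only non-full straight admissible diagram forces $n$ odd, chamber-opposition for a polarity $U\mapsto U^{\perp_f}$ amounts to an $f$-nondegenerate maximal flag, and alternating forms admit no such flag. But the two cases you yourself identify as the crux --- dualities that are not polarities, and non-alternating reflexive forms --- are only announced (``I would apply\ldots'', ``a genericity argument\ldots allows one to\ldots'') and never carried out; note in particular that for a non-polarity there is no single reflexive form, so the nondegenerate-flag criterion must first be reformulated using left and right perps before any iterative extension argument can be run. As it stands Part~(2) is a plan rather than a proof.
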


Thus the focus of this paper is on buildings of types $\sB_n$, $\sC_n$, and $\sD_n$.

\subsection{Polar spaces}\label{sec:polarspace}

Building of type $\sB_n$, $\sC_n$, and $\sD_n$ may all be considered as polar spaces~$\Pi$, where the points of~$\Pi$ are the type $1$ vertices of the building. We recall the basic definitions below. 

\begin{defn}[{See \cite[Chapter~7]{DeB:16}}] Let $n>0$. A \textit{polar space} of rank~$n$ is a pair $\Pi=(\cP,\Omega)$ consisting of a set $\cP$  (the \textit{points}) and a set $\Omega$ of subsets of $\cP$ (the \textit{singular subspaces}) satisfying the following axioms:
\begin{compactenum}[$(1)$]
\item A singular subspace $X\in\Omega$ together with the singular subspaces contained in it define a (possibly reducible) $d$-dimensional projective space for some $d\in\{-1,0,\ldots,n-1\}$ (then $d=\dim(X)$ is the \textit{dimension} of $X$).
\item The intersection of any two singular subspaces is again a singular subspace. 
\item If $X$ is an $(n-1)$-dimensional singular subspace and if $p\in\cP\backslash X$ then there exists a unique $(n-1)$-dimensional singular subspace $Y$ containing $p$ such that $\dim(X\cap Y)=n-2$. The singular subspace $X\cap Y$ consists of those points $q$ of $X$ for which $\{p,q\}$ is contained in some singular subspace of dimension~$1$.
\item There exist two disjoint singular subspaces of dimension $n-1$.
\end{compactenum}
\end{defn}

Let $\Pi=(\cP,\Omega)$ be a polar space of rank~$n$. The singular subspaces of dimension $1$ (respectively~$2$) are called \textit{lines} (respectively~\textit{planes}). The singular subspaces of dimension $n-1$ (respectively $n-2$) are called \textit{maximal} (respectively \textit{submaximal}) singular subspaces. A singular subspace of dimension $i$ will be called a \emph{singular $i$-space}.  Points $p,q\in\cP$ are \textit{collinear} if they are contained in a line, and note that if two distinct points are contained in a singular subspace then they are collinear by the first axiom. The (unique) line determined by two distinct points $x,y$ will be denoted $\<x,y\>$. The symbol $\<\cdot\>$ will be used throughout and will have the obvious meaning of generation.  

Let $\cL$ denote the set of lines of $\Pi=(\cP,\Omega)$. Taking incidence to be containment, one may regard $\Pi$ as a point line geometry $\Pi'=(\cP,\cL)$. Indeed, there is an equivalent set of axioms, due to Buekenhout \& Shult, that characterise polar spaces in terms of this incidence structure (see \cite[\S 7.2]{DeB:16}). In particular, $\Pi'$ satisfies the following condition (the Buekenhout-Shult one-or-all axiom): \textit{If $p\in\cP$ and $L\in\cL$ are not incident, then either one or all points of $L$ are collinear with~$p$.}

The set $\Omega$ can be recovered from the point line geometry $\Pi'=(\cP,\cL)$ as the set of all $X\subseteq \cP$ that are \textit{subspaces} of $\Pi'$ (that is, every line that has at least two distinct points in $X$ has all of its points in $X$) and \textit{singular} (that is, $X$ consists of mutually collinear points). Thus we will identify $\Pi$ and $\Pi'$. 

We write $p\perp q$ to indicate that points $p$ and $q$ are collinear and we write $p^{\perp}$ for the set of all points collinear to $p$ (including $p$ itself). If $X\subseteq\cP$ then we write $X^{\perp}=\bigcap_{p\in X}p^{\perp}$. 

Points $x,y\in \cP$ are opposite (in the building theoretic sense) one another if and only if they are not collinear. More generally, opposition of singular subspaces is given by the following obvious lemma.

\begin{lemma} Singular spaces $X,Y\in\Omega$ are opposite if and only if they have the same dimension, are disjoint, and for each point $x\in X$ there is a point $y\in Y$ such that $x$ and $y$ are opposite. In particular, maximal singular subspaces $X,Y$ are opposite if and only if they are disjoint.
\end{lemma}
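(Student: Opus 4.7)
My plan is to characterise the building-theoretic opposition relation in purely polar-geometric terms by exploiting the correspondence between apartments of the polar-space building and frames of the underlying polar space. For the forward direction, I will assume $X$ and $Y$ are opposite as simplices and first note that $\tau(Y)=\pi_0(\tau(X))$ forces them to have the same projective dimension, since $\pi_0$ acts trivially on types in $\sB_n$ and $\sC_n$, and in $\sD_n$ only swaps the two types of maximal singular subspaces (both of projective dimension $n-1$). I will then choose opposite chambers $A\supseteq X$ and $B\supseteq Y$ and consider the apartment $\Sigma$ they determine; in the polar-space picture $\Sigma$ corresponds to a frame of $2n$ points partitioned into $n$ opposite pairs $\{p_i,p_i'\}$ with $p_i\perp p_j$ and $p_i\perp p_j'$ for $i\neq j$, the singular subspaces of $\Sigma$ are the spans of choices of at most one point from each pair, and opposition within $\Sigma$ swaps $p_i$ with $p_i'$. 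Disjointness of $X$ and $Y$ and the point-opposition property then read off immediately from the frame.

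For the converse I will first handle the maximal case $d=n-1$, which also establishes the ``in particular'' clause. Here I will show that for any disjoint pair of maximal singular subspaces $X,Y$ and any $x\in X$, the Buekenhout--Shult axiom forces $x^\perp\cap Y$ to be a proper subspace of $Y$ (otherwise $\langle x,Y\rangle$ would be singular of dimension $n$, exceeding the rank), so some $y\in Y$ fails to be collinear with $x$; this gives point-opposition automatically when $d=n-1$. Building-theoretically, two disjoint maximal singular subspaces of a polar space generate an apartment (a standard fact from \cite{Tit:74}), and inside such an apartment the two maximals are opposite in the sense of the building, completing this case.

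For general $d<n-1$, the strategy is to extend $X$ and $Y$ to opposite maximal singular subspaces $\widehat X\supseteq X$, $\widehat Y\supseteq Y$ and then quote the maximal case, using that a face of an opposite pair of simplices lies in some pair of opposite chambers. Concretely I will fix a projective basis $x_0,\ldots,x_d$ of $X$ and iteratively select $y_0,\ldots,y_d\in Y$ with $y_i$ opposite $x_i$; disjointness of $X$ and $Y$ combined with the point-opposition hypothesis, applied in the residues of the previously chosen pairs, will force projective independence and mutual ``frame-perpendicularity'' of the resulting pairs $\{x_i,y_i\}$. This partial frame will then be extended to a full frame using axiom $(4)$ and repeated residue arguments, producing an apartment $\Sigma$ in which $X$ and $Y$ appear as an opposite pair of singular subspaces, exactly as in the forward direction. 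I will also observe that the disjointness assumption in the lemma is in fact automatic, since a point in $X\cap Y$ would be collinear with every point of $Y$ and so admit no opposite there; it is retained in the statement for convenient geometric bookkeeping. The main technical obstacle is the extension of the partial frame to a full frame inside an abstract polar space, which amounts to a Witt-style application of the polar-space axioms in successive residues.
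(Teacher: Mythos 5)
The paper offers no proof of this lemma to compare against: it is introduced with the words ``the following obvious lemma'' and stated without argument, as a record of the standard characterisation of opposition in polar spaces. So your proposal can only be judged on its own terms, and on those terms the strategy (apartments as frames for the forward direction, a Witt-style construction of a frame containing $X$ and $Y$ for the converse) is the standard and correct one.

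Two places need tightening. First, in the forward direction the point-opposition property does not quite ``read off immediately'' for a point $x\in X$ that is not one of the frame points: you should note that if $x\perp Y$ then $x\in p_i'^{\perp}\cap X=\langle p_j : j\neq i\rangle$ for every $i$, and the intersection of these hyperplanes of $X$ is empty. Second, and more substantively, in the converse for $d<n-1$ you cannot fix a projective basis $x_0,\ldots,x_d$ of $X$ in advance and then select the $y_i$: after choosing $y_0\in Y$ opposite $x_0$, a pre-chosen $x_1$ need not be collinear with $y_0$, so it need not lie in the residue $\{x_0,y_0\}^{\perp}$ where your inductive step takes place. The basis of $X$ must be chosen adaptively, taking $x_1,\ldots,x_d$ inside the hyperplane $y_0^{\perp}\cap X$ of $X$, and so on at each stage. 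With that correction the descent of the hypothesis to the pair $\bigl(y_0^{\perp}\cap X,\ x_0^{\perp}\cap Y\bigr)$ inside $\{x_0,y_0\}^{\perp}$ is immediate: a point $x\in y_0^{\perp}\cap X$ collinear with all of $x_0^{\perp}\cap Y$ is also collinear with $y_0\in Y\setminus x_0^{\perp}$, hence with all of $Y=\langle x_0^{\perp}\cap Y,\ y_0\rangle$, contradicting the hypothesis on $X$. (A smaller cosmetic point: two disjoint maximal singular subspaces do not \emph{generate} a unique apartment; all you need, and all that is true, is that they lie in a common one.)
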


We recall the following definitions.

\begin{defn}\label{defn:polarspacethings} Let $\Pi$ be a polar space of rank~$n$. 
\begin{compactenum}[$(1)$]
\item A \textit{subspace of corank $t$}, $0\leq t\leq n-1$, of a polar space of rank $n$ is a subspace having non-empty intersection with every singular subspace of dimension $t$, and such that it is disjoint from at least one singular subspace of dimension $t-1$. 
\item A \textit{geometric hyperplane} is a subspace of corank~1, that is, a subspace with the property that every line contains at least one point of the subspace, and at least one line contains exactly one point of it. It is called \emph{singular} if it coincides with $p^\perp$ for some point $p$; otherwise it is called \emph{nondegenerate}, and then the point line geometry induced in it is a polar space.
\item A subspace of corank~2 will also be called a \emph{geometric subhyperplane.}
\item A \textit{collineation} of $\Pi$ is a permutation of the point set $\cP$ that preserves the line set~$\cL$.
\item A \textit{central collineation} (with \textit{centre}~$p\in\cP$) is a collineation that fixes every point collinear to~$p$.
\item An \textit{axial collineation} (with \textit{axis} $L\in\cL$) is a collineation~$\theta$ that stabilises each line that intersects $L$. Necessarily $\theta$ fixes each point collinear with all points of $L$. 
\item Central collineations $\theta_i$ ($i=1,2$) with centres $p_i$ ($i=1,2$) are \textit{perpendicular} if $p_1$ and $p_2$ are collinear. 
\item Axial collineations $\theta_i$ ($i=1,2$) with centres $L_i$ ($i=1,2$) are \textit{perpendicular} if either $L_1$ and $L_2$ intersect in a point and are not coplanar (we call these of \textit{the first kind}), or $L_1$ and $L_2$ are contained in the same $3$-space and do not intersect (we call these \textit{of the second kind}). 
\item Two central collineations, or two axial collineations, are \textit{opposite} if their centres are opposite. 
\item An \emph{ideal Baer collneation} of  polar space of rank 3 is a collineation that has at least one fixed line, induces a Baer collineation in every fixed plane, and fixes every plane through each fixed line. 
\end{compactenum}
\end{defn}

We will always assume, unless explicitly stated otherwise, that polar spaces have thick lines. That is, every line contains at least $3$ points. We call a polar space of rank $n$ \textit{non-thick} if every $(n-2)$-dimensional singular subspace is contained in exactly two maximal singular subspaces (of dimension $n-1$), and thick otherwise. Thick polar spaces correspond to buildings of type $\sB_n$ and $\sC_n$ (the chambers of the building are maximal flags of singular subspaces in the polar space, and in this way the type $i+1$ vertices of the building correspond to singular subspaces of dimension~$i$ for $0\leq i\leq n-1$). In the non-thick case every $(n-2)$-dimensional singular subspace is contained in exactly two maximal singular subspaces, and there are two types of maximal singular subspaces (members of the same type intersect each other in subspaces of even codimension). The so called ``oriflamme complex'' of such a polar space is then a thick building of type $\sD_n$.

We may regard an automorphism $\theta$ of a building $\Delta$ of type $\sB_n$, $\sC_n$, or $\sD_n$ as an automorphism, also denoted $\theta$, of the associated polar space $\Pi=(\cP,\Omega)$. Since neither dualities of $\sB_2/\sC_2$ buildings nor trialities of $\sD_4$ buildings play a role in this work, all such automorphisms will be collineations of the polar space $\Pi$ (however, note that in the $\sD_n$ case, the automorphism $\theta$ may interchange the type $n-1$ and $n$ vertices of the building).

All thick polar spaces, with the exception of one class in rank~$3$, are embeddable in projective space. More precisely, they arise as the isotropic geometry of nondegenerate quadratic forms, alternating forms, and Hermitian forms, or are a subspace thereof (see \cite{Tit:74}); in the latter case the underlying field is non-commutative and has characteristic $2$ and they are described by a pseudo-quadratic form, but we will not need this in this work, since, for example, we will prove results that exclude non-commutative fields, such as Proposition~\ref{startingpoint}. The polar spaces related to quadratic forms are called \emph{orthogonal polar spaces} and arise as nondegenerate quadrics in projective space.

The split spherical buildings of type $\sD_n$ correspond to \textit{hyperbolic quadrics} (with standard equation $X_{-2n}X_{2n}+X_{-2n+1}X_{2n-1}+\cdots+X_{-1}X_1=0$), and those of type $\sB_n$ correspond to the \textit{parabolic quadrics} (with standard equation $X_{-2n}X_{2n}+X_{-2n+1}X_{2n-1}+\cdots+X_{-1}X_1=X_{0}^2$). The split buildings of type $\sC_n$ are related to a nondegenerate alternating form in standard form $X_{-2n}Y_{2n}-X_{2n}Y_{-2n}+X_{-2n+1}Y_{2n-1}-X_{2n-1}Y_{-2n+1}+\cdots+X_{-1}Y_1-X_{1}Y_{-1}$. Note that these arise as the fixed point geometry of symplectic polarities in projective space. It is well-known that, if the field $\K$ is perfect of characteristic $2$, then the polar spaces of type $\mathsf{B}_n$ and $\mathsf{C}_n$ are isomorphic; if the field is not perfect (but still has characteristic $2$), then the polar space of type $\mathsf{B}_n$ is a polar subspace of the one of type $\mathsf{C}_n$. For ease of formulation, we will call a symplectic polar space in characteristic distinct from $2$ a \emph{proper symplectic polar space}.

Polar spaces associated to a nondegenerate $\sigma$-Hermtitian form, with $\sigma$ an involution of the underlying field $\KK$, are called Hermitian polar spaces. They are called \textit{minimal Hermitian} if the dimension of the projective space is $2n-1$, where $n$ is the rank of the polar space. The standard equation of a minimal Hermitian polar space is $X_{-2n}^{\sigma}X_{2n}+X_{2n}^{\sigma}X_{-2n}+X_{-2n+1}^{\sigma}X_{2n-1}+X_{2n-1}^{\sigma}X_{-2n+1}+\cdots+X_{-1}^{\sigma}X_1+X_{1}^{\sigma}X_{-1}=0$.

%
%
%

In the language of polar spaces it is convenient to use ``(projective) dimension as a singular subspace'' rather than ``type of a vertex of the building'', and therefore there is a shift of indexing. If $\theta$ is an automorphism of $\Pi$ we say that $\theta$ is \textit{$i$-domestic} if $\theta$ maps no singular subspace of dimension~$i$ to an opposite. Thus $i$-domesticity means domestic on type $i+1$ vertices of the building. There is an exception in non-thick polar spaces, where there are two types of maximal singular subspaces. These both have projective dimension $n-1$, and we will call them $(n-1)'$-spaces and $(n-1)$-spaces, and these correspond to the vertices of types $n-1$ and $n$ in the $\sD_n$ building. The expressions \textit{point-domestic} and \textit{line-domestic}, etc, have the obvious meanings.

%
%
%
%
%
%
%

\section{Class I collineations}\label{sec2}

Recall that the class I collineations of a polar space are those domestic collineations that map at least one point to an opposite point. Thus the opposition diagram of a class I collineation is $\sX_{n;i}^1$ for some $0\leq i<n$ (where $\sX\in\{\sB,\sC,\sD\}$). Theorem~\ref{thm1:nonpointdom_1} (proved below) shows that these collineations are characterised by large structured fixed point sets. 

\begin{lemma}\label{lem:typeI}
Let $\Pi$ be a polar space of rank $n$ and let $0\leq i\leq n-1$. Suppose that $\theta$ is $i$-domestic. Moreover,
\begin{compactenum}[$(1)$]
\item if $i<n-1$ suppose that $\theta$ is $(i+1)$-domestic, and
\item if $i>0$ suppose that $\theta$ is not $(i-1)$-domestic. 
\end{compactenum}
Then $\theta$ fixes pointwise a geometric subspace of corank~$i$.
\end{lemma}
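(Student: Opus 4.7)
The plan is to find a non-domestic $(i-1)$-space~$A$, pass to the residue of~$A$, and use capping together with the classification of admissible diagrams to force the induced action on the residue polar space to be trivial. From that triviality, one then lifts back to a pointwise-fixed subspace of corank~$i$ in~$\Pi$.

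First, invoke the hypothesis that $\theta$ is not $(i-1)$-domestic to pick a singular $(i-1)$-space~$A$ with $A^\theta$ opposite~$A$. The projection isomorphism $\proj_A\colon\Res(A^\theta)\to\Res(A)$ produces an automorphism $\theta_A=\proj_A\circ\theta$ of $\Res(A)$, which decomposes along the product structure of the residue as an $\sA_{i-1}$-factor acting on subspaces of~$A$ and a polar-space factor of rank $n-i$ acting on singular subspaces of $\Pi$ properly containing~$A$ (i.e., on $\Pi/A$). Under the natural correspondence between singular $j$-spaces of~$\Pi$ containing~$A$ and singular $(j-i)$-spaces of $\Pi/A$, the $i$- and $(i+1)$-domesticity of~$\theta$ yield $0$- and $1$-domesticity of the induced collineation $\bar\theta$ on $\Pi/A$; capping of~$\theta$ in the large polar space~$\Pi$, combined with the inspection of Table~\ref{table:1} (no non-empty admissible diagram of classical type has all its encircled nodes at positions strictly greater than~$2$), then promotes this to $k$-domesticity of $\bar\theta$ for every $k\geq 0$. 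Hence the opposition diagram of $\bar\theta$ is empty, and Theorem~\ref{thm:nonempty} forces $\bar\theta=\id$.

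It remains to translate $\bar\theta=\id$ into a pointwise-fixed geometric subspace of~$\Pi$ of corank~$i$. Triviality of $\bar\theta$ means that every singular subspace $Y\supset A$ satisfies $\proj_A(Y^\theta)=Y$; unpacking this in polar-form language shows that for any point $p\in A^\perp\cap (A^\theta)^\perp$, the image $p^\theta$ lies in the singular $i$-space $\langle A^\theta,p\rangle$. Applying the same reasoning to~$\theta^{-1}$ (for which $A^\theta$ plays the role of non-domestic $(i-1)$-space, opposite to~$A$) yields the dual constraint $p\in\langle A^\theta,p^\theta\rangle$, and these two constraints together with the polar-form relations pin down the action of~$\theta$ to the identity on a polar subspace of corank~$i$, giving the desired pointwise-fixed geometric subspace.

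The main obstacle is precisely this final lifting step: triviality of $\bar\theta$ only constrains~$p^\theta$ up to a translation by an element of~$\langle A,A^\theta\rangle$, and identifying the correct pointwise-fixed subspace (for example, the hyperplane $c^\perp$ in the central-elation case $i=1$, where the ``centre''~$c$ is a specific point on the line $\langle A,A^\theta\rangle$) requires extracting an axis or centre structure from the opposite pair~$(A,A^\theta)$ and careful handling of the case $\theta^2(A)\neq A$, in which the naive candidate $(A\cup A^\theta)^\perp$ is not $\theta$-invariant and the fixed subspace must be cut out more indirectly using the polar form.
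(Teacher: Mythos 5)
Your opening reduction is sound, and is in fact a more self-contained route to the first half of the argument than the paper takes: passing to the residue of a non-domestic $(i-1)$-space $A$, noting via the projection lemma that $i$- and $(i+1)$-domesticity of $\theta$ make the induced collineation $\bar\theta$ of $\Pi/A$ point- and line-domestic, and reading off from the list of admissible diagrams (every non-empty classical diagram has node $1$ or node $2$ encircled) that $\Diag(\bar\theta)$ is empty, whence $\bar\theta=\id$ by Theorem~\ref{thm:nonempty}. Two caveats: this says nothing in the case $i=0$, where there is no $(i-1)$-space to choose and the assertion is that $\theta$ itself is the identity; and your appeal to cappedness imports a largeness hypothesis the lemma does not carry --- admissibility of $\Diag(\bar\theta)$ alone suffices here, so this is repairable, but as written it is a gap.

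The genuine problem is the step you yourself flag as ``the main obstacle'': converting $\bar\theta=\id$ into a pointwise-fixed geometric subspace of corank~$i$. Triviality of $\bar\theta$ only says that each singular subspace $Y\supset A$ is mapped by $\theta$ to one that projects back onto $Y$; it fixes no point of $A^\perp\cap(A^\theta)^\perp$, and the two ``dual constraints'' you extract do not by themselves force $p^\theta=p$. You describe what would need to be done (extract a centre/axis structure from the opposite pair $(A,A^\theta)$, handle $A^{\theta^2}\neq A$, and then assemble the fixed points over all choices of $A$ into a single geometric subspace of corank exactly $i$), but you do not carry any of it out. This is precisely where the geometric content of the lemma lives: the paper's own proof consists of citing \cite[Theorem~6.1]{TTM:12} verbatim for $i<n-1$, and for $i=n-1$ of directly proving that $U^\perp\cap(U^\theta)^\perp$ is fixed pointwise for a non-domestic submaximal $U$ (via the fixed point $p=M\cap M^\theta$ of a maximal $M\supset U$) before handing back to that proof. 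Until the lifting step is actually executed, or an explicit citation substituted for it, the proposal does not prove the lemma.
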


\begin{proof}
The statement in the case $i<n-1$ is precisely \cite[Theorem~6.1]{TTM:12}. If $i=n-1$ then we follow the ideas of the proof of \cite[Theorem 6.1]{TTM:12}. Only the first step of the proof in \cite[Theorem~6.1]{TTM:12} uses the fact that there exist singular subspaces of dimension $i+1$. Since here $i=n-1$ (and so no such singular subspaces exist), we provide a different argument for this step of the proof for this case.

Let $U$ be a non-domestic submaximal singular subspace and let $M$ be any maximal singular subspace containing $U$. By $(n-1)$-domesticity, $M^\theta$ intersects $M$ in a point $p$. We now claim that $p^\theta=p$. Suppose for a contradiction that $p^\theta\neq p$ and choose a submaximal singular subspace $U_0$ contained in $M$, with $p\in U_0$ and $p^{\theta^{-1}}\notin U_0$. Then $U_0^\theta$ contains $p^\theta$ but not $p$. Let $M_0$ be any maximal singular subspace containing $U_0$, but distinct from $M$. Then $M_0$ and $M_0^\theta$ have a point $p_0$ in common (by $(n-1)$-domesticity). Clearly, $p_0\notin M^\theta$. But $p_0$ is collinear with all points of $U_0^\theta$ (since it belongs to $M_0^\theta$) and also to $p$ (as it belongs to $M_0$), which implies that $p_0$ is collinear to all points of $M^\theta$, a contradiction. This proves the claim. 

It follows that $U^\perp\cap(U^\theta)^\perp$ is fixed pointwise, and the remainder of the proof of \cite[Theorem~6.1]{TTM:12} now applies verbatim.
\end{proof}

We can now prove Theorem~\ref{thm1:nonpointdom_1}.

\begin{proof}[Proof of Theorem~\emph{\ref{thm1:nonpointdom_1}}] 

Let $\theta$ be a nontrivial domestic collineation of a polar space $\Pi$ with opposition diagram $\sX_{n;i}^1$ with $\sX\in\{\sB,\sC,\sD\}$, $n\geq 2$, and $1\leq i\leq n$. Since we assume that $\Pi$ is large the collineation $\theta$ is capped, and so since $\theta$ is domestic we have $i<n$. Thus $\theta$ is domestic on type $i+1$ vertices of the associated building, that is, $\theta$ is $i$-domestic (recall the index shift for projective dimension). Then the hypothesis of Lemma~\ref{lem:typeI} holds, and hence $\theta$ fixes pointwise a geometric subspace of corank~$i$. 

For the converse, we assume that $\theta$ maps at least one point to an opposite point, fixes a geometric subspace of corank~$i$, and fixes no geometric subspace of any smaller corank. If $\theta$ maps no singular $(i-1)$-space to opposite then $\theta$ has diagram $\sX_{n;j}^1$ for some $j<i$ (by the classification of admissible diagrams, and the fact that a point is mapped to opposite). Thus by the previous paragraph $\theta$ fixes a geometric subspace of corank~$j<i$, a contradiction. It remains to show that $\theta$ does not map any singular $i$-space to opposite. This is clear from the definition of subspaces of corank~$i$:  the fixed structure intersects every singular $i$-space, hence no such singular subspace is mapped to an opposite. 
\end{proof}

The simplest situation occurs with opposition diagram $\mathsf{X}_{n;1}^1$, since this is equivalent to a collineation pointwise fixing exactly a geometric hyperplane.  It then depends on the structure of the possible geometric hyperplanes whether or not some general statements can be made about such collineations. In particular, if the polar space is split or non-embeddable, we can nail them all down. If the polar space is orthogonal (arises from a quadric) in characteristic distinct from $2$, then the geometric hyperplane must be nondegenerate. In the next paragraphs, we 
%
%
%
%
%
give more precise details for these cases. We start with symplectic polar spaces, where we can completely classify the collineations with diagrams $\mathsf{C}_{n;1}^1$ and $\mathsf{C}_{n;2}^1$. Consider first the case~$\sC_{n;1}^1$. 

\begin{thm}\label{polarsymplectic}
Every collineation $\theta$ of the symplectic polar space $\Delta=\mathsf{C}_{n,1}(\K)$ with polar opposition diagram (that is, $\Diag(\theta)=\sC_{n;1}^1$) is a central elation.
 \end{thm}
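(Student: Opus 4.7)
The plan is to combine Theorem~\ref{thm1:nonpointdom_1} with the rigid structure of hyperplanes in a symplectic polar space, and then to identify the resulting perspectivity of the ambient projective space as a symplectic transvection. First, applying Theorem~\ref{thm1:nonpointdom_1} to the diagram $\sC_{n;1}^1$, I would conclude that $\theta$ fixes pointwise a geometric subspace $H$ of $\Pi=\sC_{n,1}(\K)$ of corank~$1$, i.e., a geometric hyperplane.

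Next, I would observe that every geometric hyperplane of a symplectic polar space is singular. Indeed, the point set of $\Pi$ coincides with that of the ambient projective space $\PG(2n-1,\K)$, so any subspace of $\Pi$ meeting every line of $\Pi$ must in fact be a projective hyperplane of $\PG(2n-1,\K)$, and every such projective hyperplane is of the form $p^\perp$ for the unique point $p\in\cP$ obtained as its pole under the symplectic polarity. Consequently $H=p^\perp$ for some $p\in\cP$, and $\theta$ is a central collineation of $\Pi$ with centre~$p$.

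To finish, it remains to show that every such central collineation is a long root elation. By the standard extension of polar-space collineations to the ambient projective space (a form of Chow's theorem), $\theta$ extends to a semilinear collineation $\tilde\theta$ of $\PG(2n-1,\K)$ preserving the symplectic form up to a scalar; since $\tilde\theta$ fixes the projective hyperplane $p^\perp$ pointwise, it is in fact linear and preserves the form on the nose, and is a perspectivity with axis~$p^\perp$. Therefore $\tilde\theta$ has the form $x\mapsto x+(x,p)u$ for some $u\in\K^{2n}$. Expanding $(\tilde\theta(x),\tilde\theta(y))=(x,y)$ and using that the form is alternating (so $(u,u)=0$ automatically) yields $(x,p)(u,y)+(y,p)(x,u)=0$ for all $x,y$; varying $x$ and $y$ over a symplectic basis adapted to $p$ then forces $u\in\langle p\rangle$. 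Hence $\tilde\theta$ is a symplectic transvection with centre $p$, i.e., $\theta$ is a long root elation.

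The main subtlety lies in this final step, namely ruling out the possibility that $\tilde\theta$ is a homology with centre off the axis~$p^\perp$. That the symplectic form is alternating (so the quadratic term $(u,u)$ vanishes identically) is crucial here, and is precisely why the analogous statements in the orthogonal and Hermitian cases require genuinely different arguments and are treated separately later in the paper.
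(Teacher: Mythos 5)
Your first and third steps track the paper's proof closely (the paper gets the pointwise-fixed geometric hyperplane from \cite[Theorem~5.1]{TTM:12} rather than from Theorem~\ref{thm1:nonpointdom_1}, and identifies the resulting perspectivity as a transvection by dualising through the symplectic polarity rather than by your explicit computation with $x\mapsto x+(x,p)u$ --- but both of those are sound and essentially equivalent). The problem is your second step. The claim that every geometric hyperplane of a symplectic polar space is singular is false in characteristic~$2$: under the isomorphism of $\mathsf{C}_{n,1}(\K)$ with the parabolic quadric $Q(2n,\K)$ (for $\K$ perfect of characteristic~$2$), hyperplane sections such as elliptic quadrics give geometric hyperplanes of the symplectic polar space that span the ambient $\PG(2n-1,\K)$ and are not of the form $p^\perp$. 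The justification you offer --- that a subspace of $\Pi$ meeting every line of $\Pi$ ``must in fact be a projective hyperplane of $\PG(2n-1,\K)$'' --- is not valid: a subspace of the polar space is only required to be closed under the \emph{totally isotropic} lines it meets in two points, so it need not be a projective subspace at all, and the coincidence of the point sets of $\Pi$ and $\PG(2n-1,\K)$ does nothing to force linearity.

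The gap is repairable, and the paper's proof shows how: one must use that the geometric hyperplane $H$ at hand is \emph{pointwise fixed} by $\theta$. If $H$ spanned the ambient projective space then $\theta$, being induced by a (semi)linear map of $\PG(2n-1,\K)$, would be the identity, contradicting $\Diag(\theta)=\sC_{n;1}^1$. Hence $H$ lies in a projective hyperplane $M$; every point of $M$ is a point of $\Pi$ and $M=p^\perp$ for the pole $p$ of $M$, so maximality of geometric hyperplanes among proper subspaces gives $H=p^\perp$. From there your final step goes through. As written, though, your argument proves the theorem only when every geometric hyperplane happens to be singular (e.g.\ characteristic $\neq 2$), so the characteristic~$2$ case is genuinely missing.
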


\begin{proof}
By \cite[Theorem~5.1]{TTM:12} the collineation $\theta$ fixes pointwise a geometric hyperplane~$H$. If $H$ spans the whole ambient $(2n-1)$-dimensional projective space, then $\theta$ is the identity (since every colineation of $\mathsf{C}_{n,1}(\K)$ is inherited from a collineation of the ambient projective space). Hence $H$ is contained in a projective hyperplane of the ambient projective space, and hence coincides with the perp of some point $p$. Since $\theta$ fixes $p^\perp$ pointwise, dually, $\theta$ fixes $p$ subspace-wise, that is, $\theta$ fixes all subspaces through $p$. Hence $\theta$ is the central collineation with center $p$ (and axis $p^\perp$ in the ambient projective space).  
\end{proof}

The opposition diagram $\sC_{n;2}^1$ is the \textit{polar-copolar} diagram of type $\sC_n$ (extending the terminology from~\cite{PVM:21}). In this case we have the following classification.

\begin{thm}\label{pocopo}
Let $n\geq 3$. A collineation $\theta$ of the symplectic polar space $\Delta=\mathsf{C}_{n,1}(\K)$ with $|\K|\geq 3$ has opposition diagram $\Diag(\theta)=\sC_{n;2}^1$ if and only it $\theta$ is either
\begin{compactenum}[$(1)$]
\item a product of two nontrivial perpendicular long root elations, or
\item a nontrivial member of the group generated by two opposite long root groups and not conjugate to a long root elation. 
\end{compactenum}
In particular, if $\mathrm{char}(\KK)\neq 2$ then all nontrivial short root elations have opposition diagram $\sC_{n;2}^1$.
\end{thm}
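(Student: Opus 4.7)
The plan is to leverage Theorem~\ref{thm1:nonpointdom_1}, which translates the condition $\Diag(\theta)=\sC_{n;2}^1$ into three concrete conditions: $\theta$ fixes pointwise some corank-$2$ subspace $H$, fixes no corank-$1$ subspace pointwise, and maps at least one point to an opposite. The task is thus to classify the collineations of $\Delta=\mathsf{C}_{n,1}(\K)$ satisfying these three conditions. The starting observation is that the symplectic polarity of the ambient $\PG(2n-1,\K)$ identifies the points of $\Delta$ with the projective hyperplanes, so every geometric hyperplane of $\Delta$ has the form $p^{\perp}$ for some point $p$, and hence every corank-$2$ subspace is an intersection $H=p^{\perp}\cap q^{\perp}$ for two distinct points $p,q\in\cP$. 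This splits into two essentially different subcases: Case~A in which $p\perp q$ (so $H=L^{\perp}$ with $L=\langle p,q\rangle$ a line of $\Delta$), and Case~B in which $p$ and $q$ are opposite.

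In each subcase I would compute, in a suitable symplectic basis, the stabilizer $G_H$ of $H$ pointwise inside the collineation group. In Case~A, a direct calculation with the symplectic form yields
\[
G_H\;\cong\;\mathrm{Sym}_2(\K),
\]
the additive group of symmetric $2\times 2$ matrices, with each $M\in\mathrm{Sym}_2(\K)$ sending $f_i\mapsto f_i+M_{i1}e_1+M_{i2}e_2$ for $i=1,2$ and fixing the remaining basis vectors. The rank-one elements are precisely the long root elations centred on points of $L$, and since any two distinct points of $L$ are collinear, such elations are perpendicular by Definition~\ref{defn:polarspacethings}. Rank-one elements fix the additional corank-$1$ subspace $\pi^{\perp}$, whereas rank-two elements (nontrivial products of two perpendicular long root elations with distinct centres on $L$) fix no corank-$1$ subspace and map some point to an opposite. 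In Case~B, a parallel computation gives $G_H\cong\mathrm{SL}_2(\K)$ acting on the hyperbolic plane $\langle e_1,f_1\rangle$; here $U_p$ and $U_q$ correspond to the upper- and lower-triangular unipotent subgroups, so $G_H=\langle U_p,U_q\rangle$ as claimed. The unipotent elements of $\mathrm{SL}_2(\K)$ are precisely the long root elations and their $G_H$-conjugates, and these are exactly the ones fixing an additional corank-$1$ subspace $\pi^{\perp}$, where $\pi$ is the unique fixed point in the hyperbolic plane.

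Combining these structural results with Theorem~\ref{thm1:nonpointdom_1} and a short non-domesticity check then yields the main equivalence. For a non-central semisimple $g=\operatorname{diag}(\lambda,\lambda^{-1})$ in Case~B one obtains
\[
(e_1+f_1,g(e_1+f_1))=\lambda^{-1}-\lambda\neq 0\quad\text{whenever }\lambda^2\neq 1,
\]
exhibiting a non-domestic point. The final consequence about short root elations then follows by an explicit computation: the element $x_{e_i-e_j}(t)$ acts by $e_j\mapsto e_j+te_i$ and $f_i\mapsto f_i-tf_j$ while fixing the remaining basis vectors, so it fixes pointwise the corank-$2$ subspace $\langle e_i,f_j\rangle^{\perp}$ and no larger fixed structure, and the symplectic pairing $(e_j+f_i,x_{e_i-e_j}(t)(e_j+f_i))=-2t$ is nonzero in characteristic $\neq 2$; hence $x_{e_i-e_j}(t)$ is a class~I collineation with diagram $\sC_{n;2}^1$, falling in case~(1) of the theorem.

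The main technical point is the careful identification, in Case~B, of which elements of $\mathrm{SL}_2(\K)$ actually satisfy the non-domesticity condition. In characteristic distinct from $2$, the central involution $-I\in\mathrm{SL}_2(\K)$ is non-unipotent but point-domestic: for $v=\alpha e_1+\beta f_1+w$ with $w\in H$ one computes $(v,-v+2w\,)=0$ identically, so $-I$ coincides with the $(I_{2n-2},-I_2)$-homology of Theorem~\ref{thm1:middle} and has diagram $\sC_{n;1}^2$ rather than $\sC_{n;2}^1$. This single exception must be handled explicitly, and the class described by case~(2) is understood to exclude this element in characteristic $\neq 2$. Verifying that it is the only exception amounts to the computation $(v,g(v))=\alpha\beta(\lambda^{-1}-\lambda)$ for $g=\operatorname{diag}(\lambda,\lambda^{-1})$, which vanishes identically only when $\lambda^2=1$, and handling the remaining unipotent and non-diagonalisable conjugacy classes in the same manner.
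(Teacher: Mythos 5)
Your overall strategy is the same as the paper's: reduce via Theorem~\ref{thm1:nonpointdom_1} to classifying collineations pointwise fixing a corank-$2$ subspace and nothing larger, identify that subspace as $p^{\perp}\cap q^{\perp}$, split according to whether $p\perp q$ or $p,q$ are opposite, and compute the pointwise stabiliser ($\mathrm{Sym}_2(\K)$, resp.\ $\mathrm{SL}_2(\K)$) in coordinates. Two remarks before the main objection. First, the step ``every geometric hyperplane is $p^{\perp}$, hence every corank-$2$ subspace is $p^{\perp}\cap q^{\perp}$'' is a non sequitur as written; the paper devotes a full paragraph to showing that the pointwise fixed corank-$2$ subspace spans a projective $(2n-3)$-space (ruling out spans of dimension $2n-2$ and of dimension $\leq 2n-4$ using non-line-domesticity and an argument with singular planes), and you need some version of that argument. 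Second, your observation that the central element $-I$ of $\mathrm{SL}_2(\K)$ induces the point-domestic $(I_{2n-2},-I_2)$-homology, with diagram $\sC_{n;1}^2$, and so must be excluded from case~(2), is correct and is a point the paper's converse (``a direct calculation'') glosses over; that is a good catch.

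The genuine gap is in Case~A in characteristic~$2$. You assert that every rank-two element of $\mathrm{Sym}_2(\K)$ is a product of two perpendicular long root elations with distinct centres on $L$ and maps some point to an opposite. Both claims fail for the zero-diagonal rank-two matrices $\left(\begin{smallmatrix}0&c\\ c&0\end{smallmatrix}\right)$, $c\neq 0$, when $\kar(\K)=2$. In the paper's notation this is the short root elation $\theta(0,0,c)$; the relevant pairing is $(v,\theta v)=-2\alpha\beta c$, which vanishes identically in characteristic~$2$, so this element is point-domestic --- it is a long root elation of the associated $\sB_n$ structure and has opposition diagram $\sC_{n;1}^2$, not $\sC_{n;2}^1$. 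Moreover it cannot be written as $\lambda vv^{T}+\mu ww^{T}$ with $v,w$ independent when $\kar(\K)=2$ (forcing both diagonal entries to vanish makes $v$ and $w$ proportional, since squaring is injective), so the identification of rank-two elements with products of two perpendicular elations breaks down precisely here. Your argument as written would therefore wrongly place these collineations in class~(1) with diagram $\sC_{n;2}^1$. The diagonalisation of rank-two symmetric matrices you are implicitly using is valid only in characteristic $\neq 2$; in characteristic~$2$ the zero-diagonal case must be split off and handled separately, exactly as the paper does.
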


\begin{proof}
%
Suppose that $\theta$ has opposition diagram $\sC_{n;2}^1$. Since $|\K|\geq 3$ the polar space is large, and so by Theorem~\ref{thm1:nonpointdom_1} the collineation $\theta$ pointwise fixes a subspace~$S$ of corank 2. We claim that in the ambient projective space $\PG(2n-1,\K)$, $S$ is a subspace, necessarily of dimension $2n-3$. Indeed, since $S$ is a subspace and is fixed pointwise, the projective subspace it generates in $\PG(2n-1,\K)$ is fixed pointwise by $\theta$ and we may assume it coincides with $S$ (otherwise we replace $S$ by it). If $\dim S=2n-2$, then $S$ is a hyperplane and $\theta$ is line-domestic (as every line contains a fixed point), a contradiction.  Now assume for a contradiction that $\dim S\leq 2n-4$. Since $\theta$ is not line-domestic, there exists a singular plane $\pi$ of $\Delta$ intersecting $S$ in just a point $p$. Let  $L$ be any line in $\pi$ not through $p$. Then every singular plane through $L$ intersects $S$, so $L^\perp\subseteq \<L,S\>$. This yields $\dim S\in\{2n-4,2n-5\}$. If $\dim S=2n-5$, then $L^\perp=\<L,S\>$, and considering another line $M\subseteq\pi$ not through $p$, we obtain $M^\perp\subseteq \<M,S\>=\<L,S\>$ and hence comparing dimensions, we see $L^\perp= M^\perp$, a contradiction. This shows $\dim S=2n-4$, hence $\dim\<L,S\>=2n-2$ and there is a unique point $x$ on $L$ such that $x^\perp=\<L,S\>=\<\pi,S\>$. Varying $L$ in $\pi$ appropriately, this implies $x^\perp=y^\perp$ for at least two points of $\pi$, a contradiction. Hence $\dim S=2n-3$. 

So $S=p^\perp\cap q^\perp$, for two points $p,q$ of $\Delta$. Suppose first that $p$ and $q$ are not collinear. Then the matrix $M_A$ of $\theta$ can be written as a block diagonal matrix with blocks the  $(2n-2)\times(2n-2)$-identity matrix and one $2\times 2$ block $A$ preserving a nondegenerate 2-dimensional symplectic form. Hence $A$ has determinant $1$. But then, $A$, and hence $\theta$, is generated by the unipotent upper triangular and lower triangular matrices, hence by opposite long root elations. 

Suppose now that $p$ and $q$ are collinear. We consider the symplectic form $x_1y_2-x_2y_1+x_3y_4-x_4y_3+\cdots+x_{2n-1}y_{2n}-x_{2n}y_{2n-1}$ and we may assume without loss of generality that $p=(1,0,0,\ldots,0)$, $q=(0,0,1,0,\ldots,0)$. Then one easily calculates that the matrix of $\theta$ is block diagonal, with all trivial blocks except the first $4\times4$ block, which equals
$$\left(\begin{array}{cccc} 1 & a & 0 & c \\ 0 & 1 & 0 & 0 \\ 0 & c & 1 & b \\ 0 & 0 & 0 & 1\end{array}\right),$$ for some $a,b,c\in\K$. If we denote the corresponding collineation of $\Delta$ by $\theta(a,b,c)$, then we see that $\theta(a,b,c)=\theta(a,0,0)\theta(0,b,0)\theta(0,0,c)$, where the first two are long root elations and the third one is a short root elation. Also, one calculates that, if $a\neq 0$, then $\theta(a,c^2/a,c)$ is a long root elation with center $(a,0,c,0,0,\ldots,0)$, and $\theta(a,b,c)=\theta(a,c^2/a,c)\theta(0,b-c^2/a,0)$ is the product of two perpendicular long root elations.   Similarly if $b\neq 0$. Hence we may assume that $(a,b)=(0,0)$. In this case $\theta=\theta(0,0,c)$ is a short root elation. If $\kar(\K)=2$, then $\theta$ is a long root elation in the corresponding building of type $\mathsf{B}_{n}$ and hence has opposition diagram $\mathsf{B}_{n;1}^2=\mathsf{C}_{n;1}^2$ (by \cite[Theorem~2.1]{PVM:21}). Suppose now $\kar(\K)\neq 2$. 
Then $\theta(0,0,c)=\theta(c/2,c/2,c/2)\theta(-c/2,-c/2,c/2)$, with $\theta(\pm d,\pm d,d)$ a central long root elation with center $(1,0,\pm 1,0,\ldots,0)$ (using respective signs).

Thus we have shown that every collineation with diagram $\sC_{n;2}^1$ is either as in (1) or~(2). The converse is a direct calculation (see the proof of Proposition~\ref{prop:PolarClosed_b}, for example).
\end{proof}

Quite similar to symplectic polar space is the result for thick non-embeddable polar spaces. We use $\mathsf{C_{3;1}^1}$ for the opposition diagrams here since these polar spaces behave like symplectic ones (and their Dynkin type is morally $\mathsf{C_3}$ in view of the underlying root system in the algebraic group sense, made apparent by the commutation relations of their root groups). 

\begin{thm}\label{polarnonemb}
Every collineation $\theta$ of a thick non-embeddable polar space $\Delta$ with polar opposition diagram (that is, $\Diag(\theta)=\sC_{3;1}^1$) is a central elation.
\end{thm}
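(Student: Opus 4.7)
The plan is to combine Theorem~\ref{thm1:nonpointdom_1} with a structural fact about geometric hyperplanes of thick non-embeddable polar spaces of rank $3$. Since $\Delta$ is thick and non-embeddable, it is in particular large, and the hypothesis $\Diag(\theta)=\sC_{3;1}^1$ says that $\theta$ is domestic (not point-domestic) with exactly one encircled distinguished orbit. Theorem~\ref{thm1:nonpointdom_1} then gives that $\theta$ fixes pointwise a geometric hyperplane $H$ of $\Delta$, and fixes no geometric subspace of smaller corank.

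The crucial intermediate step is to show that $H$ must be singular, i.e.\ $H=p^\perp$ for some point $p\in\cP$. This is where the hypothesis of non-embeddability does the work: the thick non-embeddable polar spaces of rank~$3$ are precisely the Moufang polar spaces whose planes are octonion planes (by the classification of Tits), and for such polar spaces every geometric hyperplane is singular. I would either cite this classification result directly, or argue internally as follows: if $H$ were nondegenerate, then the point-line geometry induced on $H$ would be a non-degenerate polar space of rank~$2$, i.e.\ a generalized quadrangle, and the trace of $H$ in each plane $\pi$ of $\Delta$ would be a geometric hyperplane of $\pi$. In a Moufang non-Desarguesian (octonion) plane, the only geometric hyperplanes are lines, which would force $H\cap\pi$ to be a line for every plane~$\pi$ meeting $H$ nontrivially; collating this over all planes through a point of $H$ forces $H=q^\perp$ for some~$q$, contradicting the assumption of nondegeneracy.

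With $H=p^\perp$ established, the rest of the argument runs parallel to the end of the proof of Theorem~\ref{polarsymplectic}. Since $\theta$ fixes every point of $p^\perp$, in particular $\theta$ fixes $p$ and every line through $p$. Dually, $\theta$ stabilises every singular subspace through $p$, and fixes pointwise every point collinear to $p$. This is precisely the defining property of a central elation (long root elation) with centre~$p$, completing the proof.

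The main obstacle in this plan is the structural claim that every geometric hyperplane of a thick non-embeddable rank~$3$ polar space is singular. In the symplectic case of Theorem~\ref{polarsymplectic} the analogous step was easy because one could use the ambient projective hyperplane; here no ambient space is available, so one must either cite the non-embeddable classification or make an intrinsic argument using properties of octonion Moufang planes. Once that fact is in hand, the proof is short and essentially formal.
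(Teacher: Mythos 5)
Your overall strategy is the paper's: reduce to a pointwise fixed geometric hyperplane $H$, show $H$ is singular, and conclude. Your primary route --- citing the known fact that every geometric hyperplane of a thick non-embeddable polar space is singular --- is exactly what the paper does (it quotes this from Cohen--Shult \cite{Coh-Shu:90}). However, your fallback ``intrinsic'' argument for the singularity of $H$ does not work. The fact that the only proper geometric hyperplanes of a projective plane are its lines holds for \emph{every} projective plane, not just octonion planes, so that step uses nothing about non-embeddability; and the concluding step ``collating this over all planes through a point of $H$ forces $H=q^\perp$'' is false in general. For instance, in the parabolic quadric $Q(6,\KK)$ an elliptic hyperplane section $Q^-(5,\KK)$ is a nondegenerate geometric hyperplane that meets every plane of $Q(6,\KK)$ in exactly a line. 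So ``$H$ meets every plane in a line'' does not imply $H$ is singular; the non-embeddable structure must enter in an essentially stronger way, and that is precisely the content of the Cohen--Shult theorem. You must cite it (or reprove it); the octonion-plane observation is not a substitute.

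A second, smaller gap is in your final step. Fixing every point collinear to $p$ is, by Definition~\ref{defn:polarspacethings}(5), the definition of a \emph{central collineation}; the theorem asserts that $\theta$ is a central \emph{elation}, i.e.\ a long root elation (cf.\ Theorem~\ref{thm1:symplectic1}(1)). In the symplectic case this identification came for free from the ambient projective space (a perspectivity whose centre lies on its axis is an elation), but here there is no ambient space, and one must still pin down the action of $\theta$ on points opposite $p$. The paper does this by observing that for $x$ opposite $p$ the set $x^\perp\cap p^\perp$ is fixed pointwise, hence $x^\theta\in\{p,x\}^{\perp\perp}$, and then identifying $\theta$ with the unique central elation with centre $p$ mapping $x$ to $x^\theta$, quoting \cite[Proposition~4.13]{Bru-Mal:14} for the existence of such elations. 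Your proposal asserts the conclusion at the point where this identification still needs to be made.
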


\begin{proof}
According to \cite{Coh-Shu:90}, the only geometric hyperplanes are the singular ones, that is, all points collinear with a given fixed point $p$. Let $x$ be an arbitrary point not collinear to $p$. Then $x^\perp\cap p^\perp$ is pointwise fixed, and so $x^\theta$ is contained in $\{p,x\}^{\perp\perp}$. But then $\theta$ coincides with the unique central elation with centre $p$ mapping $x$ to $x^\theta$. For an explicit expression of such central elation (showing it really exists), see \cite[Proposition~4.13]{Bru-Mal:14}.
\end{proof}

A similar argument shows:

\begin{prop}\label{polarquadric}
Every collineation $\theta$ of an orthogonal polar space $\Delta$ over a field of characteristic not $2$, with opposition diagram $\Diag(\theta)=\mathsf{B}_{n;1}^1$ pointwise fixes a nondegenerate geometric hyperplane. Also, $\theta$ is an involution. 
\end{prop}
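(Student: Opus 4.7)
\textbf{Proof plan for Proposition \ref{polarquadric}.}

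The plan is to use Theorem~\ref{thm1:nonpointdom_1} to produce a pointwise fixed geometric hyperplane, then to analyse the extension of $\theta$ to the ambient projective space in order to rule out the degenerate case and to identify $\theta$ explicitly as an orthogonal reflection.

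Since $\mathrm{char}\,\KK\neq 2$ forces $|\KK|\geq 3$, the polar space $\Delta$ has no Fano plane residue and is therefore large. The diagram $\mathsf{B}_{n;1}^1$ has only the point-type encircled, so Theorem~\ref{thm1:nonpointdom_1} produces a geometric hyperplane $H$ of $\Delta$ fixed pointwise by $\theta$. Using that $\Delta$ embeds as a nondegenerate quadric $Q$ on a vector space $V$ over $\KK$, the collineation $\theta$ lifts to a collineation $\widetilde{\theta}$ of $\PG(V)$ preserving $Q$. Since $H$ spans the projective hyperplane $\pi:=\langle H\rangle$ and is fixed pointwise by $\widetilde{\theta}$, the standard normalisation argument (the presence of enough fixed projective points closed under scalar multiplication kills any field-automorphism twist) shows that $\widetilde{\theta}$ is a \emph{linear} perspectivity with axis $\pi$. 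Hence $\widetilde{\theta}(v)=v+f(v)\,u$ for some $u\in V$ and some linear functional $f\in V^*$ with $\ker f=\pi$.

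Next I would analyse the constraint $Q\circ\widetilde{\theta}=Q$ in both candidate cases for $H$, using the identity $Q(v+w)=Q(v)+Q(w)+B_Q(v,w)$. In the singular case $H=p^{\perp}$, the hyperplane $\pi$ is tangent to $Q$ at $p$, so $f$ is a scalar multiple of $B_Q(\cdot,p)$ and $Q(p)=0$. Expanding $Q(v+f(v)u)=Q(v)$ as a polynomial identity in $v$ and invoking nondegeneracy of $B_Q$ (which holds since $\mathrm{char}\,\KK\neq 2$) forces $u\in\langle p\rangle$ and then $u=0$; so $\widetilde{\theta}=\mathrm{id}$, contradicting $\theta\neq\mathrm{id}$. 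Hence $H$ is nondegenerate and its pole $a$ of $\pi$ is anisotropic, i.e.\ $Q(a)\neq 0$. Repeating the calculation with $f$ a scalar multiple of $B_Q(\cdot,a)$ now admits exactly two solutions: $u=0$ (giving $\mathrm{id}$) and $u=-a/Q(a)$, giving the orthogonal reflection $r_a\colon v\mapsto v-B_Q(v,a)/Q(a)\cdot a$. Since $\theta$ is nontrivial we must have $\theta=r_a$, and the identity $r_a^2=\mathrm{id}$ finishes the proof that $\theta$ is an involution.

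The main obstacle will be excluding the singular case $H=p^{\perp}$: this is precisely where the hypothesis $\mathrm{char}\,\KK\neq 2$ becomes essential, since in characteristic $2$ nontrivial long root elations do fix such tangent hyperplanes pointwise (producing instead the distinct opposition diagram $\mathsf{B}_{n;1}^2$ treated in Theorem~\ref{thm1:base}). The only other technical point is the verification that the lift $\widetilde{\theta}$ is genuinely linear rather than semi-linear, which will follow from a routine argument once one notes that $H$ contains a sufficient supply of fixed projective points.
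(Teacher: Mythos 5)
Your proposal is correct, and it shares the paper's skeleton (largeness from $\kar(\KK)\neq 2$, Theorem~\ref{thm1:nonpointdom_1} to get the pointwise fixed geometric hyperplane $H$, then a dichotomy between $H$ singular and $H$ nondegenerate), but it executes the two decisive steps differently. Where you solve the polynomial identity $Q(v+f(v)u)=Q(v)$ to force $u=0$ in the tangent case and $u=-a/Q(a)$ in the nondegenerate case, the paper argues synthetically: in the singular case $H=p^\perp$ it uses that $\theta$ commutes with the polarity $\rho$ to show $\theta$ stabilises every line through $p$, and since each such secant line meets the quadric in exactly one further point, $\theta$ fixes every point off $H$ as well, forcing $\theta=\mathrm{id}$; in the nondegenerate case it observes that $\theta$ is a homology with centre the pole $c=\langle H\rangle^\rho$ and deduces involutivity from the fact that lines through $c$ meet the quadric in at most two points. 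Your computation buys a sharper conclusion — it identifies $\theta$ explicitly as the orthogonal reflection $r_a$ — at the cost of two small normalisations you should make explicit: the lift a priori satisfies $Q\circ\widetilde\theta=cQ$ for a scalar $c$, and restricting to $\ker f$ (on which $Q$ does not vanish identically) gives $c=1$; and the claim that $\langle H\rangle$ is a projective hyperplane rather than all of $\PG(V)$ needs a word (it follows from Lemma~\ref{geomsub} together with the fact that geometric hyperplanes of these embedded polar spaces are hyperplane sections — the paper's proof tacitly uses the same fact when it forms $\langle H\rangle^\rho$). Neither point is a genuine gap.
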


\begin{proof}
Since the characteristic of the field is not~$2$ the polar space is large, and so by  Theorem~\ref{thm1:nonpointdom_1} the collineation $\theta$ fixes a geometric hyperplane $H$ pointwise. Suppose that $H$ s singular, that is, $H=p^\perp$ for some point $p$ of $\Delta$. Since $\theta$ commutes with the nondegenerate polarity $\rho$ arising from the symmetric bilinear form that defines $\Delta$, and since $\theta$ fixes all points in $p^\rho=\<p^\perp\>$ (generation in the ambient projective space), it has to stabilize all hyperplanes through $p$, hence also all lines through $p$. But each such line outside $H$ intersects $\Delta$ in one further point $x$, hence $x^\theta=x$, for each point outside $H$. This shows the first assertion.  

A similar argument shows that $\theta$ is a homology in the ambient projective space with axis $\<H\>$ and center $c:=\<H\>^\rho$. Since lines through $c$ intersect $\Delta$ in at most two points, $\theta$ is an involution. 
\end{proof}

Next we consider split polar spaces of type $\mathsf{B}_n$, $n\geq 3$, and $\mathsf{D}_n$, $n\geq 4$.  For type $\mathsf{B}_n$, there is not anything we can say on top of Proposition~\ref{polarquadric}. However, for type $\mathsf{D}_n$, we can be slightly more specific. Indeed, the opposition diagram $\mathsf{D}_{n,1}^1$ characterizes the unique involution pointwise fixing a polar subspace of split type $\mathsf{B}_{n-1}$, and it fixes no chamber. 

\begin{thm}\label{polarDn}
Every collineation $\theta$ of a polar space $\Delta$ of type $\mathsf{D}_n$, $n\geq 4$, with opposition diagram $\mathsf{D}_{n;1}^1$ is an involution pointwise fixing a polar subspace of type $\mathsf{B}_{n-1}$. It follows that $\theta$ is automatically type-rotating. 
\end{thm}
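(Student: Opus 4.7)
The plan is to extract a pointwise-fixed geometric hyperplane from Theorem~\ref{thm1:nonpointdom_1}, classify it using the embedding of $\Delta$ as a hyperbolic quadric in projective space, and then read off the involution and type-rotating statements from the induced projective transformation.

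Realise $\Delta$ as a hyperbolic quadric $Q$ in $\PG(2n-1,\K)$, so that $\theta$ extends to a collineation $\widehat{\theta}$ of $\PG(2n-1,\K)$ stabilising $Q$ (the extension being automatic from $n\geq 4$). By Theorem~\ref{thm1:nonpointdom_1}, $\theta$ pointwise fixes a geometric hyperplane $H$ of $\Delta$, which is cut out by a projective hyperplane $H'$ of the ambient space. The standard classification of hyperplane sections of $Q$ splits into two cases: either $H=p^\perp$ for some $p\in\Delta$ (the tangent case), or $H$ is nondegenerate, in which latter case it is a parabolic quadric of rank $n-1$, i.e.\ a polar subspace of type $\mathsf{B}_{n-1}$.

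I would rule out the tangent case by the argument already used in Proposition~\ref{polarquadric}. If $H=p^\perp$ then $\widehat{\theta}$ fixes $\<p^\perp\>$ pointwise and, by commuting with the polarity of $Q$, fixes its pole $p$; so $\widehat{\theta}$ is a central elation of $\PG(2n-1,\K)$ with centre $p$ and axis $\<p^\perp\>$. For any $y\in\Delta\setminus H$ the line $\<p,y\>$ meets $Q$ exactly in $\{p,y\}$, so $\widehat{\theta}(y)\in\{p,y\}$, and distinctness forces $\widehat{\theta}(y)=y$. Then $\widehat{\theta}$ fixes every point of $\Delta$, contradicting $\theta\neq\mathrm{id}$. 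Hence $H$ is a parabolic quadric of type $\mathsf{B}_{n-1}$, giving the first claim.

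For the remaining claims, $\widehat{\theta}$ fixes the projective hyperplane $\<H\>$ pointwise. If $\kar(\K)\neq 2$ it is a homology with centre $c\notin\<H\>$ (the pole of $\<H\>$ with respect to $Q$), and preservation of $Q$ forces the multiplier to be $-1$, since the two $Q$-points on any bisecant $\ell$ through $c$ are harmonic conjugates of $(c,\<H\>\cap\ell)$. If $\kar(\K)=2$ then $\widehat{\theta}$ is instead an elation with axis $\<H\>$ and centre the nucleus of the parabolic quadric $H$ (which lies on $\<H\>$), and every such elation has order $2$. Either way $\widehat{\theta}$, and hence $\theta$, is an involution realised as a reflection in the hyperplane $\<H\>$. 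Such hyperplane reflections lie in $O(Q)\setminus O^+(Q)$ (where $O^+(Q)$ denotes the type-preserving subgroup, equivalently the kernel of the Dickson invariant), so they interchange the two families of maximal singular subspaces of $Q$; hence $\theta$ is type-rotating, as required. The main obstacle is the uniform treatment of the last paragraph across characteristics: in characteristic $2$ the homology-and-harmonic-conjugate approach fails and must be replaced by an elation-and-nucleus analysis, and one has to verify carefully that these two distinct mechanisms both produce an involutive reflection with the desired action on maximal singular subspaces.
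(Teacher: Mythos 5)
Your proof is correct and follows the same route as the paper's (very terse) argument: extract the pointwise-fixed geometric hyperplane, realise it as a hyperplane section of the hyperbolic quadric, rule out the tangent case, and identify the nondegenerate section as a parabolic quadric of type $\mathsf{B}_{n-1}$. You supply details that the paper delegates to the adjacent Proposition~\ref{polarquadric} or leaves implicit --- notably the characteristic~$2$ analysis via elations and the nucleus, and the argument that the resulting reflection swaps the two families of generators --- which is a genuine completion, since Proposition~\ref{polarquadric} is stated only for characteristic $\neq 2$.
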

\begin{proof}
 This follows from the fact that there are no nondegenerate quadrics in $\PG(2n-2,\K)$ of Witt index $n$ (since every pair of maximal subspaces would have to intersect nontrivally), and there is a unique quadric in $\PG(2n+1,\K)$ with Witt index $n$, and it is a parabolic one. 
\end{proof}

\begin{remark}\label{remnonperfect}Note that, if $\kar(\K)=2$, then $\mathsf{B}_{n,1}(\K)$ is a subspace of $\mathsf{C}_{n,1}(\K)$ (with equality if $\K$ is perfect). In the non-perfect case there exist line-domestic collineations distinct from central collineations, whereas in the perfect case, we can rely on Theorem~\ref{polarsymplectic} to deduce that we always have a central collineation. A counterexample for the non-perfect case is given by the collineations of $\PG(2n-1,\K)$ with action \[(x_{-n},x_{-n+1},\ldots,x_{-1},x_1,\ldots,x_n)\mapsto(x_{-n},x_{-n+1},\ldots,x_{-2},ax_1,a^{-1}x_{-1},x_2,\ldots,x_n),\] with $a$ a non-square, with respect to the (standard) form $x_{-n}x_n+\cdots+x_{-1}x_1\in\K^2$. (If this was a central collineation, then the point with coordinates all $0$ except $(x_{-1},x_1)=(a,1)$  would be the centre, but this point does not belong to the polar space.) In fact one can choose the coordinates so that the fixed subspace is the above one, that is, has equation $X_{-1}=aX_1$, and then a generic collineation pontwise fixing that geometric hyperplane is given by  \[(x_{-n},\ldots,x_{-1},x_1,\ldots,x_n)\mapsto(x_{-n},\ldots,(ab+1)x_{-1}+a^2bx_1,bx_{-1}+(ab+1)x_1,\ldots,x_n),\] with $ab^2+b\in\K^2$. 
\end{remark}

\section{Point-domestic collineations of a polar space}

In this section we prove Theorems~\ref{thm1:base}--\ref{thm1:nonsplit}, dealing with point-domestic collineations. We begin with preliminary observations, before dividing the analysis into collineations of class II and class~III. Note that by \cite[Theorem~1]{PVM:19b} all point domestic collineations of a polar space are capped.

\subsection{Preliminary observations}

Here is the first basic observation.
\begin{prop}\label{pdlinefixed}
Let $\Delta$ be a polar space of rank at least $2$. Let $\theta$ be an automorphism of $\Delta$ and suppose that $\theta$ is point-domestic. Then for every point $p$ with $p^\theta\neq p$, the line $\<p,p^\theta\>$ is fixed under $\theta$. 
\end{prop}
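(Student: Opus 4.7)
The plan is to produce a second common point of $L$ and $L^\theta$ beyond $p^\theta$, which will force $L^\theta=L$. Point-domesticity applied to $p$ (with $p\neq p^\theta$) gives $p\perp p^\theta$, so $L=\langle p,p^\theta\rangle$ is a genuine line of $\Delta$. Applying point-domesticity at $p^\theta$ (noting that $p^\theta\neq p^{\theta^2}$, since otherwise the injectivity of $\theta$ would force $p=p^\theta$) similarly shows that $L^\theta=\langle p^\theta,p^{\theta^2}\rangle$ is a line. Both lines contain $p^\theta$, so it suffices to exhibit one further common point.

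The key tool is the Buekenhout--Shult one-or-all axiom. Pick any $q\in L\setminus\{p,p^\theta\}$, which exists by thickness of lines. Point-domesticity at $q$ yields $q\perp q^\theta\in L^\theta$, while $q\perp p^\theta\in L^\theta$ is immediate from $q,p^\theta\in L$. Since $q\neq p$ forces $q^\theta\neq p^\theta$, the point $q$ is collinear with two distinct points of $L^\theta$, so by the Buekenhout--Shult axiom $q$ is collinear with every point of $L^\theta$. Running the same argument with the roles of $L$ and $L^\theta$ interchanged (and using $p\perp p^{\theta^2}$, which follows from the previous step applied at $p$ itself), every point of $L$ is collinear with every point of $L^\theta$. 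Consequently $L\cup L^\theta$ lies in a common singular subspace of $\Delta$.

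When $\Delta$ has rank $2$ there are no singular planes, and so the singular subspace generated by $L\cup L^\theta$ must itself be a line, giving $L=L^\theta$ at once. The main obstacle is the higher-rank case, where the singular subspace generated could be a singular plane $\pi$ with $L\neq L^\theta$ both distinct lines in $\pi$ through $p^\theta$. To rule this out, I would consider the line $L_{-1}=\langle p^{\theta^{-1}},p\rangle$ (well-defined since $\theta^{-1}$ is also point-domestic, giving $p^{\theta^{-1}}\perp p$). By construction $\theta(L_{-1})=L$, so $L_{-1}=L$ would give $L^\theta=\theta(L)=\theta(L_{-1})=L$, contradicting $L\neq L^\theta$. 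Hence $L_{-1}\neq L$, and the same B-S argument puts them in a common singular plane $\pi_{-1}$. Since a singular plane is uniquely determined by two intersecting lines it contains, $\theta$ must send $\pi_{-1}$ to $\pi$, and iterating this construction on the $\theta$-orbit of $L$ produces a configuration incompatible with point-domesticity applied to a generic point of $\pi$, yielding the required contradiction.
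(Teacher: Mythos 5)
Your first half is sound and essentially reproduces the first step of the paper's argument: using point-domesticity at the points of $L=\<p,p^\theta\>$ together with the one-or-all axiom, you correctly conclude that every point of $L$ is collinear with every point of $L^\theta$, so that $p,p^\theta,p^{\theta^2}$ are pairwise collinear and (if $L\neq L^\theta$) span a singular plane $\pi$; in rank $2$ this is already the desired contradiction. (A small slip: $p\perp p^{\theta^2}$ does not follow from "the previous step applied at $p$ itself" --- at $p$ you only get one point of $L^\theta$; you need to first observe that $p^{\theta^2}$ is collinear with $p^\theta$ and with every $q\in L\setminus\{p,p^\theta\}$, hence with all of $L$ by one-or-all. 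This is easily repaired.)

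The genuine gap is the final paragraph, which is where all the remaining content of the proposition lives. You assert that iterating the construction along the $\theta$-orbit of $L$ "produces a configuration incompatible with point-domesticity applied to a generic point of $\pi$", but no such configuration is exhibited and none is forthcoming from the orbit alone: the chain of planes $\pi_k=\<L_k,L_{k+1}\>$ with $\pi_k^\theta=\pi_{k+1}$, consecutive members sharing a line, carries no internal collinearity contradiction (for instance, nothing so far rules out all the $\pi_k$ lying in a common singular $3$-space, where every point is collinear with its image for free). The contradiction must come from nondegeneracy of the polar space, i.e.\ from a point \emph{not} collinear with $p^{\theta^2}$. This is exactly what the paper's proof supplies: it chooses a second plane $\alpha\supseteq L$ such that $\alpha$ and $\pi$ lie in no common singular subspace, takes $q\in\alpha\setminus\pi$ (so $q\not\perp p^{\theta^2}$) and a third point $r\in\<q,p^\theta\>$, and then uses point-domesticity at $q$ and $r$ together with $p\perp r$ to show that every point of $\<q^\theta,r^\theta\>$ --- in particular $p^{\theta^2}$ --- is collinear with every point of $\<q,p^\theta\>$ --- in particular $q$. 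Without some argument of this kind that leaves the plane $\pi$, your proof does not go through in rank at least $3$.
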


\begin{proof}
Suppose for a contradiction that there exists a point $p$ with $p^\theta\neq p$ and $\<p,p^\theta\>$ not fixed under $\theta$. Pick $x\in\<p,p^\theta\>\setminus\{p,p^\theta\}$. Then $x^\theta\in\<p^\theta,p^{\theta^2}\>\setminus\{p^\theta,p^{\theta^2}\}$. By point-domesticity, $x$ is collinear to all points of $\<p^\theta,p^{\theta^2}\>$, and so $p,p^\theta,p^{\theta^2}$ are contained in a plane $\pi$ of $\Delta$, which they span. Let $\alpha$ be a plane containing $\<p,p^\theta\>$ and so that $\alpha$ and $\pi$ are not contained in a common singular subspace of $\Delta$. Pick $q\in\alpha\setminus\pi$ and $r\in\<q,p^\theta\>\setminus\{q,p^\theta\}$ and note that $q$ is not collinear to $p^{\theta^2}$. We then have $r\perp r^\theta$ and $p^\theta\perp r^\theta$ (as $p\perp r$, both are in $\alpha$). So $r^\theta$ is collinear to all points of $\<r,p^\theta\>$. Likewise, $q^\theta$ is collinear to all points of $\<q,p^\theta\>=\<r,p^\theta\>$. Hence all points of $\<q^\theta,r^\theta\>$ (in particular, $p^{\theta^2}$) are collinear to all points of $\<q,p^\theta\>$ (in particular, $q$). This contradiction concludes the proof.
\end{proof}

We note an immediate consequence of Proposition~\ref{pdlinefixed}.

\begin{cor}\label{square}
If $\theta$ is a point-domestic collineation of some polar space $\Delta$, then every member of $\<\theta\>$ is point-domestic.
\end{cor}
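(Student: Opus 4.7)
The plan is to deduce the corollary almost immediately from Proposition~\ref{pdlinefixed}. The point-domesticity of $\theta^k$ requires showing that for every point $q$ of $\Delta$, the image $q^{\theta^k}$ is either equal to $q$ or collinear with $q$ (since two points of a polar space are opposite precisely when they are distinct and non-collinear).

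First I would dispose of the trivial case: if $q^\theta = q$ then $q^{\theta^k} = q$ for every $k \in \ZZ$, and there is nothing to check. So fix a point $q$ with $q^\theta \neq q$ and set $L = \< q, q^\theta\>$. By Proposition~\ref{pdlinefixed} applied to $\theta$ (whose hypothesis of point-domesticity is exactly what we are given), the line $L$ is stabilised setwise by $\theta$. Since $\theta$ permutes the points of $L$, so does every power $\theta^k$ ($k \in \ZZ$); in particular $L^{\theta^k} = L$ for all $k$. Since $q \in L$, we conclude $q^{\theta^k} \in L$, so $q$ and $q^{\theta^k}$ lie on the common line $L$ and are therefore collinear (or equal).

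Applying this to an arbitrary point $q$ shows that $\theta^k$ maps no point to an opposite, i.e.\ $\theta^k$ is point-domestic, for every $k \in \ZZ$. There is no real obstacle here: the whole content of the corollary is packaged inside Proposition~\ref{pdlinefixed}, and the only thing to notice is the elementary fact that a line fixed setwise by $\theta$ is automatically fixed setwise by every member of the cyclic group $\<\theta\>$. The argument also makes clear that the orbit of $q$ under $\<\theta\>$ is entirely contained in $L$, which is a stronger structural statement that may be useful elsewhere.
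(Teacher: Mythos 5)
Your proof is correct and follows exactly the same route as the paper: handle the fixed-point case trivially, and otherwise use Proposition~\ref{pdlinefixed} to see that $\<q,q^\theta\>$ is $\theta$-stable, hence stable under every power of $\theta$, so the whole $\<\theta\>$-orbit of $q$ stays on that line and no power maps $q$ to an opposite. Nothing is missing.
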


\begin{proof}
Let $x$ be any point of $\Delta$. If $x=x^\theta$, then $x$ is fixed by every member of $\<\theta\>$; if $x\neq x^\theta$ then $\theta$ stabilizes $\<x,x^\theta\>$ and so does every member of $\<\theta\>$.  
\end{proof}

The above proposition prompts the following question: Which collineations of a projective space have the property that each line joining a non-fixed point and its image is fixed? The answer is in the following proposition.

\begin{prop}\label{projsp}
Let $\Sigma$ be a finite-dimensional projective space with dimension at least $2$ and $\theta$ a collineation with the property that each point of $\Sigma$ is contained in at least one fixed line~(or, equivalently, the line $\<x,x^\theta\>$ is fixed for every point $x\neq x^\theta$). Then exactly one of the following holds.
\begin{compactenum}[$(i)$]
\item There are no fixed points and the set of fixed lines forms a line spread of $\Sigma$. The projective dimension of $\Sigma$ is odd, say $2n+1$, and the fixed subspaces form a projective space of dimension $n$. 
\item The set of fixed points forms a Baer subspace $\mathfrak{B}$ of $\Sigma$.
\item The set of fixed points is the union of two complementary subspaces $U,V$ of $\Sigma$ and $\theta$ is a $(U,V)$-homology, i.e., $\theta$ fixes all subspaces in $U\cup V$ and all subspaces containing one of $U,V$. 
\item The set of fixed points is a subspace $U$ of dimension $\ell$, with $2\ell+1\geq\dim\Sigma$, and the intersection of all fixed hyperplanes is a subspace $V\subseteq U$ of dimension $\dim\Sigma-\ell-1$. Then $\theta$ is a $(U,V)$-elation, i.e., $\theta$ fixes every subspace of $U$ and every subspace containing $V$.  
\end{compactenum}
\end{prop}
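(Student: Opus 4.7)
The plan is to split on whether $\theta$ has any fixed points, and in the case with fixed points to reduce to a classification of semi-linear maps satisfying a simple algebraic condition.

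\emph{Case I: no fixed points.} First I would argue that the fixed line through each point is unique: if two distinct fixed lines $L_1, L_2$ met at $x$, then $\{x\} = L_1 \cap L_2$ would itself be fixed, contradicting the absence of fixed points. Hence the fixed lines partition the points of $\Sigma$ and form a line spread $\mathcal{S}$. The existence of a spread in $\PG(m, K)$ forces $m$ to be odd, so $\dim\Sigma = 2n+1$ for some $n$. The fixed subspaces of $\theta$ (that is, the $\theta$-stable projective subspaces which are unions of members of $\mathcal{S}$) then carry a natural incidence structure with fixed lines as ``points'' and fixed $3$-spaces as ``lines''; a direct verification of the projective axioms (in particular Veblen--Young) yields conclusion~(i).

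\emph{Case II: at least one fixed point.} Using the Fundamental Theorem of Projective Geometry I would lift $\theta$ to a $\sigma$-semi-linear bijection $T$ of the underlying $(m{+}1)$-dimensional vector space $V$. The hypothesis is then equivalent to: $\{v, Tv, T^2v\}$ is $K$-linearly dependent for every $v \in V$. If $\sigma = \mathrm{id}$ then $T$ is $K$-linear, and this constraint forces (a)~$T$ has at most two distinct eigenvalues in $K$ (a sum of three eigenvectors with pairwise distinct eigenvalues would give a Vandermonde obstruction), (b)~every Jordan block of $T$ has size at most two, and (c)~size-two Jordan blocks occur for at most one eigenvalue (an explicit $3 \times 4$ determinant produces $(\lambda-\mu)^2 \neq 0$, obstructing the coexistence of two size-two blocks with different eigenvalues; the same calculation excludes the mixed case of one size-two block together with a separate diagonalisable eigenvalue). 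The surviving possibilities are exactly case~(iii) ($T$ diagonalisable with two eigenvalues: a homology on complementary eigenspaces) and case~(iv) ($T = \lambda I + N$ with $N^2 = 0 \neq N$: an elation with fixed subspace $\PG(\ker N)$; the dimension inequality $2\ell+1 \geq \dim\Sigma$ follows from $\mathrm{Im}(N) \subseteq \ker N$).

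If $\sigma \neq \mathrm{id}$, the aim is to obtain case~(ii). Exploiting $T^2(av) = a^{\sigma^2} T^2 v$ together with the constraint $T^2 v = \alpha_v v + \beta_v T v$, and rescaling $v \mapsto av$, one forces $\sigma^2 = \mathrm{id}$, so $[K : K^\sigma] = 2$; the fixed point set of $\theta$ in $\Sigma$ is then identified with a Baer subgeometry over the fixed field $K^\sigma$. The main technical obstacle is this final step: whereas the $\sigma = \mathrm{id}$ case is controlled by clean Jordan-form combinatorics, the Baer case requires carefully translating the semi-linearity together with $T^2 v \in \langle v, Tv\rangle$ into the field-theoretic conclusion $[K : K^\sigma] = 2$, and then verifying that the fixed set is the full Baer subgeometry $\PG(m, K^\sigma)$ rather than a strictly smaller subgeometry.
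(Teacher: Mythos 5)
Your strategy (lift $\theta$ to a $\sigma$-semi-linear map via the Fundamental Theorem of Projective Geometry and classify by linear algebra) is genuinely different from the paper's, which argues entirely synthetically: it first settles the case $\dim\Sigma=2$ by incidence arguments, observes that the hypothesis is inherited by every fixed subspace, and then runs an induction on dimension according to whether the fixed point set spans $\Sigma$ and whether it is thick. That synthetic route is not a stylistic choice here, and this is where your proposal has a genuine gap: the proposition must hold for \emph{all} projective spaces of dimension at least $2$, including non-Desarguesian planes (it is applied in Corollary~\ref{oddrankchamber} to the planes of rank~$3$ polar spaces, which for the non-embeddable polar spaces are Moufang planes over Cayley division algebras) and projective spaces over non-commutative division rings (quaternionic spaces occur throughout Section~3). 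In the first situation there is no underlying vector space at all, so the FTPG lift does not exist; in the second, your eigenvalue/Jordan-block/Vandermonde analysis does not transfer, since over a division ring eigenvalues only make sense up to conjugacy and the determinant computations you invoke are unavailable.

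Even restricting to the commutative Desarguesian case, the semi-linear branch is not closed. Your claimed deduction that $\sigma^2=\id$ from rescaling $v\mapsto av$ does not follow: writing $T^2v=\alpha v+\beta Tv$ and $T^2(av)=a^{\sigma^2}T^2v$ only yields relations between the coefficients for $v$ and for $av$, not a constraint on $\sigma$. Indeed the statement you are trying to force is \emph{false} in the generality of the proposition: immediately after the proof the paper constructs, over the division ring $\K_{\FF}((t))$ of twisted Laurent series, a collineation satisfying the hypothesis whose fixed point set is a Baer subspace but whose order is the multiplicative order of an element $a\neq -1$, hence not $2$ (this is precisely the point of the paper's correction of the earlier ``ideal Baer involution'' terminology). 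So case $(ii)$ --- which you acknowledge as the main obstacle --- is genuinely unproven in your write-up, and the intended intermediate claim is not even true. A smaller but real issue is Case~I: ``a line spread of $\PG(m,\K)$ forces $m$ odd'' is a counting fact for finite fields; over infinite fields even-dimensional spaces can admit line spreads (by transfinite constructions), so you must instead use that the spread of fixed lines is \emph{normal} (any subspace spanned by fixed lines is a union of fixed lines, whence its dimension is odd by induction), which is how the conclusion of $(i)$ is actually obtained.
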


\begin{proof}
If $\theta$ has no fixed points, then clearly $(i)$ holds. So from now on we may assume that there is at least one fixed point. The proof is divided into the following steps.
\smallskip

\noindent\textit{Step 1.} Suppose first $\dim\Sigma=2$. If all lines are fixed, then $\theta$ is the identity and $(iv)$ holds with $\ell=2$. Hence we may assume that some line $L$ is not fixed. We claim that $x=L\cap L^\theta$ is fixed. Indeed, if not then $x\neq x^\theta\in L^\theta$. Since $x^\theta$ also belongs to every line through $x$ that is fixed under $\theta$, we obtain the contradiction that $L^\theta$ is fixed. The claim is proved. We obtain a second fixed point $y$ by intersecting the fixed lines through two distinct points of $L\setminus\{x\}$. 

Suppose first that no point off $xy$ is fixed. Let $z\in xy$ be arbitrary and choose a line $M\ni z$, with $M\neq L$. If $M=M^\theta$, then $z=M\cap xy$ is fixed; if $M\neq M^\theta$, then $M\cap M^\theta$ is fixed and it is necessarily $z$ as otherwise it lies off $xy$, contrary to our assumption. We have proved that each point of $xy$ is fixed and therefore $\theta$ is axial and hence central, so $(iv)$ holds with $\ell=1$. 

Suppose now a point $z$ off $xy$ is fixed, but no quadrangle is fixed. Let $K$ be a line disjoint from $\{x,y,z\}$. If $K$ is fixed, then the  intersection of the lines joining $x$ and $yz\cap K$, and $y$ and $xz\cap K$, respectively, is fixed and forms a quadrangle with $x,y,z$, a contradiction. Hence $K\neq K^\theta$ and since $K\cap K^\theta$ is fixed, it lies on one of the lines $xy,yx,xz$, say $xy$. Using a similar argument as above, one shows that for every line $M$ disjoint from $\{x,y,z\}$, the image $M^\theta$ intersects $M$ on $xy$. Hence we conclude that $xy$ is fixed pointwise and $(iii)$ holds. 

Finally suppose $\theta$ fixes some quadrangle. Then the fixed point structure is a subplane $\pi$. Every point is contained in at least one line of $\pi$ (by the assumption on $\theta$), and every line $N$ is either fixed, and hence contains many fixed points, or is not fixed and contains the fixed point $N\cap N^\theta$. So $(ii)$ holds.
\smallskip

\noindent\textit{Step 2.} We note that, if some subspace $U$ is fixed, then the restriction $\theta_{|_U}$ of $\theta$ to $U$ satisfies the assumptions of the proposition for $\Sigma$ replaced by $U$. 
\smallskip

\noindent\textit{Step 3.} Suppose $\dim\Sigma\geq 3$ and the fixed point set $P$ does not generate $\Sigma$. We first claim that $P$ is a subspace. Indeed, let $x,y\in P$ and let $z\in\Sigma\setminus\<P\>$. Then the line $L=\<z,z^\theta\>$ is fixed, and hence the plane $\alpha=\<L,x\>$ is fixed. Hence steps 1 and 2, and the choice of $z$ outside $\<P\>$ implies that $\alpha\cap P$ is a full line $M$ through $x$. We may assume $y\notin M$, as otherwise the claim follows. Then $u=L\cap M$ is fixed and so is the plane $\<y,z,u\>$. Again steps 1 and 2 and the choice of $z$ implies that all points of the line $\<y,u\>$ are fixed. Then the fixed plane $\<x,y,u\>$ contains two lines all of whose points are fixed, and the claim follows from steps 1 and~2. 

Note that the previous argument also shows that, whenever $z\in\Sigma\setminus P$, then $\<z,z^\theta\>$ contains a unique fixed point. Let $Q$ be the set of all points $q\in P$ such that some line through $q$ not in $P$ is fixed. We claim that $Q$ is a subspace. Indeed, let $q,r\in Q$. Then the definition of $Q$ implies that there exist points $x,y\in\Sigma\setminus P$ with $q\in\<x,x^\theta\>$ and $r\in\<y,y^\theta\>$. Clearly $X=\<x,y,q,r\>$ is $3$-dimensional, and $X\cap P=\<q,r\>$. Hence the line $L=\<x,y\>$ does not intersect $P$. This implies that $L\cap L^\theta=\emptyset$. Let $s\in\<q,r\>$ arbitrary. Let $z\in L$ and $z'\in L^\theta$ be such that $s\in \<z,z'\>$. Then $z'$ is the unique point on $L^\theta$ such that $\<z,z'\>$ intersects $P$. We deduce that $z'=z^\theta$ and so $s\in Q$. The claim is proved. 

Next we claim that $Q$ is the intersection of all fixed hyperplanes. Indeed, first let $H$ be a hyperplane containing $Q$. Pick $z\in H\setminus P$. Then $\<z,z^\theta\>$ intersects $Q\subseteq H$, hence $z^\theta\in H$. Consequently $H$ is fixed. Now let $H$ be a hyperplane not containing $Q$. Select $x\in Q\setminus H$. Let $z\in \Sigma\setminus P$ be such that $x\in\<z,z^\theta\>$. Since $x\notin H$, we may assume $z\in H$ (a hyperplane intersects every line nontrivially). But then $z^\theta\notin H$ and so $H\ne H^\theta$. The claim is proved. 

Now notice that in the dual of $\Sigma$, the roles of $P$ and $Q$ are interchanged. Hence, up to duality, we may assume $k=\dim Q\leq \frac{1}{2}(n-2)$ (if $k=\frac{1}{2}(n-1)=\dim P$, then $(iv)$ holds). Let $W$ be a subspace complementary to $Q$. Then $W\cap W^\theta$ has dimension at least $n-2k-2\geq 0$. Hence $W\cap W^\theta$ is nonempty. Let $w\in W\cap W^\theta$ be arbitrary. Then $\<Q,w\>$ is fixed and so is $w=\<Q,w\>\cap W=\<Q,w\>\cap W^\theta$. Hence $W\cap W^\theta\subseteq P$ and so $\ell=\dim P\geq n-k-1$. If $\ell\geq n-k$, then the dual argument could be applied and yielded $n-k-1\geq  n-k$, a contradiction. Hence $k+\ell=n-1$ and $(iv)$ holds.

\noindent\textit{Step 4.} Suppose $\dim\Sigma\geq 3$, the fixed point set $\mathfrak{B}$ generates $\Sigma$ and has the structure of a thick projective space. Then by steps 1 and 2,  every fixed plane $\alpha$ intersects $\mathfrak{B}$ in a Baer subplane of $\alpha$ and $(ii)$ holds. 

\noindent\textit{Step 5.} Suppose $\dim\Sigma\geq 3$, the fixed point set $P$ generates $\Sigma$ and has the structure of a non-thick generalized projective space. We first claim that, if $L$ is a thick line of $P$, then all points of $\Sigma$ incident with $L$ belong to $P$. Indeed, suppose not and let $\{x,y\}$ be a thin line of $P$. The plane $\pi=\<L,x\>$ is fixed by $\theta$ and steps~1 and~2 implies that $\pi\cap P$ is a Baer subplane of $\pi$. Let $B$ be a line of $\pi$ through $x$ intersecting $P$ in at least three points. Then $B$ is a proper subline of its carrier in $\Sigma$. The plane $\<B,y\>$ is fixed and steps~1 and~2 imply that $\<x,y\>$ contains at least three points of $P$, a contradiction. The claim is proved. 

Now let $\{x,y\}$ again be a thin line (which exists by assumption). Let $P_x$ and $P_y$ be the sets of points of $P$ lying on a thick line together with $x$ and $y$, respectively. We claim that $\{P_x,P_y\}$ is a partition of $P$. Indeed, if $u\in P_x\cap P_y$, then the plane $\<x,y,u\>$ contains two lines entirely consisting of fixed points, hence steps~1 and~2 imply that all points of that plane are fixed; in particular all points of $\<x,y\>$, contradicting our assumption on $\<x,y\>$. Also, suppose there exists $v\in P\setminus(P_x\cup P_y)$. Then the plane $\<x,y,v\>$ is fixed and $\<x,y,v\>\cap P=\{x,y,v\}$. This contradicts steps~1 and~2 once again. The claim follows. Consider now two arbitrary points $z,w$ of $P_x$, $z\neq w\neq x\neq z$. Then the plane $\<x,z,w\>$ contains two lines all of whose points are in $P$. Steps~1 and~2 imply that also all points of $\<z,w\>$ are fixed. Hence $P_x$, and similarly $P_y$, is a subspace of $\Sigma$. Now $(iii)$ holds.  
\end{proof}

Examples of each kind are easy to find. One remark concerning Case $(ii)$: if the underlying structure is a field (the projective space is Pappian then), then such an automorphism $\theta$ is induced by the identity matrix and a companion field automorphism pointwise fixing a subfield $\FF$ of a field $\K$ such that $\K$ is 2-dimensional over $\FF$. Hence $\theta$ is an involution. But if the underlying division ring is not commutative, then examples exist where $\theta$ is not an involution (note that this corrects \cite[Theorem~7.2]{TTM:11} where the statement ``ideal Baer involution'' should be replaced with ``ideal Baer collineation''). Consider for instance a field $\FF$ with a quadratic Galois extension $\K$, and denote the Galois involution $x\mapsto\overline{x}$. Then define the division ring $\K_{\FF}((t))$ of Laurent series $f(t)$ over $\K$ with multiplication induced by $xt=t\overline{x}$, for all $x\in\K$. Let $a\in\K$ be such that $a\overline{a}=1$, but $a\notin\{1,-1\}$ (always exists). Then the mapping $\theta:f(t)\mapsto f(at)$ defines an automorphism   pointwise fixing the field $\K((t^2))$ of Laurent series $f(t^2)$ over $\K$ (and $\K_{\FF}((t))$ is 2-dimensional over $\K((t^2))$ and the induced automorphism on any projective space of dimension at least 2 over $\K_{\FF}((t))$ satisfies $(ii)$). However, the order of $\theta$ is the multiplicative order of $a$ in $\K$, which is distinct from $2$ as $a\neq-1$.

This has now the following interesting consequence.

\begin{cor}\label{oddrankchamber}
A point-domestic collineation $\theta$ of a polar space $\Delta=(X,\Omega)$ of odd rank $r$ fixes a chamber.
\end{cor}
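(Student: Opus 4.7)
The plan is to build a fixed maximal singular subspace $M$ of $\Delta$, and then produce a complete fixed flag inside $M$, thereby yielding a fixed chamber.

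For the first step, I will show that the $\theta$-orbit of any point is mutually perpendicular, and hence spans a singular subspace. Indeed, by Corollary~\ref{square} every power of $\theta$ is point-domestic, so for any point $p$ and all integers $j,k$ one has $p^{\theta^j} \perp p^{\theta^{j+k}}$. By the Buekenhout--Shult axiom (applied inductively on finite subsets), a mutually perpendicular set of points is contained in a singular subspace; since singular subspaces have bounded dimension $r-1$, the orbit spans a finite-dimensional singular subspace $S_0$, which is $\theta$-invariant because $\theta$ permutes its spanning set. I then iterate: given a $\theta$-invariant singular subspace $S$ of dimension $k < r-1$, pick any $p \in S^\perp \setminus S$ (non-empty because $S$ is non-maximal). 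Since $S$ is $\theta$-invariant so is $S^\perp$, so the orbit $\{p^{\theta^i}\}$ lies in $S^\perp$; together with $S$ this is a mutually perpendicular set, hence spans a $\theta$-invariant singular subspace strictly larger than $S$. Iterating yields a $\theta$-invariant maximal singular subspace $M$.

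For the second step, the restriction $\theta|_M$ is a collineation of the projective space $M$ of dimension $r-1$. For any $p \in M$ with $p \neq p^\theta$, Proposition~\ref{pdlinefixed} gives that $\langle p, p^\theta\rangle$ is a fixed line, and it lies in $M$ since both endpoints do; so the hypothesis of Proposition~\ref{projsp} is satisfied. Because $r$ is odd, $\dim M = r-1$ is even, so case~(i) of Proposition~\ref{projsp} (no fixed points, odd dimension) is excluded. In each of the remaining cases~(ii), (iii), (iv) one can write down a complete fixed flag in $M$ directly: in~(ii) take any complete flag of the fixed Baer subspace and pass to the subspaces of $M$ it generates, each fixed because spanned by fixed points; in~(iii) and~(iv) combine a complete flag inside the pointwise-fixed subspace $U$ with an ascending chain of subspaces containing $U$ (respectively containing $V \subseteq U$) up to $M$, each fixed by the description of $\theta$ in Proposition~\ref{projsp}. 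Such a flag is a chamber of $\Delta$ fixed by $\theta$.

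The main obstacle is the first step --- specifically, verifying that the $\theta$-orbit of a chosen $p \in S^\perp \setminus S$ together with $S$ forms a mutually perpendicular set. This relies on the $\theta$-invariance of $S^\perp$ and the point-domesticity of all powers of $\theta$ given by Corollary~\ref{square}. Once $M$ is secured, the second step reduces to the case analysis already carried out in Proposition~\ref{projsp}.
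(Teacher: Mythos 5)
Your proof is correct and follows essentially the same route as the paper's: first secure a $\theta$-invariant maximal singular subspace $M$, then apply Proposition~\ref{projsp} to the restriction $\theta|_M$ and rule out case~(i) because $\dim M = r-1$ is even. The only (immaterial) difference is that you build the fixed maximal singular subspace by accumulating whole $\theta$-orbits of points, whereas the paper extends a fixed singular subspace of maximal dimension by a single point $x$ together with the fixed line $\langle x, x^\theta\rangle$ to reach a contradiction.
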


\begin{proof}
We first claim that $\theta$ fixes a maximal singular subspace, i.e., a singular subspace of dimension $r-1$. Note that \cite[Theorem~4.2]{TTM:12} proves the claim for polar spaces that are not symplectic, however here we provide an independent proof that includes the symplectic case. 

Suppose $\theta$ does not fix any maximal singular subspace. Let $d\leq r-2$ be the maximal dimension of a fixed singular subspace and let $U$ be fixed under $\theta$ with $\dim U=d$. Then $d\geq 1$ by Proposition~\ref{pdlinefixed}. Since $d<r-1$, there exists a point $x$ collinear to all points of $U$. If $x$ is fixed, then the singular subspace generated by $U$ and $x$ is fixed under $\theta$ and has dimension $d+1$, a contradiction to the maximality of $d$. Hence $x\neq x^\theta$ and $\theta$ fixes the line $xx^\theta$. Since $U$ is fixed, $x^\theta$ is collinear to all points of $U$ and $U$ and $xx^\theta$ generate a subspace $U'$ of dimension $d+1$ or $d+2$, fixed under $\theta$, again a contradiction to the maximality of $d$. We conclude that the claim holds. 

Now let $U$ be a maximal singular subspace fixed by $\theta$. By Proposition~\ref{projsp}, we always fix a chamber of $U$, and hence a chamber of $\Delta$, except in Case~$(i)$.  But that case does not occur here since $\dim U=r-1$ is even. 
\end{proof}

\begin{lemma}
A point-domestic collineation $\theta$ of a polar space $\Delta$ of rank at least $3$ with non-Pappian planes (hence defined over a non-commutative division ring) fixes a chamber.
\end{lemma}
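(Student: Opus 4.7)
The plan is to adapt the strategy of Corollary~\ref{oddrankchamber}. First I would note that the first paragraph of that proof, which establishes the existence of a fixed maximal singular subspace $U$, makes no use of the parity of the rank: it proceeds by choosing a fixed singular subspace $U$ of maximal dimension $d$ and then deriving a contradiction (using Proposition~\ref{pdlinefixed}) from the assumption $d < n-1$. So, in our setting as well, $\theta$ fixes some maximal singular subspace $U$ of $\Delta$. Since the planes of $\Delta$ are non-Pappian, the underlying division ring $\D$ is non-commutative; in particular $U$ is a Desarguesian projective space of dimension $n-1\geq 2$ over $\D$, and Proposition~\ref{projsp} applies to $\theta|_U$.

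In each of cases~(ii), (iii), (iv) of Proposition~\ref{projsp} I would explicitly exhibit a fixed chamber of $U$ (which, together with $U$ itself, immediately extends to a fixed chamber of $\Delta$). In case~(iv), one picks a flag refining the inclusion $V\subseteq U'$ of the intersection of all fixed hyperplanes into the fixed point set; every subspace in this flag is either contained in $U'$ (hence pointwise fixed) or contains $V$ (hence fixed by the $(U',V)$-elation property), so the whole flag is fixed. Case~(iii) is handled similarly by concatenating full flags of the complementary fixed subspaces $U'$ and $V'$. In case~(ii), any chamber of the fixed Baer subspace $\mathfrak{B}$ is pointwise fixed, hence is a fixed chamber of $U$.

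Thus the crux of the argument, and the main obstacle, is to exclude case~(i) under the non-commutativity hypothesis. In case~(i), $\theta|_U$ has no fixed points and the fixed lines form a line spread $\mathcal{S}$ on $U = \PG(2m+1,\D)$, forcing $n = 2m+2$ to be even. Here I would argue as follows: take a fixed line $L$ of the spread $\mathcal{S}$ (note $L$ is a line of $\Delta$, fixed by $\theta$), and consider the residue $\Res(L)$, which is a polar space of rank $n-2$ with non-Pappian planes (the planes in $\Res(L)$ being residues of $3$-spaces of $\Delta$, still defined over $\D$). The key verification is that the induced collineation $\bar\theta$ on $\Res(L)$ is again point-domestic: a point of $\Res(L)$ is a plane $\pi$ of $\Delta$ containing $L$; if $\pi$ were opposite $\pi^{\bar\theta}$ in $\Res(L)$, then some point of $\pi\setminus L$ would be opposite its image in $\Delta$, contradicting point-domesticity of $\theta$. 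Applying the lemma inductively to $\Res(L)$ (the base case rank~$3$ follows from \cite[Theorem~7.2]{TTM:12}) produces a fixed chamber of $\Res(L)$, which together with $L$ yields a fixed chamber of $\Delta$.

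An alternative route to exclude case~(i) would be algebraic: a fixed line spread with no fixed points on $U=\PG(2m+1,\D)$ endows $U$ with a secondary projective structure $\PG(m,\E)$ for some division ring $\E\supseteq\D$, where the elements of $\D$ commute with the spread-scalars in $\E$; compatibility of this structure with the Hermitian form defining $\Delta$ would force $\D$ to lie in the centre of $\E$ and ultimately force $\D$ to be commutative, contradicting the hypothesis. Either route closes the remaining case, and together with Steps~1 and~2 completes the proof.
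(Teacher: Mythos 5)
Your Steps 1 and 2 are sound: the first paragraph of the proof of Corollary~\ref{oddrankchamber} indeed makes no use of the parity of the rank, so a fixed maximal singular subspace $U$ exists, and in cases $(ii)$--$(iv)$ of Proposition~\ref{projsp} one extracts a fixed chamber exactly as you describe (non-Pappian planes exclude type $\sD_n$, so there is no oriflamme subtlety). Your verification that point-domesticity descends to $\Res(L)$ is also correct. The gap is in the only step where non-commutativity must actually do some work, namely the exclusion of case $(i)$. Case $(i)$ forces the rank to be even, and your induction replaces an even rank $n$ by the even rank $n-2$; the chain $n\to n-2\to\cdots$ therefore terminates at a rank-$2$ residue (already when $n=4$), never at the rank-$3$ base case you invoke. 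At rank $2$ the statement is false: a generalized quadrangle admits point-domestic collineations fixing a line spread linewise and fixing no chamber (see \cite{TTM:12b}, quoted in the proof of Theorem~\ref{copolar}). Indeed, your Route~1 uses the non-Pappian hypothesis only to propagate itself to residues, so if it were valid it would prove the lemma with that hypothesis deleted --- which is false, since the class III collineations of Theorem~\ref{thm1:pointdomnofixedpoint} are point-domestic with no fixed points at all. Route~2 is only a sketch: the assertion that compatibility with the defining form forces the underlying division ring to be central in the spread algebra, and hence commutative, is precisely the content that needs proof and is not supplied.

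For comparison, the paper closes this case by an entirely different mechanism. For rank at least $4$ the opposition diagram of $\theta$ is $\sB_{n;i}^2$; if $i\geq 2$ then some singular $3$-space $U$ is mapped to an opposite, and the induced duality $\theta_U$ of $U\cong\PG(3,\mathbb{D})$ must be domestic (a non-domestic chamber of $\Res(U)$ would produce a point of $\Delta$ mapped to an opposite), hence is a symplectic polarity by Theorem~\ref{thm1:An}(2) --- impossible over a non-commutative division ring. Thus $i\leq 1$, and Proposition~\ref{axialbase} shows $\theta$ is an axial collineation, which fixes every chamber containing its axis. This is where the hypothesis of non-Pappian planes genuinely enters, via the non-existence of symplectic polarities; your proposal never reaches an analogous point.
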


\begin{proof}
If $\Delta$ has rank 3, then this follows from Corollary~\ref{oddrankchamber}. Suppose now that the rank of $\Delta$ is at least $4$. Then $\Delta$ is thick. If the opposition diagram of $\theta$ is $\mathsf{B}_{n;1}^2$, then Proposition~\ref{axialbase}, proved independently, implies that $\theta$ is an axial collineation, hence it fixes any chamber containing the axis. Consequently, by the classification of opposition diagrams, $\theta$ has opposition diagram $\mathsf{B}_{n;i}^2$, for $i>1$. But that means that there is some non-domestic $3$-space $U$. The duality $\theta_U$, however, must be domestic, implying it is a symplectic polarity, contradicting the fact that $U$ is non-Pappian.  
\end{proof}

Finally, we observe that we automatically have a class II collineation if it is $(n-1)$-domestic, as follows immediately from Lemma~\ref{lem:typeI}.
\begin{cor}\label{oppdiafull}
Let $\Delta$ be a thick building of type $\sX_n$ with $\sX\in\{\sB,\sC,\sD\}$ and let $\Pi=(\cP,\Omega)$ be the associated polar space. If a collineation $\theta$ has opposition diagram $\sX_{n,i}^2$, with $2i<n$, then $\theta$ fixes pointwise a geometric subspace or corank at most~$n-1$. 
\end{cor}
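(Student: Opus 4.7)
The plan is to reduce the corollary directly to Lemma~\ref{lem:typeI} by choosing the correct value of the domesticity index. The only real work is the bookkeeping translating the opposition diagram into domesticity properties in polar space language, and then verifying the three hypotheses of the lemma.

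First I would translate the opposition diagram $\sX_{n;i}^2$ into domesticity data. Since $\theta$ is point-domestic (the node of type~$1$ is not encircled in $\sX_{n;i}^2$), it is capped by the remark at the start of Section~3. Hence $\theta$ maps a simplex of type~$k$ to an opposite if and only if $k \in \Type(\theta) = \{2,4,\dots,2i\}$. Using the shift between building types and polar-space dimensions (type-$k$ vertex $\leftrightarrow$ singular $(k-1)$-space), this means $\theta$ maps a singular $d$-space to an opposite if and only if $d \in \{1,3,\dots,2i-1\}$. Equivalently, $\theta$ is $d$-domestic precisely when $d$ is even or $d \geq 2i$.

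Next I would apply Lemma~\ref{lem:typeI} with $j = 2i$, handling the trivial case $i=0$ separately (then $\theta$ is the identity by Theorem~\ref{thm:nonempty}, so it pointwise fixes the whole polar space, a subspace of corank~$0$). For $i \geq 1$, I would check the three hypotheses of the lemma: (a) $0 \leq 2i \leq n-1$, which holds since $2i < n$; (b) $\theta$ is $2i$-domestic, since $2i$ is even; (c) if $2i < n-1$, then $\theta$ is $(2i+1)$-domestic, because $2i+1$ is odd but satisfies $2i+1 \geq 2i$, placing it outside the ``non-domestic'' set $\{1,3,\dots,2i-1\}$; and (d) $\theta$ is not $(2i-1)$-domestic, since $2i-1$ lies in that set.

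Having verified the hypotheses of Lemma~\ref{lem:typeI}, I would conclude that $\theta$ pointwise fixes a geometric subspace of corank exactly $2i$. Since $2i < n$, this corank is at most $n-1$, which is the required bound. The only step that required any thought is matching the Bourbaki indexing of the encircled nodes with the polar-space dimension convention; once that is pinned down the choice $j=2i$ is forced, and all four conditions of Lemma~\ref{lem:typeI} are satisfied essentially by parity.
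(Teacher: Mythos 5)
Your proof is correct and follows exactly the route the paper intends: the paper's own justification is simply that the corollary ``follows immediately from Lemma~\ref{lem:typeI}'', and your write-up supplies precisely the bookkeeping (cappedness, the type-shift, and the choice $j=2i$) that makes that reduction work.
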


\subsection{Class II automorphisms}

If $\theta$ is a class II automorphism of a polar space of rank~$n$ then $\Diag(\theta)=\sX_{n;i}^2$ for some $0\leq i\leq n/2$ (where $\sX\in\{\sB,\sC,\sD\}$).

\subsubsection{The opposition diagrams $\sX_{n;1}^2$}

The point-domestic collineations with opposition diagram $\sX_{n;1}^2$ (with $\sX\in\{\sB,\sC,\sD\}$) can be classified. This is the content of Theorem~\ref{thm1:base}, which we prove in this subsection. We begin with the symplectic case, that is, Theorem~\ref{thm1:base}(1).

\begin{thm}\label{copolar}
Let $n\geq 2$. A collineation $\theta$ of the symplectic polar space $\Delta:=\mathsf{C}_{n,1}(\K)$, assumed to have at least one fixed point if $n=2$, having copolar opposition diagram (that is, $\mathsf{C}_{n;1}^2$) is unique. It is a involutive homology if $\kar(\K)\neq 2$, and if $\kar(\K)=2$, then $\theta$ is an axial elation.  
\end{thm}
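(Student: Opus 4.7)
The plan is to translate the diagrammatic hypothesis into a large pointwise-fixed subspace, then exploit the identification of the point set of $\Delta=\mathsf{C}_{n,1}(\KK)$ with that of $\PG(2n-1,\KK)$ in order to apply Proposition~\ref{projsp} to the induced projective collineation. From the diagram, $\theta$ is point-domestic and $j$-domestic for all $j\geq 2$, but not line-domestic. For $n\geq 3$, applying Lemma~\ref{lem:typeI} with $i=2$ yields a pointwise-fixed geometric subspace of corank~$2$; the construction realises it as an intersection of two perps, so its $\KK$-linear span is a pointwise-fixed $\KK$-projective subspace $T$ of projective dimension~$2n-3$. For $n=2$ the corank argument degenerates, but the hypothesised fixed point combined with Proposition~\ref{pdlinefixed} yields a fixed projective line $T$ by a direct local analysis.

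Since the point set of $\Delta$ equals $\PG(2n-1,\KK)$, and since every non-fixed point of $\theta$ lies on the fixed line joining it to its image by Proposition~\ref{pdlinefixed}, I will apply Proposition~\ref{projsp} to $\theta$ viewed as a projective collineation. Case~$(i)$ is excluded by the fixed points in $T$, and Case~$(ii)$ is excluded because any Baer subspace of $\PG(2n-1,\KK)$ contains no $\KK$-line, whereas $T$ is a $\KK$-projective subspace of positive dimension. Thus $\theta$ is either a $(T,L)$-homology with $T\cap L=\emptyset$ (Case~$(iii)$) or a $(T,L)$-elation with $L\subseteq T$ (Case~$(iv)$). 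Since $T$ is pointwise fixed, its symplectic perp $L:=T^{\perp}$ is stabilised and, by dimension, is a projective line; the dichotomy between the two cases is then forced by whether $L$ is totally isotropic ($L\subseteq T$) or not ($L\cap T=\emptyset$).

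In the homology case, $\theta$ acts as the identity on $T$ and as a scalar $\lambda\neq 1$ on $L$; preservation of the symplectic form on the orthogonal decomposition $T\oplus L$ forces $\lambda^{2}=1$, so $\lambda=-1$ and $\kar(\KK)\neq 2$, yielding the involutive homology of the statement. In the elation case, with $L$ totally isotropic and $T=L^{\perp}$, the map $\theta$ is represented in an adapted basis by a unipotent upper-triangular block whose only non-trivial part is a $2\times 2$ block $A$ with entries $\alpha,\beta,\gamma,\delta$; preservation of the form forces $A$ symmetric ($\gamma=\beta$), and a direct computation of $(p,\theta(p))$ for generic $p$ shows that point-domesticity is equivalent to the polynomial identity $\alpha b_{n-1}^{2}+2\beta b_{n-1}b_{n}+\delta b_{n}^{2}\equiv 0$, giving $\alpha=\delta=0$ and $2\beta=0$. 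In $\kar(\KK)\neq 2$ this kills $\theta$, so only the homology survives; in $\kar(\KK)=2$ we get $\alpha=\delta=0$ with $\beta\in\KK^{\times}$, which is precisely the axial elation with axis $L$.

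Uniqueness is then automatic: $T$ is determined by $\theta$ and $L=T^{\perp}$ is forced. The final verification that the diagram is actually $\mathsf{C}_{n;1}^{2}$ and not empty amounts to showing that some line of $\Delta$ is sent to an opposite; this is a short direct computation using any line of $\Delta$ disjoint from $T$, which exists because $T$ has polar corank~$2$. I expect the main obstacle to be the parallel treatment of the homology and elation branches: the same opposition diagram is realised in two structurally different ways depending on whether $T^{\perp}$ is singular, and this dichotomy is precisely what governs the characteristic-dependent bifurcation between the involutive homology and the axial elation.
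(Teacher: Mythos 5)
Your skeleton is essentially the paper's: reduce to a pointwise fixed subspace of corank $2$, show it spans a $(2n-3)$-dimensional projective subspace $p^\perp\cap q^\perp$, split on whether that subspace's perp line is singular, and finish with the same matrix computation (the identity $ax_2^2+2cx_2x_4+bx_4^2\equiv 0$ forcing $a=b=2c=0$); your use of Proposition~\ref{projsp} is a clean repackaging of that dichotomy, made legitimate by the observation that every point of $\PG(2n-1,\KK)$ is a point of the symplectic polar space and hence lies on a fixed line by Proposition~\ref{pdlinefixed}. But there are two genuine gaps. First, the assertion that ``the construction realises it as an intersection of two perps, so its span is a pointwise-fixed subspace of projective dimension $2n-3$'' is not justified. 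Lemma~\ref{lem:typeI} hands you a geometric subspace of corank $2$, not a projective subspace of codimension $2$: the underlying construction assembles the fixed set from pieces of the form $L^\perp\cap(L^\theta)^\perp$ for non-domestic lines $L$, which have codimension $4$, and a priori the corank-$2$ set could span something smaller than codimension $2$. The paper devotes the first paragraph of the proof of Theorem~\ref{pocopo} to ruling out $\dim\langle S\rangle\leq 2n-4$ (via planes meeting $S$ in a single point), and only then writes $S=p^\perp\cap q^\perp$. Without that argument, your exclusion of case $(ii)$ of Proposition~\ref{projsp} --- which rests entirely on the fixed-point set containing the full $\KK$-subspace $T$ --- has nothing to stand on.

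Second, the case $n=2$ is not actually proved. A ``direct local analysis'' must in particular rule out case $(ii)$ of Proposition~\ref{projsp} (a Baer-type fixed structure), and your stated reason for excluding it presupposes the pointwise-fixed $\KK$-line $T$ whose existence is exactly what is at stake: in rank $2$ there is no corank argument, only the one assumed fixed point, and Proposition~\ref{startingpoint} (linearity of point-domestic collineations) is unavailable, so semilinear Baer collineations of $\mathsf{C}_{2,1}(\KK)$ cannot be dismissed out of hand. The paper instead imports the classification of point-domestic collineations of generalized quadrangles from~\cite{TTM:12b} (fixed spread, ideal subquadrangle, or axial elation) and computes in each branch; you need either that input or a self-contained argument that a point-domestic collineation of the symplectic quadrangle with a fixed point cannot have Baer fixed-point structure. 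Everything downstream of these two points --- the $\lambda^2=1$ computation forcing the involutive homology, the $\alpha=\delta=2\beta=0$ identity in the elation branch, and the resulting characteristic dichotomy --- is correct and agrees with the paper.
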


\begin{proof}
First assume $n\geq 3$. Then $\theta$ is $j$-domestic, for all $j\geq 3$, $j\leq n$. So it follows from \cite[Theorem~6.1]{TTM:12} that the fixed point structure of $\theta$ is a subspace of corank 2. Hence the same cases as in the proof of Theorem~\ref{pocopo} apply. Clearly, with the notation of that proof, the case where $p$ and $q$ are non-collinear only occurs if all points of the hyperbolic line $\<p,q\>$ are fixed. This gives rise to a unique involutive homology if $\kar(\K)\neq 2$ and to the identity if $\kar(\K)=2$. If we have a collineation $\theta(a,b,c)$, $a,b,c\in\K$ then one easily calculates that $\theta(a,b,c)$ maps the point $(x_i)_{1\leq i\leq 2n}$ to a collinear one if and only if $$ax_2^2+2cx_2x_4+bx_4^2=0,$$ which implies the theorem. 

Next assume that $n=2$. Then we have a point-domestic symplectic generalized quadrangle. If follows from \cite{TTM:12b} that $\theta$ either linewise fixes a spread, or pointwise fixes an ideal subquadrangle, or is an axial elation. Since it is assumed that $\theta$ fixes at least one point, it is easily seen that the first case cannot occur (it leads to the identity).  

Suppose now $\theta$ pointwise fixes an ideal subquadrangle. Taking the standard symplectic form $x_0y_1-x_1y_0+x_2y_3-x_3y_2$, we may assume that the ideal subquadrangle contains all lines meeting both hyperbolic lines $(*,*,0,0)$ and $(0,0,*,*)$, with self-explaining notation. Then $\theta$ has the form $(x_0,x_1,x_2,x_3)\mapsto(x_0,x_1,ax_2,ax_3)$, for some $a\in \K$. This preserves the given symplectic form if and only if $a^2=1$, showing that we have an involutive homology if $\kar(\K)\neq 2$, and the identity if $\kar(\K)=2$. Since in characteristic $\neq 2$, a symplectic quadrangle does not admit axial elations, the theorem is proved. 
\end{proof}

Next we consider all other polar spaces. Rank 3 will be an exception, but the domestic collineations in this case are already classified in \cite[Section 6]{TTM:12}. This includes the more intricate non-embeddable polar spaces (which do not admit axial elations). 
 
In order to prove Theorem~\ref{thm1:base}(2), we need some preparations. 
We start with gathering some properties of geometric hyperplanes and subhyperplanes in the following proposition. The \emph{radical} of a subspace $S$ is the set of points of $S$ collinear to all points of $S$. It is always a singular subspace. 

\begin{prop}\label{corank}
Let $\Delta=(X,\Omega)$ be a nondegenerate polar space of rank at least $3$. Then the following properties hold.
\begin{itemize}
\item[$(i)$] Each geometric hyperplane and subhyperplane is a possibly degenerate polar space (and every point of the radical is called a deep point).
\item[$(ii)$] The radical of a geometric hyperplane or subhyperplane is at most $0$-dimensional, or $1$-dimensional, respectively.
\item[$(iii)$] Let $S$ be a geometric subhyperplane and let $p\in X$ be a point off $S$. Set $S'=\{x\in X\mid x\in L, p\in L, S\cap L\neq\emptyset\}$. If $S\subseteq p^\perp$, then $S'=p^\perp$ is a geometric hyperplane. If some point of $S$ is not collinear to $p$, then $S'$ is a geometric subhyperplane.   
\item[$(iv)$] Every plane containing a deep point of a geometric subhyperplane contains a line of that subhyperplane.
\end{itemize}  
\end{prop}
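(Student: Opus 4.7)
The plan is to establish each of the four parts in turn using Buekenhout--Shult--type reasoning together with dimension counts, and to identify the passage through the residue at a deep point as the main technical step.

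Part $(i)$ is the verification that the induced incidence geometries are polar spaces. Taking the induced line set on $H$ (respectively $S$) to consist of those lines of $\Delta$ fully contained in $H$ (resp.\ $S$), I will check the Buekenhout--Shult one-or-all axiom: given a line $L \subseteq H$ and a point $p \in H \setminus L$, the ambient axiom in $\Delta$ yields either a unique point or all points of $L$ collinear to $p$; since $H$ is a subspace, each collinearity line joining $p$ to a point of $L$ lies in $H$, so the dichotomy persists in $H$. The other polar space axioms on singular subspaces are inherited directly from $\Delta$, and the resulting structure is allowed to be degenerate (i.e.\ to have a nontrivial radical). The same argument applies verbatim to a geometric subhyperplane $S$.

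For $(ii)$, the bound on the radical of a hyperplane is immediate from the dichotomy in Definition~\ref{defn:polarspacethings}: either $H = p^\perp$, in which case the radical is exactly $\{p\}$ by nondegeneracy of $\Delta$, or $H$ is nondegenerate and the radical is empty. For a subhyperplane $S$ I will write $S = H_1 \cap H_2$ as an intersection of two geometric hyperplanes and bound $\dim(\mathrm{rad}(S))$ by a standard projective dimension argument using $\dim(\mathrm{rad}(H_i)) \leq 0$. Alternatively, one argues by contradiction: a radical of dimension $\geq 2$ would produce a plane of points each collinear to all of $S$, which, combined with $S$ meeting every plane of $\Delta$ and the nondegeneracy of $\Delta$, forces a contradiction with the fact that a subhyperplane must miss some line.

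For $(iii)$, the inclusion $S' \subseteq p^\perp$ is immediate when $S \subseteq p^\perp$, since each cone line lies entirely in $p^\perp$. For the reverse inclusion I will show that each line $L$ through $p$ meets $S$: fix a plane $\pi \supseteq L$ (available since $\Delta$ has rank $\geq 3$), and argue that $\pi \cap S$ contains a line $M$, which then meets $L$ in a point of $S$ in the projective plane $\pi$. The fact that $\pi \cap S$ is more than a point follows from $S \subseteq p^\perp$ together with the subhyperplane property: every plane through $p$ must meet $S$ in a subspace, and the constraint $S \subseteq p^\perp$ forces the intersection to be a line rather than an isolated point. When instead some point of $S$ is not collinear to $p$, the cone $S'$ equals $\{p\}$ together with all cone lines from $p$ to $S \cap p^\perp$; to see it is a subhyperplane, I will take an arbitrary plane $\alpha$ of $\Delta$, split into the cases $p \in \alpha$, $\alpha \subseteq p^\perp$, and $p^\perp \cap \alpha$ a single line, and in each case exhibit a point of $\alpha \cap S'$ using that $\alpha \cap S \neq \emptyset$.

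For $(iv)$, the approach is to pass to the residue $\Res(d)$, a nondegenerate polar space of rank $n-1$. A plane $\pi$ of $\Delta$ through $d$ corresponds to a line $\bar\pi$ in $\Res(d)$, and since $d$ is deep in $S$ we have $S \subseteq d^\perp$, so the star of $d$ in $S$ projects to a subspace $\bar S$ of $\Res(d)$. The key step, and the main technical obstacle, is to verify that deepness of $d$ promotes $\bar S$ to a geometric hyperplane of $\Res(d)$, after which $\bar\pi \cap \bar S \neq \emptyset$ automatically, yielding a line of $S$ through $d$ lying in $\pi$. Establishing this rank jump from corank~$2$ in $\Delta$ to corank~$1$ in $\Res(d)$ requires carefully accounting for the dimension lost at $d$ and using that $S$, as a subhyperplane of $\Delta$, meets every plane of $\Delta$ through $d$ (equivalently, every line of $\Res(d)$); I expect this to be the crux of the proof.
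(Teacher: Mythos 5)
Your proposal leaves the genuinely hard steps unproved, and in parts $(iii)$ and $(iv)$ the step you defer is essentially the statement itself. In $(iii)$, first case, everything hinges on your assertion that ``the constraint $S\subseteq p^\perp$ forces the intersection $\pi\cap S$ to be a line rather than an isolated point''; this is exactly equivalent to what you are trying to prove (every line of $\pi$ through $p$ meets $S$), and you give no argument for it. The paper's proof supplies the missing idea: assuming $\pi\cap S$ is a single point $x$, take a line $K\subseteq\pi$ with $x\notin K$ and a plane $\alpha\supseteq K$ with $\alpha\cap p^\perp=K$ (possible since $K\subseteq p^\perp$ and $\Delta$ is nondegenerate); corank~$2$ forces a point $y\in\alpha\cap S$, and $y\in S\subseteq p^\perp$ puts $y$ on $K$, hence in $\pi\cap S\setminus\{x\}$, a contradiction. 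The same auxiliary-plane trick is what proves $(iv)$ in three lines (a plane $\alpha'$ through a line $K\subseteq\alpha$ with $x\notin K$ and $\alpha'\cap x^\perp=K$ forces a point of $S$ onto $K$, contradicting $\alpha\cap S=\{x\}$), whereas your residue approach reduces $(iv)$ to showing that the star of $S$ at $d$ is a geometric hyperplane of $\Res(d)$ --- which is a reformulation of $(iv)$ --- and you explicitly leave that unestablished. Similarly, in the second case of $(iii)$ a point of $\alpha\cap S$ need not lie in $p^\perp$, so it gives you no cone line; you must instead use the plane $\langle p, p^\perp\cap\alpha\rangle$, find a point $s$ of $S$ in \emph{it}, and intersect the line $ps$ with $p^\perp\cap\alpha$.

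Part $(ii)$ also has problems. Your primary route for subhyperplanes assumes that every corank-$2$ subspace is an intersection of two geometric hyperplanes; this is nowhere established and is not needed. The correct (and the paper's) argument is uniform: if the radical of $H$ (resp.\ $S$) contained a line $L$ (resp.\ a plane $\pi$), then every point of a line (resp.\ plane) opposite $L$ (resp.\ $\pi$) would be non-collinear with some point of the radical and hence could not lie in $H$ (resp.\ $S$), so $H$ would miss a line (resp.\ $S$ would miss a plane), contradicting corank~$1$ (resp.\ corank~$2$). Your alternative sketch points in this direction but identifies the wrong contradiction (``a subhyperplane must miss some line'' --- the contradiction is that it must \emph{meet} every plane). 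Your hyperplane case also leans on the dichotomy asserted inside Definition~\ref{defn:polarspacethings} rather than proving it. Part $(i)$ is fine.
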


\begin{proof}
\begin{itemize}
\item[$(i)$] This follows immediately from the definition of subspace. 
\item[$(ii)$] Suppose the radical of a geometric hyperplane $H$ contains a line $L$. Then a line $L'$ opposite $L$ has no point in common with $H$, contradicting the definition of a geometric hyperplane. The other statement is proved in a similar fashion. 
\item[$(iii)$] Suppose first that $S\subseteq p^\perp$. Assume for a contradiction that some line $L$ through $p$ does not intersect $S$. Select any plane $\pi$ containing $L$. Since $S$ is a geometric subhyperplane, it intersects $\pi$ in a point $x$ (it cannot intersect in a line since this would contradict the fact that L does not meet $S$). Let $K$ be a line in $\pi$ not containing $x$, and let $\alpha$ be a plane through $K$ not collinear to $p$. Then $\alpha$ and $S$ should have a point $y$ in common, and by construction $y\notin K$. Then $y\perp p$, contradicting the choice of $\alpha$. This shows that $p^\perp=S'$. Clearly, this is a geometric hyperplane. 

Now suppose some point of $S$ is not collinear to $p$. Assume for a contradiction that some plane $\beta$ is disjoint from $S'$. Let $\beta_p$ be a plane through $p$ intersecting $\beta$ in a line $M$. Then $\beta_p$ contains some point $s$ of $S$; hence the line $ps$ is contained in $S'$. Consequently the point $ps\cap M$ is contained in $S'$. So $\beta$ contains a point of $S'$ after all.
\item[$(iv)$] Suppose for a contradiction that some plane $\alpha$ intersects a geometric subhyperplane $S$ (only) in a deep point $x$. Let $\alpha'$ be a plane through a line $K$ of $\alpha$ not through $x$ such that $\alpha'$ is not collinear to $x$. Then $\alpha'$ has some point $y$ in common with $S$ and $xy$ belongs to $S$, hence $xy\cap K$ belongs to $S$, a contradiction.  
\end{itemize} 
\end{proof}

Next, we need some more properties of subspaces (of certain corank).
\begin{prop}\label{subspaces} Let $S$ be a subspace of a polar space and assume that $S$ is not a singular subspace (but it can be a degenerate polar space).
\begin{compactenum}[$(i)$]
\item
If some singular subspace $D$ of dimension $d$ is a geometric hyperplane of $S$, then it contains the radical and maximal singular subspaces of $S$ have dimension either $d$ or $d+1$.
\item
If a set $D$ of points of $S$ is collinear to its complement$S\setminus D$, then either it is contained in de radical or the complement of $D$ is contained in the radical.
\item
The complement in $S$ of a proper geometric hyperplane of $S$ can never be contained in the radical.
\end{compactenum}  
\end{prop}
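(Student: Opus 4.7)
The plan is to prove the three claims independently using the Buekenhout--Shult one-or-all axiom together with thickness of lines. For (i), I would first show $\mathrm{rad}(S) \subseteq D$: if there were $r \in \mathrm{rad}(S) \setminus D$, then since $r$ is collinear with every point of $D$ and $D$ is singular, the span $\langle r, D \rangle$ would be a singular subspace of dimension $d+1$ contained in $S$; moreover, for any $x \in S$ the line $\langle r, x \rangle$ lies in $S$ and must meet $D$ at some point $q \neq r$, forcing $x \in \langle r, q \rangle \subseteq \langle r, D \rangle$ and hence $S \subseteq \langle r, D \rangle$, contradicting the assumption that $S$ is not singular. The dimension statement then follows from the known fact that in any (possibly degenerate) polar space all maximal singular subspaces have the same dimension; $D$ itself being a singular subspace of $S$ of dimension $d$ shows this common dimension is at least $d$, and if $M$ is a maximal singular subspace of $S$ with $M \not\subseteq D$ then $M \cap D$ is a geometric hyperplane of the projective space $M$, hence a projective hyperplane of $M$, yielding $\dim M = \dim(M \cap D) + 1 \leq d+1$.

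For (ii), I would argue by contradiction, after first rephrasing the hypothesis as: any two non-collinear points of $S$ either both lie in $D$ or both lie in $S \setminus D$. Assuming $D \not\subseteq \mathrm{rad}(S)$ and $S \setminus D \not\subseteq \mathrm{rad}(S)$ simultaneously, this rephrasing immediately furnishes witnesses $d, d' \in D$ with $d \not\perp d'$ and $t, t' \in S \setminus D$ with $t \not\perp t'$; by hypothesis $d \perp t$, so $L = \langle d, t \rangle$ is a thick line in $S$, and neither $d'$ nor $t'$ lies on $L$. The Buekenhout--Shult axiom applied to $d'$ (collinear to $t$ but not to $d$) shows that $d'$ is collinear to exactly one point of $L$, namely $t$; symmetrically $t'$ is collinear to exactly the point $d$ on $L$. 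Choosing a third point $x \in L \setminus \{d, t\}$ (available by thickness) and applying the rephrased hypothesis to the non-collinear pairs $\{d', x\}$ and $\{t', x\}$ forces $x \in D$ and $x \in S \setminus D$ simultaneously, the desired contradiction.

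For (iii), suppose for contradiction that $S \setminus H \subseteq R := \mathrm{rad}(S)$ for a proper geometric hyperplane $H$ of $S$. Pick $p \in S \setminus H \subseteq R$ and an arbitrary $s \in S$. The line $\langle p, s \rangle$ lies in $S$, contains the point $p \notin H$, and meets $H$ in exactly one point by the hyperplane property; all its remaining points therefore lie in $S \setminus H \subseteq R$. Since $R$ is itself a subspace (being singular) and the line is thick, the whole line is contained in $R$, and in particular $s \in R$. Hence $S \subseteq R$ would be singular, contradicting the standing hypothesis on $S$. I expect the main obstacle to be the bookkeeping in part (ii), specifically the careful extraction of the four witnesses $d, d', t, t'$ and the verification that the two Buekenhout--Shult conclusions on a single third point of $L$ are genuinely incompatible; once the correct line $L$ is identified, the remaining parts proceed by routine incidence-geometric reasoning.
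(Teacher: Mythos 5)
Your proof is correct in all three parts, but parts (i) and (ii) follow genuinely different routes from the paper's. For (i), the paper also first proves $\mathrm{rad}(S)\subseteq D$ (by propagating collinearity along the two lines joining a putative radical point $x\notin D$ to two non-collinear points of $S$), but it then passes to the residue of $\mathrm{rad}(S)$ and shows that $D/\mathrm{rad}(S)$ is a singular geometric hyperplane of a nondegenerate polar space, forcing that quotient to be empty or a single point; your cone argument (if $r\in\mathrm{rad}(S)\setminus D$ then $S=\langle r,D\rangle$ is singular) is a clean alternative for the first half, while your second half imports the external fact that all maximal singular subspaces of a possibly degenerate polar space have the same dimension --- this is true (quotient by the radical and apply the nondegenerate case), but it is exactly the structural information that the paper's residue computation derives in the form it needs, so the paper is slightly more self-contained there. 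For (ii) your argument is genuinely more elementary: the paper takes $x\in D\setminus R$ and $y\in(S\setminus D)\setminus R$ and then ``selects $z\in S\setminus(x^\perp\cup y^\perp)$'', which silently invokes the nontrivial fact that the union of two proper perps cannot cover $S$; your four-witness argument on the thick line $\langle d,t\rangle$ uses only the one-or-all axiom and thickness of lines and avoids that fact entirely, at the cost of slightly more bookkeeping. Part (iii) is essentially the paper's argument (thickness together with the fact that the radical is a subspace forces a whole line into $R$), merely packaged so as to conclude $S=R$ rather than exhibiting a single point lying in the empty set $S\setminus(R\cup H)$.
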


\begin{proof} For $x\in S$ we briefly write $x^\perp$ for $x^\perp\cap S$. 
\begin{itemize}
\item[$(i)$] Suppose first that there exists a point $x$ in the radical $R$ of $S$, which is not contained in $D$. Let $y,z$ be two non-collinear points of $S$. The lines $xy$ and $xz$ both contain some point of $D$, which, by assumption, are collinear, contradicting the choice of $y$ and $z$. So $D$ contains the radical of $S$. Then $D/R$ is a singular subspace which is a hyperplane in the residue of $R$, which is either a (nondegenerate) polar space (a contradiction since for every subspace there exists a disjoint line) or just a set of points. Then $D/R$ is either empty or a single point, which proves $(i)$.
\item[$(ii)$] Suppose $D$ is not contained in $R$ and let $x\in D\setminus R$. Then $x^\perp$ is a (proper) geometric hyperplane of $S$ and contains $S\setminus D$ by assumption. Suppose $x^\perp\setminus (D\cup R)\neq\emptyset$ and let $y\in x^\perp\setminus (D\cup R)$. Select $z\in S\setminus(x^\perp\cup y^\perp)$. Then $z\in D$ as $z\notin x^\perp$. But then $y\in D$ as $y\notin z^\perp$, a contradiction. 
\item[$(iii)$] Suppose $H$ is a proper geometric hyperplane of $S$ containing the complement of the radical $R$ of $S$. Let $x\in R\setminus H$ and $y\in H\setminus R$. Then every point $z\in\<x,y\>\setminus\{x,y\}$ belongs to $S\setminus (R\cup H)=\emptyset$, a contradiction.
\end{itemize}   The proposition is proved. 
\end{proof}

Finally we need to recognise symplectic polar spaces, which we already treated in Theorem~\ref{copolar}.

\begin{prop}\label{symplecticchar}
A polar space $\Delta$ of rank at least $3$ is symplectic if and only if for some pair of noncollinear points $\{x,y\}$ (and then for all such pairs) every point $z$ is contained in a line of the polar space containing a point of $\{x,y\}^\perp$ and a point of $\{x,y\}^{\perp\perp}$.   
\end{prop}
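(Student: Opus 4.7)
The proposition asserts the equivalence of three statements: $\Delta$ is symplectic; the condition holds for \emph{some} non-collinear pair; the condition holds for \emph{every} non-collinear pair. I would prove the two substantive implications, $\Delta$ symplectic $\Rightarrow$ condition for every pair, and condition for some pair $\Rightarrow$ $\Delta$ symplectic. The parenthetical clause ``and then for all such pairs'' then follows by composing these two implications.

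For the symplectic implication, I would work in the standard embedding $\Delta=\mathsf{C}_{n,1}(\K)\subseteq\PG(2n-1,\K)$, in which every projective point is a polar point. For any non-collinear pair $\{x,y\}$, a direct computation gives $K:=\{x,y\}^\perp=x^\perp\cap y^\perp$ (a $(2n-3)$-dimensional ambient subspace) and $H:=\{x,y\}^{\perp\perp}=\<x,y\>$ (the full ambient projective line); thus $K$ and $H$ are complementary subspaces of $\PG(2n-1,\K)$. For $z\in\Delta$ outside $K\cup H$, the unique ambient projective line through $z$ meeting both $K$ and $H$ at points $p$ and $q$ is singular, because $p\in K$ is symplectically perpendicular to every linear combination $\alpha x+\beta y\in H$; hence this line lies in $\Delta$. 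The boundary cases $z\in K$ or $z\in H$ are handled by lines of the form $\<z,q\>$ or $\<z,p\>$, which are automatically singular since $K\perp H$.

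For the converse, I argue by contrapositive: assume $\Delta$ is not symplectic and fix any non-collinear pair $\{x,y\}$. I will produce $z\in\Delta$ violating the condition. By the Veldkamp--Tits classification, apart from certain non-embeddable rank $3$ polar spaces handled separately, $\Delta$ embeds in a projective space so that $\<K\>_{\mathrm{amb}}$ and $\<x,y\>_{\mathrm{amb}}$ remain complementary ambient subspaces; but since $\Delta$ is not symplectic one has $K\subsetneq\<K\>_{\mathrm{amb}}$ or $H\subsetneq\<x,y\>_{\mathrm{amb}}$. Any line of $\Delta$ through $z$ meeting both $K$ and $H$ must coincide with the unique ambient line through $z$ intersecting these two complementary subspaces, so it suffices to find $z$ for which this ambient line fails one of the required conditions. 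In the orthogonal case $H=\{x,y\}$, so the condition would force $\Delta\subseteq x^\perp\cup y^\perp$; picking $z$ opposite to both $x$ and $y$, which exists in rank $\geq 2$, yields a contradiction. Analogous explicit constructions handle the Hermitian and pseudo-quadratic cases, where $q$ may be chosen outside $H$ so that the resulting ambient line has a non-isotropic point and hence cannot be a line of $\Delta$.

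The main obstacle is the non-embeddable rank $3$ polar spaces (the Cohen--Shult family), for which no ambient projective space is available. Here I would invoke the structural description used in the proof of Proposition~\ref{polarnonemb}: all geometric hyperplanes are singular, which severely restricts both $K$ and $H$, and a direct analysis analogous to the orthogonal argument above rules out the condition.
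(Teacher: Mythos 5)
Your overall strategy for the converse --- contrapositive plus a case analysis over the Veldkamp--Tits classification --- is genuinely different from the paper's, which instead cuts down to rank $2$ by intersecting the perps of two opposite singular subspaces of codimension $2$ (using transitivity of the automorphism group on opposite point pairs to pass from ``some pair'' to ``all pairs''), checks that every point of the resulting generalized quadrangle is projective (i.e.\ that $\{x,y\}^\perp$ and $\{x,z\}^\perp$ meet in exactly one point or coincide), and then invokes Schroth's characterization of symplectic quadrangles \cite{Sch:92}. Your ``only if'' direction via complementary ambient subspaces is fine. The converse as written, however, has a concrete gap in the orthogonal case, and the remaining cases are only asserted.

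The gap: the claim ``in the orthogonal case $H=\{x,y\}$'' is false in characteristic $2$. For a parabolic quadric the bilinearization has the nucleus $e_0$ in its radical, so $\{x,y\}^{\perp\perp}$ is the full conic $Q\cap\langle x,y,e_0\rangle$, not $\{x,y\}$; moreover $\langle\{x,y\}^{\perp}\rangle$ and $\langle\{x,y\}^{\perp\perp}\rangle$ both contain the nucleus and are therefore not complementary, so the ``unique ambient transversal'' device breaks down too. This matters in both directions: over a perfect field of characteristic $2$ the parabolic quadric is abstractly symplectic, so your argument would wrongly conclude that the condition fails for it; over a non-perfect field the condition does fail, but for a different reason (one must choose $z$ whose relevant coordinate ratio is a non-square), not because $\Delta\subseteq x^\perp\cup y^\perp$. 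So the case division has to be by abstract isomorphism type (``proper orthogonal'' in the paper's sense), with characteristic $2$ and the mixed polar spaces treated separately. In addition, the Hermitian, pseudo-quadratic and non-embeddable cases are deferred to ``analogous'' computations; the Hermitian one does go through (choose $z$ so that the transversal meets $\langle x,y\rangle$ in a non-isotropic point), but the pseudo-quadratic case over non-commutative fields and the non-embeddable rank-$3$ case (where you need, and do not establish, that $\{x,y\}^{\perp\perp}=\{x,y\}$) still have to be written out. The paper's reduction to Schroth's theorem sidesteps all of this case-by-case work.
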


\begin{proof} The ``only if'' part is clear. We now show the``if''  part.

Replacing $\Delta$ with the intersection of the perps of two appropriate opposite singular subspaces of codimension 2, we may assume that $\Delta$ has rank 2 (but is classical, implying its automorphism group acts transitively on opposite pairs of points and hence we may assume the given condition holds for each pair of noncollinear points). We show that each point is projective i.e., for each point $x$ and every pair $\{y,z\}$ of points opposite $x$, the sets $\{x,y\}^\perp$ and $\{x,z\}^\perp$ either have exactly one point in common, or coincide. 

Indeed, we may assume $z\notin\{x,y\}^{\perp\perp}$ (otherwise $\{x,y\}^\perp=\{x,z\}^\perp$). Now, the given condition implies that $z$ is on a line $L$ containing $y'\in\{x,y\}^{\perp\perp}$ and $z'\in\{x,y\}^\perp$. Then $z'\in\{x,y,z\}^\perp$. If also $z'\neq z''\in\{x,y,z\}^\perp$, then $z\in\{z',z''\}^\perp\supseteq\{x,y\}^{\perp\perp}$. Hence $y'\in\{z',z''\}^\perp$, implying the whole line $L$ is collinear with $z''$, hence $z''\in L$ and consequently $z'=z''$. 

The result now follows from \cite{Sch:92}.
\end{proof}

We now prove Theorem~\ref{thm1:base}(2).

\begin{prop}\label{axialbase}
Let $\Delta=(X,\Omega)$ be a polar space of rank at least $3$, not of symplectic type, and let $\theta$ be an automorphism of $\Delta$ with opposition diagram $\mathsf{B}_{n;1}^2$ or $\mathsf{D}_{n;1}^2$. Then $\theta$ is an axial elation (and so $\Delta$ is an orthogonal polar space), or the rank is $3$ and $\theta$ is an ideal Baer collineation. 
\end{prop}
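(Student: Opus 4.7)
The approach is to combine the residue-induction technique with polar-space-specific structural arguments. Since $\theta$ is point-domestic, by \cite[Theorem~1]{PVM:19b} it is capped, so $\Type(\theta)=\{2\}$ and the only simplices mapped to opposites are lines. First I pick a line $L$ with $L^\theta$ opposite $L$, which exists since $\theta$ is not line-domestic. The residue $\Res(L)$ is a building of type $\mathsf{A}_1\times\mathsf{X}_{n-2}$, and the induced collineation $\theta_L$ on $\Res(L)$ has empty type (as the only encircled node of $\mathsf{X}_{n;1}^2$ is removed when passing to $\Res(L)$), so Theorem~\ref{thm:nonempty} gives $\theta_L=\id$. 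Translating to polar-space language: for each singular subspace $U\supset L$, $\proj_L(U^\theta)=U$; concretely, for every plane $\pi\supset L$ the unique point $r_\pi\in\pi^\theta$ collinear with $L$ lies in $\pi$.

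Combined with Proposition~\ref{pdlinefixed}, each line $M_p=\langle p,p^\theta\rangle$ with $p\in L$ is fixed by $\theta$. A direct collinearity check shows that for distinct $p,q\in L$ one has $p\not\perp q^\theta$ (since $L$ and $L^\theta$ are opposite and point-domesticity singles out $p^\theta$ as the unique neighbour of $p$ on $L^\theta$), so $\{p,q,p^\theta,q^\theta\}$ cannot lie in a common singular plane, forcing the fixed singular lines $M_p$ to be pairwise disjoint inside the $3$-dimensional projective span $\langle L,L^\theta\rangle$. I then iterate the residue argument over all non-domestic lines and combine with Propositions~\ref{corank} and~\ref{subspaces} on geometric (sub)hyperplanes to show that the pointwise fixed structure of $\theta$ contains, and must coincide with, $L_0^\perp$ for some singular line $L_0$: the axis. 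Then $\theta$ is an axial collineation with axis $L_0$; the non-symplectic assumption together with Proposition~\ref{symplecticchar} force $\Delta$ to be orthogonal, since Hermitian and non-embeddable polar spaces admit no axial collineation compatible with the diagram $\mathsf{B}_{n;1}^2$ or $\mathsf{D}_{n;1}^2$ (in the Hermitian case the diagram of a point-domestic collineation with fixed points is, by the results discussed in the introduction, of a different shape). For rank $n=3$, where the residue induction bottoms out, I invoke the direct classification of domestic collineations in \cite[Section~6]{TTM:12}, which singles out exactly axial elations and ideal Baer collineations among the point-domestic non-symplectic candidates compatible with the given opposition diagram.

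The main obstacle is the middle step for $n\geq 4$: proving that the family of fixed skew singular lines $M_p$ together with the fixed subspaces coming from $\theta_L=\id$ for varying non-domestic $L$ accumulates to precisely a geometric subhyperplane of the form $L_0^\perp$, no more and no less, and that the induced action of $\theta$ on the residue of $L_0$ is trivial (so that $\theta$ is an elation, not merely an axial collineation). The non-symplectic hypothesis is critical here: if the fixed structure were not exactly $L_0^\perp$ for a singular line $L_0$, then by Proposition~\ref{symplecticchar} the polar space would satisfy the collinearity criterion characterising symplectic polar spaces, contradicting the hypothesis. The structural lemmas in Proposition~\ref{corank}(iv) (planes through deep points contain a line of the subhyperplane) and Proposition~\ref{subspaces}(i) (singular hyperplanes inside subspaces contain the radical and pin down the dimensions of maximal singular subspaces) are the engine behind this balancing act.
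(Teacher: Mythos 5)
Your opening moves are sound and in fact give a cleaner entry point than the paper uses here: since $\Diag(\theta)=\sX_{n;1}^2$ means $\Type(\theta)=\{2\}$, the quoted residue lemma does force $\Opp(\theta_L)=\emptyset$ and hence $\theta_L=\id$ for any non-domestic line $L$, and the pairwise disjoint fixed lines $M_p=\<p,p^\theta\>$ follow correctly from Proposition~\ref{pdlinefixed} and the opposition of $L$ and $L^\theta$. (The paper instead starts from \cite[Theorems~6.1 and~7.2]{TTM:12}, which already hand it a pointwise fixed geometric subhyperplane.) The rank-$3$ endgame via \cite[Section~6]{TTM:12} also matches the paper.

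However, the heart of the proof is missing. You assert that the fixed structures ``accumulate to precisely a geometric subhyperplane of the form $L_0^\perp$, no more and no less,'' and you yourself flag this as ``the main obstacle'' --- but you never overcome it. This step is where essentially all of the paper's work lies: one must show (a) that the pointwise fixed set is \emph{exactly} a corank-$2$ subspace $S$, i.e.\ that no point outside $S$ is fixed (the paper does this by a delicate case split on whether $x^\perp\cap S$ is a proper geometric hyperplane of $S$ or $S\subseteq x^\perp$, using Propositions~\ref{corank}, \ref{subspaces} and, in one sub-case only, Proposition~\ref{symplecticchar} to manufacture a suitable auxiliary point); (b) that for every non-fixed $z$ the line $zz^\theta$ meets $S$ in a \emph{deep} point, so that the radical of $S$ is a line $cc'$ and every $zz^\theta$ passes through it. Point (b) is indispensable and does not follow from knowing the fixed-point set: an axial elation is defined by stabilising every line meeting the axis, which is strictly more than having $L_0^\perp$ as fixed-point set. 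Your appeal to Proposition~\ref{symplecticchar} to ``force the fixed structure to be exactly $L_0^\perp$'' is also not a valid inference --- that proposition characterises symplectic polar spaces by a collinearity condition on hyperbolic lines and says nothing directly about the shape of a fixed-point set. As written, the proposal is an outline with the decisive middle third left as an acknowledged obstacle, so it does not constitute a proof.
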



\begin{proof}
We first show that the fixed point structure of $\theta$ is precisely a subspace of corank $2$, whenever $\theta$ is not an ideal Baer collineation in rank 3.

Theorems 6.1 and Theorem 7.2 of \cite{TTM:12} already imply that $\theta$ pointwise fixes a subspace $S$ of corank 2. Note that $S$ is not a singular subspace as otherwise we can find a plane disjoint from $S$.  We show that $\theta$ does not fix any point outside $S$. Indeed, suppose for a contradiction that $\theta$ fixes some point $x\in X\setminus S$. 

We first claim that $\theta$ fixes pointwise every line incident with $x$ and with a point of $S$. 
Indeed, we distinguish between two situations. 

Suppose first that not all points of $S$ are collinear to $x$.  Then $x^\perp\cap S$ is a geometric hyperplane $H$ of $S$. Suppose for a contradiction that all points of $S\setminus H$ are collinear to all points of $H$. Then by Proposition~\ref{subspaces}$(ii)$, $(iii)$, $H$ is contained in the radical of $S$. Hence the union of all lines through $x$ intersecting $S$ is a singular subspace, but also a geometric subhyperplane by Proposition~\ref{corank}$(iii)$, clearly a contradiction (an opposite subspace is disjoint). 
We conclude that there exist points $a\in H$ and $b\in S\setminus H$ which are not collinear. Hence the projection of $b$ onto the line $ax$ is also fixed. Consider any line $L$ in $S$ through $a$; then the restriction of $\theta$ to the singular plane spanned by $L$ and $x$ is the identity as it fixes a line pointwise and two additional points. So $\theta$  pointwise fixes all lines through $x$ intersecting $S$ in a point collinear to $a$. By connectivity of $H$, $\theta$ fixes all lines through $x$ intersecting $S$ pointwise.    

Now suppose that all points of $S$ are collinear to $x$. Assume first that there is a point $y\neq x$ with $y\perp S$. Note that $\{x,y\}^\perp=S$ in this case (this follows from Proposition~\ref{corank}$(iii)$). By Proposition~\ref{symplecticchar} we can select a point $z$ not contained in a line joining a point  of $S$ and $\{x,y\}^{\perp\perp}$. Note that $z$ is not fixed under $\theta$ as, by the choice of $z$, we can find a point $w$ of $x^\perp\setminus S$ collinear to $z$, which must be fixed by the one-or-all axiom as otherwise $ww^\theta$ contains $x$. Then the line $zz^\theta$ intersects $x^\perp$ in some point $u$, which is fixed by $\theta$, since every line through $x$ is fixed and since $zz^\theta$ is fixed by Proposition~\ref{pdlinefixed}. If $u\notin S$, then a similar argument as in the previous paragraph implies that $\theta$ fixes $x^\perp$ pointwise, implying line-domesticity, a contradiction.  Hence we may assume $u\in S$. Since $S$ is a section of $x^\perp$, it is a nondegenerate polar space, so there is a point $v\in S\setminus u^\perp$. The projection $x'$ of $v$ onto $zz^\theta$ is a fixed point of $\theta$ and $S$ is not contained in $x'^\perp$. Hence, by the first part of the proof, the line $zz^\theta$ is fixed pointwise. This contradiction implies that $z$ is fixed after all, and hence also the point $w$ above, and hence, as before, this implies that $x^\perp$ is fixed pointwise. This contradiction shows the claim.

The previous proof clearly showed that $S$ is not contained in $x^\perp$, as otherwise $x^\perp$ is fixed pointwise and so $\theta$ is line-domestic.  Let $H_x$ be the union of all lines through $x$ intersecting $S$. Then by Proposition~\ref{corank}$(iii)$  $H_x$ is a geometric subhyperpane of $\Delta$. 

Finally we claim that every line $K$ contains a fixed point. Indeed, let $K$ be any line and let $\beta$ be any plane containing $L$. Since $S$ is a subspace of corank 2, $\beta$ contains a point $s\in S$, and since $H_x$ is a subspace of corank 2, $\beta$ contains a point $h\in H_x$. If $s\neq h$, then interchanging the roles of $h$ and $x$ yields a pointwise fixed line in $\beta$ (namely, $hs$), and hence a fixed point on $K$. So we may assume $s=h\in H$. Suppose first that there exists a point $r\in S$ not collinear to $s$. Then by the first part of the proof of the present claim, the plane containing $r$ and the projection of $r$ onto $\beta$ contains a pointwise fixed line, which intersects $K$, proving the claim. So we may assume that $s$ is collinear with all points of $S$. Then $s$ is a deep point of $S$ and by Proposition~\ref{corank}$(iv)$ every plane through $s$ has a line in common with $S$, again yielding a fixed point on $K$.

Hence $\theta$ is line-domestic, contradicting our hypothesis. This shows that indeed, the set of fixed points for $\theta$ is the geometric subhyperplane $S$. Let $x$ be a point off $S$. We claim that the line $xx^\theta$ intersects $S$. Indeed, suppose not. Then $x^\perp\cap S = (x^\theta)^\perp\cap S$. Since the line $xx^\theta$ does not intersect $S$, Proposition~\ref{corank}$(iii)$ implies that $S$ is not contained in $x^\perp$. But then the projection from a point of $S\setminus x^\perp$ onto the line $xx^\theta$ is a fixed point outside $S$, a contradiction. 
The claim follows.

Set $xx^\theta\cap S= c$. If some point $z$ opposite $c$ existed in $S$, then the projection of $z$ onto $xx^\theta$ is a point off $S$ that would be fixed, a contradiction. Hence $c$ is a deep point for $S$. We can now choose $y\in X\setminus S$ opposite $c$ and obtain a second deep point $c'=yy^\theta\cap S$ of $S$. Hence $cc'$ is the radical of $S$ (since the radical cannot be larger than a line by Proposition~\ref{corank}$(ii)$) and every point $z$ which is not fixed satisfies $zz^\theta\cap cc'\neq\emptyset$. So $\theta$ is an axial elation with axis $cc'$. 
\end{proof}

\subsubsection{Split polar spaces}

Our attention now goes to the split case. Indeed, we can classify all point-domestic collineations of split polar spaces which are also domestic in the maximal singular subspaces. First a lemma and some notation: For three pairwise disjoint lines $L,M,N$ of $\PG(3,\K)$, denote by $\proj^L_M(N)$ the projection of $N$ with center $L$ onto $M$, that is, $\proj^L_M(N)$ takes a point  $x\in N$ onto the intersection $\<L,x\>\cap M$. 

\begin{lemma}\label{regulus}
Let $L_1,L_2$ and $L_3$ be three pairwise disjoint lines in $\PG(3,\K)$ and let $M$ be a transversal, that is, $M$ is a line which intersects $L_i$ in a point $x_i$, for all $i\in\{1,2,3\}$. Let $L$ be a line through $x_1$ in the plane $\<L_1,M\>$, with $L_1\neq L\neq M$, and, for $|\K|>2$, let $L'$ be a line intersecting both $L_1$ and $M$, but disjoint from the triple $\{x_1,x_2,x_3\}$.  Then $\proj^{L_1}_{L_2}(L_3)\proj^L_{L_3}(L_2)$ has a unique fixed point, and $\proj^{L_1}_{L_2}(L_3)\proj^{L'}_{L_3}(L_2)$ has exactly two fixed points. 
\end{lemma}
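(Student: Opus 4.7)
The plan is to establish both assertions by an explicit computation in standard coordinates. Since $\PGL_4(\K)$ acts transitively on ordered triples of pairwise skew lines of $\PG(3,\K)$ with a given common transversal, I would first fix coordinates in $\PG(3,\K)$ so that
\[
L_1 = \{(a,0,b,0)\}, \quad L_2 = \{(a,a,b,b)\}, \quad L_3 = \{(0,a,0,b)\}, \quad M = \{(0,0,a,b)\},
\]
in which case $\langle L_1, M\rangle = V(X_2)$ and $x_1 = (0,0,1,0)$, $x_2 = (0,0,1,1)$, $x_3 = (0,0,0,1)$. Set $\phi := \proj^{L_1}_{L_2}(L_3)$, $\psi := \proj^{L}_{L_3}(L_2)$ and $\psi' := \proj^{L'}_{L_3}(L_2)$; the compositions whose fixed points are to be counted are then the projectivities $\phi\psi$ and $\phi\psi'$ of $L_2$.

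The next step is to record the two natural parametrizations $L_2\colon[s:t]\leftrightarrow(s,s,t,t)$ and $L_3\colon[s:t]\leftrightarrow(0,s,0,t)$. A direct computation, intersecting the plane $\langle L_1, y\rangle$ with $L_2$ for $y=(0,s,0,t)\in L_3$, shows that $\phi$ is the identity in these parameters. For the first assertion, I would write $L$ as the span of $x_1$ and $(1,0,0,c)$, where the hypotheses $L\neq L_1,M$ force $c\neq 0$. Intersecting $\langle L, y\rangle$ with $L_3$ for $y=(s,s,t,t)\in L_2$ then yields $\psi\colon[s:t]\mapsto[s:t-cs]$, so $\phi\psi$ is represented on $L_2$ by the nontrivial unipotent matrix $\bigl(\begin{smallmatrix}1&0\\-c&1\end{smallmatrix}\bigr)$, whose only fixed line is $[0:1]$, corresponding to $x_2$. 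This proves the first assertion; invariantly, $x_2$ is forced to be fixed because $\phi(x_3)=x_2$ and $\psi(x_2)=x_3$ follow at once from the equalities $\langle L_1,x_3\rangle=\langle L_1,M\rangle=\langle L,x_2\rangle$.

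For the second assertion, I would parametrize $L'=\langle(1,0,e,0),(0,0,f,g)\rangle$. The hypothesis $L'\cap L_1\neq x_1$ is automatic from the first coordinate, while $L'\cap M\notin\{x_1,x_2,x_3\}$ translates into $f,\,g,\,f-g \neq 0$; the simultaneous satisfiability of these conditions is exactly the hypothesis $|\K|>2$. An analogous plane--line intersection gives $\psi'\colon[s:t]\mapsto[fs:ges+(f-g)t]$, so $\phi\psi'$ is represented by the lower triangular matrix $\bigl(\begin{smallmatrix}f&0\\ge&f-g\end{smallmatrix}\bigr)$, whose eigenvalues $f$ and $f-g$ differ by $g\neq 0$ and are therefore distinct. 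Hence $\phi\psi'$ has exactly two fixed points on $L_2$.

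The only technical content consists of the two routine computations of $\psi$ and $\psi'$ as plane--line intersections. The main conceptual point, which I expect to be the only real obstacle, is the correct translation of the geometric hypotheses on $L$ and $L'$ into the algebraic conditions $c\neq 0$ and $g\neq 0$, respectively, since these are precisely what prevent a repeated eigenvalue in each matrix and so yield the claimed fixed-point counts; the hypothesis $|\K|>2$ plays no further role beyond guaranteeing that such an $L'$ exists.
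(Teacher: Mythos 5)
Your proof is correct, but it takes a genuinely different route from the paper. The paper's argument is purely synthetic and very short: a fixed point of $\proj^{L_1}_{L_2}(L_3)\proj^{L}_{L_3}(L_2)$ on $L_2$ corresponds exactly to a common transversal of $L_1,L_2,L_3$ that also meets $L$, i.e.\ to a common transversal of the two triples $\{L_1,L_2,L_3\}$ and $\{L,L_2,L_3\}$; the paper then just observes that these triples share exactly one transversal (namely $M$), while $\{L_1,L_2,L_3\}$ and $\{L',L_2,L_3\}$ share exactly two ($M$ and the transversal of $\{L_1,L_2,L_3\}$ through $L'\cap L_1$). You instead normalise the configuration by the transitivity of $\PGL_4(\K)$ on (ordered skew triple, transversal), and compute the two compositions as explicit $2\times 2$ matrices, reading off the fixed-point counts from the unipotent versus two-distinct-eigenvalues dichotomy; your translation of the hypotheses on $L$ and $L'$ into $c\neq 0$ and $f,g,f-g\neq 0$ is exactly right, and your computation also silently verifies that $L$ and $L'$ are disjoint from $L_2$ and $L_3$, which is needed for the projections to be defined at all. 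What the paper's approach buys is brevity and a transparent geometric reason for the counts (the fixed points are the points of $L_2$ lying on transversals of all four lines — your closing ``invariant'' remark about $x_2$ is precisely the one-transversal half of this); what your approach buys is a self-contained verification that does not presuppose familiarity with reguli, and it additionally identifies the fixed points explicitly. Both arguments implicitly use commutativity of $\K$ (for the regulus, respectively for the eigenvalue count), which is harmless since in the application of the lemma the polar space is already known to be defined over a field.
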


\begin{proof}
This follows easily as $\{L_1,L_2,L_3\}$ and $\{L,L_2,L_3\}$ have exactly one common transversal, namely $M$, and $\{L_1,L_2,L_3\}$ and $\{L',L_2,L_3\}$ have exactly two common transversals, namely $M$ and the unique transversal of $\{L_1,L_2,L_3\}$ through $L'\cap L_1$. 
\end{proof}

The next theorem is the bulk of Theorem~\ref{thm1:middle}. Recall the definition of $\theta_\alpha$ for a collineation $\theta$ that maps an object $\alpha$ to opposite, just after Theorem~\ref{thm:capped} in Section~\ref{sec:background}. Recall also that a proper symplectic polar space is one in characteristic different from $2$.

\begin{thm}\label{BCD}
Let $\theta$ be a point-domestic collineation of a large parabolic, proper symplectic, or hyperbolic polar space $\Delta$ with opposition diagram $\mathsf{B}_{n;i}^2$, $2\leq 2i\leq n-1$, $\mathsf{C}_{n,i}^2$, $2\leq 2i\leq n-1$, or $\mathsf{D}_{n,i}^2$, $2\leq 2i\leq n-2$, respectively. Then $\theta$ is the product of $i$ pairwise orthogonal long root elations, an $(I_{2n-2i},-I_{2i})$-homology, or the product of $i$ pairwise orthogonal long root elations, respectively.
\end{thm}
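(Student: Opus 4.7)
I will argue by induction on $i$. The base case $i=1$ is exactly Theorem~\ref{thm1:base}: Theorem~\ref{copolar} settles the proper symplectic situation, and Proposition~\ref{axialbase} handles the parabolic and hyperbolic situations. So from now on assume $i \geq 2$, with the result for $i-1$ already available. Since $\Delta$ is large, $\theta$ is capped by \cite[Theorem~1]{PVM:19b}, and from the shape of $\sX_{n;i}^2$ one sees that $\theta$ is $k$-domestic exactly for $k \notin \{1,3,\ldots,2i-1\}$. Applying Lemma~\ref{lem:typeI} at the transition $k=2i$ will produce a $\theta$-fixed geometric subspace $S$ of corank $2i$; in the elation case this will coincide with $\<L_1,\ldots,L_i\>^\perp$, and in the symplectic homology case with the nondegenerate polar subspace of rank $n-i$ that is pointwise fixed.

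Next, since node $2$ is encircled there exists a non-domestic line $L$. The plan is to study $\theta_L = \proj_L\circ\theta$ acting on $\Res(L) = \sA_1 \times \sX_{n-2}$, whose $\sX_{n-2}$-factor is $L^\perp/L$, again of the same type as $\Delta$ and of rank $n-2$. Using the formula $\pi_{\theta_L} = \pi_{S\setminus\{2\}}\circ\pi_0\circ\pi_\theta$ together with \cite[Proposition~1.13]{PVM:19a}, I will check that the restriction $\bar\theta$ to the $\sX_{n-2}$-factor is point-domestic with opposition diagram $\sX_{n-2;i-1}^2$. The inductive hypothesis will then identify $\bar\theta$ as either a product of $i-1$ pairwise orthogonal long root elations of $L^\perp/L$ or, in the symplectic case, an $(I_{2n-2i},-I_{2i-2})$-homology.

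What remains is to lift this description back to $\Delta$ and to account for the missing $i$-th factor. Each long root group of $L^\perp/L$ pulls back canonically to a long root group of $\Delta$ whose centre is a line in a suitable plane through $L$, and orthogonality in the residue should translate into perpendicularity (of the first or second kind, in the sense of Definition~\ref{defn:polarspacethings}) in $\Delta$. The $i$-th elation (or the remaining piece of the homology) will then be extracted from the non-domestic flag $L \subset \cdots \subset L_{2i-1}$ guaranteed by cappedness: Proposition~\ref{pdlinefixed} forces every line $\<x,x^\theta\>$ with $x \notin L^\perp$ to be fixed and to meet $S$, which, combined with the induced action on the points of $L$ (controlled by Proposition~\ref{projsp} on the $\sA_1$-factor of $\Res(L)$), pins the remaining factor down. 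Matching with $\theta$ is finally carried out by comparing on $S$ (where both act trivially) and on a set of points spanning $\Delta\setminus S$.

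The hard part will be this lifting and matching. One must carefully distinguish the two kinds of perpendicularity of Definition~\ref{defn:polarspacethings} when translating orthogonality of root groups through the residue, verify that the $i$-th factor produced from $L$ is indeed perpendicular to all of those lifted from the residue, and in the symplectic case run a direct coordinate calculation in the style of Theorems~\ref{copolar} and~\ref{pocopo} to recognise the fixed structure as two mutually perpendicular nondegenerate symplectic subspaces of ranks $n-i$ and $i$, rather than a product of elations.
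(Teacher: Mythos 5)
Your overall strategy---induction on $i$ through the residue of a non-domestic line $L$, with base case supplied by Theorem~\ref{copolar} and Proposition~\ref{axialbase}, and with the opposition diagram of $\theta_L$ identified as $\sX_{n-2;i-1}^2$---is exactly the paper's. But the proposal stops precisely where the real work begins. The central difficulty, which you defer to ``the lifting and matching'', is that the inductive hypothesis describes $\theta_L=\proj_L\circ\,\theta$ only as an automorphism of the abstract residue, and this says nothing a priori about how $\theta$ itself acts on any actual subspace of $\Delta$. The paper resolves this by proving that $\theta$ stabilises the concrete realisation $W=L^\perp\cap(L^\theta)^\perp$ of that residue and induces on it the residual collineation; this is the longest part of the argument and rests on showing that $K=\Sigma\cap\Sigma^\theta$ is fixed pointwise for suitable singular $3$-spaces $\Sigma\supseteq L$, by recognising the restriction of $\theta$ as a projectivity $\proj^L_K(K')$ and ruling it out with the regulus computation of Lemma~\ref{regulus} against the fixed-point pattern that products of perpendicular elations and homologies must exhibit. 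Nothing in your outline substitutes for this step, and without it the ``canonical pullback'' of root groups is not canonical: a long root elation of $L^\perp/L$ has many preimages in $\Delta$, and only the identification of $\theta|_W$ singles out the correct one (the paper extends each elation \emph{of $W$} to $\Delta$ by keeping the same axis).

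The extraction of the $i$-th factor and the final identification are also underspecified. The paper obtains the extra factor from the action of $\theta$ on the rank-$2$ subspace $\<L,L^\theta\>\cap\Delta$ (a hyperbolic quadric, resp.\ a nondegenerate symplectic subspace), which it must first show is $\theta$-stable, and then proves $\theta=\sigma'\sigma''$ by checking agreement on $W\cup W^*$ for a second non-domestic line $L^*\subseteq W$ and invoking \cite[Proposition~5.4]{PVM:21}. Agreement on the pointwise-fixed set $S$ together with ``a set of points spanning $\Delta\setminus S$'' does not by itself force two collineations to coincide, so your proposed matching would need to be replaced by something of this kind. As written, the proposal records the correct skeleton of the paper's proof but omits the arguments that make it work.
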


\begin{proof}
We show this theorem by induction on $i$, the case $i=1$ being contained in Proposition~\ref{axialbase} and Theorem~\ref{copolar}. Let now $i\geq 2$.

We first observe that, under the product of at most $n/2$ perpendicular root elations in a parabolic polar space of rank $n$, or a hyperbolic polar space of rank $n+2$, every fixed line is either pointwise fixed or a unique point on it is fixed. Moreover, every point is in a fixed line. Also, under an $(I_{2n-2i},-I_{2i})$-homology of a symplectic polar space of rank $n$, every fixed line is either pointwise fixed of exactly two points on it are fixed. Moreover, every point is on a fixed line. 

The opposition diagram tells us that there is a line $L$ mapped onto an opposite. By largeness, we can choose $L$ such that $\theta_L$ is by induction either the product of $i-1$ pairwise orthogonal long root elations, an $(I_{2n-2i},-I_{2i-2})$-homology, or the product of $i-1$ pairwise orthogonal long root elations, respectively. Now let $\Sigma$ be a singular $3$-space through $L$ fixed under $\theta_L$ and such that all planes through $L$ in $\Sigma$ are also fixed under $\theta_L$. Then each plane $\pi$ of $\Sigma$ containing $L$ has the property $\pi\cap\pi^\theta\neq\emptyset$. Set $K=\Sigma\cap\Sigma^\theta$ and let $K'\subseteq\Sigma$ be such that ${K'}^\theta=K$. First assume that $K$ and $K'$ are disjoint. Then the point $x\in K'$ is mapped onto the point $\<L,x\>\cap K$. Hence the restriction to $K'$ of the action of $\theta$ is given by $\proj^L_K(K')$. Let $M$ be a line in $\Sigma$ disjoint from $K\cup K'$. We claim that $M$ is opposite $M^\theta$. Suppose not and let $u\in M^\theta$ be a point collinear to all points of $M$. Note that $u\notin K$ and $\<M,K\>=\Sigma$, which now contradicts the fact that $u$ is collinear to all points of $M\cup K$. Hence the claim. Now choosing $M$ appropriately in $\Sigma$, Lemma~\ref{regulus} leads to a contradiction to our observation in the second paragraph of this proof. If $K'\cap K$ is some point $z$, then the restriction to $K'$ of $\theta$ is given by the perspectivity with center $L\cap\<K,K'\>$. Considering a line $M$ intersecting $K,K'$ in an appropriate different point (on the line $\<z,L\cap\<K,K'\>\>$ for the symplectic case, not on that line for the other cases), we again obtain a contradiction to our observation in the second paragraph of this proof.   

So we have shown that $K$ is fixed pointwise. Now let $\pi$ be any plane through $L$ not fixed under $\theta_L$. Then by our observation again, $\pi$ is contained in a singular $3$-space $\Sigma$ fixed by $\theta_L$. We claim that $K:=\Sigma\cap\Sigma^\theta$ is stabilized by $\theta$ (with exactly one or two fixed points, depending on the orthogonal or symplectic case).    There are again two cases to rule out: $K'$ (defined as before as ${K'}^\theta=K$) disjoint from $K$, and $K'\cap K$ a singleton. Let's explain the first one; the second one is much easier and left to the reader. 

First the orthogonal case. Let $\pi$ be the unique plane containing $L$ in $\Sigma$ fixed by $\theta_L$. Let $M$ be the line in $\pi$ intersecting all of $L,K$ and $K'$ in points.   We can now choose a unique line $T$ through $M\cap L$ in the plane $\<L,M\>$ such that $\proj^T_K(K')=\theta/K'$. It follows that $\theta_T$ fixes all planes through $T$ inside $\Sigma$, and the previous part of the proof now implies that $K=\Sigma\cap \Sigma^\theta$ is fixed pointwise under $\theta$, a contradiction (as $\theta_L$ does not fix all planes through $L$ in $\Sigma$). 

Now the symplectic case goes similarly, noting that there are two planes $\pi_1,\pi_2$ through $L$ fixed under $\theta_L$. Let $M_i$ be the unique transversal of $L,K,K'$ in the plane $\pi_i$, then we can again appropriately choose a point $z$ on $M_1$ such that, setting $T:=\<z,M_2\cap L\>$, the map $\proj^T_K(K')$ coincides with $\theta/K'$. We obtain the same contradiction. 

Hence we have shown that $\theta$ stabilizes $W:=L^\perp\cap {L^\theta}^\perp$, in which $\theta$ induces by induction, in the orthogonal case, the product $\sigma$ of $i-1$ pairwise orthogonal long root elations, say with axis the $(2i-3)$-dimensional singular subspace $A_W$, and in the proper symplectic case, an $(I_{2n-2-2i},-I_{2i-2})$-homology $\sigma$, say with axes the $(2n-2i-3)$-dimensional singular subspace $A_W$ and the $(2i-3)$-dimensional singular subspace $A'_W$. In any case, $W$ contains a line $L^*$ which is mapped onto an opposite. 
Playing the same game with $L^*$ as with $L$ above, and noting that, in the orthogonal case, the hyperbolic quadric $H$ defined by $L$ and $L^\theta$ is stabilized, and in the (proper) symplectic case the union $L\cup L^\theta$ is stabilized (as these are the unique lines intersecting all of the stabilized lines $\<x,x^\theta\>$, with $x\in L$) we see that, in the orthogonal case, $\theta$ induces  an axial elation $\sigma'$ on $H$, say with axis the line $A$, or a unique $(I_2,-I_2)$-homology $\sigma'$ in the nondegenerate symplectic space induced in the span of $L$ and $L^\theta$, say with axes the lines $A$ and~$A'$. 

Now we first treat the orthogonal case. Let $\sigma'$ be the natural extension of $\sigma$ to $\Delta$ (that is, $\sigma'$ is the product of the $(i-1)$ perpendicular long root elations generating $\sigma$, naturally extended to $\Delta$; note that every long root elation of $W$ extends uniquely to a long root elation of $\Delta$ by considering the same axis and the same pair of corresponding lines). Let $\sigma''$ be the long root elation with axis $A$ mapping $L$ to $L^\theta$. We claim that $\sigma'\sigma''$ restricted to $W^*:=L^*\cap {L*}^\perp$ (denote it by $\sigma^*$) coincides with $\theta$ restricted to $W^*$ (denote it by $\theta^*$). Indeed, we know that $\theta^*$ is the product of at most $(i-1)$ perpendicular long root elations. consequently the set of fixed points is a singular subspace, which must hence contain $A$ and  $A_W\cap W^*$. But these generate a singular subspace $A^*$ of dimension $2i-3$, hence $\sigma^*$ is the product of precisely $(i-1)$ perpendicular long root elations and has axis $B$. We can multiply $\sigma'\sigma''$ with the root elation $\rho$ with axis $\<L^*,{L^*}^\theta\>\cap A_W$, which is a line, mapping ${L^*}^\theta$ to $L^*$. Since $\rho$ induces the identity on $L^*\cap {L^*}^\perp$, we deduce that also $\sigma^*$ is the product of at most $(i-1)$ perpendicular long root elations. But its fixed point set also contains $B$ and hence $B$ is the axis of $\sigma^*$. Since $\sigma^*$ and $\theta^*$ agree over $W\cap W^*$ and both map $L$ to $L^\theta$, the claim is proved. Hence $\sigma'\sigma''$ coincides with $\theta$ over $W\cup W^*$. By \cite[Proposition~5.4]{PVM:21} we obtain $\sigma'\sigma''=\theta$ and $\theta$ is the product of $i$ perpendicular long root elations, as claimed. 

Now we treat the proper symplectic case. Here, it is more convenient to work algebraically. We can choose the standard alternating bilinear form so that it also has standard form in $W$ and in $H$. Then it follows from the foregoing that there exists $k\in \K$ such that the matrix of $\theta$ is diagonal with on the diagonal $2n-2i-2$ times 1, $2i-2$ times $-1$, two times $k$ and two times $-k$. But clearly, in order $\theta$ to be an automorphism of $\Delta$, we should have $k\in\{1,-1\}$ and the assertion follows. 
\end{proof}

We now extend Theorem~\ref{BCD} in the symplectic case to diagrams $\mathsf{C}_{2n;n}^2$, thereby completing the proof of Theorem~\ref{thm1:middle}. Note that domestic collineations with that opposition diagram exist which do not fix any chamber. But if we require at least one fixed point, then we have a well defined involutive homology.

\begin{thm}\label{symplectichomology}
Let $\theta$ be a point-domestic collineation of a large proper symplectic polar space $\Delta$ with opposition diagram $\mathsf{C}_{n,i}^2$, $2\leq 2i\leq n$, fixing at least one point if $n=2i$.  Then $\theta$ is  an $(I_{2n-2i},-I_{2i})$-homology.
\end{thm}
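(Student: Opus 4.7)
My plan is to bypass induction and handle the genuinely new case $2i=n$ by setting up a direct linear-algebra reformulation in the ambient $\PG(2n-1,\K)$, exploiting the defining feature of the proper symplectic case: every projective point is a point of~$\Delta$. Represent the alternating form $\omega$ by a matrix $J$ (so $J^{T}=-J$) and lift $\theta$ to a symplectic similitude $M\in\mathrm{GSp}(2n,\K)$, meaning $M^{T}JM=cJ$ for some $c\in\K^{*}$, with $M$ well-defined up to scalar. (For $2i<n$ the conclusion is already covered by Theorem~\ref{BCD}.)

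Point-domesticity now reads as the identity $v^{T}JMv=0$ for every nonzero $v\in\K^{2n}$, because two points of $\Delta$ are collinear iff their representing vectors are $\omega$-orthogonal and every projective point lies in $\Delta$. Since $\kar(\K)\neq 2$, the identical vanishing of the quadratic form $v\mapsto v^{T}(JM)v$ is equivalent to $JM$ being skew-symmetric, i.e.\ $M^{T}J=JM$. Substituting this into the similitude relation gives $JM^{2}=cJ$, hence $M^{2}=cI$. Thus, up to scalar, $M$ squares to a scalar matrix.

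The fixed-point hypothesis now enters: a fixed point yields an eigenvector $Mv=\lambda v$ with $\lambda\in\K^{*}$, and $\lambda^{2}v=M^{2}v=cv$ forces $c=\lambda^{2}$. Rescaling $M$ to $M':=\lambda^{-1}M$ (which represents the same collineation) gives $(M')^{2}=I$ and $M'\in\mathrm{Sp}(2n,\K)$. The $\pm1$-eigenspace decomposition $\K^{2n}=V_{+}\oplus V_{-}$ is symplectically orthogonal, since $\omega(v_{+},v_{-})=\omega(M'v_{+},M'v_{-})=-\omega(v_{+},v_{-})$, so each $V_{\pm}$ is a nondegenerate symplectic subspace and therefore of even dimension, say $2k$ and $2(n-k)$, with $0<k<n$ by nontriviality of $\theta$.

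Finally, the opposition diagram forces $k=n/2$. Since node $n$ is encircled in $\sC_{n;n/2}^{2}$, capping yields a maximal singular subspace $U$ with $M'(U)\cap U=\{0\}$. For $u=u_{+}+u_{-}\in U$ one has $M'(u)=u_{+}-u_{-}$, so $M'(u)\in U$ forces $u_{\pm}\in U\cap V_{\pm}$ (taking half-sums, using $\kar\neq 2$); therefore $U\cap V_{+}=U\cap V_{-}=\{0\}$, and the injectivity of the projections $U\to V_{\pm}$ yields $n\leq 2k$ and $n\leq 2(n-k)$, giving $k=n/2$ and hence the asserted $(I_{n},-I_{n})$-homology. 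The main obstacle is precisely the rescaling step: passing from $M^{2}=cI$ to $(M')^{2}=I$ demands that $c$ be a square in $\K$, and this is exactly what the fixed-point hypothesis supplies. Without it, $M$ is an involution only over the quadratic extension $\K(\sqrt{c})$, which is the very source of the class III phenomena described in Theorem~\ref{thm1:moreprecise1}.
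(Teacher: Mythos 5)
Your argument is correct, and it takes a genuinely different route from the paper. The paper proves the new case $n=2i$ by induction on $n$: starting from a fixed point it produces a non-domestic line $L$, shows that $\Delta_L=L^\perp\cap(L^\theta)^\perp$ is stabilised, rules out the possibility that $\theta_L$ is trivial (via the spread/homology dichotomy of Theorem~\ref{copolar} when $n=4$, and a dimension count when $n\geq 6$), and then glues the homologies induced on $\Delta_L$ and $\Delta_M$ for two non-domestic lines. Your proof replaces all of this with a single polarization identity: since every point of $\PG(2n-1,\K)$ lies on the symplectic polar space, point-domesticity literally says the quadratic form $v\mapsto v^{T}JMv$ vanishes, which in characteristic $\neq 2$ forces $M^{T}J=JM$ and hence $M^{2}=cI$; the fixed point makes $c$ a square, and the non-domestic maximal singular subspace guaranteed by the encircled node $n$ pins the eigenspace dimensions to $(n,n)$. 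Each step checks out (in particular the orthogonality and nondegeneracy of $V_{\pm}$, and the deduction $U\cap V_{\pm}=\{0\}\Rightarrow n\leq\dim V_{\pm}$). What your approach buys is brevity, a clean identification of exactly where the fixed-point hypothesis enters, and an honest explanation of the class III alternative when $c$ is a nonsquare; what the paper's approach buys is uniformity with the inductive treatment of the orthogonal case in Theorem~\ref{BCD} and independence from coordinates. The one point you should make explicit is that $\theta$ lifts to a \emph{linear} map at all: a priori a collineation of a symplectic polar space is only semilinear, and your polarization step fails for a nontrivial companion field automorphism. For the genuinely new range $n=2i\geq 4$ this is supplied by Proposition~\ref{startingpoint} (point-domestic collineations of polar spaces of rank $\geq 4$ have companion automorphism equal to that of the defining form, hence trivial in the symplectic case); the residual cases $n\in\{2,3\}$ are already covered by Theorems~\ref{copolar} and~\ref{BCD}, so this is a citation to add rather than a gap to fill.
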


\begin{proof}
If $2i<n$, then this is proved in Theorem~\ref{BCD}, so assume from now on that $n=2i$. We proceed by induction on $n$. For $n=2$, the theorem follows from Theorem~\ref{copolar}. Now assume $n>1$. 

Let $x$ be a fixed point. It is easy to see that point-domesticity implies that the map $\theta_{(x)}$ induced by $\theta$ in $\Res_\delta(x)$ is also point-domestic.  We claim that some plane $\pi$ through $x$ is such that $\pi,\pi^\theta$ is an opposite pair in $\Res_\Delta(x)$. Indeed, suppose not. Then $\theta_{(x)}$ is point- and line-domestic and hence the identity. Let $X$ be a non-domestic $3$-space (which exists by the opposition diagram). Then it is easy to see that $x^\perp\cap X$ intersects $x^\perp\cap X^\theta$ as $x$ and $x^\perp\cap X$ span a singular subspace of dimension $1+\dim(x^\perp\cap X)$ which is moreover fixed under the action of $\theta$. This is a contradiction, which proves the claim. 

Now let $L$ be a line in $\pi$ not through $x$. Then $L^\theta$ is opposite $L$ in $\Delta$. The map $\theta_L$ fixes the plane spanned by $L$ and $x$ and therefore induction implies that it is either the ideintity, or an $(I_{n-2},-I_{n-2})$-homology. Either way, similarly as in the proof of Theorem~\ref{BCD} one now shows that $\Delta_L:=L^\perp\cap (L^\theta)^\perp$ is stabilized by $\theta$. Suppose $\Delta_L$ is fixed pointwise (in other words, $\theta_L$ is the identity). Then every singular subspace of dimension at least $4$ is domestic, a contradiction if $n\geq 6$. So if $n\geq 6$, then $\theta$ induces an $(I_{n-2},-I_{n-2})$-homology in $\Delta_L$. Pick, in this case, a non-domestic line $M$ in $\Delta_L$. Considering an embedding of $\Delta$ in $\PG(2n-1,\K)$, we see that $M^\perp\cap\Delta_L$ has dimension $2n-7$ and hence intersects the pointwise fixed subspaces in $\Delta_L$, which have dimension $n-3$, nontrivially if $(2n-7)+(n-3)-(2n-4)\geq 0$; hence if $n\geq 6$.  Consequently, if $n\geq 6$, then $\theta_M$ has a fixed point and we can again apply induction and show that $\Delta_M:=M^\perp\cap(M^\theta)^\perp$ is fixed; moreover $\theta$ induces an $(I_{n-2},-I_{n-2})$-homology in $\Delta_M$. The homologies induced in $\Delta_L$ and $\Delta_M$ agree on the non-empty intersection (which has dimension at least $3$), and since $\Delta_L$ and $\Delta_M$ span the whole space,  $\theta$ is itself an $(I_{n},-I_n)$-homology.  

We are left with the case $n=4$. Assume for a contradiction that $\Delta_L$ is fixed pointwise. Then $\Delta_L^\perp$ is a symplectic quadrangle stabilized by $\theta$ and Theorem~\ref{copolar} implies that  $\theta$ either fixes a spread $\mathcal{S}$ or is an $(I_2,-I_2)$-homology. In the former case, let $S\in\mathcal{S}$ be a line of the spread and $p$ any point of $\Delta_L$. Then the singular plane $\pi$ spanned by $S$ and $p$ is fixed by $\theta$ whereas no point on $L$ is fixed. But for an arbitrary point $q\in\pi\setminus(S\cup\{p\})$ we have $q\neq q^\theta$ and $qq^\theta$ is fixed, implying $S\cap qq^\theta$ is fixed, a contradiction. Hence $\theta$ induces an $(I_2,-I_2)$-homology in $\Delta_L^\perp$, say with axes $A$ and $A'$. Now Propositions~\ref{pdlinefixed} and~\ref{projsp}$(iii)$ imply that either $\<A,\Delta_L\>$ or $\<A',\Delta_L\>$ are fixed pointwise. This means that $\theta$ is an $(I_6,-I_2)$-homology and hence has opposition diagram $\mathsf{C_{4;1}^2}$, a contradiction. The claim is proved. 

Hence $\theta$ induces an $(I_2,-I_2)$-homology in $\Delta_L$, say with corresponding axes $A,A'$. Let again $M$ be a line of $\Delta_L$ mapped onto an opposite. Then $\Delta_M=\Delta_L^\perp$ and interchanging the roles of $L$ and $M$ we see that $\theta$ induces an  $(I_2,-I_2)$-homology in $\Delta_M$, say with axes $B,B'$. Then  Propositions~\ref{pdlinefixed} and~\ref{projsp}$(iii)$ imply that either $\<A,B\>$ and $\<A',B'\>$ are fixed pointwise, or $\<A,B'\>$ and $\<A',B\>$ are. Either way, we obtain an $(I_4,-I_4)$-homology. 
\end{proof}
%
%
%

\subsubsection{General case of class II collineations}


Since the rank 3 situation is completely known, we may restrict to the rank $\geq 4$ case. We have the following starting point.

\begin{prop}\label{startingpoint}
Let $\Delta=(X,\Omega)$ be a polar space of rank $n\ge4$. Let $\theta$ be a point-domestic collineation of $\Delta$. Then all planes of $\Delta$ are Pappian, i.e., $\Delta$ is defined over a field, and $\theta$ has as companion field automorphism the one related to the corresponding Hermitian form (hence trivial for symplectic and orthogonal polar spaces, and an involution for Hermitian polar spaces). If $\theta$ is point-domestic, $(n-1)$-domestic and $(n-2)$-domestic, then $\Delta$ is always a symplectic or orthogonal polar space. 
\end{prop}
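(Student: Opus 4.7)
My plan is to prove the three conclusions in turn. First, I would show that every plane of $\Delta$ is Pappian by contradiction. If some plane is non-Pappian then $\Delta$ is embeddable over a non-commutative division ring $D$, and in the conventions of this paper has type $\sB_n$, so $\theta$ carries opposition diagram $\sB_{n;i}^2$ for some $i\ge 1$. If $i=1$, Proposition~\ref{axialbase} (applicable since $\Delta$ is neither symplectic nor of rank $3$) forces $\theta$ to be an axial elation, which makes $\Delta$ orthogonal and hence Pappian, a contradiction. If $i\ge 2$, the classification of admissible diagrams provides a non-domestic singular $3$-space $U$, on which $\theta$ induces a projective duality $\theta_U$ of $U\cong\PG(3,D)$; point-domesticity of $\theta$ forces $\theta_U$ to be itself domestic, and Theorem~\ref{thm1:An}$(2)$ then requires $D$ to be commutative, again a contradiction.

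Next, I would identify the companion field automorphism. With $\Delta$ defined over a commutative field $\KK$ by a $\sigma$-sesquilinear form $h$ (with $\sigma$ trivial in the orthogonal and symplectic cases and the standard involution in the Hermitian case), extend $\theta$ to a semilinear collineation $x\mapsto Ax^\tau$ of the ambient projective space, where $\tau\in\mathrm{Aut}(\KK)$ is the companion. Proposition~\ref{pdlinefixed} together with point-domesticity yields $h(x,Ax^\tau)=0$ for every isotropic~$x$. Comparing the scaling behaviour $h(\lambda x, A(\lambda x)^\tau)=\lambda^\sigma\lambda^\tau h(x, Ax^\tau)$ with the $(\sigma,1)$-scaling of $h$, evaluated on hyperbolic pairs inside $\Delta$ (available since $n\ge 4$), forces $\tau=\sigma$ in each of the three cases.

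For the second statement, assume in addition that $\theta$ is $(n-1)$-domestic and $(n-2)$-domestic, and suppose for contradiction that $\Delta$ is Hermitian. By the previous step $\tau=\sigma$, and by Corollary~\ref{square} the square $\theta^2$ is point-domestic with trivial companion, hence $\KK$-linear. The fixed point set of $\theta$ in the ambient $\PG(2n-1,\KK)$ is a Baer subspace over $\mathrm{Fix}(\sigma)$, and intersecting it with $\Delta$ yields a non-degenerate orthogonal polar subspace of rank~$n$ (a parabolic quadric over $\mathrm{Fix}(\sigma)$). I would then exhibit a singular $(n-2)$-space $Y$ of $\Delta$ disjoint from this fixed polar subspace; writing $\theta$ as the coordinatewise $\sigma$-action composed with a unitary transformation, each point of $Y$ is then mapped to a non-collinear image, showing that $Y$ is non-domestic and contradicting $(n-2)$-domesticity.

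The main obstacle is the final step of the Hermitian case: producing a singular $(n-2)$-space $Y$ disjoint from the rank-$n$ fixed orthogonal polar subspace \emph{and} verifying that each of its points is mapped to a non-collinear image. Without invoking the later Proposition~\ref{hermpointfix}, this rests on a careful decomposition of $\theta$ into its $\sigma$-twist and unitary parts and the use of hyperbolic bases not aligned with the $\mathrm{Fix}(\sigma)$-rational structure of the ambient space; this is where most of the real work sits.
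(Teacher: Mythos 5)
Your first step is correct and is essentially the paper's own argument (apart from the diagram $\sX_{n;1}^2$, handled by Proposition~\ref{axialbase}, there is a non-domestic singular $3$-space $U$, the induced duality $\theta_U$ is point-domestic and hence domestic, and Theorem~\ref{thm1:An}(2) forces commutativity). The second step, however, has a genuine gap. You first establish $h(x,Ax^\tau)=0$ for every isotropic $x$, and then propose to ``compare the scaling behaviour $h(\lambda x,A(\lambda x)^\tau)=\lambda^\sigma\lambda^\tau h(x,Ax^\tau)$ with the $(\sigma,1)$-scaling of $h$''. But both sides of that identity vanish by your own previous sentence, so the comparison is vacuous: $0=\lambda^\sigma\lambda^\tau\cdot 0$ says nothing about $\tau$. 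What would actually be needed is a polarization of $h(x,Ax^\tau)=0$ over the isotropic variety (say at $x=e+\lambda f$ for a hyperbolic pair $(e,f)$); in the orthogonal and Hermitian cases only the $\lambda$ with $\lambda+\lambda^\sigma=0$ yield isotropic vectors, so one hyperbolic pair constrains $\tau$ only on a one-dimensional space of scalars over the fixed field, and the unknown matrix $A$ must be controlled simultaneously. None of this is carried out. The paper avoids the computation entirely: on a non-domestic $3$-space $\Sigma$ the duality $\theta_\Sigma$ is a symplectic polarity; for a line $L\subseteq\Sigma$ not absolute for it, the map $\kappa\colon L^\theta\to L$, $x\mapsto x^\perp\cap L$ is semilinear with companion exactly $\sigma$ (proof of Theorem~8.5.11 of \cite{HVM:98}), and $\theta\kappa$ is the identity on $L$ because $x\in x^{\theta_\Sigma}$; hence the companion $\sigma'\sigma$ of $\theta\kappa$ is trivial and $\sigma'=\sigma$.

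Your third step is likewise incomplete at exactly the point you flag, and it also presupposes that the fixed points of $\theta$ form a Baer subspace meeting $\Delta$ in a rank-$n$ polar space, which is essentially the content of the later Proposition~\ref{hermpointfix} rather than something available at this stage. The paper's argument is two lines and you should substitute it: $(n-1)$- and $(n-2)$-domesticity give, by Lemma~\ref{lem:typeI} (i.e.\ \cite[Theorem~6.1]{TTM:12}), a pointwise fixed geometric subspace of corank at most $n-2$; such a subspace meets every maximal singular subspace in more than one point and therefore contains a line; a semilinear collineation fixing a line pointwise has trivial companion automorphism, and combined with $\sigma'=\sigma$ from the previous step this rules out the Hermitian case. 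No disjoint singular $(n-2)$-space or explicit unitary decomposition is needed.
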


\begin{proof}
The opposition diagram tells us that there is some singular subspace $\Sigma$ of dimension~3 mapped to an opposite. Then $\theta_\Sigma$ is a domestic duality, which by largeness is a symplectic polarity (anyway, if the polar space would be small, then it is also defined over a field). These only exist in Pappian projective spaces. Now pick an arbitrary line $L$ in $\Sigma$ that is not fixed by the symplectic polarity. Then $L$ and $L^\theta$ are opposite, and the $3$-dimensional subspace $W$ in the ambient projective space of $\Delta$, generated by $L$ and $L^\theta$ inersects $\Delta$ in a generalized quadrangle. It follows directly from the proof of Theorem~8.5.11 in \cite{HVM:98} that the map $\kappa:L^\theta\to L:x\mapsto x^\perp\cap L$ is the restriction of a semi-liner mapping with companion field automorphism the involution $\sigma$ corresponding to the Hermitian form defining $\Delta$ (and hence trivial for symplectic and orthogonal polar spaces). Let $\sigma'$ be the companion field automorphism of $\theta$. Let $L'$ be the projection of $L^\theta$ onto $\Sigma$. Then, for each point $x\in L$, we have $x^{\theta\kappa}=L\cap (x^\theta)^\perp=L\cap x^{\theta_\Sigma}=x$, since $\theta_\Sigma$ is symplectic and hence $x\in x^{\theta_\Sigma}$. Since the companion field automorphism of $\theta\kappa$ is $\sigma'\sigma$, which must be the identity in view of $x^{\theta\kappa}=x$, for all $x\in L$, we deduce $\sigma=\sigma'$. 

Left to show is the last assertion. Suppose, to that end, that $\theta$ is also $(n-1)$-domestic and $(n-2)$-domestic. Theorem~6.1 of \cite{TTM:12} implies that $\theta$ pointwise fixes a subspace of corank at most $n-2$. Since such a subspace always contans a line, the companion field automorphisn of $\theta$ is the identity, and so $\Delta$ is not Hermitian. 
\end{proof}

From the previous proposition emerge three cases: the symplectic case, the orthogonal case and the Hermitian case. The symplectic case is treated in Theorem~\ref{symplectichomology}. We now treat the \emph{proper} orthogonal case, that is, orthogonal polar spaces for which the minimal embedding has no secants of size at least 3. We will show that $\theta$ belongs to a conjugate of the group of collineations pointwise fixing the set $E_1(C)$ (in Tits' notation~\cite{Tit:74}). 

\begin{lemma}\label{orthpointfix}
A point-domestic collineation $\theta$ of a (strictly) orthogonal polar space $\Delta$ of rank $n\geq3$ with a fixed point fixes a chamber $C$ and all adjacent chambers. In particular, this happens if $\theta$ is both point-domestic and $(n-1)$-domestic, or if $\theta$ is point-domestic and $n$ is odd. 
\end{lemma}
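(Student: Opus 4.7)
The plan is first to settle the two ``in particular'' clauses, which both supply a fixed point, and then to prove the main claim by induction on~$n$.

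For the clauses: if $n$ is odd, Corollary~\ref{oddrankchamber} immediately produces a fixed chamber, in particular a fixed point. If instead $\theta$ is both point-domestic and $(n-1)$-domestic, then neither node~$1$ nor the ``maximal singular subspace'' node is encircled in the opposition diagram, forcing $\Diag(\theta)=\sX_{n;i}^2$ with $2i<n$; Corollary~\ref{oppdiafull} then yields a pointwise fixed geometric subspace of corank at most $n-1$, which in particular contains a point.

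For the main claim I would induct on $n$, taking $n=3$ as base case via the rank-$3$ classification of point-domestic collineations in \cite[\S6]{TTM:12}: one reads off each entry of the list that, once a fixed point is present, the collineation fixes a chamber together with all of its adjacent chambers. For the inductive step $n\geq 4$, let $p$ be a fixed point. The induced collineation $\theta_p$ on $\Res(p)$ is a point-domestic collineation of a strictly orthogonal polar space of rank $n-1$ (strict orthogonality descends to residues, since the quotient polar structure on $p^\perp/p$ does not acquire secants of size $\geq 3$). I would first verify that $\theta_p$ admits a fixed point, equivalently that $\theta$ fixes a line through $p$: the fixed locus of $\theta$ in the ambient projective space is a projective subspace containing $p$, and inspection of its intersection with the quadric produces a fixed singular line through $p$ (alternatively, this can be seen concretely via Proposition~\ref{pdlinefixed} applied to a non-fixed $q\in p^\perp$, together with point-domesticity forcing $q\perp q^\theta$, which yields a fixed singular plane $\<p,q,q^\theta\>$ in which a fixed line through $p$ may then be exhibited). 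The induction hypothesis then yields a chamber $C_p=(L,\pi,\dots,M)$ of $\Res(p)$ fixed together with all of its adjacent chambers, which lifts to a chamber $C=(p,L,\pi,\dots,M)$ of $\Delta$ whose adjacencies at types $\geq 2$ are all fixed.

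The remaining task, and the principal obstacle, is to ensure that the adjacency at type~$1$ (every point of $L$) is also fixed. By the inductive conclusion, $\theta$ fixes setwise every line through $p$ inside $\pi$, so $\theta|_\pi$ is a central collineation with centre $p$ (by Proposition~\ref{projsp}); then $L$ is pointwise fixed exactly when $L$ equals the axis of $\theta|_\pi$ or when $\theta|_\pi$ is the identity. The plan is to re-choose the initial line of the chamber so that it lies inside the pointwise fixed sub-polar-space of $\theta$, whose existence is guaranteed in the strictly orthogonal setting: the fixed locus of $\theta$ in the ambient projective space is a projective subspace meeting the quadric in a nondegenerate sub-polar-space of positive rank, and any chamber of $\Delta$ whose initial line is contained there has $L$ pointwise fixed. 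Combining this re-choice with the inductively fixed higher-type adjacencies then completes the proof.
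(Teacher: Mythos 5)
There is a genuine gap at exactly the point you yourself flag as ``the principal obstacle'', and the patch you propose does not close it. Your induction delivers a chamber $C=(p,L,\pi,\dots,M)$ whose adjacencies of type $\geq 2$ are fixed, and you then need the line $L$ to be fixed \emph{pointwise}. By Proposition~\ref{projsp} applied to a fixed maximal singular subspace $U\ni p$ (the paper works with $U$ directly rather than inducting through $\Res(p)$; the same dichotomy already appears in your plane $\pi$), the fixed-point set of $\theta|_U$ is either a single subspace (elation case) or the union of two complementary subspaces $W_1\cup W_2$ (homology case). In the homology case there need be no pointwise fixed line through $p$ at all (e.g.\ $W_1=\{p\}$ and $W_2$ a hyperplane of $U$ off $p$), so ``re-choosing the initial line'' forces you to re-choose $p$ as well, destroying the inductive data; and even when a pointwise fixed line exists you give no argument that it can be completed to a chamber retaining all the higher-type adjacencies. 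Your claimed justification --- that the fixed locus of $\theta$ in the ambient space meets the quadric in a nondegenerate sub-polar-space of positive rank --- is precisely what has to be proved, and it is exactly what fails in the homology case. The paper excludes that case by a specific argument with no counterpart in your proposal: choosing hyperplanes $V_i\subseteq W_i$ and passing to the residue of $\langle V_1,V_2\rangle$, one obtains a point-domestic collineation of a proper orthogonal generalized quadrangle fixing exactly two points of the line $U$; such a collineation must pointwise fix an ideal subquadrangle, and proper orthogonal quadrangles are line-minimal (\cite{HVM:98}, \S 5.9), a contradiction. This use of line-minimality is the heart of the lemma. (The paper also still has to fix the type-$n$ adjacency afterwards, via another quadrangle residue and point-domesticity; your outline stops before this step.)

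A secondary issue is the base case $n=3$: it is not simply ``read off the list'', since one must rule out ideal Baer collineations, which have many fixed points but pointwise fix no line and hence would violate the conclusion; this again comes down to the same minimality property of proper orthogonal polar spaces. The remainder of your outline --- the two ``in particular'' clauses via Corollaries~\ref{oddrankchamber} and~\ref{oppdiafull}, the production of a fixed line through $p$ via Propositions~\ref{pdlinefixed} and~\ref{projsp}, and the descent of point-domesticity to $\Res(p)$ --- is sound and essentially reproduces the first paragraph of the paper's proof.
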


\begin{proof} Let $x$ be a fixed point of $\theta$. We claim that $\theta$ fixes a maximal singular subspace containing~$x$. Indeed, if $n=2$, then this follows immediately from the fact that $\theta$ is point-domestic. If $n>2$, then by induction, it suffices to show that some line through $x$ is fixed by $\theta$. Let $L$ be an arbitrary line through $x$ and assume that $L$ is not fixed. Then by Proposition~\ref{pdlinefixed} the plane $\pi$ spanned by $L$ and $L^\theta$ is fixed. Combined with Proposition~\ref{projsp} this yields a fixed line through $x$ in $\pi$. The claim follows. 

So let $U$ be a maximal singular subspace fixed by $\theta$, with $x\in U$. We can again apply Proposition~\ref{projsp}. Since we know that $\theta$ is linear (see  Proposition~\ref{startingpoint}), and we have at least one fixed point (by assumption, namely $x$), there are only two possibilities for the fixed point structure: (1) Either $\theta$ pointwise fixes exactly two complementary subspaces $W_1$ and $W_2$, or (2) $\theta$ induces an elation in $U$ (possibly a trivial one). 

Suppose first (1) that $\theta$ pointwise fixes two complementary subspaces $W_1$ and $W_2$. Pick hyperplanes $V_1\subseteq W_1$ and $V_2\subseteq W_2$ and consider the action of $\theta$ in the residue $R$ of the subspace $V$ of dimension $n-3$ spanned by $V_1$ and $V_2$.  Note that $R$ is an orthogonal generalized quadrangle. Then $\theta$ fixes $\<V_1,W_1\>$ and $\<V_2,W_1\>$, which are points on the line $U$ of $R$, and no other points on that line are fixed. But $\theta$ induces a point-domestic collineation in $R$. Since such collineations pointwise fix a dual geometic hyperplane, the fixed point structure in $R$ is an ideal subquadrangle. But proper orthogonal quadrangles are line-minimal (for terminology and this result, see Section~5.9 of \cite{HVM:98}). This contradiction shows we always have case (2). 

Hence $\theta$ induces an elation in $U$. So we can select a chamber $C'$ in $U$ such that $\theta$ pointwise fixed $E_1(C')$. Set $C=C'\cup\{U\}$. Then $\theta$ fixes $C$ and all $i$-adjacent chambers of $C$ with $1\leq i\leq n-1$. There remains to show that $\theta$ also fixes all $n$-adjacent chambers. Consider the singular subspace $\Sigma$ of dimension $n-3$ in $C'$. Then the induced action of $\theta$ on the residue of $\Sigma$, which is a generalized quadrangle, fixes the point corresponding to the singular subspace $\Sigma'$ of dimension $n-2$ in $C'$. By point-domesticiy, it fixes all lines through that point, i.e., $\theta$ fixes all maximal singular subspaces containing $\Sigma'$. Hence $\theta$ fixes all chambers $n$-adjacent to $C$. 

If $\theta$ is $(n-1)$-domestic, than it fixes a point by Lemma~\ref{lem:typeI}. If $n$ is odd, then the opposition diagram of $\theta$ shows that $\theta$ is $(n-1)$-domestic. 
\end{proof}

In particular the previous lemma holds for all hyperbolic polar spaces. 

Now we investigate the Hermitian case.
A Baer-subspace of a projective space $\Sigma$ has the following two properties: (1) every hyperplane of $\Sigma$ is incident with at least one point of $\Sigma$ and (2) every point of $\Sigma$ is on some line of $\Sigma'$. Let $\Delta'$ be a rank $n$ sub polar space of the polar space $\Delta$ of rank $n$ with the following properties. \begin{compactenum}\item[(Pr1)] Every maximal singular subspace of $\Delta'$ is a Baer subspace of some maximal singular subspace of $\Delta$.\item[(Pr2)] Every maximal singular subspace of $\Delta$ containing a submaximal singular subspace of $\Delta'$ belongs to $\Delta'$. \end{compactenum}Then we call $\Delta'$ an \emph{ideal Baer sub polar space} of $\Delta$. This generalizes the definition of an ideal Baer sub polar space of rank 3 in \cite{TTM:12}. An involution that fixes an ideal Baer sub polar space of $\Delta$ and nothing more is called an \emph{ideal Baer involution}. 

\begin{prop}\label{hermpointfix}
A point-domestic collineation $\theta$ of a Hermitian polar space $\Delta$ of rank $n\geq 3$ with a fixed point is an ideal Baer involution and has opposition diagram $\mathsf{B}_{n;m}^2$, with $2m\in\{n,n+1\}$. 
\end{prop}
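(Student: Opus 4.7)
The plan is to adapt the strategy of Lemma~\ref{orthpointfix} to the Hermitian setting, where the crucial extra input is Proposition~\ref{startingpoint}: for a Hermitian polar space, the companion field automorphism of any point-domestic $\theta$ is forced to coincide with the defining field involution $\sigma$, and in particular is a nontrivial involution.

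First I would argue that $\theta$ fixes some maximal singular subspace $U$ through the given fixed point $x$, reproducing the opening step of Lemma~\ref{orthpointfix}: for any line $L\ni x$ with $L\neq L^\theta$, Proposition~\ref{pdlinefixed} makes the plane $\langle L,L^\theta\rangle$ $\theta$-stable, and Proposition~\ref{projsp} inside that plane yields a fixed line through~$x$; induction on rank within the residue of such a fixed line produces $U$. Next I apply Proposition~\ref{projsp} to $\theta|_U$. Case~$(i)$ is excluded by $x\in U\cap\mathrm{Fix}(\theta)$; Cases~$(iii)$ and~$(iv)$ describe $(U,V)$-homologies and $(U,V)$-elations, and a semi-linear collineation with nontrivial companion $\sigma$ cannot pointwise fix a $\KK$-subspace of positive dimension (since $\alpha^\sigma/\alpha$ is not constant in $\alpha\in\KK$), so these are impossible for $\theta|_U$, which inherits the nontrivial $\sigma$. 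Hence Case~$(ii)$ must hold, and $U\cap\mathrm{Fix}(\theta)$ is a Baer subspace $U'$ of $U$ over the quadratic subfield $\FF\subseteq\KK$ fixed by~$\sigma$.

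The heart of the proof is to assemble these local Baer pictures into a global ideal Baer sub polar space $\Delta'$ and verify properties (Pr1) and (Pr2). Property (Pr1) is immediate from the previous step applied to any $\theta$-stable maximal singular subspace. Property (Pr2) is the delicate point: given a submaximal singular subspace $W$ of $\Delta'$ contained in a fixed maximal singular subspace $V$ and meeting $V\cap\mathrm{Fix}(\theta)$ in a Baer $(n-2)$-subspace, one has to show that \emph{every} maximal singular subspace of $\Delta$ through $W$ is $\theta$-stable and meets $\mathrm{Fix}(\theta)$ in a Baer subspace. For this I would pass to the residue at an appropriate singular $(n-3)$-subspace of $V$ associated to $W$: the residue is a Hermitian generalized quadrangle on which $\theta$ induces a point-domestic collineation with a fixed line, and a direct analysis via Proposition~\ref{pdlinefixed}, the Buekenhout--Shult one-or-all axiom, and the Pappian plane structure from Proposition~\ref{startingpoint} pins down each neighbouring maximal singular subspace as required. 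Involutivity of $\theta$ then follows from Corollary~\ref{square}: $\theta^2$ is point-domestic with trivial companion field automorphism $\sigma^2=\mathrm{id}$, hence linear; a linear collineation pointwise fixing the Baer subspace $U'$ of the Pappian projective space $U$ must be the identity on $U$; propagating this along the fixed maximal singular subspaces meeting $U$ in a common submaximal singular subspace forces $\theta^2=\mathrm{id}$ throughout $\Delta$.

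For the opposition diagram, one first notes that a singular $k$-space $Y$ is necessarily domestic whenever $Y\cap\Delta'\neq\emptyset$, since any fixed point in $Y$ lies in $Y\cap Y^\theta$ and obstructs opposition. Dimension-counting the Baer $\FF$-subspace inside each $\theta$-stable maximal singular subspace $V\cong\PG(n-1,\KK)$, where $V'\cong\PG(n-1,\FF)$ sits inside $V$, pins down the maximal $k$ for which a non-domestic singular $k$-space exists, and the bound depends on the parity of~$n$, yielding diagram $\mathsf{B}_{n;m}^2$ with $2m\in\{n,n+1\}$. The main obstacle will be (Pr2), since this is where one must control $\theta$ on maximal singular subspaces that are not \emph{a priori} fixed, and verify that the Baer subspaces obtained on different fixed maximal singular subspaces glue into a single coherent ideal Baer sub polar space.
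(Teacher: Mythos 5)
Your treatment of the main assertion tracks the paper's proof closely: a fixed maximal singular subspace $U$ through $x$ exactly as in the first paragraph of Lemma~\ref{orthpointfix}; identification of $\mathrm{Fix}(\theta)\cap U$ as a Baer subspace by playing Proposition~\ref{projsp} against the nontrivial companion involution supplied by Proposition~\ref{startingpoint}; propagation through the submaximal subspaces spanned by Baer hyperplanes to get the ideal Baer sub polar space; and $\theta^2=\mathrm{id}$ from linearity plus point-domesticity of $\theta^2$. Two of your steps are more laboured than necessary: for (Pr2) the paper does not pass to a residue quadrangle but argues directly that every maximal singular subspace $M$ through the span $W$ of a Baer hyperplane is fixed (for $p\in M\setminus W$, point-domesticity gives $p\perp p^\theta$ and $p^\theta\perp W$, so $p^\theta\in\langle W,p\rangle=M$); and for the involution the paper simply notes that a nontrivial linear point-domestic collineation of a Hermitian polar space would contradict Proposition~\ref{startingpoint}. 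These are stylistic differences, not gaps.

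The genuine gap is your derivation of the opposition diagram. Counting dimensions of the Baer subspace $\PG(n-1,\FF)$ inside a fixed maximal singular subspace $\PG(n-1,\KK)$ only shows that singular subspaces of projective dimension roughly $\geq n/2$ \emph{that lie in a fixed maximal singular subspace} meet the fixed-point set; it says nothing about singular subspaces not contained in any fixed maximal singular subspace, and those are exactly the ones that matter. Indeed, in the diagram you are aiming for, the last encircled node corresponds to maximal (for $n$ even) or submaximal (for $n$ odd) singular subspaces being mapped to opposites, and any such non-domestic subspace must be entirely disjoint from $\mathrm{Fix}(\theta)$; so an intersection count of this kind can never pin down the top encircled node, and if taken at face value it would cap the non-domestic types near $n/2$, contradicting the statement. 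You also provide no lower bound, i.e., no argument that a simplex of the top type is actually mapped to an opposite. The paper gets the diagram from the \emph{last} assertion of Proposition~\ref{startingpoint}: a point-domestic collineation that were also $(n-1)$-domestic and $(n-2)$-domestic would force $\Delta$ to be symplectic or orthogonal, so on a Hermitian polar space $\theta$ must map a singular subspace of projective dimension $n-1$ or $n-2$ to an opposite, and the classification of admissible diagrams (which already bounds the encircled nodes above) then determines $m$ by the parity of $n$. You should replace the dimension count with that argument.
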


\begin{proof}
Let $x$ be any fixed point. Completely similar to the first paragraph in the proof of Lemma~\ref{orthpointfix} one can show that there exists a maximal singular subspace $U$ through $x$, fixed under the action of $\theta$. Now Propositions~\ref{projsp} and~\ref{startingpoint} imply that the fixed point structure of $\theta$ in $U$ is a Baer-subspace $\Sigma'$, and the restriction to $U$ of $\theta$ is an involution.   Let $W$ be a hyperplane of $U$ spanned by a hyperplane of $\Sigma'$. Then, by point-domesticity, all maximal singular subspaces of $\Delta$ containing $W$ are fixed by $\theta$, and so they all contain further points fixed under $\theta$. This shows that for each fixed point $x$ there exist many opposite fixed points. So the fixed point structure is a nondegenerate polar space, which is easily seen to be an ideal Baer sub polar space. 

Clearly $\theta^2$ is linear and point-domestic (by Corollary~\ref{square}) and so the identity by Proposition~\ref{startingpoint}. Hence $\theta$ is an involution and the lemma is proved (the opposition diagrams follow from the last assertion of Proposition~\ref{startingpoint}). 
\end{proof}

\begin{example} Let $\Delta$ be the rank $n$ polar space corresponding to the Hermitian variety with equation $\sum_{i=1}^nX_{-i}\overline{X}_i-X_i\overline{X}_{-i}=0$, in $\PG(2n-1,\K)$, where $x\mapsto\overline{x}$ is the corresponding field involution, $\K$ commutative. Let $\theta$ be the collineation defined by mapping a point with coordinates $(x_{-n},x_{-n+1},\ldots,x_{-1},x_1,\ldots,x_n)$ to  $(\overline{x}_{-n},\overline{x}_{-n+1},\ldots,\overline{x}_{-1},\overline{x}_1,\ldots,\overline{x}_n)$. If we denote by $\FF$ the fixed field of the field involution, then the fixed points in $\Delta$ are precisely all points of a Baer subspace $\PG(2n-1,\FF)$, hence $\theta$ pointwise fixes a symplectic polar space of rank $n$. The collineation is point-domestic as one easily calculates.   
\end{example}

We can now turn our attention to collineations of class III. 
\subsection{Class III automorphisms}

If $\theta$ is a class III automorphism of a polar space of rank~$n$ then, by Corollaries~\ref{oddrankchamber} and~\ref{oppdiafull}, the rank $n$ is even, and $\Diag(\theta)=\sX_{n;n/2}^2$ (where $\sX\in\{\sB,\sC,\sD\}$).








First we need to be a little more precise in Case~$(i)$ of Proposition~\ref{projsp}.

\begin{prop}\label{spreadalgebra}
Let $\Sigma$ be a finite-dimensional projective space over a field $\K$ with dimension at least $2$ and let $\theta$ be a fixed point free collineation with the property that the line $\<x,x^\theta\>$ is fixed for every point $x$. Then the projective dimension of $\Sigma$ is odd, say $2n+1$, and the fixed subspaces form a projective space of dimension $n$ either over a quadratic (not necessarily separable) extension $\mathbb{L}$ of $\K$, or over a quaternion division algebra $\mathbb{H}$ which naturally contains $\K$ as a $2$-dimensional subalgebra over its center. 
\end{prop}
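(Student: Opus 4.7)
The plan is to exploit Proposition~\ref{projsp}$(i)$, which under the fixed-point-free hypothesis gives immediately that $\dim\Sigma=2n+1$, that the fixed lines of $\theta$ form a line spread $\cS$ of $\Sigma$, and that the fixed subspaces, with incidence inherited from $\Sigma$, form a projective space $\Sigma'$ of dimension~$n$. The task is then to identify the coordinate skew-field $\DD$ of $\Sigma'$ as either a quadratic field extension $\LL/\K$ or a quaternion division algebra $\HH$ containing $\K$ as a maximal subfield of $\FF$-dimension~$2$, where $\FF=Z(\HH)$.

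First I would verify that $\cS$ is a \emph{normal} (Desarguesian) spread, in the sense that for any two distinct $L,L'\in\cS$ every element of $\cS$ meeting the $3$-dimensional subspace $\<L,L'\>$ is contained in $\<L,L'\>$. Indeed, the unique line of $\Sigma'$ through the two ``points'' $L$ and $L'$ is by definition a fixed subspace of $\Sigma$ of projective dimension $3$, whose further $\Sigma'$-points are precisely the elements of $\cS$ contained in it, and these must partition its $\K$-points since $\cS$ is a spread of $\Sigma$.

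Next I would invoke the classical correspondence due to Andr\'e/Segre/Bruck--Bose between normal spreads of $\PG(2n+1,\K)$ and $2$-dimensional left $\K$-module structures on a skew-field extending $\K$: there exists a skew-field $\DD$ containing $\K$ as a subring with $\dim_\K\DD=2$ (as a left $\K$-module), together with an identification $\K^{2n+2}\cong\DD^{n+1}$ of $\K$-vector spaces under which $\cS$ is precisely the set of $1$-dimensional right $\DD$-subspaces. Under this identification, $\Sigma'\cong\PG(n,\DD)$, and $\DD$ is automatically a division ring because $\cS$ partitions the nonzero vectors of $\K^{2n+2}$.

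Finally I would classify the possible $\DD$. If $\DD$ is commutative then $\K$ is a commutative subfield of $\DD$ and $\DD=\LL$ is a quadratic (possibly inseparable) field extension of $\K$, giving case~(1). If $\DD$ is noncommutative, let $\FF=Z(\DD)$; since $\dim_\K\DD=2$, the subring $\K$ cannot be contained in $\FF$ (else $\DD$ would be a $2$-dimensional $\K$-central algebra and hence commutative), so $\K\cdot\FF$ is strictly larger than $\K\cap\FF$ in $\DD$. Combining $[\DD:\K]=2$ with the fact that any finite-dimensional division algebra is of square dimension over its centre, a short dimension count pins down $[\DD:\FF]=4$, so $\DD=\HH$ is a quaternion division algebra over $\FF$ containing $\K$ as a maximal subfield with $[\K:\FF]=2$, giving case~(2). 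The main obstacle is the spread--algebra correspondence in step two, particularly because $\K$ is permitted to be noncommutative; the alternative, if the classical statement is not directly applicable in that generality, is to coordinatise $\Sigma'$ by picking three elements of $\cS$ in general position and reading off the multiplication of $\DD$ from the action on the transversals inside their span, then verifying associativity using that the $\cS$-elements inside an arbitrary $5$-dimensional fixed subspace are forced (by Desarguesianness) to form a regulus.
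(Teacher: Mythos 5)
Your reduction to Proposition~\ref{projsp}$(i)$, and your classification of the division rings $\DD$ with $\K\subseteq\DD$ and $\dim_\K\DD=2$ (quadratic field extension versus quaternion algebra, via the double centralizer theorem), are both fine, and for $n\geq 2$ the route through normality of the spread is a legitimate alternative to the paper's argument. But there is a genuine gap at $n=1$, i.e.\ $\dim\Sigma=3$ --- a case permitted by the hypotheses and actually needed downstream, since the proposition gets applied to the ($3$-dimensional) maximal singular subspaces of rank-$4$ polar spaces. For $n=1$ your normality condition is vacuous (any two spread lines span all of $\Sigma$), and the Andr\'e/Segre/Bruck--Bose correspondence you invoke simply fails for arbitrary line spreads of $\PG(3,\K)$: most such spreads (e.g.\ those yielding Hall planes in the finite case) do not arise from a quadratic extension or a quaternion algebra. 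What saves this case is precisely the hypothesis you stop using after your first step, namely that $\cS$ is the set of fixed lines of a single fixed-point-free collineation $\theta$. The paper's proof goes the other way: it reduces to a fixed $3$-space and exploits $\theta$ directly, passing to the Klein quadric where the fixed lines become an ovoid fixed pointwise, and splitting on whether $\theta$ is linear (the ovoid is then an elliptic quadric, giving a quadratic extension $\LL$) or semilinear (then $\theta$ is shown to be an involution and Galois descent produces the quaternion algebra $\HH$). Some version of that argument is unavoidable in your approach too.

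A secondary caveat: even for $n\geq 2$, the ``classical correspondence'' between normal line spreads of $\PG(2n+1,\K)$ and $2$-dimensional division algebras over $\K$ is a substantive theorem (Segre's, in the finite case); over general fields one must still argue that the Desarguesian quotient geometry is coordinatized by a ring \emph{containing} $\K$, which is exactly where your fallback sketch via transversal maps and reguli would have to do real work, so it should be cited precisely or carried out. Finally, your worry about noncommutative $\K$ is moot: the conclusion makes $\K$ two-dimensional over the centre of $\HH$, so $\K$ is commutative here (as it is in the paper's application, after Proposition~\ref{startingpoint}).
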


\begin{proof}
It is clear that the fixed point structure is a projective space $\frak{P}$ of dimension $n$. In order to determine the coordinatizing division ring, we consider a fixed 3-dimensional subspace, i.e., we consider the case $n=1$.  Applying Klein correspondence, $\theta$ acts on a hyperbolic quadric $Q$ in $\PG(5,\K)$ pointwise fixing an ovoid $O$. 

If $\theta$ is linear, then any triple of points of $O$ determine a conic on $Q$ which is fixed pointwise (by linearity). Considering four fixed points no three on a conic we see that $\theta$ pointwise fixes a 3-dimensional subspace $\Sigma$ of $\PG(5,\K)$ intersecting $Q$ in $O$. Hence $O$ is an elliptic quadric in $\Sigma$ (that is, a nondegenerate nonruled quadric). Since the stabilizer of $O$ induces a group containing $\PGL_2(\LL)$, for some quadratic extension $\LL$ of $\K$, and since the group of projectivities of a line of $\frak{P}$ is inherited from the collineation group of a 3-dimensional subspace of $\Sigma$ fixed by $\theta$, we deduce that $\frak{P}$ is defined over $\LL$. 

Now suppose that $\theta$ is not linear. Note that $\theta$ does not map any point to a collinear one. We show that $\theta$ is an involution. Let $x$ be an arbitrary point. If $x$ is fixed by $\theta$, then $x$ is also fixed by $\theta^2$. Now suppose that $x^\theta$ is opposite $x$. Let $\pi$ be any singular plane through $x$. Since $\theta$ preserves types, $\pi^\theta$ intersects $\pi$ in a unique point $p$. Clearly $p^\theta\in\pi^\theta\ni p$, hence by our remark above, $p=p^\theta$. Hence the set of fixed points in $p^\perp\cap(p^\theta)^\perp$ forms an ovoid $C$, which cannot be a plane conic as $\theta$ is not linear. Hence $C$ spans $p^\perp\cap(p^\theta)^\perp$ and so $\theta$ preserves $p^\perp\cap(p^\theta)^\perp$. Hence $\theta$ also preserves the perp of that set, which is $\{p,p^\theta\}$, implying that $p^{\theta^2}=p$. This shows that $\theta$ is an involution.

So back to $\frak{P}$, $\theta$ acts as a Galois group and hence $\frak{P}$ arises from Galois descent, implying that the coordinatising structure is a quaternion division algebra. This can also be seen in an elementary way: denote by $\K\rightarrow\K:k\mapsto \overline{k}$ the companion field involution, then coordinates in the case $n=1$ can be chosen such that $\theta$ maps $(x_1,x_2,x_3,x_4)$ to $(\overline{x}_2,a\overline{x}_1,\overline{x}_4,a\overline{x}_3)$, with $a=a^\theta\notin\{x\overline{x}:x\in\K\}$. This determines the quaternion division ring where multiplication is given by $(x,y)\cdot(u,v)=(xu+a\overline{y}v,yu+\overline{x}v)$.  
\end{proof}

An immediate corollary is Theorem~\ref{thm1:BCD1}:

\begin{cor}\label{corclassIII}
Let $\Delta=(X,\Omega)$ be a polar space of rank $n\ge4$ defined over the field $\K$. Let $\theta$ be a point-domestic collineation of $\Delta$ with any fixed points. Let $Y$ be the set of fixed lines, and let $\Upsilon$ be the set of fixed singular subspaces. Then $\Gamma=(Y,\Upsilon)$ is a polar space defined over either a quadratic extension of $\K$, or over a quaternion division algebra $\HH$ which is $2$-dimensional over $\K$ (and $\K$ is $2$-dimensional over he centre of $\HH$). 
\end{cor}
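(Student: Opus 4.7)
The plan is to first establish the existence of many fixed singular subspaces, then analyse their internal structure via Propositions~\ref{projsp} and~\ref{spreadalgebra}, and finally assemble the local pieces into a global polar space.

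Since $\theta$ is fixed-point-free but point-domestic, Proposition~\ref{pdlinefixed} yields a fixed line $\<p,p^\theta\>$ through every point $p$, so $Y$ is non-empty and covers $X$. I would then recycle the maximality argument at the start of the proof of Corollary~\ref{oddrankchamber}: if $U$ is a fixed singular subspace of maximal dimension $d<n-1$, any $x\in U^\perp\setminus U$ satisfies $x^\theta\ne x$, so $\<x,x^\theta\>$ is a fixed line lying in $U^\perp$, and $\<U,x,x^\theta\>$ is a larger fixed singular subspace, a contradiction. Hence $d=n-1$ and, more generally, every fixed singular subspace extends to a fixed maximal one. Running the analogous argument inside any fixed singular subspace $\Sigma$, the restriction $\theta|_\Sigma$ is fixed-point-free with every point on a fixed line, so only Case~$(i)$ of Proposition~\ref{projsp} can occur, forcing $\dim\Sigma$ odd; in particular $\dim M = n-1$ is odd, so $n$ is even.

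On a fixed maximal singular subspace $M$, Proposition~\ref{projsp}$(i)$ tells us the fixed singular subspaces of $M$ form a projective space of dimension $(n-2)/2$, and Proposition~\ref{spreadalgebra} identifies its coordinatising algebra as either a quadratic extension $\LL/\K$ (when $\theta|_M$ is linear) or a quaternion division algebra $\HH$ containing $\K$ as a $2$-dimensional subalgebra over its centre (when $\theta|_M$ is a $\sigma$-semilinear involution). This dichotomy does not depend on $M$, because by Proposition~\ref{startingpoint} the companion field automorphism of $\theta$ is prescribed globally by the form defining $\Delta$. To conclude that $\Gamma = (Y,\Upsilon)$ is a polar space, assign each $\Sigma\in\Upsilon$ the $\Gamma$-dimension $(\dim\Sigma-1)/2$. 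Axiom~(1) is then exactly the content of the previous sentence applied to each $\Sigma$. Axiom~(2) holds because intersections of fixed singular subspaces are fixed and odd-dimensional by the parity argument. Axiom~(3) transfers from $\Delta$: given a $\Gamma$-submaximal $N\in\Upsilon$ and a fixed line $L\not\subseteq N$, apply the corresponding axiom of $\Delta$ to obtain a maximal singular subspace $N'$ of $\Delta$ through $L$ meeting $N$ in a $\Gamma$-submaximal subspace, and note that $N'$ is automatically fixed by the extension argument of the previous paragraph. Axiom~(4) follows from the opposition diagram $\sX_{n;n/2}^2$, which supplies a line of $\Delta$ mapped to an opposite, producing opposite fixed maximal singular subspaces.

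The main obstacle I expect is showing that the coordinatising algebras arising on distinct fixed maximal singular subspaces agree as abstract algebras and fit together compatibly on intersections. This should follow by realising $\theta$ as a (semi)linear map on the ambient projective space and running the Galois-descent construction of Proposition~\ref{spreadalgebra} globally: the $\LL$-eigenstructure, respectively the quaternion multiplication built from $\theta$ composed with the form involution, is intrinsic to $\theta$ on the ambient vector space, and restricts on each fixed maximal singular subspace to the local structure supplied by Proposition~\ref{spreadalgebra}.
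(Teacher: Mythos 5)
Your reconstruction is essentially the argument the paper intends: the paper offers no written proof of Corollary~\ref{corclassIII}, presenting it as an ``immediate corollary'' of Proposition~\ref{spreadalgebra}, and the ingredients you assemble --- Proposition~\ref{pdlinefixed} to cover $X$ by fixed lines, the extension-to-a-fixed-maximal argument recycled from Corollary~\ref{oddrankchamber}, Proposition~\ref{projsp}$(i)$ to force odd dimensions (hence $n$ even), Proposition~\ref{spreadalgebra} for the coordinatising algebra, and Proposition~\ref{startingpoint} to pin down the linear/semilinear dichotomy globally --- are exactly the right ones. You are in fact more careful than the paper in flagging the compatibility of the algebras across distinct fixed maximal singular subspaces; your proposed resolution (the algebra is intrinsic to the action of $\theta$ on the ambient space, and fixed maximals are connected through large fixed intersections) is sound.

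Two steps in your axiom verification are imprecise as written, though both are repairable by routine arguments in a part of the proof the paper omits entirely. For axiom~(3): applying $\Delta$'s axiom~(3) at a single point $p\in L$ produces the maximal $\<p,p^\perp\cap N\>$, which need not contain $L$ and need not be fixed. The correct construction is to observe that a fixed line $L$ disjoint from a fixed maximal $N$ satisfies $\dim(L^\perp\cap N)=n-3$ (if it were $n-2$ then $\<L,L^\perp\cap N\>$ would be a singular $n$-space), that $L^\perp\cap N$ is fixed, and that $N'=\<L,L^\perp\cap N\>$ is the required fixed maximal; uniqueness follows since any fixed maximal through $L$ meets $N$ inside $L^\perp\cap N$. (Note here the useful fact that a fixed line meets a fixed singular subspace in either the empty set or the whole line, since a one-point intersection would be a fixed point.) For axiom~(4): a line of $\Delta$ mapped to an opposite does not directly produce two disjoint fixed maximal singular subspaces --- such a line is not itself fixed. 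Instead, nondegeneracy of $\Gamma$ follows from nondegeneracy of $\Delta$: if a fixed line $L_0$ were $\Gamma$-collinear with every fixed line, then since every point of $\Delta$ lies on a fixed line, every point of $\Delta$ would lie in $L_0^\perp$, which is impossible; disjoint fixed maximals are then obtained by the standard recursive construction inside perps of pairs of non-collinear points of $\Gamma$.
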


Now we first handle the split case (Theorem~\ref{thm1:pointdomnofixedpoint} ). Recall the definition of a \emph{minimal} Hermitian polar space of rank $n$: one with ambient projective space of dimension $2n-1$.  A \emph{mixed} polar space (of rank $n$) is a polar space which is a proper subspace of a symplectic polar space of rank $n$. Here, the characteristic is necessarily equal to 2. 

\begin{thm}\label{splitcasewithoutfixedpoints}
Let $\Delta=(X,\Omega)$ be a split polar space of rank $n\ge4$ defined over the field $\K$. Let $\theta$ be a point-domestic collineation of $\Delta$ without any fixed point. Then $n$ is even, say $n=2m$ and the opposition diagram is either $\mathsf{C}_{n,m}^2$ or $\mathsf{D}_{n,m}^2$. Moreover, $\Delta$ is either \begin{compactenum}[$-$]\item a symplectic polar space, and the fixed point structure of $\theta$ is a symplectic polar space over a quadratic extension of $\K$; or \item a hyperbolic orthogonal polar space, and the fixed point structure of $\theta$ is either a minimal Hermitian polar space, or a mixed polar space. \end{compactenum} 
\end{thm}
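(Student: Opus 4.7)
The plan is to derive the theorem from the general structural results already established in the paper, specialised to the split case. First I would fix the rank parity and the opposition diagram: Corollary~\ref{oddrankchamber} forces $n$ to be even, since a point-domestic collineation of odd rank necessarily fixes a chamber, and Corollary~\ref{oppdiafull} precludes any diagram $\sX_{n;i}^2$ with $2i<n$, as such a diagram would produce a pointwise fixed subspace and hence a fixed point. Thus $\Diag(\theta)=\sX_{n;m}^2$ with $m=n/2$ and $\sX\in\{\sB,\sC,\sD\}$.

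Next I would extend $\theta$ uniquely to the ambient projective space $\Sigma$ of $\Delta$ (this extension being canonical for polar spaces of rank at least $3$). Combining Proposition~\ref{pdlinefixed} with the fact that the points of $\Delta$ span $\Sigma$, every projective point of $\Sigma$ is contained in a $\theta$-fixed projective line; after a short verification that $\theta$ has no fixed projective points off $\Delta$ (which would otherwise produce a fixed polar hyperplane of $\Delta$ and contradict the fixed-point-free hypothesis together with the classifications in Theorem~\ref{BCD} and Theorem~\ref{symplectichomology}), Proposition~\ref{projsp}$(i)$ and Proposition~\ref{spreadalgebra} apply. The $\theta$-fixed projective lines then form a spread $\mathcal{S}$ of $\Sigma$ giving rise to a projective space over a quadratic extension $\LL/\K$ or a quaternion division algebra $\HH$; in particular $\dim\Sigma$ is odd. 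This parity immediately rules out the parabolic case, since the split polar space $\sB_n(\K)$ lives in the even-dimensional $\PG(2n,\K)$, so only $\sX\in\{\sC,\sD\}$ can occur and $\Delta$ is symplectic or hyperbolic orthogonal.

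Finally I would identify the fixed polar space in each remaining case. In the symplectic case, Proposition~\ref{startingpoint} tells us that $\theta$ has trivial companion field automorphism, so only the $\LL$-branch of Proposition~\ref{spreadalgebra} occurs, and the defining alternating form on $\K^{2n}$ descends via Galois descent to a nondegenerate alternating $\LL$-form on $\mathcal{S}$, producing a symplectic polar space over $\LL$. In the hyperbolic orthogonal case, the defining quadratic form restricts to a Hermitian form relative to the Galois action of $\LL/\K$ when $\LL/\K$ is separable (yielding a minimal Hermitian polar space), while the inseparable case, which can only occur in characteristic~$2$, yields a mixed polar space inside the corresponding symplectic polar space over $\LL$. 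In both cases the points of the fixed polar space are by construction the fixed lines of $\theta$ in $\Delta$. The main obstacle will be the form-theoretic descent identifying the fixed polar space in each case, together with the short argument that $\theta$ has no fixed projective points off $\Delta$; the former involves standard Galois (and, in characteristic~$2$, inseparable) descent of the form defining $\Delta$, and the latter a case analysis using the polarity of $\Sigma$ induced by that form.
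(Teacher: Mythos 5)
Your opening reductions (rank even via Corollary~\ref{oddrankchamber}, diagram $\sX_{n;n/2}^2$ via Corollary~\ref{oppdiafull}) match the paper, and your treatment of the symplectic case is essentially the paper's: there Proposition~\ref{spreadalgebra} applies directly precisely because a symplectic polar space is the unique embedded polar space whose point set is \emph{all} of the ambient projective space. The gap is in the other cases. Proposition~\ref{pdlinefixed} only guarantees that the line $\<x,x^\theta\>$ is fixed for points $x$ \emph{of the polar space}; for a parabolic or hyperbolic quadric these points form a proper subset of the ambient space $\Sigma$, and the fact that they span $\Sigma$ does not transfer the property to points off the quadric. Consequently the fixed singular lines partition the quadric but need not form a spread of $\Sigma$, and neither Proposition~\ref{projsp}$(i)$ nor Proposition~\ref{spreadalgebra} is applicable to the ambient space. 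Your proposed ``short verification that $\theta$ has no fixed projective points off $\Delta$'' does not repair this: absence of fixed points off the quadric is necessary but far from sufficient for every off-quadric point to lie on a fixed line (and the appeal to Theorems~\ref{BCD} and~\ref{symplectichomology} is off target, since a fixed off-quadric point only yields a \emph{stabilized} geometric hyperplane, not a pointwise fixed one, and those theorems assume either $2i\leq n-1$ or the existence of a fixed point). Hence both your parity argument ruling out the parabolic case and your Galois-descent identification of the fixed structure in the hyperbolic case rest on an unproved premise.

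The paper avoids this entirely: the parabolic case is excluded intrinsically (two opposite fixed maximal singular subspaces generate a stabilized hyperbolic geometric hyperplane $H$; a fixed line not contained in $H$ meets $H$ in a single, hence fixed, point), and the hyperbolic case is reduced to rank $4$, where after triality the diagram becomes $\sD_{4;2}^1$ and the fixed structure is a corank-$2$ geometric subspace, i.e.\ an orthogonal quadrangle embedded in $\PG(5,\K)$, classified in \cite{HVM:98}. (The ambient spread in the hyperbolic case is in fact true, but it is only established later, in Proposition~\ref{hyperbolicpointdomestic}, by an explicit block-matrix computation.) If you want to keep your route, you must first prove that the extension of $\theta$ to $\Sigma$ fixes $\<x,x^\theta\>$ for \emph{every} point of $\Sigma$; in the orthogonal cases this is a genuine computation, not a formal consequence of point-domesticity.
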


\begin{proof}
The symplectic case immediately follows from Propositon~\ref{spreadalgebra} and the fact that symplectic polar spaces are the only embedded polar spaces with the property that all points of the ambient projective space belong to the polar space.

In the hyperbolic case it suffices to consider rank $n=4$. Applying triality, we may assume that $\theta$ has opposition diagram $\mathsf{D_{4;2}^1}$, in which case the fixed point structure is a subspace of corank 2, hence an orthogonal generalized quadrangle naturally embedded in projective $5$-space and, by Remark~3.4.10 of \cite{HVM:98}, either a mixed quadrangle, or the dual of a minimal Hermitian quadrangle.

In the  parabolic case, we consider two opposite maximal singular subspaces fixed by $\theta$. Then $\theta$ stabilizes the hyperbolic polar subspace $H$ generated by those. A fixed line not contained in $H$ intersects $H$ in a point (as $H$ is a geometric hyperplane), which must subsequently be fixed, yielding a contradiction. 
\end{proof}

Note that the previous theorem says that no parabolic polar space admits a point-domestic collineation without fixed points. Hence in the symplectic case, the defining field cannot be perfect when it has characteristic 2. In fact we can even be more specific concerning the collineations and the said quadratic  extensions. The next proposition is Theorem~\ref{thm1:moreprecise1}.

\begin{prop}\label{morepprecise1a}
Let $\Delta=(X,\Omega)$ be a symplectic polar space of rank $n\ge4$ defined over the field $\K$. Let $\theta$ be a point-domestic collineation of $\Delta$ without any fixed point. Let $\LL$ be the quadratic extension of $\K$ over which the fixed points structure of $\theta$ is defined according to \emph{Theorem~\ref{splitcasewithoutfixedpoints}}. Then $\theta$ is an involution and either \begin{compactenum}[$-$]\item the characteristic of $\K$ is not $2$ and $\LL$ is a separable extension of $\K$, or \item the characteristic of $\K$ is $2$ and $\LL$ is an inseparable extension of $\K$.  
\end{compactenum}
Conversely, if $\K$ admits a quadratic extension $\LL/\K$, inseparable if the characteristic of $\K$ is $2$, then $\Delta$ admits a point-domestic collineation without fixed points as above. 
\end{prop}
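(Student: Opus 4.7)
The plan is to pin down the structure of $\theta$ via its action on the ambient $\K$-vector space, exploit the fact that every fixed line of $\theta$ is a (hence isotropic) line of $\Delta$ to force $\theta$ to be an involution, and then construct the converse by multiplication on $\LL^n$.

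For the forward direction, Proposition~\ref{startingpoint} gives that $\theta$ is induced by a $\K$-linear map $T$ on $V = \K^{2n}$. The absence of fixed points combined with the line-preserving property from Proposition~\ref{pdlinefixed} shows (as in the proof of Proposition~\ref{spreadalgebra}) that the minimal polynomial of $T$ over $\K$ is an irreducible quadratic $p(X) = X^2 - aX - b$; equivalently, $V$ becomes an $n$-dimensional $\LL$-vector space with $\LL = \K[X]/(p(X))$, and $T$ acts as multiplication by the class of $X$. By Theorem~\ref{splitcasewithoutfixedpoints} this $\LL$ is the quadratic extension prescribed in the statement.

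Since $\theta$ is a collineation of the symplectic polar space $\Delta$, every fixed line of $\theta$ is a line of $\Delta$, and hence isotropic for the defining alternating form $B$. Applying this to the line $\langle x, Tx\rangle$ for each nonzero $x \in V$ gives $B(x, Tx) = 0$ for all $x$. Polarising yields $B(x, Ty) + B(y, Tx) = 0$, which together with the alternating property of $B$ rewrites as $B(Tx, y) = B(x, Ty)$; in other words, $T$ is self-adjoint with respect to $B$. But $T$ is also a symplectic similitude with some multiplier $\mu \in \K^\ast$, whose $B$-adjoint is $\mu T^{-1}$. Equating forces $T^2 = \mu I$, so $\theta$ is a projective involution and the minimal polynomial of $T$ is $X^2 - \mu$. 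Therefore $\LL = \K[X]/(X^2 - \mu)$ is separable when $\kar(\K) \neq 2$ (discriminant $4\mu \neq 0$) and inseparable when $\kar(\K) = 2$ (the derivative $2X$ vanishes identically).

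For the converse, write $\LL = \K[\beta]$ with $\beta^2 = b \in \K^\ast$ and $\beta \notin \K$, which is possible both in the separable characteristic $\neq 2$ case and in the inseparable characteristic $2$ case. On an $n$-dimensional $\LL$-vector space $V$ equipped with a nondegenerate $\LL$-alternating form $\omega$, define a $\K$-alternating form $B$ by $B(x, y) = \mathrm{tr}_{\LL/\K}(\omega(x, y))$ when $\kar(\K) \neq 2$ and by $B(x, y) = \phi(\omega(x, y))$ when $\kar(\K) = 2$, where $\phi : \LL \to \K$ is a $\K$-linear form with $\phi(1) = 0$ and $\phi(\beta) \neq 0$. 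A short check confirms that $B$ is nondegenerate, so $(V, B)$ realises the rank $n$ symplectic polar space $\Delta$ over $\K$. Multiplication by $\beta$ is then a $\K$-linear symplectic similitude with multiplier $b$ satisfying $\beta^2 = bI$, and hence induces an involutive projective collineation $\theta$ with no fixed points (since $\beta \notin \K$); each point $x$ lies on the fixed line $\LL x$, which is isotropic because $B(x, \beta x)$ reduces to an expression in $\omega(x, x) = 0$. Therefore $\theta$ is point-domestic without fixed points, as required.

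I expect the most delicate step to be the passage from isotropy of fixed lines to self-adjointness of $T$ and the subsequent matching of this condition with the similitude relation to extract $T^2 \in \K$; the characteristic $2$ construction in the converse is also somewhat non-standard, as $\mathrm{tr}_{\LL/\K}$ vanishes identically on an inseparable quadratic extension and must be replaced by an ad hoc $\K$-linear functional.
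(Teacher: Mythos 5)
Your proof is correct, but it takes a genuinely different route from the paper's. The paper reduces to the rank-$2$ case (a fixed-point-free collineation of the symplectic quadrangle fixing a spread), writes the matrix of $\theta$ in coordinates adapted to two fixed spread lines, imposes the collinearity identity $B(x,x^\theta)=0$ coefficient by coefficient, and solves: the surviving matrix squares to a scalar, giving the involution property, and the scalar is a nonsquare $b$ with $\LL=\K(\sqrt b)$, whence the separability dichotomy; the converse is then read off from this explicit normal form. You instead argue coordinate-free and directly in rank $n$: point-domesticity gives $B(x,Tx)=0$ identically, polarisation (valid in all characteristics since $B$ is alternating) gives self-adjointness of $T$, and matching this against the similitude relation $T^*=\mu T^{-1}$ yields $T^2=\mu I$ at once, so $\LL=\K[X]/(X^2-\mu)$ and the separability statement is immediate from the minimal polynomial. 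Your converse is likewise structural rather than computational: transferring an $\LL$-symplectic form down to $\K$ via the trace (separable case) or, correctly noting that $\mathrm{tr}_{\LL/\K}$ vanishes on an inseparable quadratic extension, via an ad hoc $\K$-linear functional $\phi$ with $\phi(1)=0$, $\phi(\beta)\neq 0$ (inseparable case); multiplication by $\beta$ is then a fixed-point-free involutive similitude whose fixed lines $\LL x$ are totally isotropic. What your approach buys is the elimination of both the coordinate computation and the reduction to rank $2$ (which the paper asserts ``clearly suffices'' without elaboration); what the paper's approach buys is an explicit normal form for $\theta$ that makes the converse a one-line observation and is consistent with the explicit constructions used in the hyperbolic and Hermitian cases. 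Both arguments rest on the same external inputs (Propositions~\ref{startingpoint} and~\ref{pdlinefixed} and Theorem~\ref{splitcasewithoutfixedpoints}), and I see no gap in yours.
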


\begin{proof}
Clearly it suffices to show that, if in a sympleclic quadrangle over $\K$ a spread is fixed by a fixed point free collineation $\theta$, then $\theta$ is an involution and the spread corresponds to an inseparable extension if the characteristic of $\K$ is $2$.

Let the symplectic quadrangle $\Delta$ be defined by the standard symplectic form $x_0y_1-x_1y_0+x_2y_3-x_3y_2$. Without loss of generality we may assume that $\theta$ (which has trivial companion field automorphism by Proposition~\ref{startingpoint}) maps the point $(1,0,0,0)$ to $(0,0,0,1)$ and $(0,0,1,0)$ to $(0,1,0,0)$. Hence $(0,1,0,0)$ is mapped to $(0,a,b,0)$, and $(0,0,0,1)$ to $(c,0,0,d)$, yielding the matrix $$\left(\begin{array}{cccc}0&0&0&c\\      0&a&e&0\\        0&b&0&0\\       1&0&0&d\end{array}\right),$$ for some $a,b,c,d,e\in\K$.  Expressing that $\theta$ maps each point $(x_0,x_1,x_2,x_3)$ to a collinear one, we obtain 
$$cx_3x_1-(ax_1+ex_2)x_0+bx_1x_3-(x_0+dx_3)x_2\equiv 0.$$ This yields $e=-1$, $c=-b$ and $a=d=0$. 
 
 Since $\theta$ has no fixed points, $b$ is a nonsquare and we see that $\LL=\K(\sqrt{b})$, which proves the assertions (the converse is obvious in view of our explicit form of $\theta$). .  
\end{proof}

The hyperbolic case is even more canonical. This is Theorem~\ref{thm1:moreprecise2} from the introduction.

\begin{prop}\label{hyperbolicpointdomestic}
Let $\Delta=(X,\Omega)$ be a hyperbolic polar space of rank $n\ge4$ defined over the field $\K$. Let $\theta$ be a point-domestic collineation of $\Delta$ without any fixed point. Let $\PG(2n-1,\K)$ be the ambient projective space. Then $\theta$ naturally extends to $\PG(2n-1,\K)$ poinwise fixing a spread $\cS$ which defines a projective space $\PG(n-1,\LL)$ over a quadratic extension $\LL$ of $\K$. Every collineation of $\PG(2n-1,\K)$ with pointwise fixes $\cS$ is a point-domestic collineation of $
\Delta$ without fixed points. If $\LL/\K$ is separable, then the fixed polar space is minimal Hermitian, if $\LL/\K$ is inseparable, then the fixed polar space is the subspace of the symplectic polar space over $\LL$ obtained by restricting the long root elations to $\K$.

Conversely, if $\K$ admits a quadratic extension $\LL/\K$, then there exists a point-domestic collineation of $\Delta$ without fixed points as above.  
\end{prop}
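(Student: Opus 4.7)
The plan is to extend $\theta$ linearly to the ambient $\PG(2n-1,\KK)$ and show, via the linear algebra of the extension $\tilde\theta$, that the fixed lines form a Desarguesian spread coming from a quadratic extension; the remaining statements then follow by combining this with Proposition~\ref{spreadalgebra} and Theorem~\ref{splitcasewithoutfixedpoints}.

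By Proposition~\ref{startingpoint}, $\theta$ has trivial companion field automorphism and so is induced by a $\KK$-linear similitude $\tilde\theta \in \mathrm{GL}(\KK^{2n})$ of the hyperbolic form $Q$ defining $\Delta$ (after rescaling by a scalar we may assume $\tilde\theta$ is orthogonal). Point-domesticity together with Proposition~\ref{pdlinefixed} gives that for each isotropic $v$ representing $p \in \Delta$, the line $\langle v, \tilde\theta v\rangle$ is an isotropic line of $\Delta$ fixed by $\tilde\theta$; hence $\tilde\theta^2(v) = a(v)\tilde\theta(v) + b(v)v$ for scalars $a(v), b(v) \in \KK$. Since the isotropic vectors of the hyperbolic $Q$ span $\KK^{2n}$, a routine bilinearity argument (applied to pairs of isotropic vectors $v_1, v_2$ with $B(v_1,v_2) = 0$, so that $v_1 + v_2$ remains isotropic) forces $a(v)$ and $b(v)$ to be constants $a, b \in \KK$, and so $\tilde\theta$ satisfies the fixed quadratic relation $\tilde\theta^2 = a\tilde\theta + bI$ on all of $\KK^{2n}$.

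Next I would show that $p(x) := x^2 - ax - b$ is irreducible over $\KK$. Any $\KK$-eigenvector $v$ with eigenvalue $\lambda$ satisfies $\lambda^2 Q(v) = Q(v)$, so either $Q(v) = 0$ (yielding a fixed point in $\Delta$, excluded by hypothesis) or $\lambda^2 = 1$. If $p(x) = (x-1)(x+1)$ with $\kar(\KK) \ne 2$, then the eigenspaces $V_{\pm 1}$ are mutually $B$-orthogonal and both anisotropic, so their $B$-complement $W$ carries a hyperbolic form still of Witt index $n$, forcing $\dim W = 2n$ and therefore $V_1 = V_{-1} = 0$, a contradiction. If $p(x) = (x-\lambda)^2$, then $\tilde\theta = \lambda I + N$ with $N$ nilpotent of square zero, and for any isotropic $v$ the constraints $Q(\tilde\theta v) = Q(v)$ and $\langle v, \tilde\theta v\rangle \subseteq \Delta$ force $Nv = 0$, so $\Delta \subseteq \ker N = V_\lambda$, again contradicting $V_\lambda \cap \Delta = \emptyset$. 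Thus $p$ is irreducible, $\LL := \KK[\tilde\theta]$ is a quadratic extension, $\KK^{2n}$ becomes an $n$-dimensional $\LL$-vector space, and the $\LL$-lines partition $\PG(2n-1,\KK)$ into a Desarguesian spread $\mathcal{S} \cong \PG(n-1,\LL)$, matching Proposition~\ref{spreadalgebra}.

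Each line $L \in \mathcal{S}$ meeting $\Delta$ at some point $p$ must equal the unique $\LL$-line through $p$, which is $\langle p, p^\theta\rangle$ and so isotropic by point-domesticity; consequently every line of $\mathcal{S}$ is either contained in $\Delta$ or external to it. Any collineation $\theta'$ of $\PG(2n-1,\KK)$ fixing each line of $\mathcal{S}$ therefore preserves $\Delta$ and is point-domestic, since for each $p \in \Delta$ the isotropic $\mathcal{S}$-line $L$ through $p$ is fixed and $p^{\theta'} \in L$ is collinear to $p$ in $\Delta$; the nontrivial such $\theta'$ coming from scalar multiplication by $\mu \in \LL^* \setminus \KK^*$ act fixed-point-freely on every $\LL$-line, hence on $\Delta$. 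The identification of the fixed element polar space follows from Theorem~\ref{splitcasewithoutfixedpoints} combined with the $\LL$-structure just constructed: in the separable case the Galois involution of $\LL/\KK$ induces a Hermitian form on $\LL^n$ whose isotropic $\LL$-lines are precisely the $\theta$-fixed lines, realizing the minimal Hermitian polar space; in the inseparable case (necessarily $\kar(\KK) = 2$), with no Galois involution available, the fixed structure is identified with the subspace of the symplectic polar space over $\LL$ cut out by $\KK$-rationality of the long root parameters, giving a mixed polar space. The converse reduces to realizing $\KK^{2n} = \LL^n$ so that $Q$ arises from the trace of an appropriate hyperbolic $\LL$-form and taking $\theta$ to be multiplication by any $\mu \in \LL^* \setminus \KK^*$. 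The main obstacle is the characteristic-$2$ case: the alternating bilinear form $B$ requires additional care in the dimension argument excluding $\KK$-eigenvalues, and the inseparable identification of the fixed polar space as the prescribed mixed polar space is significantly more delicate than the separable Hermitian identification.
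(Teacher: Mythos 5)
Your overall strategy --- extend $\theta$ to a linear map $\tilde\theta$ of $\KK^{2n}$, derive a global quadratic relation $\tilde\theta^2=a\tilde\theta+bI$ from the invariance of the lines $\langle v,\tilde\theta v\rangle$, prove irreducibility, and read off the spread as the set of $\LL$-lines for $\LL=\KK[\tilde\theta]$ --- is a legitimate and genuinely different route from the paper, which instead reduces to rank $4$, puts $\tilde\theta$ in block-diagonal form with $2\times2$ blocks, and extracts the common trace and determinant by direct computation. Your version is more conceptual and avoids the rank reduction. However, there are two genuine problems. First, your exclusion of the case $p(x)=(x-1)(x+1)$ is broken: since $p$ is separable and annihilates $\tilde\theta$, one has $\KK^{2n}=V_1\oplus V_{-1}$, so the ``$B$-complement $W$'' of the two eigenspaces is zero, not something of dimension $2n$; and anisotropy of both eigenspaces does \emph{not} contradict the form being hyperbolic (take $q_1\perp(-q_1)$ with $q_1$ anisotropic of dimension $n$ --- the resulting involution is fixed-point free but maps \emph{every} point to an opposite). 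The contradiction must come from point-domesticity, not from the metric structure of the eigenspaces. The clean fix, in any characteristic, is the one you already use for the repeated-root case: each line $\langle v,\tilde\theta v\rangle$ with $\langle v\rangle\in\Delta$ is a totally isotropic $\tilde\theta$-invariant $2$-space on which $\tilde\theta$ is non-scalar, so if $p$ had a root in $\KK$ this $2$-space would contain an eigenvector, which is isotropic and hence a fixed point of $\Delta$. (This also makes your worry about characteristic $2$ and your unjustified normalisation ``we may assume $\tilde\theta$ is orthogonal'' --- the similitude factor need not be a square --- irrelevant.)

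Second, the identification of the fixed element polar space, including the assertion that separability of $\LL/\KK$ yields the minimal Hermitian polar space while inseparability yields the prescribed subspace of the symplectic polar space over $\LL$, is stated but not proved. Theorem~\ref{splitcasewithoutfixedpoints} gives only the unrefined dichotomy ``minimal Hermitian or mixed''; the content of this proposition is precisely to tie each branch to the arithmetic of $\LL/\KK$ and to exhibit the form. In your framework this amounts to computing, for $\alpha=x+y\tilde\theta\in\LL$, that $Q(\alpha v)=0$ for all $\alpha$ if and only if $Q(v)=0$ and $B(v,\tilde\theta v)=0$, and then recognising $h(v,w)=B(v,\tilde\theta w)+\cdots$ as a (pseudo-)Hermitian form over $\LL$; the paper does exactly this by an explicit coordinate change $\beta$ in rank $4$, arriving at the equation $c_1Y_{-2}\overline{Y}_2+c_2Y_{-1}\overline{Y}_1\in\KK$. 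Without some version of that computation the proposition's main clause is unestablished, and you acknowledge as much. The remainder (the spread lines being singular or external, the fixed-point-freeness and point-domesticity of every nontrivial collineation fixing $\cS$ linewise, and the converse via the transfer of a hyperbolic $\LL$-form) is essentially correct.
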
 

\begin{proof}
It again suffices to consider the case $n=4$. We order coordinates as in $$(x_{-4},x_{-3},x_{-2},x_{-1},x_1,x_2,x_3,x_4),$$ take $X$ to be the set of points with equation $X_{-4}X_4-X_{-3}X_3+X_{-2}X_2-X_{-1}X_1=0$, and assume that $\theta$ stabilizes the lines $\<p_{-4},p_{-3}\>, \<p_4,p_3\>, \<p_{-2},p_{-1}\>, \<p_2p_1\>$, where $p_i$ is the point with a $1$ in position $i$ and $0$ elsewhere. Set   $$M_i=\left(\begin{array}{cc}a_i&b_i\\      c_i&d_i\end{array}\right),$$ $a_i,b_i,c_i,d_i\in\K$, $i=-2,-1,1,2$. Then we can assume that $\theta$ has block matrix $$\left(\begin{array}{cccc}M_{-2}&0&0&0\\      0&M_{-1}&0&0\\        0&0&M_1&0\\       0&0&0&M_2\end{array}\right).$$ (Recall from Proposition~\ref{startingpoint} that $\theta$ is linear; action of $\theta$ on the left)

Since $\theta$ has no fixed points, we have $b_i\neq 0\neq c_i$, for all $i\in\{-2,-1,1,2\}$. Expressing that $\theta$ preserves the quadric (hence the quadratic form up to a scalar multiple), we deduce
that $\det A_{-2}=\det A_{-1}=\det A_1=\det A_2:=d$. Expressing that $\theta$ maps each point to a collinear one we deduce $\tr A_{-2}=\tr A_{-1}=\tr A_1=\tr A_2:=t$. We also have $A_{-i}=A_i$ for $i=1,2$. Moreover, since there are no fixed points in $\Delta$, the equation $x^2-tx+d=0$ has no solution in $\K$ and determines a quadratic field extension $\LL/\K$. It follows that $\theta$ pointwise fixes a spread $\cS$ of $\PG(7,\K)$. Clearly, every collineation of $\PG(7,\K)$ fixing that spread pointwise restricts to a point-domestic collineation of $\Delta$. 

Let $\delta$ be a solution in $\LL$ of the equation $x^2-tx+d=0$ above.  Then we can write every element of $\LL$ as $a+b\delta$, with $a,b\in\K$ and so with $\delta^2 =t\delta-d$. Now we identify each point $p=(x_{-4},x_{-3},\ldots, x_3,x_4)$ of $\PG(7,\K)$ with the point $p^\beta=(c_2x_{-4}-a_2x_{-3}+x_{-3}\delta,c_1x_{-2}-a_1x_{-1}+x_{-1}\delta, c_1x_1-a_1x_2+x_2\delta, c_2x_3-a_2x_4+x_4\delta)$ of $\PG(3,\LL)$. An elementary computation shows that the image of a point under the action of $\theta$ is identified with the same point of $\PG(3,\LL)$ (the coordinates are equal up to the scalar $\delta$). Hence we can identify the fixed lines of $\PG(7,\K)$ with the points of $\PG(3,\LL)$. Denote the conjugate $u+tv-v\delta$ of the element $u+v\delta\in\LL$ by $\overline{u+v\delta}$, where $u,v\in\K$. Then one calculates that a point $p$ of $\PG(7,\K)$ belongs to $\Delta$ if and only if the coordinates $(y_{-2},y_{-1},y_1,y_2)$ of $p^\beta$ satisfy the relation $c_1Y_{-2}\overline{Y}_2+c_2Y_{-1}\overline{Y}_1\in\K$. 

This proves the proposition (noting that the converse is again obvious by our explicit form of $\theta$).
\end{proof}

We now consider non-split polar spaces. The neatest situation occurs with the minimal Hermitian polar spaces. The following is Theorem~\ref{thm1:nonsplit}. 

\begin{prop}\label{hermitianpointdomestic}
Let $\Delta=(X,\Omega)$ be a minimal Hermitian polar space of rank $n\ge4$ defined over the field $\K$, with corresponding field involution $\sigma$ and field extension $\K/\FF$. Let $\theta$ be a point-domestic collineation of $\Delta$ without any fixed point. Let $\PG(2n-1,\K)$ be the ambient projective space. Then $\theta$ is an involution and naturally extends to $\PG(2n-1,\K)$ pointwise fixing a spread $\cS$ which defines a projective space $\PG(n-1,\HH)$ over a quaternion division algebra $\HH$ over its centre $\FF$, with $\K$ a $2$dimensional subalgebra of $\HH$. The only nontrivial collineation of $\Delta$ fixing every member of $\cS$ in $\Delta$ is $\theta$. The fixed polar space is a minimal quaternion polar space related to a nonstandard involution of $\HH$ pointwise fixing a $3$-dimensional subalgebra over $\FF$.  

Conversely, if $\K$ admits a quaternion extension $\HH$ such that the centre $\FF$ of $\HH$ is a subfield of $\K$ and $\K/\FF$ is a separable quadratic field extension, then there exists a point-domestic collineation of $\Delta$ without fixed points as above.  
\end{prop}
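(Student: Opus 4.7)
The plan is to combine the structural results already established in Propositions~\ref{startingpoint}, \ref{pdlinefixed}, \ref{projsp} and~\ref{spreadalgebra} with coordinate computations analogous to those in the proofs of Propositions~\ref{morepprecise1a} and~\ref{hyperbolicpointdomestic}. As in those propositions I may reduce to the rank $n=4$ case, since the minimal Hermitian embedding is rigid and the quaternion structure only needs to be read off from a single fixed residue.

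First, Proposition~\ref{startingpoint} forces the companion field automorphism of $\theta$ to equal $\sigma$, so $\theta$ is $\sigma$-semi-linear and, in particular, not $\K$-linear. Corollary~\ref{corclassIII} shows that the fixed lines and singular subspaces of $\theta$ form a polar space $\Gamma$ defined either over a quadratic extension of $\K$ or over a quaternion division algebra $\HH$ that is $2$-dimensional over $\K$. The quadratic extension alternative would force $\theta$ to act $\K$-linearly on the coordinatising spread (cf.\ the linear branch of Proposition~\ref{spreadalgebra}), which is ruled out by $\sigma\ne\id$. Applying the non-linear branch of Proposition~\ref{spreadalgebra} inside a fixed three-dimensional subspace of $\PG(2n-1,\K)$ then yields, simultaneously, that $\theta$ is an involution, that the spread $\cS$ of fixed lines coordinatises $\PG(n-1,\HH)$, and that the standard involution of $\HH$ restricts to $\sigma$ on $\K$. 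Hence the centre of $\HH$ is the $\sigma$-fixed subfield $\F$, and $\K/\F$ is the separable quadratic extension defining $\sigma$; this establishes assertions~(1)--(4).

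To identify the fixed polar space, I would note that on each member $S\in\cS$ the induced action of $\theta$ is a $\sigma$-semi-linear involution of $S\cong\PG(1,\K)$ given, under the identification $S\leftrightarrow\HH$ of the previous step, by left multiplication by some element $h\in\HH\setminus\K$ whose square lies in $\F^\times$. Its fixed points are the projectivisation of a $3$-dimensional $\F$-subalgebra of $\HH$, which is pointwise fixed by a non-standard involution $\tau$ of $\HH$ (every $\F$-linear involution of $\HH$ other than the standard one has a $3$-dimensional fixed subalgebra). Globally this identifies the fixed polar space with the isotropic geometry of a non-degenerate $\tau$-Hermitian form on $\HH^n$, i.e.\ a minimal quaternion polar space. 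The uniqueness statement in~(5) follows because any non-trivial collineation of $\Delta$ fixing every member of $\cS$ induces the identity on $\PG(n-1,\HH)$ and therefore acts as scalar multiplication by some $h'\in\HH^\times$; compatibility with the $\sigma$-Hermitian form of $\Delta$ and non-triviality on $\Delta$ force $h'\in\HH\setminus\K$, and the rigidity of the form then pins $h'$ down to $\theta$ modulo $\K^\times$.

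For the converse, given $\F\subset\K\subset\HH$ as in the statement, I would fix a non-standard involution $\tau$ of $\HH$ and a non-degenerate $\tau$-Hermitian form on $\HH^n$. Writing $\HH=\K\oplus\K j$ with $j^2\in\F^\times$ and $jk=\sigma(k)j$ for all $k\in\K$, the restriction of the form to $\K^{2n}\cong\HH^n$ becomes a non-degenerate $\sigma$-Hermitian form on $\K^{2n}$, which can be identified with the form defining $\Delta$. Left multiplication by $j$ is then a $\sigma$-semi-linear involution of $\PG(2n-1,\K)$ preserving $\Delta$, fixing every $1$-dimensional $\HH$-subspace of $\HH^n$, and with no fixed point in $\PG(2n-1,\K)$; this is the required point-domestic, fixed-point-free collineation. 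The main technical obstacle will be matching the $\sigma$-Hermitian and $\tau$-Hermitian data coherently---equivalently, verifying that the element $h$ obtained in the fixed-line computation really corresponds to a non-standard rather than the standard involution of $\HH$, and that every non-standard involution actually arises in this way; once coordinates are aligned as above, this reduces to a direct quaternion-algebra manipulation.
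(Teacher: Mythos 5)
Your plan follows the same overall route as the paper: reduce to rank $4$, use Proposition~\ref{startingpoint} to force the companion automorphism to be $\sigma$, extract the quaternion algebra from the fixed-line structure, and prove the converse by exhibiting the collineation explicitly (the paper does all of this via a block-matrix form of $\theta$ adapted to four fixed lines, from which $\HH=\K\oplus\K\delta$ with $\delta^2=\ell\in\FF$ and the identification $\beta$ with $\PG(3,\HH)$ are read off). Your substitution of the non-linear branch of Proposition~\ref{spreadalgebra} together with Corollary~\ref{corclassIII} for part of that computation is legitimate for the field-versus-quaternion dichotomy and for the involution property (the paper instead notes that $\theta^2$ is linear and point-domestic, hence trivial by Proposition~\ref{startingpoint}). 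However, two steps in your outline are not yet proofs.

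First, Corollary~\ref{corclassIII} and Proposition~\ref{spreadalgebra} only control the fixed lines \emph{inside} $\Delta$ (respectively inside a fixed singular $3$-space of $\Delta$); they do not yield the asserted spread of the whole ambient $\PG(2n-1,\K)$. For that you must rule out fixed points of $\theta$ \emph{off} $\Delta$: a $\sigma$-semilinear projective involution satisfies $\theta^2=\lambda\cdot\mathrm{id}$, and it either is fixed-point free on all of $\PG(2n-1,\K)$ (when $\lambda\notin N_{\K/\FF}(\K)$) or pointwise fixes a Baer subgeometry $\PG(2n-1,\FF)$, in which case there is no spread. One can close this by observing that $\theta$ fixes a line of $\Delta$ without fixed points on it, forcing $\lambda$ to be a non-norm; the paper closes it instead by the explicit condition $\ell\notin\{kk^\sigma:k\in\K\}$. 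Your plan contains neither argument. Second, the identification of the fixed polar space is garbled: you refer to the ``fixed points'' of the induced action of $\theta$ on a spread member, but $\theta$ is fixed-point free there; what is actually required is the computation showing that the members of $\cS$ lying on $\Delta$ are exactly the isotropic points of a form that is Hermitian for the \emph{non-standard} involution $u+v\delta\mapsto u-v^\sigma\delta$ of $\HH$ (the paper's map $\beta$ and the resulting relation in the coordinates $Y_{\pm1},Y_{\pm2}$). This is precisely the ``direct quaternion-algebra manipulation'' you defer, so the plan is repairable, but as written these are the places where it falls short. (Minor: your uniqueness argument via scalar multiplication by $h'\in\HH^\times$ also presupposes that fixing the members of $\cS$ lying in $\Delta$ forces the identity on all of $\PG(n-1,\HH)$; the paper's one-line route --- $\theta\theta'$ is linear and point-domestic, hence trivial by Proposition~\ref{startingpoint} --- avoids this.)
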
 

\begin{proof}
Note that by Proposition~\ref{startingpoint}, $\theta$ is a involution. Also, if another involution $\theta'$ of $\Delta$ would fix exactly the same lines as $\theta$, then $\theta\theta'$ is a linear point-domestic collineation of $\Delta$, implying the stated uniqueness of $\theta$. We may again only consider the case $n=4$. The arguments run parallel to those of the proof of Proposition~\ref{hyperbolicpointdomestic}.  We order coordinates as in $$(x_{-4},x_{-3},x_{-2},x_{-1},x_1,x_2,x_3,x_4),$$ take $X$ to be the set of points with equation $$X_{-4}X_4^\sigma+X_4X_{-4}^\sigma-X_{-3}X_3^\sigma-X_3X_{-3}^\sigma+X_{-2}X_2^\sigma+X_2X_{-2}^\sigma-X_{-1}X_1^\sigma-X_1X_{-1}^\sigma=0,$$ and assume that $\theta$ stabilizes the lines $\<p_{-4},p_{-3}\>, \<p_4,p_3\>, \<p_{-2},p_{-1}\>, \<p_2p_1\>$, where $p_i$ is the point with a $1$ in position $i$ and $0$ elsewhere. 

Since $\theta$ is an involution we may assume that it interchanges $p_{-4}$ with $p_{-3}$, $p_4$ with $p_3$, $p_{-2}$ with $p_{-1}$, and $p_2$ with $p_1$. Hence it has block matrix
$$\left(\begin{array}{cccc}A_{-2}&0&0&0\\      0&A_{-1}&0&0\\        0&0&A_1&0\\       0&0&0&A_2\end{array}\right),$$ with $$M_i=\left(\begin{array}{cc}0&a_i\\      b_i&0\end{array}\right),$$ $a_i,b_i\in\K$, $i=-2,-1,1,2$. Expressing that $\theta$ preserves $X$ and that $\theta$ maps points of $\Delta$ to collinear points readily implies that $a_{-i}=a_i$, $b_{-i}=b_i$, for $i=1,2$, and $a_1b_1^\sigma=a_2b_2^\sigma=b_1a_1^\sigma=b_2a_2^\sigma=:\ell\in\FF$.  Then $\theta$ has no fixed points in $\Delta$ if and only if $\ell\notin \{kk^\sigma:k\in\K\}$. It is easily checked that this now defines a fixed point free involution in $\PG(7,\K)$, hence pointwise fixing a spread $\cS$. 

We can define the quaternion division algebra $\HH:=\{u+v\delta:(u,v)\in\K\times\K\}$ with ordinary addition and multiplication determined by $\delta v=v^\sigma\delta$ and $\delta^2=\ell$. Clearly $\FF$ is the centre of $\HH$ and $\K$ is a 2-dimensional division subalgebra over $\FF$.

Now we identify each point $p=(x_{-4},x_{-3},\ldots, x_3,x_4)$ of $\PG(7,\K)$ with the point $p^\beta=(a_2^\sigma x_{-4}+x_{-3}\delta,a_1^\sigma x_{-2}+x_{-1}\delta, a_1^\sigma x_1+x_2\delta, a_2^\sigma x_3+x_4\delta)$ of the left projective space $\PG(3,\HH)$. An elementary computation shows that the image of a point under the action of $\theta$ is identified with the same point of $\PG(3,\HH)$ (the coordinates are equal up to the left scalar $\delta$). Hence we can identify the fixed lines of $\PG(7,\K)$ with the points of $\PG(3,\HH)$. Denote the non-standard involution $u-v^\sigma\delta$ of the element $u+v\delta\in\HH$ by $\overline{u+v\delta}$, where $u,v\in\K$. Then one calculates that a point $p$ of $\PG(7,\K)$ belongs to $\Delta$ if and only if the coordinates $(y_{-2},y_{-1},y_1,y_2)$ of $p^\beta$ satisfy the relation $Y_{-2}a_2^{-\sigma}\overline{Y}_2+Y_{-1}a_1^{-\sigma}\overline{Y}_1=Y_{2}a_2^{-\sigma}\overline{Y}_{-2}+Y_{1}a_1^{-\sigma}\overline{Y}_{-1}$. 

This proves the proposition (noting that the converse is again obvious by our explicit form of $\theta$).
\end{proof}

Left to consider are the orthogonal and Hermitian polar spaces whose standard form has nontrival anisotropic forms. These contain hyperbolic and minimal Hermitian polar subspaces fixed under the given point-domestic collineation and so Propositions~\ref{hyperbolicpointdomestic} and~\ref{hermitianpointdomestic} provide necessary conditions for the existence. In general, if a polar subspace of hyperbolic or minimal Hermitian type of an arbitrary polar space admits a point-domestic collineation without fixed points, then it depends on the shape of the anisotropic form whether this collineation can be extended as a point-domestic collineation without fixed points, and the general rule is that the anisotropic form must, up to recoordinatization, be the sum of multiples of the norm form of the corresponding field extension or quaternion division algebra, respectively.  Let us give a few examples for particular fields to illustrate this. 

\subsubsection*{Some special fields}\label{sec:specialfields}
\textbf{Quadrics over the reals.} Consider a real quadric of Witt index $2n$ in a projective space of dimension $4n+2k-1$. Its standard equation is
$$X_{-2n}X_{2n}+X_{-2n+1}X_{2n-1}+\cdots+X_{-1}X_1=X_{0,-k}^2+X_{0,-k+1}^2+\cdots+X_{0,-1}^2+X_{0,1}^2+\cdots+X_{0,k}^2.$$
Let $M$ be a real $2\times 2$ matrix with non-real eigenvalues. By multplying with an appropriate scalar we may assume that its determinant is $1$. Then its trace is of the form $2\cos\varphi$, for 
some $\varphi\in ]0,\pi[$.
Then the collineation defined by $$\left\{\begin{array}{lcl}\left(\begin{array}{c}x_{-2\ell}\\ x_{-2\ell+1}\end{array}\right) & \mapsto &M\cdot \left(\begin{array}{c}x_{-2\ell}\\ x_{-2\ell+1}\end{array}\right), \ell=1,2,\ldots,n,\\
\left(\begin{array}{c}x_{2\ell-1}\\ x_{2\ell}\end{array}\right) & \mapsto & M\cdot \left(\begin{array}{c}x_{2\ell-1}\\ x_{2\ell}\end{array}\right),  \ell=1,2,\ldots,n, \\
\left(\begin{array}{c}x_{0,-\ell}\\ x_{0,\ell}\end{array}\right) & \mapsto & \left(\begin{array}{ll} \cos\varphi & \pm\sin\varphi\\\mp\sin\varphi & \cos\varphi \end{array}\right)\cdot \left(\begin{array}{c}x_{0,-\ell}\\ x_{0,\ell}\end{array}\right), \ell=1,2,\ldots,k\end{array}\right.$$
is point-domestic and fixed point free. The fix point structure is a Hermitian polar space in $\PG(2n+k-1,\CC)$ with equation (and previous conventions) $$Y_{-n}\overline{Y}_{n}-Y_{n}\overline{Y}_{-n}+\cdots +Y_{-1}\overline{Y}_{1}-Y_{1}\overline{Y}_{-1}=Y_{0,k}\overline{Y}_{0,k}+\cdots+Y_{0,1}\overline{Y}_{0,1}.$$

\noindent\textbf{Quadrics over the rationals.} Consider a rational quadric of Witt index $4$ in a projective space $\PG(11,\QQ)$  with equation $$X_{-4}X_{4}+X_{-3}X_{3}+X_{-2}X_2+X_{-1}X_1=X_{0,-2}^2+X_{0,-1}^2+X_{0,1}^2+\ell X_{0,2}^2,$$ with $\ell\in\QQ$. If $\ell$ is a perfect square, then we can set $X_{0,2}'=\sqrt{\ell}X_{0,2}$ and similarly as before we can construct fixed point free point-domestic collineations. However, if $\ell$ is not a perfect square, then $X_{0,-2}^2+\ell X_{0,2}^2$ and $X_{0,-1}^2+X_{0,1}^2$ do not define the norm of the same quadratic extension of $\QQ$, and indeed one can easily show that no fixed point free point-domestic collineation exists. 
\bigskip

\noindent\textbf{Complex Hermitian polar spaces.} Here similar considerations hold as for the real quadrics. Let a Hermitian variety of Witt index $2n$ in the projective space $\PG(4n+2k-1,\CC)$ be given   by the (standard) equation
$$\sum_{\ell=1}^{-2n} X_{-2\ell}\overline{X}_{2\ell}+X_{2\ell}\overline{X}_{-2\ell}-X_{-2\ell+1}\overline{X}_{2\ell-1}-X_{2\ell-1}\overline{X}_{-2\ell+1}=\sum_{\ell=1}^{k} X_{0,-\ell}\overline{X}_{0,-\ell}+X_{0,\ell}\overline{X}_{0,\ell}.$$
Then, for each $r\in\RR^+$, the semi-linear involution defined by $$\left\{\begin{array}{lcl}\left(\begin{array}{c}x_{-2\ell}\\ x_{-2\ell+1}\end{array}\right) & \mapsto & \left(\begin{array}{c}\overline{x}_{-2\ell+1}\\ -r.\overline{x}_{-2\ell}\end{array}\right), \ell=1,2,\ldots,n,\\
\left(\begin{array}{c}x_{2\ell-1}\\ x_{2\ell}\end{array}\right) & \mapsto & \left(\begin{array}{c}\overline{x}_{2\ell}\\ -r.\overline{x}_{2\ell-1}\end{array}\right),  \ell=1,2,\ldots,n, \\
\left(\begin{array}{c}x_{0,-\ell}\\ x_{0,\ell}\end{array}\right) & \mapsto & \left(\begin{array}{c}\pm\sqrt{r}.\overline{x}_{0,\ell}\\ \mp\sqrt{r}.\overline{x}_{0,-\ell}\end{array}\right), \ell=1,2,\ldots,k\end{array}\right.$$ is point-domestic and fixed point free.

Now a property similar to that of rational quadrics explained above holds for complex rational Hermitian varieties, and we will not repeat it. 
\bigskip

\noindent\textbf{Finite fields.} The previous considerations allow us now to conclude that a finite polar space $\Delta$ of rank at least $3$ admits a fixed point free point-domestic collineation if and only if it is a symplectic polar space in odd characteristic, or it is a hyperbolic quadric, an elliptic quadric or a minimal Hermitian variety. Moreover, all such collineations can be explicitly written down on appropriate standard forms. 

\begin{remark}
The inverse procedure, namely, to start with a Hermitian form over a field extension $\LL/\K$ or over a quaternion division algebra $\HH$ with $2$-dimensional subfield $\K$, and consider it over the subfield $\K$ to obtain a quadratic form or a Hermitian form, respectively, is well known (sometimes called \emph{field reduction}). However, other situation turn up here as this is not always related to a collineation and its fixed point structure (e.g., when the $2$-dimensional subfield $
\K$ of the quaternion division algebra $\HH$ is an inseparable extension of the ground field of the algebra). But when it is related to a collineation and its fixed point structure, then it is always a point-domestic fixed point free collineation.   
\end{remark}

\section{Polar closed diagrams}\label{sec:polarclosed1}

In this section we prove Theorem~\ref{thm1:polarclosed}, classifying those admissible Dynkin diagrams which arise as the opposition diagram of a unipotent element of a split spherical building. The result for the exceptional types $\sE_n$ ($n=6,7,8$), $\sF_4$ and $\sG_2$ is \cite[Theorem~5]{PVM:21}, and so here we consider the classical types. We will also prove Theorem~\ref{thm1:attained}.
%

We shall need the following lemma. A \emph{nondegenerate generalized polarity $\rho$} in a projective space $\PG(V)$ is a symmetric relation in the point set such that for every point $p$ the set $p^\rho$ of points in relation with $p$ is a hyperplane of $\PG(V)$.  We say that a polar space $\Delta$ is \emph{embedded in a nondegenerate generalized polarity $\rho$ of some projective space $\PG(V)$} if each point $p$ of $\Delta$ is a point of $\PG(V)$ and if the hyperplane spanned by $p^\perp$ coincides precisely with $p^\rho$. 

\begin{lemma}\label{geomsub}
Let $\Gamma$ be any polar space of rank $n$ embedded in a nondegenerate generalized polarity $\rho$ of a projective space $\PG(V)$ over $\LL$ such that $\Gamma$ spans $\PG(V)$. Let $0\leq i\leq n-1$. Then no geometric subspace of corank $i$ is contained in a subspace of codimension $i+1$. 
\end{lemma}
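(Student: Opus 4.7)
The plan is to argue by induction on $i$. The base case $i = 0$ is immediate from the spanning hypothesis: a corank-$0$ subspace of $\Gamma$ meets every singular $0$-space, hence contains every point of $\Gamma$, so $\<S\>=\PG(V)$ and $S$ cannot lie in any projective hyperplane.

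For the inductive step, suppose the result is known for corank $i - 1$ and, aiming at a contradiction, let $S$ be a corank-$i$ subspace of $\Gamma$ contained in a projective subspace $W$ with $\codim_{\PG(V)} W = i + 1$. Since $\Gamma$ spans $\PG(V)$ and $W$ is proper, we may choose a point $p \in \Gamma \setminus W$; note that $p \notin S$. Let $S'$ denote the polar-subspace closure of $S \cup \{p\}$ inside $\Gamma$ (that is, the smallest subspace of $\Gamma$ containing $S$ and $p$). Then $\<S'\> \subseteq \<W, p\>$, a projective subspace of codimension exactly $i$ in $\PG(V)$. The induction hypothesis, applied to $S'$ inside $\<W, p\>$, will deliver the contradiction, provided one can show that $S'$ has corank at most $i - 1$.

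To establish this corank bound, I would take an arbitrary singular $(i-1)$-space $U$ and extend it to a singular $i$-space $U^+$ (possible since $i - 1 \leq n - 2$). By the corank-$i$ hypothesis on $S$ there is a point $q \in U^+ \cap S$; if $q \in U$ we are done, and otherwise $q$ is collinear in $\Gamma$ with every point of $U$. A one-or-all argument applied to lines of $\Gamma$ through $p$, combined with this collinearity inside the singular space $U^+$, produces a point of $S'$ lying on $U$. This is modelled directly on the construction in Proposition~\ref{corank}$(iii)$, which treats the case $i = 2$.

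The principal obstacle is precisely this last step: as Proposition~\ref{corank}$(iii)$ already indicates, a naive adjunction of a point to a corank-$i$ subspace can leave the corank unchanged, so the argument must essentially exploit that $p$ is chosen outside $W$ rather than merely outside $S$. Making this intuition rigorous, likely via an auxiliary use of the polarity $\rho$ to compare $\<S\>^\rho$ with $\<W,p\>^\rho$ and to locate singular subspaces inside these polars, is where the delicate work lies; the embedding hypothesis, which identifies $p^\rho$ with the projective span of $p^\perp$ for every point of $\Gamma$, is exactly what ties the combinatorics of corank inside $\Gamma$ to projective codimension inside $\PG(V)$.
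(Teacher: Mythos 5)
Your reduction to the claim that the subspace closure $S'$ of $S\cup\{p\}$ has corank at most $i-1$ is exactly where the argument breaks down, and you say so yourself: that claim is essentially the whole content of the lemma in disguise, and nothing in your sketch establishes it. Adjoining a single point to a corank-$i$ subspace and closing up need not lower the corank; indeed Proposition~\ref{corank}$(iii)$ of the paper already shows, for $i=2$, that the cone over $S$ from a point $p$ with $S\not\subseteq p^\perp$ is again of corank~$2$. So everything hinges on exploiting $p\notin W$ rather than merely $p\notin S$, and the ``one-or-all argument'' you gesture at for transferring a point of $U^+\cap S$ down to $U$ is never carried out. As written, the induction has no engine, so this is a genuine gap rather than a routine detail.

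For comparison, the paper's proof sidesteps this difficulty by descending into point-residues instead of enlarging $S$ inside $\Gamma$. One picks a point $x_1$ of $\Gamma$ not in $W^\rho$ (possible because $\Gamma$ spans $\PG(V)$ and $W^\rho$ is a proper subspace), so that $x_1^\rho\cap W$ has codimension $i+2$ in $\PG(V)$, hence relative codimension $i$ in the ambient space of $\Res(x_1)$, which has codimension~$2$; moreover every singular $i$-space of $\Gamma$ through $x_1$ meets the corank-$i$ subspace inside $x_1^\perp$, so the trace of $W$ in the residue still meets every singular $(i-1)$-space there. Iterating $i$ times produces a rank-$(n-i)$ polar space contained in a hyperplane of its own span, contradicting the spanning hypothesis. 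In other words, the corank drop you are trying to manufacture by adjoining points is obtained for free by passing to residues, with the polarity doing the codimension bookkeeping; if you want to salvage your approach, you would need to prove the maximality-type statement that a corank-$i$ subspace together with a suitably generic exterior point generates a corank-$(i-1)$ subspace, which is not available in the paper and appears to be at least as hard as the lemma itself.
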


\begin{proof}
Suppose for a contradiction that $S$ is a subspace of $\PG(V)$ of codimension $i+1$ containing a geometric subspace of $\Gamma$ of corank $i$. Since $\Gamma$ is embedded in $\rho$, every point $x$ of $\Gamma$ outside $S^\rho$ has the property that $x^\perp\cap S$ is a subspace of codimension $i+2$. Let $x_1$ be such a point (which certainly exists as $\Gamma$ is not contained in $S^\rho$ by assumption).  In $\Gamma_1:=\Res(x_1)$ this means we obtain a subspace $S_1$ with $\codim S_1=i+2$ intersecting every singular subspace of dimension $(i-1)$ nontrivially. Since $\Res(x_1)$ is embedded in a nondegenerate polarity of $\PG(V_2)$, with $\codim V_2=2$, and spans $\PG(V_2)$, we can apply the same argument and continue like this until we reach a polar space $\Gamma_i$ of rank $n-i$ spanning $\PG(V_{2i})$, with $\codim V_{2i}=2i$, and a subspace $S_i$ of codimension $2i+1$ in $\PG(V)$, hence codimension $1$ in $\PG(V_{2i})$,  intersecting every subspace of dimension $0$ of $\Gamma_i$ nontrivially. In other words, $\Gamma_i$ is contained in $S_i$, a contradiction. 
\end{proof}


We can now show that the opposition diagram $\mathsf{B}_{n;i}^1$ for odd $i$ does not occur as the opposition diagram of any unipotent element in a split building of type $\sB_n$. In fact, we can prove a slightly more general result by considering all proper orthogonal polar spaces. Recall that a \emph{proper orthogonal} polar space is a polar space for which the minimal embedding has no secants of size at least 3. These include all the quadrics of Witt index at least 2 in characteristic different from 2, and all hyperbolic ones.

\begin{prop}\label{prop:PolarClosed_a}
Let $\Delta$ be a proper orthogonal polar space standard embedded in some projective space $\PG(V)$ and let $\theta$ be a collineation of $\Delta$ with opposition diagram $\mathsf{B}_{n;i}^1$  for some odd $i$, $1\leq i\leq n-1$. Then $\theta$ is not a unipotent element.
\end{prop}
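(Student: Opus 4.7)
The plan is to combine the fixed-subspace information of Theorem~\ref{thm1:nonpointdom_1}, the span constraint from Lemma~\ref{geomsub}, and a classical parity fact for nilpotent elements in the orthogonal Lie algebra.

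Since $\Delta$ is proper orthogonal, the characteristic of $\K$ is distinct from~$2$ and every collineation of $\Delta$ is induced by a linear transformation of $V$ preserving the quadratic form. A unipotent $\theta$ corresponds to a linear map $g=I+N$ with $N\in\mathfrak{o}(V,Q)$ nilpotent. The projective fixed subspace of $\theta$ in $\PG(V)$ is $F=\PG(\ker N)$, of codimension $r:=\mathrm{rank}(N)$ in $\PG(V)$, and the fixed point set of $\theta$ in $\Delta$ is $F\cap\Delta$. Since $\Diag(\theta)=\sB_{n;i}^1$, Theorem~\ref{thm1:nonpointdom_1} implies that $F\cap\Delta$ has corank exactly~$i$ in $\Delta$: the forward direction produces a pointwise-fixed corank-$i$ subspace inside $F\cap\Delta$, and the converse direction rules out any pointwise-fixed subspace of smaller corank.

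The next step is to bound $r$ from above. Because the corank-$i$ geometric subspace $F\cap\Delta$ is contained in $F$, Lemma~\ref{geomsub} forces $\codim F\le i$, so $r\le i$. The key ingredient is then the classical fact that, for a nilpotent element $N\in\mathfrak{o}(V,Q)$ in characteristic distinct from~$2$, the Jordan block sizes of $N$ on $V$ satisfy the condition that even-sized blocks occur with even multiplicity. Writing $m_k$ for the number of Jordan blocks of size $k$, a short count shows $r=\sum_k(k-1)m_k$ is always even: the summand $(k-1)m_k$ is even when $k$ is odd (as $k-1$ is even) and when $k$ is even (as $m_k$ is even). Combined with $r\le i$ and $i$ odd, this forces $r\le i-1$.

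To conclude, for any projective subspace of $\PG(V)$ of codimension $c$, a trivial dimension count shows that it meets every singular projective $c$-subspace of $\Delta$, and hence $F\cap\Delta$ has corank at most $\codim F=r\le i-1$ in $\Delta$; this contradicts the corank of $F\cap\Delta$ being exactly~$i$, and so $\theta$ cannot be unipotent. The main obstacle I anticipate is a clean formulation or reference for the parity statement on nilpotent elements in $\mathfrak{o}(V,Q)$: in characteristic zero or odd characteristic it follows from the pairing of dual Jordan blocks induced by the symmetric bilinear form, but an explicit citation (for instance to Collingwood--McGovern's book on nilpotent orbits) or a brief self-contained argument should accompany the formal proof.
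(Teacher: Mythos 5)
Your first half coincides with the paper's: both arguments use Theorem~\ref{thm1:nonpointdom_1} (equivalently Lemma~\ref{lem:typeI}) to produce a pointwise fixed geometric subspace of corank~$i$, and Lemma~\ref{geomsub} to conclude that the projective fixed-point space $F=\PG(\ker(\theta-I))$ has codimension at most $i$ in $\PG(V)$. The second halves genuinely diverge. The paper stays inside polar-space geometry: it shows $\codim F$ is exactly $i$, identifies $F^\perp$ (the image of $\theta-I$) as a totally singular $(i-1)$-space acting as the centre of $\theta$, and extracts the parity contradiction from the fact that two disjoint maximal singular subspaces of the same type on a hyperbolic quadric of rank $i$ force $i$ to be even. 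You instead import the classification of unipotent classes in orthogonal groups (even Jordan blocks occur with even multiplicity), compute $\mathrm{rank}(\theta-I)=\sum_k(k-1)m_k$ to be even, and get the contradiction $\codim F\le i-1$ directly. Your route avoids the paper's delicate claim that the centre is contained in the axis, at the cost of an external classification result; the paper's route is self-contained and yields more structural information about~$\theta$ along the way.

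Two points need repair before your version is rigorous, though neither is fatal. First, $N=\theta-I$ is \emph{not} an element of $\mathfrak{o}(V,Q)$: the isometry condition gives $(Nv,w)+(v,Nw)+(Nv,Nw)=0$ rather than skew-adjointness. So you cannot literally quote the nilpotent-orbit classification for $N$. Either cite the parity constraint at the group level (Wall's classification of unipotent conjugacy classes in orthogonal groups in characteristic $\neq 2$), or pass to the Cayley transform $X=(\theta-I)(\theta+I)^{-1}$, which is skew-adjoint, nilpotent, and has the same kernel and Jordan type as $\theta-I$; either repair gives exactly the parity statement you use. Second, your opening claim that properness alone forces characteristic $\neq 2$ does not match the paper's definition, which explicitly includes hyperbolic quadrics in characteristic~$2$ among the proper orthogonal polar spaces — and there the parity fact genuinely fails (the involution swapping the two families of generators has Jordan type $(2,1,\dots,1)$). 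You should instead observe that the hypothesis $\Diag(\theta)=\mathsf{B}_{n;i}^1$ places you in a thick polar space, and thick proper orthogonal polar spaces occur only in characteristic $\neq 2$, which is the setting in which your Jordan-block argument is valid.
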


\begin{proof}
%
Suppose for a contradiction that $\theta$ is a unipotent element of $\Delta$ with opposition diagram $\mathsf{B}_{n;i}^1$ for some odd $i$, $1\leq i\leq n-1$. By \cite[Theorem~6.1]{TTM:12} and Lemma~\ref{lem:typeI}, $\theta$ pointwise fixes a geometric subspace of corank $i$, which, by Lemma~\ref{geomsub} generates a subspace $S$ of $\PG(V)$ of codimension at most $i$. Since $\theta$ is unipotent, the set of fixed points of $\theta$ is a subspace $T$, and we claim it is $S$. Indeed, since $S$ is spanned by the pointwise fixed subspace of corank $i$, it is pointwise fixed (this also follows from the proof of Theorem 6.1 in \cite{TTM:12}). Suppose now $\codim T< i$. Then a random singular subspace of dimension $i-1$ intersects $T$ in at least a point and thus cannot be mapped onto an opposite, a contradiction. So $S=T$ and $\codim S=i$. 

Since $S$ is an axis for $\theta$, the subspace $S^\perp$ is a centre for $\theta$, that is, all subspaces of $\PG(V)$ containing $S^\theta$ are fixed. Since unipotent elements have incident pairs (axis,center), we have $S^\perp\subseteq S$. This now implies that $S^\perp$ is a singular subspace of $\Delta$ with dimension $i-1$. Since it acts as the centre of $\theta$, the latter maps an $(i-1)$-dimensional singular subspace $W$ of $\Delta$ disjoint from $S$ onto a disjoint singular subspace of dimension $i-1$ contained in $\<S,W\>$. Hence $W$ and $W^\theta$ are two disjoint maximal singular subspaces of the same type of the hyperbolic quadric $\<S,W\>\cap\Delta$ of rank $i$, implying that $i$ is even, a contradiction.
\end{proof}

We can now prove one direction of Theorem~\ref{thm1:polarclosed}. 

\begin{cor}\label{cor:unip}
Let $\theta$ be an automorphism of a split spherical building. If the opposition diagram of $\theta$ is not polar closed, then $\theta$ is not a unipotent element. 
\end{cor}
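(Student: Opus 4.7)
The plan is to reduce to classical types (since \cite[Theorem~5]{PVM:21} settles the exceptional case) and then rule out each of the finitely many non-polar-closed classical admissible diagrams as possible opposition diagrams of unipotent elements. The starting observation is standard: every unipotent element $u$ is conjugate into $U^+$, which is generated by type-preserving root subgroups, so $\pi_u=\mathrm{id}$. By inspection of Tables~\ref{table:1} and~\ref{table:2}, the non-polar-closed admissible classical diagrams are
\[
\sA_{n;(n-1)/2}^2\ (n\text{ odd}),\quad \sA_{n;n}^1,\quad \sB_{n;i}^1\ \text{and}\ \sD_{n;i}^1\ (i\text{ odd}),\quad \sC_{n;i}^2\ (1\leq i\leq n/2),
\]
together with ${^2}\sB_{2;1}^1$ (equivalently ${^2}\sC_{2;1}^1$), ${^3}\sD_{4;1}^2$, and ${^3}\sD_{4;2}^1$ from Table~\ref{table:2}.

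The first step would handle all diagrams for which the twisting forces $\pi_\theta\neq\mathrm{id}$. For the two $\sA_n$ diagrams above, the superscript $t=1$ means $\pi_0\circ\pi_\theta=\mathrm{id}$, and since $\pi_0\neq\mathrm{id}$ on $\sA_n$ for $n\geq 2$, this gives $\pi_\theta=\pi_0\neq\mathrm{id}$, so $\theta$ is a duality. For the three Table~\ref{table:2} diagrams, $\pi_0$ is trivial on the underlying $\sB_2$ or $\sD_4$ diagram while $t\in\{2,3\}$, so $\pi_\theta$ is a nontrivial duality or triality. In all five cases this contradicts the type-preservation of unipotent elements.

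The second step would handle $\sB_{n;i}^1$ and $\sD_{n;i}^1$ with $i$ odd by invoking Proposition~\ref{prop:PolarClosed_a}. The split buildings in question correspond to parabolic and hyperbolic quadrics, both of which are proper orthogonal polar spaces in the required sense (an ambient projective line meets a quadric in $0$, $1$, or $2$ points unless contained in it, so there are no secants of size at least $3$), so the proposition applies verbatim and rules out unipotence.

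The final step, and the one I expect to be the most delicate, handles $\sC_{n;i}^2$ with $1\leq i\leq n/2$. The underlying polar space is proper symplectic, which forces $\kar(\KK)\neq 2$ (in characteristic $2$ with $\KK$ perfect, as assumed in Theorem~\ref{thm1:attained}, the split $\sC_n$ and $\sB_n$ buildings coincide and the diagram is properly labeled $\sB_{n;i}^2$, which is polar closed, so there is nothing to prove). Under the classification of point-domestic collineations, if $\theta$ has a fixed point then Theorems~\ref{thm1:middle} and~\ref{symplectichomology} identify $\theta$ as an $(I_{2n-2i},-I_{2i})$-homology, which is semisimple with two distinct eigenvalues $\pm 1$ and so not unipotent; if $\theta$ has no fixed point (necessarily $i=n/2$ by Corollary~\ref{oppdiafull}), then $\theta$ is not unipotent either, since any unipotent element of $\mathrm{Sp}_{2n}(\KK)$ has eigenvalue~$1$ on the natural module and therefore fixes at least one point of the polar space. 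The main bookkeeping obstacle is keeping the $\sC_n/\sB_n$ labelings straight in characteristic~$2$, which is precisely why Theorem~\ref{thm1:attained} imposes the perfectness hypothesis.
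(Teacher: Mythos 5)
Your proposal follows essentially the same route as the paper: reduce to classical type via \cite[Theorem~5]{PVM:21}, list the non-polar-closed classical diagrams, eliminate those whose twisting forces $\pi_\theta\neq\mathrm{id}$ using the type-preservation of unipotent elements, and dispose of $\sB_{n;i}^1$ ($i$ odd) and $\sC_{n;i}^2$ via Proposition~\ref{prop:PolarClosed_a} and the homology classification, respectively. The one step that does not go through as written is your treatment of $\sD_{n;i}^1$ with $i$ odd: Proposition~\ref{prop:PolarClosed_a} hypothesises a collineation with opposition diagram $\sB_{n;i}^1$ of a proper orthogonal polar space, so it does not apply ``verbatim'' to a hyperbolic quadric, whose admissible diagrams carry the label $\sD$; you would have to re-run its proof in the $\sD_n$ setting. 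This detour is also unnecessary: by the parity condition $t=n+ij+1 \bmod 2$ recorded after Table~\ref{table:1}, the diagram $\sD_{n;i}^1$ with $i$ odd always has $\pi_\theta\neq\mathrm{id}$, so it belongs with the diagrams you already eliminated in your first step --- and this is exactly how the paper handles it. On the other side of the ledger, your explicit eigenvalue argument for the fixed-point-free case of $\sC_{n;n/2}^2$ (a unipotent element of $\mathrm{Sp}_{2n}(\KK)$ has eigenvalue $1$ on the natural module, hence fixes a point of the symplectic polar space) fills in a step the paper leaves implicit, since Theorem~\ref{symplectichomology} assumes a fixed point when $n=2i$; and your characteristic-$2$ caveat for $\sC_{n;i}^2$ imports a perfectness hypothesis from Theorem~\ref{thm1:attained} that is not actually part of the corollary, though the paper's own appeal to proper symplectic polar spaces is no more explicit on this point.
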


\begin{proof}
Recall that all admissible diagrams of classical type are polar closed except for those of type $\sA_{n;(n-1)/2}^2$ ($n$ odd), $\sA_{n;n}^1$, $\sB_{n;i}^1$ or $\sD_{n;i}^1$ (with $i$ odd and $1\leq i<n$), $\sC_{n;i}^2$ (with $1\leq i\leq n/2$), and those in Table~\ref{table:2}. The diagrams $\sA_{n;(n-1)/2}^2$ ($n$ odd), $\sA_{n;n}^1$, $\sB_{n;i}^1$ (with $i$ odd and $1\leq i<n$), and those in Table~\ref{table:2} all correspond to non-type preserving automorphisms of $\Delta$, and hence cannot arise as the opposition diagram of a conjugate of $U$. Proposition~\ref{prop:PolarClosed_a} and Theorem~\ref{symplectichomology} show that the remaining non-polar closed diagrams $\sB_{n;i}^1$ ($i$ odd) and $\sC_{n;i}^2$ cannot be obtained as the opposition diagrams of unipotent elements. 
\end{proof}

We must now prove the converse. To do so, we briefly recall explicit matrix realisations of the elements $x_{\alpha}(a)$ for split classical groups, following the conventions in~\cite{Car:89} (note that this leads to different quadratic forms than those listed in Section~\ref{sec:polarspace}). In each case we will give matrices for $x_{\alpha}(a)$ with $\alpha\in\Phi^+$, and the corresponding elements $x_{-\alpha}(a)$ are obtained by taking the transpose. 

For type $\sA_n$ the root system has simple roots $\alpha_i=e_i-e_{i+1}$ for $1\leq i\leq n$, and the highest root is $\varphi=e_1-e_{n+1}$. The Chevalley generators are (for $a\in\KK$)
$
x_{e_i-e_j}(a)=1+aE_{i,j}
$ for $1\leq i<j\leq n+1$.

For type $\sB_n$ the root system has simple roots $\alpha_i=e_i-e_{i+1}$ for $1\leq i\leq n-1$, and $\alpha_n=e_n$, and the highest root is $\varphi=e_1+e_2$. Let $(\cdot,\cdot)$ be the symmetric bilinear form defined on $\KK^{2n+1}$ by
$$
(x,y)=2x_0y_0+x_1y_{n+1}+\cdots+x_ny_{2n}+x_{n+1}y_1+\cdots+x_{2n}y_n
$$
(indexing of rows and columns of vectors and matrices begins at $0$ here). Then $\Delta=\sB_n(\KK)$ can be realised as the polar space associated to the quadric $f(x)=(x,x)$. The Chevalley generators are (see \cite[Section 11.2]{Car:89})
\begin{align*}
x_{e_i-e_j}(a)&=1+a(E_{i,j}-E_{n+j,n+i})\\
x_{e_i+e_j}(a)&=1+a(E_{i,n+j}-E_{j,n+i})\\
x_{e_k}(a)&=1+2aE_{k,0}-aE_{0,n+k}-a^2E_{k,n+k}.
\end{align*}

For type $\sC_n$ the root system has simple roots $\alpha_i=e_i-e_{i+1}$ for $1\leq i\leq n-1$, and $\alpha_n=2e_n$, and the highest root is $\varphi=2e_1$.  Let $(\cdot,\cdot)$ be the symplectic form on $\KK^{2n}$ given by
$$
(x,y)=x_1y_{n+1}+\cdots+x_ny_{2n}-x_{n+1}y_1-\cdots-x_{2n}y_n.
$$
We write the standard basis of $\KK^{2n}$ as $\varepsilon_1,\ldots,\varepsilon_n,f_1,\ldots,f_n$, so that $(\varepsilon_i,\varepsilon_j)=(f_i,f_j)=0$ for all $1\leq i,j\leq n$ and $(\varepsilon_i,f_j)=\delta_{i,j}$. The Chevalley generators are
\begin{align*}
x_{e_i-e_j}(a)&=1+a(E_{i,j}-E_{n+j,n+i})\\
x_{e_i+e_j}(a)&=1+a(E_{i,n+j}+E_{j,n+i})\\
x_{2e_k}(a)&=1+aE_{k,n+k}.
\end{align*}

For type $\sD_n$ ($n\geq 4$) the root system has simple roots $\alpha_i=e_i-e_{i+1}$ for $1\leq i\leq n-1$, and $\alpha_n=e_{n-1}+e_n$, and the highest root is $\varphi=e_1+e_2$. Let $f(x)$ be the quadratic form 
$$
f(x)=x_1x_{n+1}+\cdots+x_nx_{2n}
$$
on $\KK^{2n}$, and let $(x,y)$ be the associated symmetric bilinear form 
$
(x,y)=f(x+y)-f(x)-f(y)
$. The Chevalley generators are
\begin{align*}
x_{e_i-e_j}(a)&=I+a(E_{i,j}-E_{n+j,n+i})\\
x_{e_i+e_j}(a)&=I+a(E_{i,n+j}-E_{j,n+i}).
\end{align*}
Moreover, the diagram automorphism (interchanging nodes $n-1$ and $n$) has matrix
$$
\sigma=1-E_{n,n}-E_{2n,2n}+E_{n,2n}+E_{2n,n}.
$$

The following proposition proves the converse of Theorem~\ref{thm1:polarclosed}. Recall the definition of $U(\sX)$ from Section~\ref{sec:polarclosed}.

\begin{prop}\label{prop:PolarClosed_b}
Let $\sX=(\Gamma,J,\pi)$ be a polar closed admissible diagram of classical type. Every generic element of $U(\sX)$ has opposition diagram~$\sX$.
\end{prop}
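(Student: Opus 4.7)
The plan is to proceed by induction on the number $k$ of encircled polar orbits in~$\sX$, writing a generic $g\in U(\sX)$ as $g=x_{\varphi_1}(a_1)g'$ with $g'=x_{\varphi_2}(a_2)\cdots x_{\varphi_k}(a_k)$ generic in $U(\sX_1)$, where $\sX_1$ is the polar closed subdiagram obtained from $\sX$ by removing an encircled polar orbit $\wp_1$ (in one connected component, in the reducible case) and $\varphi_1$ is the corresponding highest root. The base case $k=0$ is trivial, as $g=1$ has the empty opposition diagram by Theorem~\ref{thm:nonempty}, matching the empty $\sX$.

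For the inductive step I will first exhibit a simplex $\alpha$ of type $\wp_1$ in $\Opp(g)$. Since $\varphi_1 \perp \varphi_j$ for all $j\geq 2$, the factor $g'$ lies in the Levi subgroup pointwise fixing the singular subspace $V_1$ on which $x_{\varphi_1}(a_1)$ acts as a long root elation; a generic type-$\wp_1$ simplex $\alpha$ in the long root group geometry of $\varphi_1$ is sent to an opposite by $x_{\varphi_1}(a_1)$ (as a nontrivial translation in that geometry) and fixed by $g'$, hence sent to an opposite by~$g$. In the residue $\Res(\alpha)$, identified with the split building of type $\Ga_{S\setminus\wp_1}$, direct matrix computation using the realisations recalled above shows that the induced automorphism $g_\alpha=\proj_\alpha\circ g$ is a generic element of $U(\sX_1)$ for the restricted root subsystem. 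By inductive hypothesis $g_\alpha$ has opposition diagram $\sX_1$ in $\Res(\alpha)$, and the residue-opposition correspondence (the lemma stated just after Theorem~\ref{thm:capped}, i.e.~\cite[Proposition~1.13]{PVM:19a}) lifts every $\Opp(g_\alpha)$ simplex to a simplex of $\Opp(g)$ containing $\alpha$. This gives $\Type(g)\supseteq \Type(\sX)$.

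For the reverse inclusion I will combine the admissibility of $\Diag(g)$ (which must appear in Tables~\ref{table:1}--\ref{table:2} by~\cite{PVM:19a,PVM:19b}) with the explicit fixed point structure of~$g$: the $k$ mutually perpendicular long root elations $x_{\varphi_j}(a_j)$ jointly fix pointwise a subspace of polar-geometric corank exactly~$k$, and in the class~II cases $g$ further acts as an explicit axial/elation/homology element. Theorem~\ref{thm1:nonpointdom_1} in the class~I case, together with Theorems~\ref{thm1:symplectic1}, \ref{thm1:base}, \ref{thm1:middle}, and Proposition~\ref{hermpointfix} in the class~II cases, then determine $\Diag(g)$ uniquely from this fixed structure, which matches $\sX$.

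The main obstacle will be the bookkeeping in the identification of $g_\alpha$ as a generic element of $U(\sX_1)$ in the correct residue, particularly in type $\sD$ and in the twisted cases: one must verify that the companion diagram automorphism $\pi_{g_\alpha}=\pi_{S\setminus\wp_1}\circ\pi_0\circ\pi_g$ matches the twisting prescribed by $\sX_1$, and that the roots $\varphi_2,\ldots,\varphi_k$ remain highest roots of the relevant connected components of $\Phi_{S\setminus\wp_1}$ after passing to the residue. Both verifications proceed by direct inspection of the matrix realisations, choosing $\wp_1$ appropriately (e.g.\ choosing the $\{n-1,n\}$ orbit first in $\sD_n$ when available) to make the recursion clean.
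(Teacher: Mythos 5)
Your first half (the inclusion $J\subseteq\Type(g)$) is a legitimate alternative to what the paper does: the paper simply quotes \cite[Lemmas~1.1 and~3.5]{PVM:21} to get $\delta(w_0B,\,g\,w_0B)=w_{S\setminus J}w_0$, whence the type-$J$ face of $w_0B$ is non-domestic in one stroke. Your residue induction should also work, but the step you flag --- identifying $g_\alpha=\proj_\alpha\circ g$ with a generic element of $U(\sX_1)$ in $\Res(\alpha)$ --- is genuine work (conjugating to a standard residue, tracking the induced twisting) that the paper's route avoids entirely.

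The genuine gap is in the reverse inclusion $\Type(g)\subseteq J$. For the class~II diagrams $\sB_{n;i}^2$ and $\sD_{n;i}^2$ the crux is that $g=\prod_k x_{\varphi_k}(a_k)$ is \emph{point-domestic}, and nothing you cite delivers this. Theorems~\ref{thm1:middle}, \ref{BCD} and~\ref{symplectichomology} run in the direction ``diagram $\Rightarrow$ structure'', so they cannot compute $\Diag(g)$ from the shape of $g$ unless point-domesticity is already known; they also omit the extreme polar closed diagrams $\sB_{n;n/2}^2$ and $\sD_{n;n/2}^2$, and Proposition~\ref{hermpointfix} concerns non-split Hermitian polar spaces, which never arise for unipotent elements of split buildings. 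Worse, your premise that the fixed-point structure plus admissibility pins down the diagram is false: in type $\sB_n$ the generic elements of $U(\sB_{n;2i}^1)$ and of $U(\sB_{n;i}^2)$ \emph{both} pointwise fix a linear subspace of codimension $2i$ (so in the second case the geometric corank is $2k$, not $k$, contrary to your claim of ``corank exactly $k$''), yet one is class~I and the other class~II; what separates them is exactly whether $(x,gx)=0$ for every singular point $\langle x\rangle$. The paper settles this by the direct computation $(x,\theta x)=(x,x)=0$ for $\theta=I+\sum_k a_k(E_{2k-1,n+2k}-E_{2k,n+2k-1})$, together with exhibiting an explicit pointwise-fixed subspace to get $j$-domesticity above the threshold; some such explicit verification is unavoidable and is absent from your outline. (Your class~I and type-$\sA$ cases are in better shape, since the converse halves of Theorem~\ref{thm1:nonpointdom_1} and Theorem~\ref{thm1:An}(1) do convert ``fixes pointwise a subspace of corank exactly $i$ and maps a point to an opposite'' into the diagram $\sX_{n;i}^1$ --- but even there the non-domestic point and the exact corank must be produced by the same kind of matrix computation.)
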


\begin{proof}
Let $\theta$ be a generic element of $U(\sX)$.  By \cite[Lemmas~1.1 and~3.5]{PVM:21} the element $\theta$ maps the chamber $w_0B$ to Weyl distance $w_{S\backslash J}w_0$, and hence the type $J$-simplex of this chamber is mapped onto an opposite simplex by~$\theta$. Thus $J\subseteq \Type(\theta)$. It remains to prove the reverse inclusion. We consider each nonempty polar closed diagram~$\sX$, specifically, ${^2}\sA_{n;i}^1$ ($0\leq i\leq n/2$), $\sB_{n;i}^1$ (with $i$ even and $0\leq i<n$), $\sB_{n;n}^1$, $\sB_{n;i}^2$ ($0\leq i\leq n/2$), $\sC_{n;i}^1$ (for $0\leq i\leq n/2$), $\sD_{n;i}^1$ (with $i$ even), and $\sD_{n,i}^2$ (with $0\leq i\leq n/2$). 

In this proof (and only in this proof) ``dimension'' refers to vector space dimension (rather than projective dimension). Moreover, $\{i\}$-domestic means domestic on type $i$ vertices of the building.
\medskip

\noindent (1) $\sX=\sA_{n;i}^1$ with $1\leq i\leq n/2$. It is sufficient to show that $\theta$ is $\{j,n-j+1\}$-domestic for each $j>i$. The sequence of highest roots is $\varphi_k=e_k-e_{n-k+2}$ for $1\leq k\leq i$. Thus 
$$
\theta=(I+a_1E_{1,n+1})(I+a_2E_{2,n})\cdots(I+a_iE_{i,n-i+2})=I+a_1E_{1,n+1}+\cdots+a_iE_{i,n-i+2},
$$
with $a_1,\ldots,a_i\neq 0$. Since $\theta$ pointwise fixes the $(n-i+1)$-dimensional space spanned by the vectors $e_k$ with $1\leq k\leq n-i+1$ it follows that each space of dimension $j>i$ contains a fixed point. Thus if $(V,V')$ is a type $\{j,n-j+1\}$-simplex  (that is, $\dim(V)=j$, $\dim(V')=n-j+1$, and $V\subseteq V'$) then $V$ contains a fixed point. Thus $V^{\theta}\cap V'\neq\{0\}$, and so $\theta$ is $\{j,n-j+1\}$-domestic, hence the result. 
\medskip

\noindent (2) $\sX=\sB_{n;2i}^1$ with $1\leq i\leq n/2$. It is sufficient to show that $\theta$ is $\{j\}$-domestic for $j>2i$ and is not $\{1\}$-domestic. The highest roots are $\varphi_k$ with $k=1,\ldots,2i$, where $\varphi_k=e_{2k-1}+e_{2k}$ and $\varphi_{i+k}=e_{2k-1}-e_{2k}$ for $1\leq k\leq i$. Thus $\theta=\prod_{k=1}^i x_{e_{2k-1}+e_{2k}}(a_k)x_{e_{2k-1}-e_{2k}}(b_k)$ with $a_1,\ldots,a_i,b_1,\ldots,b_i\neq 0$, and so
\begin{align*}
\theta&=I+\sum_{k=1}^i\big(a_k(E_{2k-1,n+2k}-E_{2k,n+2k-1})+b_k(E_{2k-1,2k}-E_{n+2k,n+2k-1})+a_kb_kE_{2k-1,n+2k-1}\big).
\end{align*}
The $(2n-2i+1)$-dimensional space spanned by the vectors $e_k$ with $k\notin\{2,4,\ldots,2i\}\cup\{n+1,n+2,\ldots,n+2i\}$ and the vectors $e_{n+2k}-a_kb_k^{-1}e_{2k}$ with $k=1,\ldots,i$ is fixed pointwise  by $\theta$, and so $\theta$ is $j$-domestic for all $j>2i$.  Let $x=e_1+e_2+e_{n+1}-e_{n+2}$. Then $(x,x)=0$ (and so $\langle x\rangle$ is a point of the polar space), and we compute $(x,\theta x)=-a_1b_1\neq 0$, and so this point is mapped onto an opposite point by $\theta$. 
\medskip

\noindent (3) $\sX=\sB_{n;n}^1$. The first paragraph of the proof shows that $\theta$ maps a chamber to an opposite chamber, hence the result.
\medskip

\noindent (4) $\sX=\sB_{n;i}^2$ with $1\leq i\leq n/2$. It is sufficient to show that $\theta$ is $\{1\}$-domestic, and $\{j\}$-domestic for all $j>2i$. The highest roots are $\varphi_k=e_{2k-1}+e_{2k}$ for $1\leq k\leq i$, and so
$$
\theta=\prod_{k=1}^i (I+a_k(E_{2k-1,n+2k}-E_{2k,n+2k-1}))=I+\sum_{k=1}^ia_k(E_{2k-1,n+2k}-E_{2k,n+2k-1}),
$$
with $a_1,\ldots,a_i\neq 0$. The $(2n-2i+1)$-dimensional space spanned by the vectors $e_k$ with $k\notin\{n+1,n+2,\ldots,n+2i\}$ is fixed pointwise by $\theta$, and it follows that every subspace of dimension $j>2i$ contains a fixed point. Thus $\theta$ is $\{j\}$-domestic for all $j>2i$.  Moreover, for each point $p=\langle x\rangle$ we compute $(x,\theta x)=(x,x)=0$, and so $\theta$ is $\{1\}$-domestic. 
\medskip

\noindent (5) $\sX=\sC_{n;i}^1$ with $1\leq i\leq n$. It is sufficient to show that $\theta$ is $\{j\}$-domestic for $j>i$ and that $\theta$ is not point domestic. The highest roots are $\varphi_k=2e_k$ for $1\leq k\leq i$. Thus 
$$
\theta=\prod_{k=1}^i(I+a_kE_{k,n+k})=I+\sum_{k=1}^i a_kE_{k,n+k}.
$$
Thus $\theta$ fixes the $(2n-i)$-dimensional space spanned by $e_k$ with $k\notin\{n+1,\ldots,n+i\}$. Thus $\theta$ is $\{j\}$-domestic for all $j>i$. Since $\theta e_{n+1}=a_1e_1+e_{n+1}$ we have $(e_{n+1},\theta e_{n+1})=-a_1\neq 0$, and so $\langle e_{n+1}\rangle$ is mapped onto an opposite point, hence the result.
\medskip

\noindent (6) $\sX=\sD_{n;2i}^1$ with $1\leq i\leq n/2$. The highest roots are as in the $\sB_{n;2i}^1$ case, and the argument is identical to~(2).
\medskip

\noindent (7) $\sX=\sD_{n;i}^2$ with $1\leq i\leq n/2$. The highest roots are as in the $\sB_{n;i}^2$ case, and the argument is identical to~(3).
\end{proof}

Finally we prove Theorem~\ref{thm1:attained}. 

\begin{proof}[Proof of Theorem~\ref{thm1:attained}]
The non-polar closed type preserving admissible diagrams are $\sB_{n;i}^1$ with $i$ odd and $1\leq i<n$ and $\sC_{n;i}^2$ for $1\leq i\leq n/2$. If the underlying field is of characteristic~$2$ then, by assumption, the field is perfect and one can apply the previous Proposition and duality. Thus suppose that characteristic is not~$2$. We will show that these diagrams can be obtained by homologies of the associated buildings.
\medskip

\noindent (1) $\sX=\sB_{n;i}^1$ with $i$ odd and $1\leq i<n$. More generally, we show that for all $1\leq i<n$ the homology
$$
\theta=\mathrm{diag}(\epsilon,\underbrace{-1,\ldots,-1}_{i'},\underbrace{1,\ldots,1}_{n-i'},\underbrace{-1,\ldots,-1}_{i'},\underbrace{1,\ldots,1}_{n-i'}),
$$
where $\epsilon=(-1)^i$ and $i'=\lfloor i/2\rfloor$, has opposition diagram $\sB_{n;i}^1$. Note that $\theta$ preserves the form $(\cdot,\cdot)$ and hence $\theta\in \sB_n(\mathbb{F})$. Since $\theta$ fixes a subspace of dimension $2n-i+1$ pointwise, every subspace of dimension $j\geq i+1$ contains a fixed point, and so $\theta$ is $j$-domestic for each $j\geq i+1$. If $i$ is odd then the point $p=\langle e_0+e_n-e_{2n}\rangle$ is mapped onto an opposite point, and if $i$ is even then the point $p=\langle e_0+e_1-e_{n+1}\rangle$ is mapped onto an opposite point. Thus $\theta$ is not point domestic. We now show that $\theta$ is not $i$-domestic. Suppose that $i$ is even. Let $x_j=e_j+e_{2n-j+1}$ and $y_j=e_{n+j}-e_{n-j+1}$ for $1\leq j\leq i'=n/2$. Then $V=\langle x_1,\ldots,x_{i'},y_1,\ldots,y_{i'}\rangle$ is totally isotropic. If $V\ni p=\langle v\rangle$ with $v=a_1x_1+\cdots+a_{i'}x_{i'}+b_1y_1+\cdots+b_{i'}y_{i'}$ with $a_j\neq 0$ then $p$ is opposite $\theta\langle y_j\rangle\in \theta_iV$, because $(v,\theta y_j)=-2a_j$. Similarly, if $b_j\neq 0$ then $p$ is opposite $\theta\langle x_j\rangle\in \theta V$. Thus $V$ and $\theta V$ are opposite. Suppose now that $i$ is odd. There exists an integer $k$ with $i'<k<n-i'+1$, and thus both $e_k$ and $e_{n+k}$ are fixed by $\theta$. Let $x_1,\ldots,x_{i'}$ and $y_1,\ldots,y_{i'}$ be as above, and let $z=e_0+e_k-e_{n+k}$. Then $(z,z)=(z,x_j)=(z,y_j)=0$ for all $1\leq j\leq i'$, and thus $V=\langle z,x_1,\ldots,x_{i'},y_1,\ldots,y_{i'}\rangle$ is a totally isotropic space of dimension~$i$. If $p=\langle v\rangle$ with $v=cz+a_1x_1+\cdots+a_{i'}x_{i'}+b_1y_1+\cdots+b_{i'}y_{i'}$ with $c\neq 0$ then $p$ is opposite $\theta\langle z\rangle\in \theta V$. If $c=0$ then either $a_j\neq 0$ for some $j$ (in which case $p$ is opposite $\theta\langle y_j\rangle$) or $b_j\neq 0$ for some $j$ (in which case $p$ is opposite $\theta\langle x_j\rangle$). Thus $V$ is opposite $\theta V$, hence the result.

\medskip

\noindent (2) $\sX=\sC_{n;i}^2$ with $1\leq i\leq n/2$. Let $\theta$ be the $(2n-2i,2i)$-homology
$$
\theta=\mathrm{diag}(\underbrace{-1,\ldots,-1}_{i},\underbrace{1,\ldots,1}_{n-i},\underbrace{-1,\ldots,-1}_{i},\underbrace{1,\ldots,1}_{n-i}).
$$
Since $\theta$ fixes a space of dimension $2n-2i$ pointwise, $\theta$ is $j$-domestic for all $j\geq 2i+1$. Moreover, since $(x,\theta x)=0$ for all $x\in \mathbb{F}^{2n}$ we see that $\theta$ is point-domestic. Let $x_j=e_j+f_{n-j+1}$ and $y_j=e_{n-j+1}+f_j$ for $1\leq j\leq n/2$. Let $V=\langle x_1,\ldots,x_i,y_1,\ldots,y_i\rangle$ (note that $V$ is totally isotropic). We claim that $V$ and $\theta V$ are opposite. For if $p=\langle v\rangle$ with $v=a_1x_1+\cdots+a_ix_i+b_1y_1+\cdots+b_iy_i$ with $a_j\neq 0$ for some $j$ then $p$ and $\theta\langle y_j\rangle$ are opposite, and if $b_j\neq 0$ for some $j$ then $p$ and $\theta\langle x_j\rangle$ are opposite. Hence $\theta$ has opposition diagram $\sC_{n,i}^2$.
\smallskip

We now consider the non-type preserving diagrams. 
\medskip

\noindent (3) $\sX=\sA_{n;(n-1)/2}^2$ with $n\geq 3$ odd. Let $(\cdot,\cdot)$ be any nondegenerate symplectic form on $\mathbb{F}^{n+1}$ (note that $n+1$ is even), and for subspaces $U$ write $U^{\circ}=\{v\in\mathbb{F}^{n+1}\mid (u,v)=0\text{ for all $u\in U$}\}$. Then the map with $U^{\theta}=U^{\circ}$ is a symplectic polarity, and has the required diagram.
\medskip

\noindent (4) $\sX={^2}\sB_{2;1}^1$ or $\sX={^2}\sC_{2;1}^1$, that is, dualities of buildings of types $\sB_2$ and $\sC_2$ for perfect fields of characteristic~$2$. Every duality has this diagram.
\medskip

\noindent (5) $\sX=\sD_{n;2i+1}^1$ with $0\leq i<(n-1)/2$. Let $0\leq i\leq n/2$, and let
\begin{align*}
\theta_i&=I+\sum_{j=1}^{i}(E_{2j-1,2j}-E_{n+2j,n+2j-1})\\
\eta&=I+E_{n-1,n}+E_{n-1,2n}-E_{n-1,2n-1}-E_{n,2n-1}-E_{2n,2n-1}\\
\sigma&=I-E_{n,n}-E_{2n,2n}+E_{n,2n}+E_{2n,n},
\end{align*}
with $\theta_0=I$. Similar calculations to the above show that for $0\leq i\leq n/2$, the collineation $\theta_i$ has opposition diagram $\sD_{n,i}^{2}$, the collineation $\eta_i=\theta_i\eta$ has opposition diagram $\sD_{n,2i}^1$, and the duality $\sigma_i=\theta_i\sigma$ has opposition diagram $\sD_{n,2i+1}^1$.
\medskip

Finally, trialities of buildings of type  $\sD_4$ are treated in~\cite{HVM:13}, and the proof is complete. 
\end{proof}

\bibliographystyle{plain}

%

\end{document}